\numberwithin{equation}{section}  
\newtheorem{theorem}{Theorem}[section]
\newtheorem{lemma}[theorem]{Lemma}
\newtheorem{corollary}[theorem]{Corollary}
\newtheorem{proposition}[theorem]{Proposition}
\newtheorem{remark}[theorem]{Remark}
\newtheorem{definition}[theorem]{Definition}
\newtheorem{theoremletter}{Theorem}
\newtheoremstyle{tttheorem}
{}                
{}                
{\slshape}        
{}                
{\bfseries}       
{'}               
{ }               
{}                
\theoremstyle{tttheorem}
\def\XXint#1#2#3{{\setbox0=\hbox{$#1{#2#3}{\int}$ }
		\vcenter{\hbox{$#2#3$ }}\kern-.6\wd0}}
\newcommand{\R}{\mathbb{R}}
\newcommand{\Ss}{\mathbb{S}}
\title[Constant $Q$-curvature metrics with Delaunay ends]{Constant $Q$-curvature metrics with Delaunay ends: The nondegenerate case}  
\author[J.H. Andrade]{Jo\~{a}o Henrique\ Andrade}
\author[R. Caju]{Rayssa Caju}
\author[J.M. do \'O]{Jo\~ao Marcos do \'O}
\author[J. Ratzkin]{Jesse Ratzkin}
\author[A. Silva Santos]{Almir Silva Santos}
\address[J.H. Andrade]{
    \newline\indent
    Department of Mathematics,
	University of British Columbia
	\newline\indent 
	V6T 1Z2, Vancouver-BC, Canada
	\newline\indent 
	and
	\newline\indent 
	Institute of Mathematics and Statistics,
	University of S\~ao Paulo
	\newline\indent 
	05508-090, S\~ao Paulo-SP, Brazil}
\email{\href{mailto:andradejh@math.ubc.ca}{andradejh@math.ubc.ca}}
\email{\href{mailto:andradejh@ime.usp.br}{andradejh@ime.usp.br}}
\address[R. Caju]{Department of Mathematics, 
	Federal University of Para\'{\i}ba
	\newline\indent 
	58051-900, Jo\~ao Pessoa-PB, Brazil}
\email{\href{mailto:rayssacaju@gmail.com}{rayssacaju@gmail.com}}
\address[J.M. do \'O]{Department of Mathematics,
	Federal University of Para\'{\i}ba
	\newline\indent 
	58051-900, Jo\~ao Pessoa-PB, Brazil}
\email{\href{mailto:jmbo@pq.cnpq.br}{jmbo@pq.cnpq.br}}
\address[J. Ratzkin]{Department of Mathematics,
	Universit\"{a}t W\"{u}rzburg
	\newline\indent
	97070, W\"{u}rzburg-BA, Germany}
\email{\href{mailto:jesse.ratzkin@mathematik.uni-wuerzburg.de}{jesse.ratzkin@mathematik.uni-wuerzburg.de}}
\address[A. Silva Santos]{Department of Mathematics, 
	Federal University of Sergipe
	\newline\indent 
	49100-000, Sao Cristov\~ao-SE, Brazil}
\email{\href{mailto: almir@mat.ufs.br}{almir@mat.ufs.br}}
\thanks{This work was partially supported by S\~ao Paulo Research Foundation (FAPESP) \#2020/07566-3 and \#2021/15139-0, Para\'iba State Research Foundation (FAPESQ) \#3034/2021, and National Council for Scientific and Technological Development (CNPq) \#312340/2021-4, \#403349/2021-4, and \#429285/2016-7}
\subjclass[2020]{35J60, 35B09, 35J30, 35B40}
\keywords{Paneitz operator, $Q$-curvature equation, Gluing construction, Prescribed asymptotics, Isolated singularities, Geometric equation}
\begin{document}
	
	\begin{abstract}
	We construct a one-parameter family of solutions to the positive singular $Q$-curvature problem on compact nondegenerate manifolds of dimension bigger than four with finitely many punctures. 
	If the  dimension is at least eight we assume that the Weyl tensor vanishes to sufficiently high order at the singular points. 
	On a technical level, we use perturbation methods and gluing techniques based on the mapping properties of the linearized operator both in a small ball around each singular point and in its exterior.
	Main difficulties in our construction include controlling the convergence rate of the Paneitz operator to the flat bi-Laplacian in conformal normal coordinates and matching the Cauchy data of the interior and exterior solutions; the latter difficulty arises from the lack of geometric Jacobi fields in the kernel of the linearized operator. 
	We overcome both these difficulties by constructing suitable auxiliary functions. 
	\end{abstract}
	
	\maketitle
	
	\begin{center}
		\footnotesize
		\tableofcontents
	\end{center}
	
	\section{Introduction}
	In the last several decades, much of the geometric analysis community has explored curvature problems similar to 
	and extending the classical Yamabe problem regarding scalar curvature. 
	The present paper constructs several new metrics on a punctured manifold with constant 
	(fourth order) $Q$-curvature. Let $(M^n,g)$ be a closed (compact and without boundary) 
	Riemannian manifold of dimension $n \geq 5$ and 		
	\begin{equation}\label{q_curvature}
	    Q_g=\displaystyle -\frac{1}{2(n-1)}\Delta_gR_g
	    +\frac{n^3-4n^2+16n-16}{8(n-1)^2(n-2)^2}R_g^2
	    -\frac{2}{(n-2)^2}|{\rm Ric}_g|^2,
	\end{equation}
	where $R_g$ is the scalar curvature, ${\rm Ric}_g$ is the 
	Ricci curvature, and $\Delta_g$ is the Laplace-Beltrami operator. 
	It is well-known that $Q_g$ transforms under a conformal 
	change of metric, denoted by $\widetilde g = u^{{4}/{(n-4)}} g$ with $u\in C^\infty(M)$ and $u>0$, according to the rule 
    \begin{equation}\label{transformationlaw} 
        Q_{\widetilde g} = \frac{2}{n-4} u^{- \frac{n+4}{n-4}} P_gu, 
    \end{equation} 
    where  the Paneitz operator $P_g$ is given by
    \begin{equation}\label{paneitz-branson} 
        P_gu=\Delta_g^2u + \operatorname{div}_g \left (\frac{4}{n-2} \operatorname{Ric}_g (\nabla u, \cdot) -
        \frac{(n-2)^2 + 4}{2(n-1)(n-2)} R_g \langle \nabla u, \cdot\rangle \right )+\frac{n-4}{2} Q_g u.
    \end{equation} 
    We refer the interested reader to \cite{MR832360,MR904819,MR3618119,MR2407527,MR2407525,MR2393291} for a thorough 
    background on the $Q$-curvature and on the Paneitz operator. 
    
    We are interested in the problem of prescribing the $Q$-curvature of a conformal 
    metric. For this, we adopt the normalization $Q_{\widetilde g} = {n(n^2-4)}/{8}$, 
    which makes this curvature equals that of the sphere ($\Ss^n,g_0)$ with 
    its standard round metric.
    With this normalization, we define the $Q$-curvature 
    operator $H_g:C^{4,\alpha}(M)\rightarrow C^{0,\alpha}(M)$, 
    for some $\alpha\in(0,1)$, by
    \begin{equation}\label{eq020}
        H_g(u)=P_gu-\frac{n(n-4)(n^2-4)}{16}|u|^{\frac{8}{n-4}}u,
    \end{equation}
    so that  \eqref{transformationlaw} can be reformulated as $H_g(u)=0$. 
    The operator $H_g$ is conformally covariant, in that 
    \begin{equation}\label{eq023}
        H_{\widetilde{g}}(\phi)=u^{-\frac{n+4}{n-4}}H_g(u\phi) \quad {\rm for \ all} \quad \phi\in C^\infty(M).
    \end{equation}
    For a given finite set of points $\Lambda$, our main goal in this paper is to find positive smooth solutions to
    the singular problem 
    \begin{equation}\label{ourequation}\tag{$\mathcal{Q}$}
	\begin{cases}
	H_g(u)=0 \quad {\rm on} \quad M\backslash\Lambda\\
	\displaystyle\liminf_{x\rightarrow p}u(x)=\infty\quad \mbox{for each }p\in\Lambda.
	\end{cases}
	\end{equation}

When $(M^n,g)$ is conformally equivalent to the round sphere $(\mathbb S^n,g_0)$ and $\Lambda$ is a single point there exists no solution to \eqref{ourequation}, as it is was proved by C. S. Lin \cite{Lin98} and 
J. Wei and X. Xu \cite{MR1679783}.
The first result proving 
existence of solutions to the positive singular $Q$-curvature problem is due to S. Baraket and S. Rebhi \cite{MR1936047}. 
They provided a partial answer to the existence problem as follows

\begin{theoremletter}[\cite{MR1936047}]
For each natural number $k$ there exists a finite set $\Lambda$ 
with cardinality $2k$ such that $\mathbb{S}^n \backslash \Lambda$ carries 
a complete, constant $Q$-curvature metric conformal to the round metric. 
\end{theoremletter}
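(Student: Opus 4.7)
The plan is a gluing construction exploiting discrete symmetry to bypass the well-known absence of geometric Jacobi fields for the Paneitz operator mentioned in the abstract. The starting ingredient is the one-parameter family of radial, singular Delaunay-type solutions $u_\varepsilon$ on $\mathbb{R}^n\setminus\{0\}$ to the conformally flat model
\begin{equation*}
    \Delta^2 u = \frac{n(n-4)(n^2-4)}{16}\, u^{(n+4)/(n-4)},
\end{equation*}
which are $\log r$-periodic and provide both the local asymptotic model near each puncture and, after stereographic projection, exact solutions on $\mathbb{S}^n\setminus\{N,S\}$ (the case $k=1$).

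First I would choose $\Lambda\subset\mathbb{S}^n$ to be the vertices of a regular $2k$-gon lying on a great circle. Its stabilizer $G\subset O(n+1)$ acts transitively on $\Lambda$ and contains an $O(n-1)$-factor fixing the great circle together with a dihedral action of order $4k$. The entire analysis is performed in the closed subspace of $G$-invariant functions in $C^{4,\alpha}(\mathbb{S}^n\setminus\Lambda)$. This restriction annihilates the infinitesimal rotations and translations of the puncture configuration and, crucially, forces a single Delaunay parameter $\varepsilon$ shared by every end.

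Next I would build an approximate solution $u_{\mathrm{app}}$ by patching: near each $p_j\in\Lambda$, use the rescaled Delaunay solution $u_\varepsilon$ expressed in local conformal coordinates; away from $\Lambda$, use the constant function $1$ corresponding to $g_0$; interpolate via cutoff functions on annular regions of radius $\varepsilon^{\alpha}$ for a suitable $\alpha\in(0,1)$. A direct computation yields that $H_{g_0}(u_{\mathrm{app}})$ is small in a weighted H\"older norm encoding the Delaunay asymptotics. One then seeks a true solution of \eqref{ourequation} of the form $u=u_{\mathrm{app}}+\phi$ with $\phi$ a $G$-invariant perturbation.

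The hard part will be the uniform invertibility of $\mathrm{D}H_{g_0}(u_{\mathrm{app}})$ on $G$-invariant functions as $\varepsilon\to 0$, together with the matching of Cauchy data across the gluing annuli. On the exterior piece the linearization has the round-sphere Paneitz operator as leading model and is invertible on $G$-invariant functions; on each Delaunay end, an indicial analysis of Mazzeo--Pacard type produces finite-dimensional obstructions that, by $G$-symmetry, collapse to a single one-dimensional mode associated with the Delaunay parameter. This residual mode is precisely what one balances by tuning $\varepsilon$: the resulting scalar balance equation is monotone at leading order and solvable by an intermediate-value argument. Combining this with the smallness of the approximate error closes a Banach fixed-point iteration for all sufficiently small $\varepsilon$, producing the desired complete conformal metric with constant $Q$-curvature and exactly the $2k$ prescribed ends.
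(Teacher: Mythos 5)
First, a framing remark: the paper does not prove this statement at all; Theorem A is quoted from Baraket--Rebhi, whose construction is of dipole type (clustering pairs of singular points), and the machinery developed in the present paper cannot be applied verbatim to the sphere because $(\mathbb{S}^n,g_0)$ is degenerate, as the introduction points out. Your idea of restoring invertibility by working equivariantly with respect to the symmetry group $G$ of a regular $2k$-gon on a great circle is sound for the exterior linearization: the kernel of $L_{g_0}$ is spanned by restrictions of linear functions, none of which is $G$-invariant, and a $G$-invariant Green's function with poles on $\Lambda$ exists because $\sum_j p_j=0$ makes the symmetrized delta orthogonal to that kernel (a point you should state, since you never introduce any Green's-function term). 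So your route is genuinely different from the cited proof, and the equivariant reduction is a legitimate device.

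The genuine gap is in the low-mode analysis at the ends. You assert that the indicial obstructions at each Delaunay end ``collapse to a single one-dimensional mode associated with the Delaunay parameter,'' to be balanced by tuning $\varepsilon$. Symmetry does kill the modes along $e_1,\dots,e_n$ (the vertex stabilizer reverses the tangent to the circle and acts as $O(n-1)$ normally), but it does not reduce the radial $j=0$ obstruction to one dimension. For the fourth-order operator there are four radial Cauchy data to match across each gluing sphere ($u$, $\partial_r u$, $\Delta u$, $\partial_r\Delta u$; equivalently, radial indicial behaviors $|x|^0,|x|^2,|x|^{4-n},|x|^{2-n}$), whereas the geometric deformations of the Delaunay family supply only the necksize and the axial phase --- and your scheme allows itself only the single shared $\varepsilon$, with no phase parameter $R$, no Green's-function coefficient $\lambda$, and no adjustable constant boundary data. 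This shortage of geometric Jacobi fields relative to the number of radial matching conditions is exactly the difficulty the paper emphasizes even in the nondegenerate one-point case, and it is resolved there only by introducing the auxiliary functions $\Upsilon$ and $w_{\varepsilon,R}$ and the extra scalar unknowns $(b,\lambda,\xi_0,\xi_2)$ solving the system \eqref{gs3}. Without an analogous device, or a proof that the surviving equivariant cokernel is genuinely one-dimensional and is hit with nonvanishing derivative by $\partial_\varepsilon$, the concluding intermediate-value/fixed-point step does not close, so the proposal as written is incomplete at precisely the step that is hardest in the fourth-order setting.
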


Using variational bifurcation theory and topological methods R. G. Bettiol, P. Piccione and Y. Sire \cite{MR4251294} 
obtained the existence of infinitely many 
complete metrics with constant positive $Q$-curvature on $\mathbb S^n\backslash\mathbb S^k$ with $0\leq k\leq {(n-4)}/{2}$,
conformal to the round metric.
Recently, for arbitrary closed Riemannian manifolds, A. Hyder and Y. Sire \cite{MR4170788}
studied the positive singular $Q$-curvature problem when $\Lambda$ is a smooth submanifold of $M$  
with Hausdorff dimension ${\rm dim}_{\mathcal H}(\Lambda)$ strictly between $0$ and 
$(n-4)/2$. Motivated by the results in \cite{MR1425579}, they established the 
following 

\begin{theoremletter}[\cite{MR4170788}]\label{Hyder-Sire}
Let $(M^n,g)$ be a closed Riemannian manifold with $n\geq 5$ with semi-positive $Q$-curvature and nonnegative
scalar curvature.
Let $\Lambda$ be a connected smooth closed submanifold of $M$ with $0<{\rm dim}_{\mathcal H}(\Lambda)<{(n-4)}/{2}$.  Then, there 
exists an infinite-dimensional family of complete metrics on $M\backslash\Lambda$ with positive constant $Q$-curvature.
\end{theoremletter}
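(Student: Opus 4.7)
The plan is to adapt the Mazzeo--Smale construction \cite{MR1425579} of singular constant-scalar-curvature metrics to the fourth-order Paneitz setting. The curvature hypotheses are tailored to make $P_g$ coercive with a positive Green's function, and the bound $k := \dim_{\mathcal H}(\Lambda) < (n-4)/2$ is the capacity-theoretic threshold that keeps a Green's-potential barrier inside the functional framework of the critical nonlinear problem.

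First, exploit the nonnegativity of $R_g$ and the semi-positivity of $Q_g$ in \eqref{paneitz-branson} (together with an integration by parts on the divergence term) to show that $\int_M v\,P_g v\,dv_g \ge c_0 \|v\|_{H^2(M)}^2$ for all $v\in H^2(M)$. This coercivity yields a positive Green's function $G\in C^\infty((M\times M)\setminus\mathrm{diag})$ with $G(x,y)\sim \dist_g(x,y)^{4-n}$, together with a positivity-preserving bounded inverse $P_g^{-1}$ on nonnegative data. For any smooth positive density $\mu$ on $\Lambda$, form the Green's potential
\[
 w_\mu(x) := \int_\Lambda G(x,y)\,d\mu(y),
\]
which is smooth and positive on $M \setminus \Lambda$, satisfies $P_g w_\mu \equiv 0$ there, and blows up like $\dist_g(x,\Lambda)^{-(n-4-k)}$ along $\Lambda$.

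Next, construct a pair of barriers. For a small $\varepsilon > 0$ the function $\varepsilon w_\mu$ is a subsolution on $M \setminus \Lambda$, since $P_g(\varepsilon w_\mu) = 0 \le \tfrac{n(n-4)(n^2-4)}{16}(\varepsilon w_\mu)^{(n+4)/(n-4)}$. A smooth positive supersolution $U$ can be produced from a global constant-$Q$-curvature metric on $M$ (whose existence follows from a direct method using the coercivity of $P_g$), rescaled so that $\varepsilon w_\mu \le U$ off a small neighborhood of $\Lambda$. The monotone iteration
\[
 v_0 := \varepsilon w_\mu, \qquad v_{j+1} := P_g^{-1}\!\left[\tfrac{n(n-4)(n^2-4)}{16}\,\min\{v_j, U\}^{(n+4)/(n-4)}\right]
\]
then yields an increasing sequence bounded above by $U$, converging to a weak solution $u$ of $H_g(u) = 0$ with $\varepsilon w_\mu \le u \le U$. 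Elliptic regularity and the strong maximum principle for the coercive operator $P_g$ promote $u$ to a smooth positive classical solution on $M\setminus\Lambda$ with the prescribed blow-up along $\Lambda$.

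The infinite-dimensional family then arises because the space of smooth positive densities $\mu$ on $\Lambda$ is itself infinite-dimensional, and distinct $\mu$ produce solutions whose leading singular profiles along $\Lambda$ are genuinely distinct. I expect the main obstacle to be the rigorous justification of the iteration scheme in a function space adapted to the singular behavior of $w_\mu$: the nonlinearity $u^{(n+4)/(n-4)}$ is critical, so the standard $L^p$--Schauder machinery must be replaced by weighted estimates calibrated to the normal codimension of $\Lambda$, and it is precisely the sharp capacity bound $k < (n-4)/2$ that allows $w_\mu$ to be tested against the nonlinearity with finite $H^2$-type control and keeps $\Lambda$ invisible to the variational formulation.
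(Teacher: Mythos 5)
First, note that the paper does not prove this statement at all: it is quoted (as Theorem~B) from Hyder--Sire \cite{MR4170788}, so the only fair comparison is with that construction, which (in the spirit of Mazzeo--Pacard \cite{MR1425579}, not ``Mazzeo--Smale'') builds approximate solutions whose transverse profile is the cylindrical-type rate $\dist(x,\Lambda)^{-(n-4)/2}$, corrects them by linear analysis in weighted spaces, and uses the Gursky--Malchiodi positivity theory (this is exactly where $Q_g$ semi-positive and $R_g\ge 0$ enter) to keep everything positive; the infinite-dimensional family comes from function-valued parameters along $\Lambda$. Your proposal is built on a different, and in this regime incorrect, singular model. The Green potential $w_\mu$ blows up like $d^{-(n-4-k)}$ with $d=\dist(\cdot,\Lambda)$, $k=\dim_{\mathcal H}\Lambda$, and a direct computation in the transverse codimension $N=n-k$ shows that $w_\mu^{(n+4)/(n-4)}$ is locally integrable near $\Lambda$ if and only if $(n-4-k)\tfrac{n+4}{n-4}<n-k$, i.e.\ $k>(n-4)/2$. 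So precisely in the range of the theorem, $k<(n-4)/2$, the Green-rate profile is \emph{too} singular: $P_g^{-1}$ cannot even be applied to $(\varepsilon w_\mu)^{(n+4)/(n-4)}$, and (as in the scalar-curvature analogue, where the threshold is $(n-2)/2$) solutions of the actual equation cannot blow up at the Green rate but only at the slower rate $d^{-(n-4)/2}$. Your closing heuristic is therefore inverted: $k<(n-4)/2$ is not the ``capacity bound that lets $w_\mu$ be tested against the nonlinearity''; it is the bound that rules your ansatz out and instead makes the $d^{-(n-4)/2}$ model (equivalently, positivity of the $Q$-curvature of the $\Ss^{n-k-1}\times\mathbb{H}^{k+1}$-type model) viable.

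The iteration scheme also fails structurally, independently of the profile issue. A supersolution $U$ coming from a global constant-$Q$-curvature metric is bounded, so the ordering $\varepsilon w_\mu\le U$ can never hold near $\Lambda$; and with the truncation $\min\{v_j,U\}$ the first iterate $v_1=P_g^{-1}\bigl[c\min\{v_0,U\}^{(n+4)/(n-4)}\bigr]$ is bounded, hence $v_1\ge v_0$ already fails near $\Lambda$, the sequence is not monotone, and any limit would be a bounded solution with no singularity along $\Lambda$ (keeping the singularity would force you to solve $P_gu=cu^{(n+4)/(n-4)}+\mu$ distributionally, which is meaningless here by the non-integrability above). Two further steps are asserted but not available: the coercivity $\int_M vP_gv\ge c_0\|v\|^2_{H^2}$ does not follow from a pointwise integration by parts, because the $\operatorname{Ric}_g(\nabla v,\nabla v)$ term in \eqref{paneitz-branson} has no sign even when $R_g\ge0$ and $Q_g\ge0$ (Gursky--Malchiodi's positive Green's function and maximum principle are proved by much subtler arguments, and for a fourth-order operator positivity of $G$ is in any case not a comparison principle strong enough to run sub/supersolution schemes naively); and the existence of a smooth constant-$Q$-curvature background metric is itself a nontrivial theorem in this critical problem, not a direct-method byproduct. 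To repair the argument you would have to abandon the Green-potential barrier altogether and follow the weighted-space perturbation scheme around the $d^{-(n-4)/2}$ model, which is what \cite{MR4170788} does.
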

    
    Let $(M^n,g)$ be a closed Riemannian manifold with dimension $n\geq 5$ and let $\Lambda$ be a finite set of points.
    Under two natural conditions on the metric, namely, nondegeneracy and vanishing of the Weyl tensor $W_g$ up to some suitable order, we prove  
    existence of a one-parameter family of solutions to \eqref{ourequation}. We say that $g$ is {\it nondegenerate} if the 
	linearized operator $L_g:C^{4,\alpha}(M)\rightarrow C^{0,\alpha}(M)$ is invertible for some $\alpha\in(0,1)$. Here
	\begin{equation}\label{linearization}
	    L_{g}u = P_{g}u - \frac{n(n^2-4)(n+4)}{16}u,
	\end{equation}
	that is, $L_g$ is the linearization of $H_g$ at the function $1$. Let $\mathcal M$ be the space of smooth metrics in $M$. 
	For $5\leq n\leq 7$,
	we define $\mathcal W_\Lambda^d=\left\{g\in\mathcal M : g\mbox{ is nondegenerate}\right\}$. For $n\geq 8$, and $d\in\mathbb N$, we set
	
	\begin{equation*}
	    \mathcal{W}^d_{\Lambda}=\left\{ g\in\mathcal M : \begin{aligned}\ &\mbox{$\nabla_{g}^jW_{g}(p)=0$ for all $p\in\Lambda$ and $j=0,\dots, d$},\\
	    &\mbox{\quad\quad\quad\quad\quad and $g$ is nondegenerate.}
        \end{aligned}\right\}.
	\end{equation*}

    Our main theorem  extends the results in \cite{MR2639545,MR2010322} to the context of constant $Q$-curvature metrics
	
	\begin{theorem}\label{maintheorem1}
	    Let $\left(M^{n}, g\right)$ be a closed Riemannian manifold with dimension $n\geq 5$ and let $\Lambda$ be a finite set of points.
	    Assume that $Q_{g}={n(n^2-4)}/{8}$ and $g\in\mathcal{W}^d_{\Lambda}$ for $d=\left[{(n-8)}/{2}\right]$.
	    Then, there exist a constant $\varepsilon_{0}>0$ and a one-parameter family of complete 
	    metrics $\{\tilde g_{\varepsilon}\}_{\varepsilon \in\left(0, \varepsilon_{0}\right)}$ on $M\backslash\Lambda$, 
	    such 
	    that each $\tilde g_\varepsilon$ is conformal to $g$ with $Q$-curvature equal to $n(n^2-4)/8$. 
	    Near each singular point $p \in \Lambda$ the metric $\tilde g_\varepsilon$ is 
	    asymptotic to one of the Delaunay metrics described in Section \ref{sec:delaunaysolutions}. 
	    Moreover, $\tilde g_{\varepsilon} \rightarrow g$ uniformly on compact sets 
	    of $M\backslash\Lambda$ as $\varepsilon \rightarrow 0$.
	\end{theorem}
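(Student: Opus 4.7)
The plan is to construct the family $\{\tilde g_\varepsilon\}$ by a gluing argument modeled on the scalar-curvature constructions of \cite{MR2639545, MR2010322}, adapted to the fourth-order Paneitz equation. First, I would build an approximate solution $u_\varepsilon$: in conformal normal coordinates around each singular point $p \in \Lambda$, splice a Delaunay solution $u_{\varepsilon,p}^{\mathrm{Del}}$ (from Section~\ref{sec:delaunaysolutions}) into a small ball of radius $r_\varepsilon$, transition via cut-off functions on an annulus, and take the constant function $1$ on the remainder of $M \setminus \Lambda$. The crucial point in the interior is that, after conformal change to normal coordinates, the Paneitz operator $P_g$ agrees with the flat bi-Laplacian $\Delta^2$ up to a high-order error; the hypothesis $\nabla_g^j W_g(p) = 0$ for $j \le [(n-8)/2]$ is exactly what is needed to push this error below the critical threshold for the weighted analysis.

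The next step is the linear theory, split between the interior and exterior. On the complement of small balls, the nondegeneracy hypothesis means $L_g$, as defined in \eqref{linearization}, is invertible, so it admits a bounded right inverse for the Dirichlet-type problem modulo prescribed Cauchy data on the boundary spheres. On each interior neighborhood one studies the Jacobi operator of the Paneitz equation around $u_{\varepsilon,p}^{\mathrm{Del}}$ via Fourier decomposition in spherical harmonics on $\mathbb{S}^{n-1}$; mode by mode this reduces to a fourth-order ODE whose indicial roots and bounded/polynomially-growing solutions can be catalogued explicitly. Introducing weighted H\"older spaces $\mathcal{C}^{k,\alpha}_\delta$ adapted to the Delaunay cylindrical asymptotics, one obtains a right inverse for the linearization modulo a finite-dimensional deficiency subspace corresponding to deformations within the Delaunay family (neck size and phase).

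The main obstacle, flagged in the abstract, is the Cauchy-data matching. Because solutions are fourth-order, on each gluing sphere one must match the tuple $(u, \partial_r u, \partial_r^2 u, \partial_r^3 u)$, and unlike the scalar-curvature case the geometric symmetry-induced Jacobi fields of the Paneitz operator do not furnish enough bounded directions to span this space. To repair the mismatch I would follow the abstract's prescription and build auxiliary functions by hand: explicit profiles in the kernel (or approximate kernel) of $L_g$ on the exterior, with prescribed boundary data chosen to span the otherwise missing directions in the interior/exterior deficiency subspaces. Combined with the neck-parameter and phase-shift deformations within each Delaunay family, this enlarges both sides of the matching map enough to make it surjective onto the relevant finite-dimensional Cauchy-data space. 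This step is the heart of the argument and the one I expect to be most delicate.

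With the linear theory in hand, I would close by a contraction-mapping argument. Writing $u = u_\varepsilon + v$ and expanding $H_g(u_\varepsilon + v) = H_g(u_\varepsilon) + L_{u_\varepsilon} v + N(v)$ via \eqref{eq020}, the problem becomes the fixed-point equation $v = -L_{u_\varepsilon}^{-1}\bigl(H_g(u_\varepsilon) + N(v)\bigr)$. The error $H_g(u_\varepsilon)$ is small in the weighted norm thanks to the Weyl-vanishing estimate controlling the difference $P_g - \Delta^2$, while $N$ is locally Lipschitz with small constant on a ball of radius a small power of $\varepsilon$; Banach's fixed-point theorem then yields $v_\varepsilon$. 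The resulting conformal factor gives $\tilde g_\varepsilon = (u_\varepsilon + v_\varepsilon)^{4/(n-4)} g$ with $Q_{\tilde g_\varepsilon} = n(n^2-4)/8$, completeness and Delaunay asymptotics at each puncture built in, and uniform convergence $\tilde g_\varepsilon \to g$ on compact subsets of $M \setminus \Lambda$ as $\varepsilon \to 0$ because $u_\varepsilon \to 1$ away from the punctures.
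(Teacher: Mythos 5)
Your outline is in the right family of methods, but it leaves a genuine gap at exactly the point you yourself call the heart of the argument, and it also mixes two strategies in a way that does not close. The paper does not perturb a cutoff-spliced global approximate solution by one global contraction: it produces an \emph{exact} interior solution on each punctured ball with partially free Navier data (Theorem \ref{teo001}), an \emph{exact} exterior solution of the form $1+\lambda G_p+u_{\psi_0,\psi_2}+V$ (Theorem \ref{existenceexterior}), and then matches the Navier data $(u,\partial_r u,\Delta_g u,\partial_r\Delta_g u)$ across $\partial B_{r_\varepsilon}(p)$, which is what makes the weak-solution argument at the start of Section \ref{sec:onegluingprocedure} work (note it is Navier data, not $(u,\partial_r u,\partial_r^2u,\partial_r^3u)$). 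In your version, the final step $v=-L_{u_\varepsilon}^{-1}(H_g(u_\varepsilon)+N(v))$ presupposes a right inverse of the linearization about the spliced approximate solution with bounds uniform in $\varepsilon$; this fails in the natural weighted spaces because of the indicial roots attached to the low spherical harmonics $e_0,\dots,e_n$ along the Delaunay neck. You acknowledge a finite-dimensional deficiency in the interior linear theory, but you never say how it is removed in the global fixed-point equation, so the contraction step as written would not go through.

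The repair you sketch is also misdirected. On a nondegenerate manifold $L_g$ in \eqref{linearization} has no kernel, so there are no ``auxiliary kernel profiles of $L_g$ on the exterior'' to add; the only exterior geometric freedom is the coefficient $\lambda$ of the Green's function, and the Delaunay ``neck size and phase'' deformations do not come close to spanning the Cauchy-data mismatch. Concretely, matching the four Navier components on the modes $e_0,\dots,e_n$ gives $4(n+1)$ scalar equations, while the geometric parameters ($R$ through $b$, the translation $a\in\mathbb{R}^n$, and $\lambda$) supply only $n+2$ unknowns; the paper's resolution is to promote the low Fourier modes of the boundary data themselves (the $\xi_0,\xi_2,\tau_j,\zeta_j,\varrho_j$ fed into the interior and exterior Poisson operators) to unknowns, solving \eqref{gs3} and \eqref{gs4}, after the high modes are matched via the Navier-to-Neumann isomorphism of Corollary \ref{cor002}. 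Even then the system is only solvable because of two auxiliary functions your proposal does not have: $\Upsilon$ in \eqref{eq0008}, which cancels the $|x|^2$ term of the Delaunay expansion so the constant-mode system is invertible, and $w_{\varepsilon,R}$ solving \eqref{eq068} (using the orthogonality of Lemma \ref{lem008}), which is what makes the metric error admissible for $n\ge 10$ under the Weyl-vanishing hypothesis. Without a concrete mechanism of this kind, the matching step --- and hence the construction of $\tilde g_\varepsilon$ --- remains unproved.
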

	
In contrast with Theorem~\ref{Hyder-Sire}, the assumption that the scalar curvature is nonnegative is not necessary.
This set of hypotheses allows one to recover a maximum principle for the Paneitz operator, and it was first introduced by M. Gursky and A. Malchiodi \cite{MR3420504}.
Our techniques are point-fixed based and do not rely on such a principle.
This condition was also used in \cite{arXiv:2101.11304} to prove compactness of solutions to \eqref{ourequation} in the spherical setting.
 
	The vanishing condition on the Weyl tensor also appears in the scalar curvature setting. 
	Indeed, in \cite{MR2639545}, a similar hypothesis was used to prove existence of solutions to the Yamabe problem 
	with finitely many singularities, which generalizes the results due to A. Byde \cite{MR2010322} 
	in the conformally flat setting, at least around the singularities (see also \cite{MR929283}).
	Both results were extended for the case of fully nonlinear equations and strongly coupled systems \cite{MR3663325,arXiv:2009.01787}.
	The vanishing condition is also one of the essential pieces in the program proposed 
	by R. M. Schoen to establish compactness results for  the Yamabe 
	equation \cite{MR1144528,MR2477893}, and it comes naturally from the Weyl vanishing 
	conjecture \cite{MR1144528}; this was proved to be
	true for $3\leq n\leq 24$  \cite{MR2197144,MR2164927,MR2309836,MR2477893}, and disproved  otherwise \cite{MR2551136}. 
	
	The study of compactness for solutions to the $Q$-curvature equation is still under development.
	Recently, G. Li \cite{MR4028770} obtained compactness of solutions for $5\leq n\leq 7$ assuming the conditions in \cite{MR3420504} and without any hypothesis on the Weyl tensor.
	Independently, this was also proved by
	Y. Y. Li and J. Xiong \cite{MR3899029} for $5\leq n\leq 9$  assuming the Weyl tensor to vanish at singular points of a sequence of blowing-up solutions and the weaker hypothesis in \cite{MR3518237}.
    In addition, since the Weyl tensor and its covariant derivatives appear in the expansion of the 
	Green function for Paneitz operator, they observed that for $n\geq 8$ its vanishing at the singular points is required.
	In our situation, a similar phenomenon happens.
	When $5\leq n\leq 7$, we do not need any assumption on the Weyl
	tensor in Theorem \ref{maintheorem1}.
	Nevertheless, for $n\geq8$ 
	we assume that it vanishes up to order $\left[{(n-8)}/{2}\right]$ at singular points. 
	This order comes up naturally in our method but is not known to be  
	optimal (see Remark \ref{remark003}).
	
	We expect that the set of nondegenerate metrics is Baire generic with respect to the 
	Gromov-Hausdorff topology, similarly to what Beig, Chru\'sciel and Schoen \cite{BCS}
	proved in the scalar curvature setting. 
	
	We construct the metrics in Theorem \ref{maintheorem1} by gluing together 
	known examples of constant $Q$-curvature metrics, a technique that 
	usually requires a nondegeneracy assumption. 
	For example, Y.-J. Lin \cite{MR3333110} used a similar definition of nondegeneracy to perform a connected sum construction of manifolds with constant $Q$-curvature. The most important example of a degenerate manifold is the standard 
	round sphere $(\mathbb S^n,g_0)$.
	To see this, notice that the linearized operator \eqref{linearization} is given by
	$$L_{g_0}=\left(\Delta_{g_0}-\frac{n^2-4}{2}\right)\left(\Delta_{g_0}+n\right),$$
	which annihilates the restrictions of linear functions on $\mathbb R^{n+1}$ to $\mathbb S^n$. 
	
	As mentioned above, there exists no solution to the singular $Q$-curvature problem in the round sphere with a 
	one-point singularity. 
	In contrast with the conformally flat case,  for a generic choice of the background metric, we 
	can find solutions to \eqref{ourequation} in a closed manifold with one point removed.

To provide an example of a nondegenerate metric, consider the round sphere $(\mathbb S^m(k),g_k)$ with sectional 
curvature $k={(n-1)}{(m-1)}$ and $n:=2m\geq 5$. This implies that the spectrum of the Laplacian is given 
by $\operatorname{Spec}(\Delta_{g_k})=\{i(m+i-1)k:\;i=0,1,\ldots\}$.  With this normalization, the product metric $g=g_k+g_k$ in 
 $\mathbb S^m(k)\times \mathbb S^m(k)$ satisfies ${\rm Ric}_{g}=(n-1)g$, $R_g=n(n-1)$ and $Q_g={n(n^2-4)}/{8}$, and 
$L_{g}=(\Delta_{g}-\frac{n^2-4}
{2})(\Delta_{g}+n)$,
where $\Delta_g=\Delta_{g_k}+\Delta_{g_k}$. 
Hence, it is not difficult to show 
\[\operatorname{Spec}(\Delta_g)=\left\{\frac{n-1}{m-1}\left(i(i+m-1)+j(j+m-1)\right):\;i,j=0,1,\ldots\right\}.\] 
This implies that $n=2m\not\in\mbox{Spec}(\Delta_g)$. Therefore, $(\mathbb S^3(k)\times \mathbb S^3(k),g)$ 
is nondegenerate, and our main theorem applies. 
Notice that in this dimension we do not require any conditions on the 
Weyl tensor, and that this manifold is not locally conformally flat.

    We employ a standard gluing strategy based on mapping properties of the 
    linearized operator around an approximate solution in proving the existence of 
    solutions to \eqref{ourequation}. This strategy typically has three parts: interior analysis, exterior analysis, and gluing procedure. 
	First, we fix a ball of sufficiently small radius around each isolated singularity. 
	The interior analysis then consists of constructing a solution inside this ball being uniformly bounded and satisfying proper estimates on its norm.
	To this end, the hypothesis on vanishing the Weyl tensor up to some orders is necessary. 
	Second, in the exterior analysis, we use the nondegeneracy condition to study \eqref{ourequation} on 
	the complement of this ball, proving that the solution operator is also uniformly bounded.
	Finally, we prove the existence of an isomorphism, which we call the Navier-to-Neumann operator, 
	and use it to match the interior and exterior solutions up to third order on the boundary. This 
	is accomplished in parts, projecting the equation on its low and high Fourier modes.
	
	We encounter several difficulties in carrying out the strategy outlined in the previous 
	paragraph. 
	First, one needs to control the convergence rate of the Paneitz operator to the flat 
	bi-Laplacian in conformal normal coordinates. In dimension $n=5,6,7,$ this 
	convergence comes naturally from local expansions in conformal normal coordinates, 
	but in dimension $n \geq 8,$ we only obtain reasonable convergence rates after  
	constructing an auxiliary function. 
	Gluing the interior and exterior solutions on the boundary  up to third order is also
	problematic because of the lack of geometric Jacobi fields on the kernel of the linearized operator 
	in the low-frequency mode, making the system involving the gluing variables under-determined. 
	To circumvent this problem, we construct auxiliary functions and insert the missing 
	variables into the problem, making it solvable.
	
	For the sake of clarity, we provide an intuitive picture of our method. Figure \ref{fig:orig_summands}  
	shows summands for our gluing construction before any modifications, namely the manifold 
	minus a small geodesic ball and a punctured ball with the flat metric. 
	
	\begin{figure}[ht]
\pdfinclusioncopyfonts=1
\centering
\def\svgwidth{10.5cm}
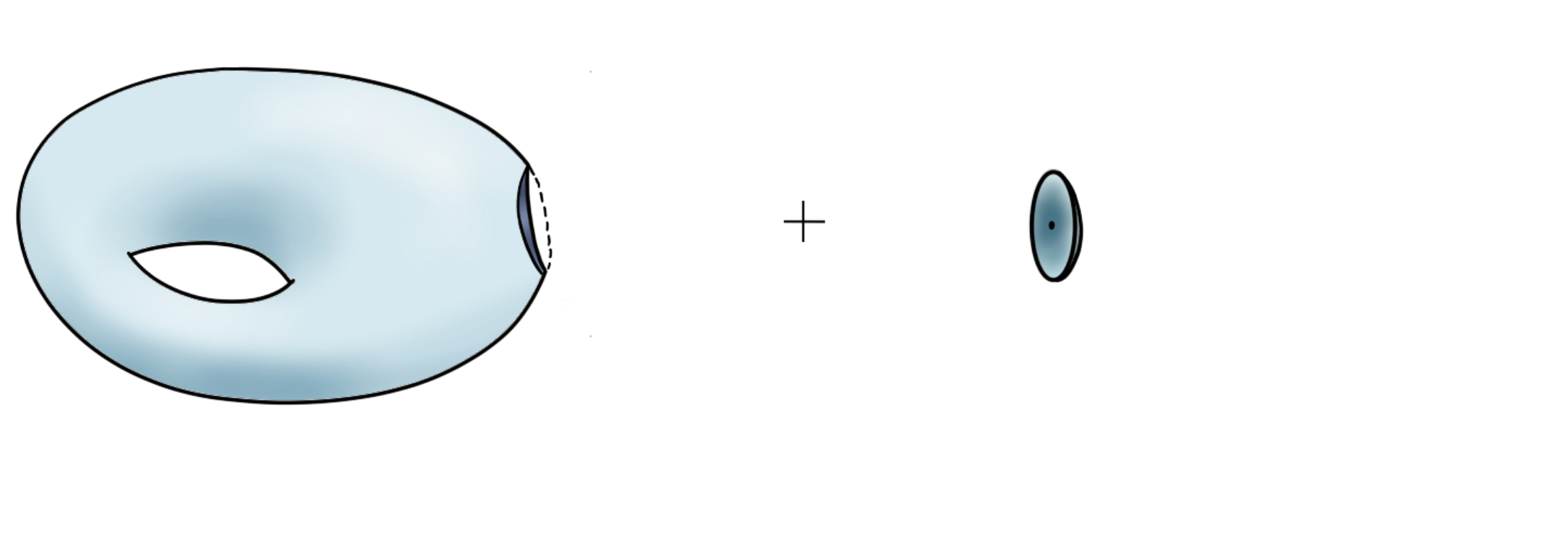
\caption{This figure shows the summands in their original states.}
\label{fig:orig_summands}
\end{figure}

    Figure \ref{fig:modified_summands} shows the summands after we have conformally deformed 
    each of them. We use the Green's function of the flat bi-Laplacian 
    on the left and the Delaunay metric on the right.

\begin{figure}[h]
    \centering
    \includegraphics[width=0.7\textwidth]{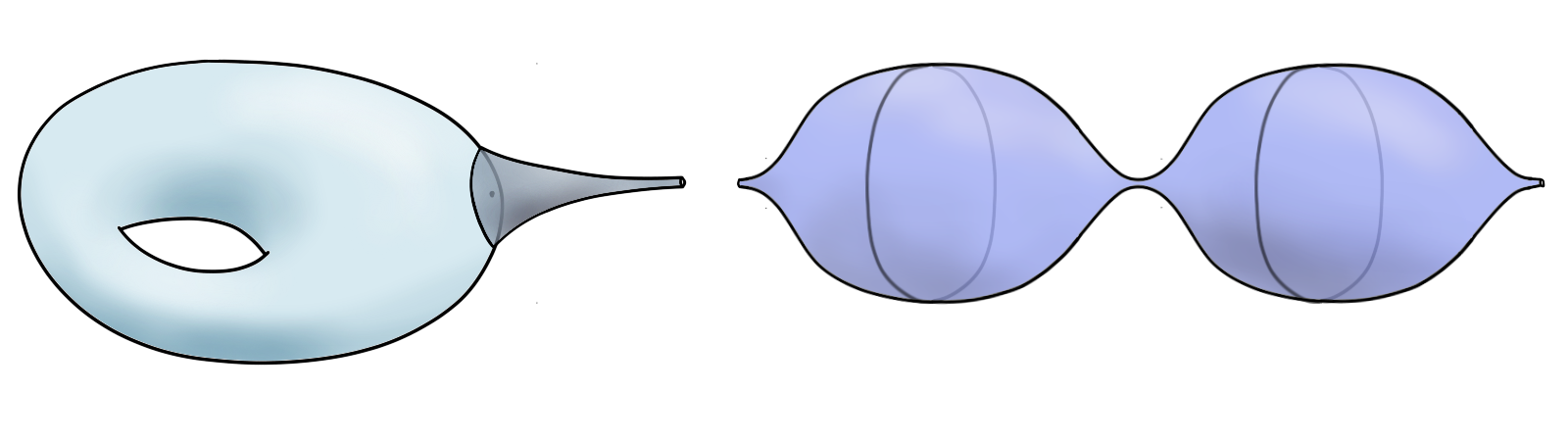}
    \caption{This figure shows the summands after conformal 
    modification. }
    \label{fig:modified_summands}
\end{figure}

Now we explain the summands in our gluing procedure more precisely.
The first summand is the underlying nondegenerate manifold with a point removed $M \backslash B_r(p)$, 
where $p\in \Lambda$ satisfies the Weyl vanishing hypotheses $\mathcal{W}_\Lambda^d$. 
The second summand is one end of a Delaunay metric. The Delaunay metric are all 
possible constant $Q$-curvature metrics on the cylinder $\R \times \Ss^{n-1}$ (or, equivalently, on 
$\Ss^n \backslash \{ p_1, p_2\}$) and we describe them in detail in Section \ref{sec:delaunaysolutions}. 
Most importantly, one can characterize a Delaunay metric by choosing a necksize parameter 
$\varepsilon$ and when this necksize is small the conformal factor of 
the Delaunay metric is close to the Green's
function for the flat bi-Laplacian. Thus, after conformally modifying the given 
metric $g$ on $M$ by a multiple of the Green's function whose pole is the gluing 
base point $p$, we see that the two summands are geometrically sufficiently close, making it 
possible for us to glue them together. 
After describing the Delaunay metrics, we introduce
some appropriate function spaces in Section \ref{sec:functionspaces}. 
Next, we 
construct and study our model operators in Section \ref{sec:poissonoperator}. Then, 
we construct our solution operators for the geometric problem as perturbations 
of these model operators. In this same section, we also prove that the associated 
Navier-to-Neumann operator is an isomorphism. In Section~\ref{sec:interioranalysis} we 
complete our interior analysis, using a fixed-point argument and the hypothesis on the 
Weyl tensor to obtain a solution to \eqref{ourequation} in the interior of a ball of small radius 
around a singular point with fine estimates. In Section~\ref{sec:exterioranalysis} we use 
the nondegeneracy hypothesis to find a solution to \eqref{ourequation} in the exterior 
of this ball.
In Section~\ref{sec:onegluingprocedure} we use the 
Navier-to-Neumann operator to match the interior and exterior solutions on the boundary 
of the small ball around a unique singular point. At the beginning of 
this section we explain why matching the interior and exterior solutions 
to third order across their common boundaries suffices to give 
us a smooth, global solution. 
In Section~\ref{sec:mainresults}, we discuss the necessary modifications 
to extend the gluing construction of the last section for the case of 
multiple points; this yields the proof of Theorem~\ref{maintheorem1}.

{\sc Acknowledgements:} We would like to thank the referees for 
a close read of and valuable comments regarding an earlier version of this 
manuscript. Additionally the first four authors would like to thank the last 
author for performing above and beyond the call of duty. 

	\section{Delaunay-type solutions}\label{sec:delaunaysolutions}
	We introduce the (logarithmic) cylindrical coordinates.
	We present the Delaunay-type solutions together with a classification result and some estimates.
    Finally, we consider deformations of the Delaunay-type solutions. These solutions will be used as approximating solutions to the interior problem.
    In what follows we will use dots to denote derivatives with respect to $t$.
    
	If $g=u^{\frac{4}{n-4}}\delta$ is a complete metric in $ (\mathbb R^n\backslash\{0\},\delta)$ 
	with constant $Q$-curvature $ Q_g=n(n^2-4)/8$, where $\delta$ is the Euclidean metric 
	in $\mathbb R^n$, then the function $u$ satisfies the following equation
	\begin{equation}\label{eq002}
	\Delta^2u=\frac{n(n^2-4)(n-4)}{16}u^{\frac{n+4}{n-4}}.
	\end{equation}
	It was proved by C.-S. Lin \cite{Lin98} that solutions to  \eqref{eq002} in $\mathbb R^{n}\backslash \{0\}$ with a nonremovable singularity at the origin are radially symmetric.
	If we consider the conformal diffeomorphism
	\begin{equation}\label{eq022}
	\Phi:(\mathbb R \times \mathbb S^{n-1},\, g_{\rm cyl}:=dt^{2} + d\theta^2)\rightarrow (\mathbb R^n\backslash\{0\}, \delta),
	\end{equation}
	given by $\Phi(t,\theta)=e^{-t}\theta$, we find $\Phi^*\delta=e^{-2t}g_{\rm cyl}$. Also, if $g=u^{\frac{4}{n-4}}\delta$, then $\Phi^*g=v^{\frac{4}{n-4}}g_{\rm cyl}$, where $$v(t)=e^{\frac{4-n}{2}t}u(e^{-t}\theta)=|x|^{\frac{n-4}{2}}u(x).$$
	
	In this logarithm cylindrical coordinates (also known as Endem-Fowler coordinates), 
	\eqref{eq002} is equivalent to 
	$$P_{\rm cyl} v = \frac{n(n-4)(n^2-4)}{16} v^{\frac{n+4}{n-4}},$$
	where 
	\begin{eqnarray*} 
	P_{\rm cyl} & = & \partial_t^4 + \Delta_{\mathbb{S}^{n-1}}^2 + 2 
	\Delta_{\mathbb{S}^{n-1}} \partial_t^2 - \frac{n^2-4n+8}{2} \partial_t^2 - \frac{n(n-4)}{2} \Delta_{\mathbb{S}^{n-1}} + \frac{n^2(n-4)^2}{16}. 
	\end{eqnarray*} 
	Restricting to radial functions reduces the last PDE to the ODE below
	\begin{equation}\label{eq001}
	\ddddot v-\frac{n^2-4n+8}{2}\ddot v+\frac{n^2(n-4)^2}{16}v-\frac{n(n-4)(n^2-4)}{16}v^{\frac{n+4}{n-4}}=0.
	\end{equation}
Following the notation of \cite{MR3869387}, we write this ODE as
	$$\ddddot v-A\ddot v=f(v),$$
	 with $A^2=4B+4(n-2)^2>4B$ and
	\begin{equation}\label{eq003}
	f(v)=Cv^{\frac{n+4}{n-4}}-Bv=\frac{n(n-4)}{16}v\left((n^2-4)v^{\frac{8}{n-4}}-n(n-4)\right),
	\end{equation}	
	where
\begin{equation}\label{eq019}
    A=\frac{n(n-4)+8}{2},\quad B=\frac{n^2(n-4)^2}{16}\quad\mbox{ and }\quad C=\frac{n(n-4)(n^2-4)}{16}.
\end{equation}

	\subsection{Classification results and some estimates}
	Now we will present some recent results concerning the classification of solutions to \eqref{eq001}.
	Notice that one can find a first integral for this ODE defined as
	\begin{equation}\label{eq004}
	H(x,y,z,w)=-xz+\frac{1}{2}y^2+\frac{n^2-4n+8}{4}z^2-\frac{n^2(n-4)^2}{32}w^2+\frac{(n-4)^2(n^2-4)}{32}w^{\frac{2n}{n-4}}.
	\end{equation}
    This Hamiltonian energy is constant along solutions to  \eqref{eq001}, {$H_\varepsilon=H(\dddot v_\varepsilon,\ddot v_\varepsilon,\dot v_\varepsilon,v_\varepsilon)=$ constant}. Let 
	$$v_{\rm sph}(t)=(\cosh t)^{\frac{4-n}{2}}\;\;\;{\rm and}\;\;\;v_{\rm cyl}=\left(\frac{n(n-4)}{n^2-4}\right)^{\frac{n-4}{8}}$$
	be the spherical and cylindrical solutions, whose energies are 
	\begin{equation*}
	   H_{\rm sph} = 0 \quad {\rm and} \quad H_{\rm cyl}=-\frac{(n-4)(n^2-4)}{8}\left(\frac{n(n-4)}{n^2-4}\right)^{\frac{n}{4}}<0,
	\end{equation*}
	respectively. {It was observed in \cite{arxiv.2001.07984} (see also \cite{arXiv:2002.05939}) that $H_\varepsilon<0$ and it is strictly decreasing function of $\varepsilon<0$.}

	In a recent paper, R. Frank and T. K\"{o}nig \cite{MR3869387} provided a full characterization to the solutions to  the ODE \eqref{eq001}, as stated below
	
	\begin{theoremletter}[\cite{MR3869387}]\label{Prop001}
		Let $v\in C^4(\mathbb R)$ be a solution to (\ref{eq001}). Then $\inf |v|\leq v_{\rm cyl},$ 
		with equality if and only if $v$ is a non-zero constant. Moreover, one of the following three alternatives holds:
		\begin{enumerate}
			\item[(a)] $v\equiv\pm v_{\rm cyl}$ or $v\equiv 0$.
			\item[(b)] $v(t)=\cosh^{\frac{4-n}{2}}(t-T)$ for some $T\in\mathbb R$.
			\item[(c)] Let $\varepsilon\in(0,v_{\rm cyl})$. Then there is a unique 
			(up to translations) bounded solution  $v_\varepsilon\in C^4(\mathbb R)$ 
			of (\ref{eq001}) with minimal value $\varepsilon$.  This solution is periodic, 
			has a unique local maximum and minimum per period and is symmetric 
			with respect to its local extrema.
		\end{enumerate}
	\end{theoremletter}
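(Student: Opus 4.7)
My strategy is to exploit the conserved Hamiltonian \eqref{eq004} together with elementary ODE arguments, stratifying the family of solutions by the value of $H$. As a preliminary step I would verify by direct differentiation that $H$ is a first integral of \eqref{eq001}, and then identify the only constant solutions $v\equiv 0,\pm v_{\rm cyl}$, with respective energies $H_{\rm sph}=0$ and $H_{\rm cyl}<0$. This already recovers part $(a)$ and fixes the admissible range of energies.

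To prove the bound $\inf|v|\leq v_{\rm cyl}$, I would argue by contradiction, assuming without loss of generality that $v\geq v_{\rm cyl}+\delta$ for some $\delta>0$. A direct computation from \eqref{eq003} shows that $f$ is strictly increasing on $[v_{\rm cyl},\infty)$, so $f(v)\geq c>0$ uniformly. Boundedness of $v$ combined with a standard linear-ODE comparison applied to $\ddot v$ (viewed as a solution of $\ddot u-Au=f(v)$) forces $\dot v,\ddot v,\dddot v$ to be bounded on $\mathbb{R}$ as well. Integrating \eqref{eq001} on $[-T,T]$ then yields
\[
2cT \;\le\; \int_{-T}^{T} f(v)\,\ud t \;=\; \bigl[\,\dddot v - A\dot v\,\bigr]_{-T}^{T},
\]
where the right-hand side is uniformly bounded in $T$; a contradiction. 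For the equality case, suppose $v\geq v_{\rm cyl}$ with $v(t_{0})=v_{\rm cyl}$ at some $t_{0}$. Then $\dot v(t_{0})=0$ since $t_{0}$ is a minimum, evaluating \eqref{eq004} at $t_{0}$ forces $\ddot v(t_{0})=0$, and the local inequality $v\geq v_{\rm cyl}$ near $t_{0}$ forces $\dddot v(t_{0})=0$ as well (otherwise the cubic term in the Taylor expansion of $v-v_{\rm cyl}$ would change sign). Uniqueness of the Cauchy problem for \eqref{eq001} then yields $v\equiv v_{\rm cyl}$.

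For the trichotomy, I would split according to the common level $H_{0}$ of the solution. When $H_{0}=0$ and $v$ is non-constant, the orbit must be homoclinic to $v=0$ along its one-dimensional stable manifold; a direct check shows that the explicit solution $v_{\rm sph}(t)=\cosh^{(4-n)/2}(t-T)$ lies on $\{H=0\}$, and uniqueness along the stable manifold yields part $(b)$. When $H_{\rm cyl}<H_{0}<0$, conservation of $H$ traps the orbit in a compact region of phase space where $v$ oscillates between two turning points. The time-reversal symmetry $t\mapsto -t$ of $H$, combined with the fact that at any critical point $t_{0}$ of $v$ both $\dot v(t_{0})=0$ and (by one further differentiation of \eqref{eq001} together with the Hamiltonian identity) $\dddot v(t_{0})=0$, forces reflection symmetry of the orbit about $t_{0}$ by uniqueness of the initial-value problem; this gives periodicity with exactly one local max and min per period. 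The existence and uniqueness up to translation of the Delaunay solution $v_{\varepsilon}$ with $\min v_{\varepsilon}=\varepsilon$ then follow from a shooting argument combined with the strict monotonicity of $H_{\varepsilon}$ in $\varepsilon$ observed in \cite{arxiv.2001.07984}.

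The principal obstacle will be the uniqueness and periodicity assertion of case $(c)$: the phase space is four-dimensional and $v_{\rm cyl}$ is a saddle-center, so in principle bounded orbits confined to a compact level set of $H$ could be quasi-periodic. Ruling this out is precisely where the saddle-center normal-form analysis near $v_{\rm cyl}$ and the strict monotonicity of $H_{\varepsilon}$ play a decisive role, matching the detailed shooting argument carried out in \cite{MR3869387}.
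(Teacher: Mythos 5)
The statement you are proving is not proved in the paper at all: it is imported verbatim as a black-box classification result of Frank and K\"onig \cite{MR3869387}, so your sketch has to stand on its own, and as written it does not. Its load-bearing steps are unjustified or false. In a four-dimensional phase space, conservation of the single first integral \eqref{eq004} does not trap an orbit in a compact region: the level sets of $H$ are three-dimensional and noncompact (the term $-\dddot v\,\dot v$ can be arbitrarily large with either sign), so ``$v$ oscillates between two turning points'' does not follow from $H_{\rm cyl}<H_{0}<0$; this is exactly the quasi-periodicity problem you acknowledge in your last paragraph and then delegate to the shooting analysis of \cite{MR3869387}, i.e.\ the crux of case (c) is cited rather than proved. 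The equality case is also broken: the infimum $\inf|v|=v_{\rm cyl}$ need not be attained, and even when $v(t_{0})=v_{\rm cyl}$ with $\dot v(t_{0})=0$, evaluating \eqref{eq004} only gives $H=\tfrac12\ddot v(t_{0})^{2}+H_{\rm cyl}$, which forces $\ddot v(t_{0})=0$ only if one already knows the orbit lies on the level $H=H_{\rm cyl}$ --- and nothing in your argument supplies that.

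Likewise, the claim that $\dddot v(t_{0})=0$ at every critical point ``by one further differentiation of \eqref{eq001} together with the Hamiltonian identity'' is not correct: when $\dot v(t_{0})=0$ the Hamiltonian contains no information about $\dddot v(t_{0})$ (every occurrence of $\dddot v$ in \eqref{eq004} is multiplied by $\dot v$), and differentiating the equation merely gives $v^{(5)}(t_{0})=A\dddot v(t_{0})$. That vanishing is equivalent, via uniqueness for the Cauchy problem, to the reflection symmetry about critical points, which is Corollary 5 of \cite{MR3869387} --- precisely the nontrivial input the present paper itself imports when proving Lemma \ref{lem002} --- so invoking it here is circular. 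In the first part you also use boundedness of $v$ (to bound $\dot v,\ddot v,\dddot v$ and hence the boundary terms $\dddot v-A\dot v$), but boundedness is not a hypothesis of the statement; and in case (b) the stable manifold of the origin is two-dimensional, since the linearized equation has characteristic roots $\pm\tfrac{n-4}{2}$ and $\pm\tfrac{n}{2}$, so uniqueness of the homoclinic orbit does not come for free either. In short, you have correctly identified the energy $H$ and the stratification by its level, which is indeed how the literature organizes the problem, but the equality case, the symmetry at critical points, the confinement/periodicity step, and the existence and uniqueness of $v_{\varepsilon}$ --- essentially all of the content of the theorem --- are missing or deferred to the very reference whose result is being established.
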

	
	We call $v_\varepsilon$ the \emph{Delaunay-type solution} 
	with necksize $\varepsilon$, and 
	denote its minimal period by $T_\varepsilon$. As $\varepsilon
	\rightarrow 0$ one has 
	$T_\varepsilon\rightarrow\infty$ and $v_\varepsilon(t+T_\varepsilon/2)
	\rightarrow \cosh^{\frac{4-n}{2}}(t)$ uniformly on compact sets. 
	Also, for all $\varepsilon\in(0,v_{\rm cyl})$, we have 
	that $0<v_\varepsilon<1$, 
	see \cite{arxiv.2001.07984} and \cite{arXiv:2003.03487}. We observe 
	that $\varepsilon=v_\varepsilon(0)
	=\min v_\varepsilon$, $\dot v_\varepsilon(0)=0$ and $\ddot v_\varepsilon(0)
	\geq 0$. We write the 
	corresponding solution to (\ref{eq002}) as $u_\varepsilon(x)=
	|x|^{\frac{4-n}{2}} v_\varepsilon(-\log|x|)$.
	
    Let then $\lambda$, $\mu$ and $\omega$ be real numbers such that 	$\lambda+\mu=A$, $\lambda\mu=\omega$ 
    and $A^2\geq 4\omega$.  We see that $\lambda$ and $\mu$ 	satisfy $\lambda^2-A\lambda+\omega=0$. Thus
	\begin{equation}\label{eq005}
	\lambda=\frac{A}{2}-\frac{1}{2}\sqrt{A^2-4\omega}\;\;\;\;\;\;{\rm and }\;\;\;\;\;\;\mu=\frac{A}{2}+\frac{1}{2}\sqrt{A^2-4\omega},
	\end{equation}
	implying that the equation (\ref{eq001}) is equivalent to
	\begin{equation}\label{eq027}
	\left\{\begin{array}{l}
	\ddot v-\lambda v=w\\
	\ddot w-\mu w=f(v)+\omega v,
	\end{array}\right.    
	\end{equation}
		where $f$ is defined in (\ref{eq003}). We can choose $\omega$ such that 
	\begin{equation*}
	B-\frac{n(n+4)(n^2-4)}{16}\varepsilon^{\frac{8}{n-4}}<\omega<\frac{A^2}{4}
	\end{equation*}
	and this implies that $f'(v)+\omega>0$.
	
	The proof of the next lemma is similar of \cite[Lemma 5]{BOUWEVANDENBERG} and we 
	include its proof for completeness.
	
	\begin{lemma}\label{lem002}
		Let $\lambda\leq \gamma\leq \mu$. If $v$ is a positive solution to (\ref{eq001}), then
		$$sign(\dddot v(t)-\gamma \dot v(t))=-sign(\dot v(t))$$
		for all $t\in \mathbb R$, where $sign (0)=0$.
	\end{lemma}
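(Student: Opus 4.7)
The plan is to reduce the claim to a scalar second-order differential inequality amenable to the maximum principle, and to cover the range of $\gamma$ by interpolating from the endpoint $\gamma = \lambda$. I set $\psi_\gamma(t) := \dddot v(t) - \gamma \dot v(t) = (\partial_t^2 - \gamma)\dot v(t)$, so the goal becomes $\operatorname{sign}(\psi_\gamma) = -\operatorname{sign}(\dot v)$ on $\mathbb{R}$. First I would prove the result at $\gamma = \lambda$, and then use the algebraic identity $\psi_\gamma = \psi_\lambda - (\gamma - \lambda)\dot v$ to propagate to the full range $\lambda \le \gamma \le \mu$.

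For the endpoint $\gamma = \lambda$, I differentiate the factored form $(\partial_t^2 - \lambda)(\partial_t^2 - \mu) v = f(v) + \omega v$ of the ODE once in $t$ and use commutativity of the two constant-coefficient factors to arrive at
\begin{equation*}
(\partial_t^2 - \mu)\psi_\lambda = (f'(v)+\omega)\,\dot v.
\end{equation*}
The choice of $\omega$ made in the discussion just after \eqref{eq027} ensures $f'(v) + \omega > 0$ along the solution, so the right-hand side inherits the sign of $\dot v$. By Theorem~\ref{Prop001}, any positive solution of \eqref{eq001} is either the constant $v_{\rm cyl}$ (for which the claim is vacuous), the spherical bubble $\cosh^{(4-n)/2}(t-T)$, or a Delaunay solution $v_\varepsilon$; in the two non-trivial cases $v$ is symmetric about each of its critical points $t_0$, which forces $\dddot v(t_0) = 0$ and hence $\psi_\lambda(t_0) = 0$, with analogous decay as $|t| \to \infty$ in the spherical case.

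On each maximal interval $I$ on which $\dot v$ has fixed sign, $\psi_\lambda$ vanishes on $\partial I$ (or at infinity), while $(\partial_t^2 - \mu)\psi_\lambda$ carries the sign of $\dot v$ in $I^\circ$. Since $\mu > 0$, the weak maximum principle for $\partial_t^2 - \mu$ forces $\psi_\lambda$ to have the opposite sign to $\dot v$ on $I$, and the strong maximum principle upgrades this to strict inequality in $I^\circ$. For arbitrary $\gamma \in [\lambda,\mu]$ the identity $\psi_\gamma = \psi_\lambda - (\gamma - \lambda)\dot v$ then presents $\psi_\gamma$ as a sum of two terms, each having sign opposite to $\dot v$ (the second because $\gamma - \lambda \geq 0$); at critical points of $v$ both summands vanish, so $\psi_\gamma$ does too. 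The main subtle point in this outline is securing the pointwise positivity $f'(v) + \omega > 0$, which is precisely the condition built into the authors' earlier choice of $\omega$; once that is granted, the remainder is a standard maximum principle argument.
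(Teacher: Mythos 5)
Your proposal is correct, and it rests on the same two pillars as the paper's own proof: the strict positivity $f'(v)+\omega>0$ secured by the choice of $\omega$ after \eqref{eq027}, and the vanishing of $\dddot v$ at critical points of $v$ (by the symmetry of $v$ about such points), which supplies zero boundary data for a maximum-principle argument on each interval where $\dot v$ has a fixed sign. Where you genuinely differ is in how the parameter $\gamma$ is handled. The paper works with $y=\dddot v-\gamma\dot v$ directly for the given $\gamma$ and computes, via the system \eqref{eq027}, the identity $\ddot y-(A-\gamma)y=\dot v\bigl(f'(v)+\omega+(\gamma-\lambda)(\mu-\gamma)\bigr)$, whose source is positive on intervals where $\dot v>0$ precisely because $\lambda\le\gamma\le\mu$; the strong maximum principle for $\partial_t^2-(A-\gamma)$ then yields $y<0$ between two consecutive critical points $t_1<t_0<t_2$. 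You instead prove only the endpoint case $\gamma=\lambda$, where $(\partial_t^2-\mu)\psi_\lambda=(f'(v)+\omega)\dot v$ falls out of differentiating the factored equation, and pass to general $\gamma$ through the algebraic decomposition $\psi_\gamma=\psi_\lambda-(\gamma-\lambda)\dot v$. Your route buys a cleaner computation, makes the upper restriction $\gamma\le\mu$ unnecessary (the interpolation only uses $\gamma\ge\lambda$), and, by allowing half-infinite monotonicity intervals with decay at infinity, explicitly covers the spherical solution, for which two finite critical points flanking $t_0$ are not available and the paper's formulation needs the same small supplement; the paper's route buys a single uniform computation for all admissible $\gamma$ and leans only on the symmetry statement (Corollary 5 of Frank--K\"onig) rather than the full classification, which is in any case the same symmetry fact you invoke at critical points. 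Both arguments share the caveat that $f'(v)+\omega>0$ is tied to the lower bound $v\ge\varepsilon$, i.e.\ to the Delaunay solutions to which the lemma is actually applied.
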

	\begin{proof}
		Let $t_0\in\mathbb R$. If $\dot v(t_0)=0$, {it follows by \cite[Corollary 5]{MR3869387} 
		that $v(t_0+t)=v(t_0-t)$ for all $t\in\mathbb R$. Thus} $\dddot v(t_0)=0$ the result follows. Then 
		assume that $\dot v(t_0)>0$. In this case, since $v$ is bounded there exist real numbers $t_1$ and 
		$t_2$ such that $\dot v(t_1)=\dot v(t_2)=0$ and $\dot v(t)>0$ for all $t\in(t_1,t_2)$.  Again we 
		have $\dddot v(t_1)=\dddot v(t_2)=0$. Define $y=\dddot v-\gamma \dot v$. Thus, using \eqref{eq027} we get
		$$\begin{array}{rcl}
		\dot y & = & \ddddot v-\gamma \ddot v=\ddddot v-\lambda \ddot v+(\lambda-\gamma)\ddot v=\ddot w+(\lambda-\gamma)\ddot v\\
		& = & \mu w+f(v)+\omega v+(\lambda-\gamma)\ddot v
		\end{array}$$
				and
	$$\begin{array}{rcl}
		\ddot y & = &\mu \dot w+\dot v(f'(v)+\omega)+(\lambda-\gamma)\dddot v\\
		& = & \mu(\dddot v-\lambda \dot v)+\dot v(f'(v)+\omega)+(\lambda-\gamma)\dddot v\\
		& = & \mu(y+\gamma \dot v-\lambda \dot v)+\dot v(f'(v)+\omega)+(\lambda-\gamma)(y+\gamma \dot v)\\
		& = & (A-\gamma)y+\dot v(f'(v)+\omega+ (\gamma-\lambda)(\mu-\gamma)).
	\end{array}$$
			This implies that
		$$\left\{\begin{array}{l}
		\ddot y-(A-\gamma) y=\dot v(f'(v)+\omega+(\gamma-\lambda)(\mu-\gamma))>0\\
		y(t_1)=y(t_2)=0.
		\end{array}\right.$$
		Therefore, by the strong maximum principle, $y<0$ in $(t_1,t_2)$. In particular, $y(t_0)<0$.
		
		If $\dot v(t_0)<0$, the proof is similar. 
	\end{proof}

	Using the Lemma \ref{lem002} and that the energy \eqref{eq004} is negative along the Delaunay solution we get that 
	\begin{equation}\label{eq029}
	    \left(\frac{A}{2}-\lambda\right)\dot v_\varepsilon^2+\frac{1}{2}\ddot v_\varepsilon^2\leq \dot v_\varepsilon(\dddot v_\varepsilon-\lambda\dot v_\varepsilon)+\frac{B}{2}v_\varepsilon^2-\frac{n-4}{2n}Cv_\varepsilon^{\frac{2n}{n-4}}< \frac{B}{2}v_\varepsilon^2,
	\end{equation}
	with $A/2-\lambda>0$, see \eqref{eq005}.
	
\begin{proposition}\label{Prop002}
For any $\varepsilon\in(0,v_{\rm cyl})$ and for all $t\geq 0$ the Delaunay-type solution $v_\varepsilon$ satisfies the estimates
\begin{align*}
    \left|v_\varepsilon(t)-\alpha_\varepsilon\cosh\left(\frac{n-4}{2}t\right)-\beta_\varepsilon \cosh\left(\frac{n}{2}t\right)\right|\leq & ~  c_n\varepsilon^{\frac{n+4}{n-4}}e^{\frac{n+4}{2}t},\\
    \left|\dot v_\varepsilon(t)-\frac{n-4}{2}\alpha_\varepsilon\sinh\left(\frac{n-4}{2}t\right)-\frac{n}{2}\beta_\varepsilon \sinh\left(\frac{n}{2}t\right)\right|\leq & ~  c_n\varepsilon^{\frac{n+4}{n-4}}e^{\frac{n+4}{2}t},\\
    \left|\ddot v_\varepsilon(t)-\left(\frac{n-4}{2}\right)^2\alpha_\varepsilon\cosh\left(\frac{n-4}{2}t\right)-\left(\frac{n}{2}\right)^2\beta_\varepsilon \cosh\left(\frac{n}{2}t\right)\right|\leq & ~  c_n\varepsilon^{\frac{n+4}{n-4}}e^{\frac{n+4}{2}t},\\
    \left|\dddot v_\varepsilon(t)-\left(\frac{n-4}{2}\right)^3\alpha_\varepsilon\sinh\left(\frac{n-4}{2}t\right)-\left(\frac{n}{2}\right)^3\beta_\varepsilon \sinh\left(\frac{n}{2}t\right)\right|\leq & ~  c_n\varepsilon^{\frac{n+4}{n-4}}e^{\frac{n+4}{2}t},\\
    \left|\ddddot v_\varepsilon(t)-\left(\frac{n-4}{2}\right)^4\alpha_\varepsilon\cosh\left(\frac{n-4}{2}t\right)-\left(\frac{n}{2}\right)^4\beta_\varepsilon \cosh\left(\frac{n}{2}t\right)\right|\leq & ~  c_n\varepsilon^{\frac{n+4}{n-4}}e^{\frac{n+4}{2}t},
\end{align*}
for all $t\geq 0$, where
\begin{align}
        \frac{n}{2(n-2)}\varepsilon<\alpha_\varepsilon &\displaystyle :=\frac{1}{2(n-2)}\left(\frac{n^2}{4}\varepsilon-\ddot v_\varepsilon(0)\right)<\frac{n}{4}\varepsilon\label{eq037}\\
         -\frac{n-4}{4}\varepsilon<\beta_\varepsilon&\displaystyle :=\frac{1}{2(n-2)}\left(\ddot v_\varepsilon(0)-\frac{(n-4)^2}{4}\varepsilon\right)<-\frac{(n-4)(n+2)}{8n}\varepsilon^{\frac{n+4}{n-4}}\label{eq0001},
\end{align}
	for some positive constants $C_n$ and  $c_n$  which depends only on $n$.
	\end{proposition}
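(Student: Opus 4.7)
My plan is to construct an elementary linear approximation of $v_\varepsilon$ that agrees with it through third order at $t=0$, and then control the remainder by Duhamel's principle for the linearization of \eqref{eq001}.

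First, observe that $\cosh((n-4)t/2)$ and $\cosh(nt/2)$ both lie in the kernel of $L:=\partial_t^4-A\partial_t^2+B$. I will set
\[
w(t) := \alpha_\varepsilon \cosh\bigl((n-4)t/2\bigr) + \beta_\varepsilon \cosh\bigl(nt/2\bigr),
\]
with $\alpha_\varepsilon,\beta_\varepsilon$ chosen so that $w(0)=\varepsilon$ and $\ddot w(0)=\ddot v_\varepsilon(0)$; solving the corresponding $2\times 2$ linear system, whose determinant equals $2(n-2)$, produces the formulas in the statement. Since $v_\varepsilon$ is even about its minimum (\cite[Corollary 5]{MR3869387}), one has $\dot v_\varepsilon(0)=\dddot v_\varepsilon(0)=0$, and $w$ is itself even, so the residual $r:=v_\varepsilon-w$ vanishes through third derivative at $t=0$ and solves $Lr = C v_\varepsilon^{(n+4)/(n-4)}$. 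Letting $G$ denote the Green's function of $L$ normalized by $G(0)=G'(0)=G''(0)=0$ and $G'''(0)=1$, a direct computation gives
\[
G(t) = \frac{1}{n-2}\left[\frac{\sinh(nt/2)}{n} - \frac{\sinh((n-4)t/2)}{n-4}\right],
\]
which is nonnegative on $[0,\infty)$ and satisfies $|G^{(k)}(t)|\le c_n e^{nt/2}$ for $k=0,\ldots,4$. Duhamel's formula and its differentiated versions then express $r,\dot r,\ddot r,\dddot r$ as integrals $\int_0^t G^{(k)}(t-s) v_\varepsilon(s)^{(n+4)/(n-4)}\,ds$ (the boundary terms vanish because $G^{(k)}(0)=0$ for $k\le 2$), while the identity $\ddddot r = A\ddot r - Br + Cv_\varepsilon^{(n+4)/(n-4)}$ follows by applying $L$ to $r$.

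Next I would estimate the Duhamel integral by a continuation argument. Assume the running hypothesis $v_\varepsilon(s)\le C_n \varepsilon\cosh((n-4)s/2)$ on $[0,T]$; substituting into the integrand and using the bound on $G$ gives
\[
|r(t)| \le c_n \varepsilon^{(n+4)/(n-4)}\, e^{nt/2}\!\int_0^t e^{2s}\,ds \le c_n'\,\varepsilon^{(n+4)/(n-4)}e^{(n+4)t/2}.
\]
Combined with $v_\varepsilon=w+r$ and the bounds on $\alpha_\varepsilon,\beta_\varepsilon$, this closes the bootstrap in the regime where the correction is of smaller order than the main term, which is precisely where the stated estimate is nontrivial. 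Beyond this regime the universal a priori bound $v_\varepsilon\le 1$ from \cite{arxiv.2001.07984,arXiv:2003.03487} makes the inequality automatic, since the right-hand side $c_n\varepsilon^{(n+4)/(n-4)}e^{(n+4)t/2}$ dominates $|v_\varepsilon|+|w|$ for $t$ sufficiently large. The same procedure applied to the differentiated Duhamel representations yields the stated estimates for $\dot v_\varepsilon$ through $\ddddot v_\varepsilon$.

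Finally, the bounds on $\alpha_\varepsilon$ and $\beta_\varepsilon$ combine the Hamiltonian energy identity with refined asymptotics near the minimum. Evaluating \eqref{eq029} at $t=0$ yields
\[
\ddot v_\varepsilon(0)^2 < \frac{n^2(n-4)^2}{16}\varepsilon^2 - \frac{(n-4)^2(n^2-4)}{16}\varepsilon^{2n/(n-4)};
\]
taking square roots and using $\sqrt{1-x}\le 1-x/2$ gives $\ddot v_\varepsilon(0) < n(n-4)\varepsilon/4$, which immediately produces the lower bounds on $\alpha_\varepsilon$ and on $\beta_\varepsilon$. The main obstacle is the sharp upper bound $\beta_\varepsilon < -(n-4)(n+2)\varepsilon^{(n+4)/(n-4)}/(8n)$, which is equivalent to the strict inequality $\ddot v_\varepsilon(0) < (n-4)^2\varepsilon/4 - (n-4)(n^2-4)\varepsilon^{(n+4)/(n-4)}/(4n)$. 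This cannot be extracted from $H_\varepsilon<0$ alone; I would obtain it via a matched-asymptotic analysis relating the inner profile of $v_\varepsilon$ near its minimum to the outer spherical bubble $\cosh^{(4-n)/2}$ (whose Hamiltonian vanishes), which pins down both the leading value $(n-4)^2\varepsilon/4$ of $\ddot v_\varepsilon(0)$ and the sign and size of the next-order correction. This matching step is the technical heart of the proposition.
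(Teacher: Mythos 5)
Your representation of $v_\varepsilon$ is essentially the paper's: the identity \eqref{eq007} there is exactly your Duhamel formula written as a nested quadruple integral (your kernel $G$ is what one gets by unwinding that factorization), your $\alpha_\varepsilon,\beta_\varepsilon$ are the same, and your estimate of the integral under a bound $v_\varepsilon(s)\lesssim\varepsilon e^{\frac{n-4}{2}s}$ reproduces the paper's computation, as does your use of \eqref{eq029} at $t=0$ for \eqref{eq037} and the lower bound in \eqref{eq0001}. The genuine gap is the sharp upper bound $\beta_\varepsilon<-\frac{(n-4)(n+2)}{8n}\varepsilon^{\frac{n+4}{n-4}}$, which you defer to an unexecuted matched-asymptotics analysis and label the technical heart. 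In fact it follows in one line from the tools you already set up: insert the trivial bound $v_\varepsilon\ge\varepsilon$ into the nonnegative integral term of \eqref{eq007} (equivalently, into your Duhamel integral, using $G\ge 0$). The resulting lower bound for $v_\varepsilon(t)$ has $\cosh(\tfrac n2 t)$-coefficient exactly $\beta_\varepsilon+\frac{(n-4)(n+2)}{8n}\varepsilon^{\frac{n+4}{n-4}}$, and since $0<v_\varepsilon<1$ for all $t$, the coefficient of this fastest-growing mode cannot be positive; this is precisely the missing inequality. A matched-asymptotics argument, besides not being carried out, is a small-$\varepsilon$, leading-order technique and would not deliver a strict inequality with this exact constant for every $\varepsilon\in(0,v_{\rm cyl})$.

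There is also a hole in your continuation argument for the main estimates. The bootstrap $v_\varepsilon(s)\le C_n\varepsilon\cosh(\tfrac{n-4}{2}s)$ closes only while $\varepsilon^{\frac{n+4}{n-4}}e^{\frac{n+4}{2}t}\lesssim\varepsilon e^{\frac{n-4}{2}t}$, i.e. for $t\lesssim\frac{2}{n-4}\log\frac{1}{\varepsilon}$, whereas your fallback (``the right-hand side dominates $|v_\varepsilon|+|w|$'') requires in particular $\varepsilon^{\frac{n+4}{n-4}}e^{\frac{n+4}{2}t}\gtrsim|\beta_\varepsilon|\cosh(\tfrac n2 t)$, which with the only available bound $|\beta_\varepsilon|\le\frac{n-4}{4}\varepsilon$ forces $t\gtrsim\frac{4}{n-4}\log\frac{1}{\varepsilon}$; the intermediate window is covered by neither argument. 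The paper avoids this by first asserting the global pointwise bound $0<v_\varepsilon(t)<\varepsilon\cosh(\tfrac{n-4}{2}t)$ for all $t\ge 0$ (its \eqref{v-bounded-by-cosh}) and inserting it into \eqref{eq007} once, uniformly in $t$; to repair your version you need either that global bound or an improved estimate on $|\beta_\varepsilon|$ to bridge the window. The remaining ingredients (evenness at $t=0$ killing the $\sinh$ modes, and the derivative estimates obtained by differentiating the representation) are fine and coincide with the paper.
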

	\begin{proof}
First, note that by \eqref{eq029} we have
\begin{equation}\label{segunda-derivada-v}
    |\ddot v_\varepsilon(0)|<\frac{n(n-4)}{4}\varepsilon
\end{equation}
and then we obtain \eqref{eq037}. First, note that if $v_\varepsilon$ is any solution to (\ref{eq001}), then
	$$e^{\frac{n}{2}t}\left(e^{-nt}\left(e^{2t}\left(e^{(n-4)t}\left(e^{\frac{4-n}{2}t}v_\varepsilon\right)'\right)'\right)'\right)'=Cv_\varepsilon^{\frac{n+4}{n-4}}.$$
	Thus
	\begin{align}
	v_\varepsilon(t)  = & \alpha_\varepsilon\cosh\frac{n-4}{2}t+\beta_\varepsilon\cosh\frac{n}{2}t\label{eq007}\\
	& +\frac{n(n-4)(n^2-4)}{16}e^{\frac{n-4}{2}t}\int_0^te^{(4-n)s}\int_0^se^{-2x}\int_0^xe^{ny}\int_0^ye^{-\frac{n}{2}z}v_\varepsilon^{\frac{n+4}{n-4}}dzdydxds. \nonumber
	\end{align}
Using that $v_\varepsilon\geq\varepsilon$ we obtain
\begin{align}
	v_\varepsilon(t)  & \geq  \alpha_\varepsilon\cosh\frac{n-4}{2}t+\beta_\varepsilon\cosh\frac{n}{2}t\nonumber\\
	& +\frac{n(n-4)(n^2-4)}{16}\varepsilon^{\frac{n+4}{n-4}}\left(\frac{2}{n^2(n-2)}\cosh\frac{n}{2}t-\frac{2}{(n-2)(n-4)^2}\cosh\frac{n-4}{2}t+\frac{16}{n^2(n-4)^2}\right),\nonumber
	\end{align}
for all $t\geq 0$. Since $\alpha_\varepsilon>0$ we conclude the upper bound in \eqref{eq037} and \eqref{eq0001}. Using \eqref{segunda-derivada-v} we obtain the lower bound of \eqref{eq0001}. Also, using \eqref{eq007} we get

\begin{equation}\label{v-bounded-by-cosh}
    0<v_\varepsilon(t)<\varepsilon\cosh\left(\frac{n-4}{2}t\right)\leq \varepsilon e^{\frac{n-4}{2}|t|}
\end{equation}

This and \eqref{eq007} implies the estimates.
	\end{proof}

	For the next result we will write $f=O^{(l)}(K|x|^k)$ to mean $|\partial_r^if|\leq c_nK|x|^{k-i}$, for all $i\in\{1,\ldots,l\}$ and for some positive constant $c_n$ which depends only on $n$. Here $K>0$ is a constant.
	
	\begin{corollary}\label{cor001}
    For any $\varepsilon\in(0,v_{\rm cyl})$ and any $x\in\mathbb R^n\backslash\{0\}$ with $|x|\leq 1$, the Delaunay-type solution $u_\varepsilon(x)$ satisfies the estimates
    $$u_\varepsilon(x)=\frac{\alpha_\varepsilon}{2}(1+|x|^{4-n})+\frac{\beta_\varepsilon}{2}(|x|^2+|x|^{2-n})+O^{(4)}(\varepsilon^{\frac{n+4}{n-4}}|x|^{-n}),$$
    where $\alpha_\varepsilon$ and $\beta_\varepsilon$ are given by \eqref{eq037}.
\end{corollary}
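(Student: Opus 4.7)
The plan is to transfer the cylindrical-coordinate estimates of Proposition~\ref{Prop002} back to Euclidean coordinates by inverting the conformal diffeomorphism $\Phi$ from \eqref{eq022}. Set $r = |x|$ and $t = -\log r$, so that $|x| \leq 1$ corresponds to $t \geq 0$; recall that $u_\varepsilon(x) = r^{(4-n)/2} v_\varepsilon(t)$. First I would substitute the leading Ansatz from Proposition~\ref{Prop002},
\begin{equation*}
v_\varepsilon(t) = \alpha_\varepsilon \cosh\!\left(\tfrac{n-4}{2}t\right) + \beta_\varepsilon \cosh\!\left(\tfrac{n}{2}t\right) + R_\varepsilon(t),
\end{equation*}
and use the identities $\cosh\!\left(\tfrac{n-4}{2}t\right) = \tfrac12(r^{(4-n)/2}+r^{(n-4)/2})$ and $\cosh\!\left(\tfrac{n}{2}t\right) = \tfrac12(r^{-n/2}+r^{n/2})$. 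Multiplying by $r^{(4-n)/2}$ immediately yields the two stated leading blocks
\begin{equation*}
\frac{\alpha_\varepsilon}{2}(1 + r^{4-n}) + \frac{\beta_\varepsilon}{2}(r^2 + r^{2-n}),
\end{equation*}
so the zeroth-order content of the corollary is a direct translation of the first estimate in Proposition~\ref{Prop002}.

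Next I would handle the error term. The bound $|R_\varepsilon(t)| \leq c_n \varepsilon^{(n+4)/(n-4)} e^{(n+4)t/2}$ multiplies by $r^{(4-n)/2} = e^{-(4-n)t/2}$ to give
\begin{equation*}
r^{(4-n)/2}|R_\varepsilon(t)| \leq c_n \varepsilon^{(n+4)/(n-4)} r^{(4-n)/2} r^{-(n+4)/2} = c_n \varepsilon^{(n+4)/(n-4)} r^{-n},
\end{equation*}
which matches the claimed size with $l=0$. For the $O^{(4)}$ statement I need the same scaling on all radial derivatives $\partial_r^i$ for $1 \leq i \leq 4$. Writing $u_\varepsilon(r) = r^{(4-n)/2} v_\varepsilon(-\log r)$ and using $\partial_r = -r^{-1}\partial_t$, the Leibniz rule gives
\begin{equation*}
\partial_r^i u_\varepsilon = \sum_{k=0}^{i} c_{i,k}\, r^{(4-n)/2 - i}\, v_\varepsilon^{(k)}(-\log r)
\end{equation*}
for explicit constants $c_{i,k}$, and analogous formulas hold after substituting the Ansatz. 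Applying the derivative bounds in Proposition~\ref{Prop002}, each $v_\varepsilon^{(k)}$ satisfies the same $e^{(n+4)t/2}$ decay of the remainder, so every term in the sum for $\partial_r^i R_\varepsilon$ contributes at most $c_n \varepsilon^{(n+4)/(n-4)} r^{-n-i}$, which is precisely the definition of $O^{(4)}(\varepsilon^{(n+4)/(n-4)}r^{-n})$. The leading explicit terms differentiate cleanly using $\partial_r^i r^a = a(a-1)\cdots(a-i+1)\, r^{a-i}$.

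The only substantive book-keeping issue—and the one I expect to be the mildest obstacle—is confirming that the chain-rule expansion of $\partial_r^i [r^{(4-n)/2} v_\varepsilon(-\log r)]$ produces no lower-order derivatives of $v_\varepsilon$ that would force a better-than-claimed error, and that every algebraic term generated by differentiating the four explicit building blocks $1$, $r^{4-n}$, $r^2$, $r^{2-n}$ is absorbed either into the corresponding $\partial_r^i$ of the leading expression or into the remainder of size $r^{-n-i}$ (using $\varepsilon \leq \varepsilon^{(n+4)/(n-4)}\cdot \varepsilon^{-8/(n-4)}$ comparisons are not needed, since the explicit building blocks are differentiated exactly). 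Putting these pieces together gives the pointwise identity
\begin{equation*}
u_\varepsilon(x) = \frac{\alpha_\varepsilon}{2}(1 + |x|^{4-n}) + \frac{\beta_\varepsilon}{2}(|x|^{2} + |x|^{2-n}) + O^{(4)}\!\left(\varepsilon^{\frac{n+4}{n-4}}|x|^{-n}\right),
\end{equation*}
as required.
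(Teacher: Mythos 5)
Your proposal is correct and follows essentially the same route as the paper: transfer the cylindrical estimates of Proposition~\ref{Prop002} via $t=-\log|x|$, convert the $\cosh$ terms into the stated power blocks, and obtain the radial-derivative bounds by expressing $\partial_r^i u_\varepsilon$ through $\partial_t$-derivatives of $v_\varepsilon$ (the paper records the first-order identity \eqref{eq031} and invokes the derivative estimates of Proposition~\ref{Prop002} for the rest, exactly as your Leibniz expansion does).
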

\begin{proof}
Remember that $u_\varepsilon(x)=|x|^{\frac{4-n}{2}}v_\varepsilon(-\log|x|)$. 			Besides, since $t=-\log |x|$, we get
\begin{align*}
    |x|^{\frac{4-n}{2}}\cosh\left(\frac{n-4}{2}t\right) & =\frac{1}{2}(1+|x|^{4-n}),\\
    |x|^{\frac{4-n}{2}}\cosh\left(\frac{n}{2}t\right) & =\frac{1}{2}(|x|^2+|x|^{2-n}).\\
\end{align*}
From this we get the expansion for $u_\varepsilon(x)$, since $0<|x|\leq 1$ implies that $t=-\log|x|>0$. Also we have
\begin{align*}
    |x|^{\frac{4-n}{2}}\sinh\left(\frac{n-4}{2}t\right) & =\frac{1}{2}(|x|^{4-n}-1)
\end{align*}
and
\begin{equation}\label{eq031}
    |x|\partial_ru_\varepsilon(x)=\frac{4-n}{2}u_\varepsilon(x)-|x|^{\frac{4-n}{2}}\dot v_\varepsilon(-\log|x|).
\end{equation}
Using the estimates of the derivative of $v_\varepsilon$ given by Proposition \ref{Prop002} we obtain the estimates for the radial derivatives of $u_\varepsilon$.
\end{proof}

	\color{black}

	\subsection{Variations of Delaunay-type solutions}
	Applying rigid motions to the solutions to  the PDE \eqref{eq002}, we can obtain some important families of solutions. We focus in two types of variations: translations along the Delaunay axis and translations at infinity. 
	
	We can obtain the first family of solutions by simply noting that if $u$ is a solution to \eqref{eq002}, 
	then $R^{\frac{4-n}{2}}u(R^{-1}x)$, for $R>0$, also solves this equation. Applying this transformation to
	a Delaunay-type solution yields the family
$$\mathbb R^+\ni R\mapsto |x|^{\frac{4-n}{2}}v_\varepsilon(-\log|x|+\log R).$$
	
	To construct the second family, define the standard inversion  $I:\mathbb R^n\backslash\{0\}
	\rightarrow\mathbb R^n\backslash\{0\}$ given by $I(x)=x|x|^{-2}$. It is well known that 
	$I^*\delta=|x|^{-4}\delta$. Hence, if $g=u^{\frac{4}{n-4}}\delta$, then $I^*g=
	(|x|^{4-n}u\circ I)^{\frac{4}{n-4}}\delta.$
	Recall that the Kelvin transform $\mathcal{K}$ is given by 
	$$\mathcal K(u)(x)=|x|^{4-n}u(x|x|^{-2}).$$ 
	
	By \eqref{transformationlaw}, we have
	$$\mathcal K(u)^{-\frac{n+4}{n-4}}P_\delta(\mathcal K(u))=(u\circ I)^{-\frac{n+4}{n-4}}P_\delta u\circ I,$$
	$$\Delta^2(\mathcal K(u))=\mathcal K(|x|^8\Delta^2u).$$
	
	Now suppose that $u$ is a solution to \eqref{eq002}, then
	$$\begin{array}{rcl}
	\displaystyle\Delta^2\mathcal K(u)(x) & = & \displaystyle \mathcal K(|x|^8\Delta^2u)(x)=\mathcal \mathcal K(c(n)|x|^8u^{\frac{n+4}{n-4}})
	\\
	& = & \displaystyle c(n)|x|^{-4-n}(u\circ I(x))^{\frac{n+4}{n-4}}= c(n)(\mathcal K(u)(x))^{\frac{n+4}{n-4}}.
	\end{array}$$
	Therefore, $\mathcal K(u)$ still is a solution to \eqref{eq002}. The function $u(x-a)$ also 
	solves \eqref{eq002}, but with a singularity in $a\in\mathbb R^n$ instead of the origin.

For the purpose of this paper, we consider the family of solutions 
	\begin{equation}\label{approx}
	u_{\varepsilon,R,a}(x)=\mathcal K(\mathcal K(u_\varepsilon)(\cdot-a))(x)=|x-a|x|^2|^{\frac{4-n}{2}}v_\varepsilon\left(-\log|x|+\log\left|\frac{x}{|x|}-a|x|\right|+\log R\right).  
	\end{equation}
This function has a singular point when $x=0$ and $x=a/|a|$.  In the particular case 
when $a=0$, as a direct consequence of Corollary \ref{cor001} the following asymptotic expansion

\begin{align}
    u_{\varepsilon,R}(x)  & =\frac{\alpha_\varepsilon}{2}\left(R^{\frac{4-n}{2}}+R^{\frac{n-4}{2}}|x|^{4-n}\right)+\frac{\beta_\varepsilon}{2} (R^{-\frac{n}{2}}|x|^2+R^{\frac{n}{2}}|x|^{2-n})+O^{(4)}(R^{\frac{n+4}{2}}\varepsilon^{\frac{n+4}{n-4}}|x|^{-n}),\label{eq059}
\end{align}
where $\alpha_\varepsilon$ and $\beta_\varepsilon$ are given by \eqref{eq037}. 
\color{black}

\begin{proposition}\label{lem006}
There exists a constant $r_0\in(0,1)$, such that for any $x$ and $a$ in $\mathbb R^n$ with $|x|\leq 1$, $|a||x|<r_0$    , $R>0$ and $\varepsilon\in(0,v_\varepsilon)$ the solution $u_{\varepsilon,R,a}$ satisfies the estimate
\begin{equation}\label{eq032}
    u_{\varepsilon,R,a}(x)  =  u_{\varepsilon,R}(x)+\left((n-4)u_{\varepsilon,R}(x)+|x|\partial_ru_\varepsilon(x)\right)\langle a,x\rangle+O(|a|^2|x|^{\frac{8-n}{2}}),
\end{equation}
and if $R\leq |x|$ the estimate
\begin{equation}\label{eq033}
    u_{\varepsilon,R,a}(x)  =  u_{\varepsilon,R}(x)+\left((n-4)u_{\varepsilon,R}(x)+|x|\partial_ru_\varepsilon(x)\right)\langle a,x\rangle+O(|a|^2\varepsilon R^{\frac{4-n}{2}}|x|^{2}).
\end{equation}
\end{proposition}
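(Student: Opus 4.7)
The plan is to Taylor expand $u_{\varepsilon,R,a}(x)$ in powers of $a$ around $a=0$, keeping terms up to first order and carefully estimating the quadratic remainder in two regimes.

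First, I would rewrite the defining formula in a form that separates the $|x|$-dependence from the $a$-dependence. Noting that $x - a|x|^2 = |x|\bigl(x/|x| - a|x|\bigr)$, I set $s := |x/|x| - a|x||$ and $T := -\log|x| + \log R$, so that
\begin{equation*}
u_{\varepsilon,R,a}(x) = |x|^{\frac{4-n}{2}}\, s^{\frac{4-n}{2}}\, v_\varepsilon(T + \log s),
\qquad u_{\varepsilon,R}(x) = |x|^{\frac{4-n}{2}} v_\varepsilon(T).
\end{equation*}
From $s^2 = 1 - 2\langle x,a\rangle + |a|^2|x|^2$, valid for all $a$ and $x$ with $|a||x|<r_0$ sufficiently small, I expand
\begin{equation*}
s = 1 - \langle x,a\rangle + O(|a|^2|x|^2), \qquad \log s = -\langle x,a\rangle + O(|a|^2|x|^2),
\end{equation*}
and consequently $s^{(4-n)/2} = 1 + \tfrac{n-4}{2}\langle x,a\rangle + O(|a|^2|x|^2)$.

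Next, using that $v_\varepsilon \in C^4$, I write $v_\varepsilon(T + \log s) = v_\varepsilon(T) + \dot v_\varepsilon(T)\log s + \tfrac{1}{2}\ddot v_\varepsilon(\xi)(\log s)^2$ and multiply the two expansions. Collecting first-order terms in $a$ produces the coefficient
\begin{equation*}
|x|^{\frac{4-n}{2}}\Bigl[\tfrac{n-4}{2} v_\varepsilon(T) - \dot v_\varepsilon(T)\Bigr]\langle a,x\rangle,
\end{equation*}
which by the identity \eqref{eq031}, applied to $u_{\varepsilon,R}$ rather than $u_\varepsilon$ (i.e.\ $|x|\partial_r u_{\varepsilon,R}(x) = \tfrac{4-n}{2}u_{\varepsilon,R}(x) - |x|^{(4-n)/2}\dot v_\varepsilon(T)$), simplifies to $\bigl((n-4)u_{\varepsilon,R}(x) + |x|\partial_r u_{\varepsilon,R}(x)\bigr)\langle a,x\rangle$, matching the claimed leading correction.

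It remains to estimate the quadratic remainders, which is where the two different bounds arise. Every error term is of the form $|x|^{(4-n)/2}\cdot O(|a|^2|x|^2)$ multiplied by $v_\varepsilon(T)$, $\dot v_\varepsilon(T)$ or $\ddot v_\varepsilon(T)$ evaluated at $T$ or at an intermediate point near $T$. For the general estimate \eqref{eq032}, I invoke Proposition \ref{Prop002} (specifically the bound $v_\varepsilon(t) \le \varepsilon\cosh(\tfrac{n-4}{2}t)$ and the parallel bounds for its derivatives), which yields a remainder controlled by $|a|^2|x|^{(8-n)/2}$ uniformly in the parameters, once we absorb $R$-dependent constants. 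For the sharpened estimate \eqref{eq033} assuming $R\le|x|$, the argument $T = \log(R/|x|)$ is nonpositive, so $v_\varepsilon(T)$ and its first two derivatives are comparable to their values near the minimum, namely $O(\varepsilon)$ up to the factor $\cosh(\tfrac{n-4}{2}T)\asymp (|x|/R)^{(n-4)/2}$. Multiplying by $|x|^{(4-n)/2}$ converts this into a factor $\varepsilon R^{(4-n)/2}$, producing the refined remainder $O(|a|^2\varepsilon R^{(4-n)/2}|x|^2)$.

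The main obstacle will be tracking the error terms cleanly across both regimes and ensuring the constants are uniform in $\varepsilon\in(0,v_{\mathrm{cyl}})$ and $R>0$; this requires a consistent use of Proposition \ref{Prop002} together with the a priori bound \eqref{v-bounded-by-cosh}, and a careful bookkeeping of which estimate of $v_\varepsilon$ is sharp in each regime.
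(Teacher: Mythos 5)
Your skeleton is the same as the paper's proof: expand $|x-a|x|^2|^{\frac{4-n}{2}}$ and $\log\left|\tfrac{x}{|x|}-a|x|\right|$ to first order, Taylor-expand $v_\varepsilon$ at $T=-\log|x|+\log R$ with a Lagrange remainder, and identify the first-order coefficient through the identity \eqref{eq031} applied with the parameter $R$; that part is correct. The gap is in how you control the remainder. For \eqref{eq032} you propose to use $v_\varepsilon(t)\le\varepsilon\cosh\bigl(\tfrac{n-4}{2}t\bigr)$ and to ``absorb $R$-dependent constants'': this cannot work, because the argument is $T=\log(R/|x|)$ with $R>0$ arbitrary, so $\varepsilon\cosh\bigl(\tfrac{n-4}{2}T\bigr)\sim\tfrac{\varepsilon}{2}(R/|x|)^{\frac{n-4}{2}}$ is unbounded in $R$, while the statement requires the implicit constant to depend only on $n$. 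What is needed here is simply the global bound $0<v_\varepsilon<1$ together with $|\dot v_\varepsilon|\le c_n v_\varepsilon$ and $|\ddot v_\varepsilon|\le c_n v_\varepsilon$, i.e.\ \eqref{eq048}, which the paper obtains from the Hamiltonian energy estimate \eqref{eq029} and \eqref{segunda-derivada-v} (via Lemma \ref{lem002}); these pointwise derivative bounds are not consequences of Proposition \ref{Prop002}.

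For \eqref{eq033} the same issue is more serious. The bound \eqref{v-bounded-by-cosh} does give $v_\varepsilon(T)\le c_n\varepsilon(|x|/R)^{\frac{n-4}{2}}$ when $R\le|x|$, but the ``parallel bounds for its derivatives'' in Proposition \ref{Prop002} contain $\sinh\bigl(\tfrac{n}{2}|t|\bigr)$ terms of size $\varepsilon(|x|/R)^{\frac{n}{2}}$, which overshoot the needed bound by a factor $(|x|/R)^{2}\ge 1$ and would only yield a remainder $O\bigl(|a|^2\varepsilon R^{-\frac{n}{2}}|x|^{4}\bigr)$ rather than the claimed $O\bigl(|a|^2\varepsilon R^{\frac{4-n}{2}}|x|^{2}\bigr)$. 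The missing ingredient is again \eqref{eq048}: it lets you bound $|\dot v_\varepsilon|$ and $|\ddot v_\varepsilon|$ — including at the intermediate point $T+t_{a,x}$ with $|t_{a,x}|\le\bigl|\log s\bigr|=O(|a||x|)<r_0$ — by $c_n v_\varepsilon$, after which \eqref{v-bounded-by-cosh} (extended to $t\le 0$ by the evenness of $v_\varepsilon$) gives exactly the factor $\varepsilon R^{\frac{4-n}{2}}|x|^{\frac{n-4}{2}}$. With \eqref{eq048} in hand your computation closes and coincides with the paper's proof.
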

\begin{proof}
First, using the Taylor expansion we get
\begin{equation}\label{eq028}
    |x-a|x|^2|^{\frac{4-n}{2}}=|x|^{\frac{4-n}{2}}+\frac{n-4}{2}\langle a, x\rangle |x|^{\frac{4-n}{2}}+O(|a|^2|x|^{\frac{8-n}{2}})
\end{equation}
and
\begin{equation}\label{eq0003}
    \log\left|\frac{x}{|x|}-a|x|\right|=-\langle a, x\rangle+O(|a|^2|x|^2),
\end{equation}
for $|a||x|<r_0$ and some $r_0\in(0,1)$. Also
\begin{align*}
    v_\varepsilon & \left(-\log|x|  +\log\left|\frac{x}{|x|}-a|x|\right|+\log R\right)= v_\varepsilon\left(-\log|x|+\log R\right) \\
   &   +\dot v_\varepsilon(-\log|x|+\log R)\log\left|\frac{x}{|x|}-a|x|\right|+\ddot v_\varepsilon(-\log|x|+\log R+t_{a,x})\left(\log\left|\frac{x}{|x|}-a|x|\right|\right)^2\\
   & =v_\varepsilon\left(-\log|x|+\log R\right) -\dot v_\varepsilon(-\log|x|+\log R)\langle a,x\rangle+\dot v_\varepsilon(-\log|x|+\log R)O(|a|^2|x|^2)\\
   & +\ddot v_\varepsilon(-\log|x|+\log R+t_{a,x})O(|a|^2|x|^2),
\end{align*}
for some $t_{a,x}\in\mathbb R$ with $0<|t_{a,x}|<\left|\log\left|\frac{x}{|x|}-a|x|\right|\right|$. Note that $t_{a,x}\rightarrow 0$ when $|a||x|\rightarrow 0$. 

Note that, using \eqref{eq029} and \eqref{segunda-derivada-v} it follows that 
\begin{equation}\label{eq048}
|\dot v_\varepsilon|\leq c_nv_\varepsilon\;\;\;\;{\rm and }\;\;\;\;  |\ddot v_\varepsilon|\leq c_nv_\varepsilon
\end{equation}
for some positive constant $c_n$ that depends only on $n$. Thus, by \eqref{approx} and \eqref{eq028} we obtain
$$u_{\varepsilon,R,a}(x)  =  u_{\varepsilon,R}(x)+\left(\frac{n-4}{2}u_{\varepsilon,R}(x)-|x|^{\frac{4-n}{2}}\dot v_\varepsilon(-\log|x|+\log R)\right)\langle a,x\rangle+O(|a^2||x|^{\frac{8-n}{2}}).$$
By \eqref{eq031} this implies \eqref{eq032}. Moreover, by \eqref{eq029} and \eqref{v-bounded-by-cosh}, since $-\log|x|+\log R\leq 0$ for $R\leq |x|$, then we get that $v_\varepsilon(-\log|x|+\log R)$ and $v_\varepsilon(-\log|x|+\log R+t_{a,x})$ are bounded by $c_n\varepsilon R^{\frac{4-n}{2}}|x|^{\frac{n-4}{2}}$, for some positive constant $c_n$ which depends only on $n$. Therefore, we get \eqref{eq033}.
\end{proof}

	\section{Function spaces}\label{sec:functionspaces}
In this section, we define some function spaces that will be useful in this work. The first one is 
the weighted H\"older spaces in the punctured ball. It is well-established in the literature that 
these spaces are the most convenient spaces to define the linearized operator. The second one appears 
so naturally in our results that it is more helpful to put its definition here. Finally, the third 
one is the weighted H\"older spaces in which the exterior analysis will be carried out. These 
are the same weighted spaces as in \cite{Mazzeo-pacard-1999, MR2194146}.

\begin{definition}\label{def1}
For each $k\in\mathbb{N}$, $r>0$, $0<\alpha<1$ and $\sigma\in(0,r/2)$, let 
$u\in C^k(B_r(0)\backslash\{0\})$, set
$$\|u\|_{(k,\alpha),[\sigma,2\sigma]}=\sup_{|x|\in[\sigma,2\sigma]}
\left(\sum_{j=0}^{k}\sigma^j |\nabla^ju(x)|\right)+\sigma^{k+\alpha}
\sup_{|x|,|y|\in[\sigma,2\sigma]} \frac{|\nabla^ku(x)-\nabla^ku(y)|}{|x-y|^{\alpha}}.$$
Then, for any $\mu\in\mathbb{R}$, the space $C^{k,\alpha}_{\mu}(B_r(0)\backslash\{0\})$ 
is the collection of functions $u$ that are locally in $C^{k,\alpha}(B_r(0)
\backslash\{0\})$ and for which the norm
$$\|u\|_{(k,\alpha),\mu,r}=\sup_{0<\sigma\leq\frac{r}{2}}\sigma^{-\mu} 
\|u\|_{(k,\alpha),[\sigma,2\sigma]}$$
is finite.
\end{definition}

The one result about these that we shall use frequently, and without comment, 
is that to check if a function $u$ is an element of some $C_{\mu}^{0,\alpha}$, say, 
it is sufficient to check that $|u(x)|\leq C|x|^\mu$ and $|\nabla u(x)|\leq C|x|^{\mu-1}$. 
In particular, the function $|x|^{\mu}$ is in $C_{\mu}^{k,\alpha}$ for any $k,$ $\alpha$, or $\mu$.

Notice that $C_{\mu}^{k,\alpha}\subseteq C_{\delta}^{l,\alpha}$ if $\mu\geq\delta$ 
and $k\geq l$, and $\|u\|_{(l,\alpha),\delta}\leq C\|u\|_{(k,\alpha),\mu}$ for 
all $u\in C_{\mu}^{k,\alpha}$.

\begin{definition}
For each $k\in\mathbb{N}$, $0<\alpha<1$ and $r>0$. Let $\phi\in C^k(\mathbb{S}_r^{n-1})$, set
$$\|\phi\|_{(k,\alpha),r}:=\|\phi(r\cdot)\|_{C^{k,\alpha}(\mathbb{S}^{n-1})}.$$
Then, the space $C^{k,\alpha}(\mathbb{S}^{n-1}_r)$ is the collection of functions $\phi\in C^k(\mathbb{S}^{n-1}_r)$ for which the norm $\|\phi\|_{(k,\alpha),r}$ is finite.
\end{definition}

Next, consider an $n$-dimensional compact Riemannian manifold $(M^n, g)$ and  
a coordinate system $\Psi: B_{r_1}(0) \rightarrow M$ on $M$ centered at a point 
$p = \Psi(0) \in M$, where $B_{r_1}(0)\subset\mathbb{R}^n$ is the ball of radius $r_1$. For $0<r<s\leq r_1$ define
$M_r:=M\backslash\Psi(B_r(0))$ and $\Omega_{r,s}:=\Psi(A_{r,s}),$
where $A_{r,s}:=\{x\in\mathbb{R}^n;r\leq|x|\leq s\}$.

\begin{definition}\label{def4}
For all $k\in\mathbb{N}$, $\alpha\in(0,1)$ and $\nu\in\mathbb{R}$, the space $C_{\nu}^{k,\alpha}(M\backslash\{p\})$ is the space of functions $v\in C_{\rm loc}^{k,\alpha}(M\backslash\{p\})$ for which the following norm is finite
$$\|v\|_{C_\nu^{k,\alpha}(M\backslash\{p\})}:= \|v\|_{C^{k,\alpha}(M_{r_1/2})} +\|v\circ\Psi\|_{(k,\alpha),\nu,r_1},$$
where the norm $\|\cdot\|_{(k,\alpha),\nu,r_1}$ is the one defined in Definition \ref{def1}.
\end{definition}

For all $0<r<s\leq r_1$, we can also define the spaces $C_\mu^{k,\alpha}(\Omega_{r,s})$ and $C_\mu^{k,\alpha}(M_r)$ to be the space of restriction of elements of $C_\mu^{k,\alpha}(M\backslash\{p\})$ to $M_r$ and $\Omega_{r,s}$, respectively. These spaces is endowed with the following norm
$$\|f\|_{C^{k,\alpha}_\mu(\Omega_{r,s})}:=\sup_{r\leq \sigma\leq\frac{s}{2}}\sigma^{-\mu}\|f\circ \Psi\|_{(k,\alpha),[\sigma,2\sigma]}$$
and
$$\|h\|_{C^{k,\alpha}_\mu(M_r)}:=\|h\|_{C^{k,\alpha}(M_{r_1/2})}+ \|h\|_{C^{k,\alpha}_\mu(\Omega_{r,r_1})}.$$
Notice that these norms are independent of the extension of the functions $f$ and $h$ to $M_r$.
	
\section{Model operators}\label{sec:poissonoperator}

In this section, we follow \cite[Sections 11 and 13]{jleli} to construct 
a Poisson operator for the bi-Laplacian in the Euclidean space, which 
will be crucial to perform the gluing construction in Section~\ref{sec:onegluingprocedure}. 

Let us set some notation.
We introduce $(e_j,\lambda_j)$, 
	the eigendata of the Laplacian on $\mathbb S^{n-1}$. That 
	is, $\Delta_{\mathbb S^{n-1}}e_j+\lambda_je_j=0$. We assume 
	that the eigenvalues are counted with multiplicity and 
	that the eigenfunctions are normalized so that their $L^2$-norm 
	is equal to 1. It is well known that $(e_j,-\lambda_j^2)$ 
	are the eigendata of the bi-Laplacian on $\mathbb S^{n-1}$, 
	that is, $\Delta_{\mathbb S^{n-1}}^2e_j=\lambda_j^2e_j$. 
	Thus, for $\phi\in L^2(\mathbb S^{n-1})$ we use the following
decomposition
	$$\phi(\theta)=\sum_{j=0}^\infty 
	\phi_j e_j(\theta).$$
	
    Define the orthogonal projections 
    $$\pi', \pi'' : L^2 (\Ss_{r}^{n-1}) 
    \rightarrow L^2 (\Ss_{r}^{n-1})$$
    by 
    \begin{equation} \label{fourier_mode_projections}  
    \pi' \left ( \sum_{j=0}^\infty \phi_j e_j \right ) 
    = \sum_{j=0}^n \phi_j e_j, \quad \pi''(\phi) = \phi-\pi'(\phi).
    \end{equation}
It is appropriate to consider the product of H\"{o}lder spaces 
$C_{\delta}^{4,\alpha}(\mathbb{S}_r^{n-1},\mathbb{R}^2)
:=C_{\delta}^{4,\alpha}(\mathbb{S}_r^{n-1})\times C_{\delta}^{4,\alpha}(\mathbb{S}_r^{n-1})$, where $r>0$, 
$\alpha\in(0,1)$ and $\delta\in\mathbb{R}$. We also write 
the projection onto the high Fourier modes as 
\begin{equation*}
    \pi''(C^{4,\alpha}(\mathbb{S}_r^{n-1},\mathbb{R}^2)):=
    \pi''(C^{4,\alpha}(\mathbb{S}_r^{n-1}))\times 
    \pi''(C^{4,\alpha}(\mathbb{S}_r^{n-1})).
\end{equation*}

\subsection{Interior Poisson operator}
First, we can prove the existence of the Poisson operator for 
the interior of the punctured unit ball.

	\begin{proposition}\label{poissoninterior}
		Let $0<\alpha<1$ be a fixed constant. There exists a bounded 
		linear operator $
		    \mathcal P_1: \pi'' (C^{4,\alpha}(\mathbb{S}^{n-1}))
		    \times C^{4,\alpha}(\mathbb{S}^{n-1}) \rightarrow 
		    C^{4,\alpha}_{2}(B_1(0)\backslash\{0\}),$
		such that for all $\phi_0 \in \pi'' (C^{4,\alpha} 
		(\mathbb{S}^{n-1}))$ and $\phi_2 \in C^{4,\alpha} 
		(\mathbb{S}^{n-1})$ it holds
	\begin{equation}\label{eq075}
	    \left\{
	\begin{array}{rclcc}
		\Delta^2 \mathcal P_1(\phi_0,\phi_2) & = & 0  &{\rm in } 
		& B_1(0)\backslash\{0\} \\
		\pi''(\mathcal P_1(\phi_0,\phi_2)) & = 
		&\displaystyle \phi_0 & {\rm on } & \partial B_{1} \\ 
		\Delta \mathcal P_1(\phi_0,\phi_2) & = & \phi_2 & 
		{\rm on } & \partial B_{1}.
		\end{array}
		\right.
	\end{equation}
Moreover, if $\pi''(\phi_2)=0$, then
\begin{equation}\label{eq083}
    \mathcal P_1(0,\phi_2)=
    |x|^2 \left ( \frac{(\phi_2)_0}{2n} + \frac{1}
    {6n-4} \sum_{j=1}^n (\phi_2)_j x_j \right ). 
\end{equation}

	\end{proposition}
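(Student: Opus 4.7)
The strategy is separation of variables using the spectral decomposition of the Laplacian on $\mathbb{S}^{n-1}$. Expanding the boundary data as $\phi_0 = \sum_{j \geq n+1} (\phi_0)_j e_j$ and $\phi_2 = \sum_{j \geq 0} (\phi_2)_j e_j$, I would seek a biharmonic solution of the form $\sum_j u_j(|x|)\, e_j(\theta)$. On each Fourier mode, $\Delta^2$ reduces to a fourth-order Euler-type ODE whose indicial polynomial factors as $(\gamma - k)(\gamma + k + n - 2)(\gamma - k - 2)(\gamma + k + n - 4)$, where $k$ denotes the degree of the spherical harmonic carrying $e_j$, so that $\lambda_j = k(k+n-2)$. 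Hence the four characteristic exponents are
\[
\gamma \in \{\,k,\ k+2,\ -(k+n-2),\ -(k+n-4)\,\}.
\]

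For the high Fourier modes ($j \geq n+1$, equivalently $k \geq 2$), both positive roots $k$ and $k+2$ yield solutions with growth at least $|x|^2$ near the origin, so the admissible radial profile is $u_j(r) = A_j r^k + B_j r^{k+2}$. Using the identities $\Delta(r^k e_j) = 0$ and $\Delta(r^{k+2} e_j) = 2(2k+n)\, r^k e_j$, the two Navier-type boundary conditions at $r=1$ reduce to a triangular $2\times 2$ system with unique solution $B_j = (\phi_2)_j / (2(2k+n))$ and $A_j = (\phi_0)_j - B_j$. For the low Fourier modes $j = 0, 1, \ldots, n$ (lying in $\ker \pi''$), the weighted growth constraint excludes the exponent $k \in \{0,1\}$, leaving only the $r^{k+2}$ branch, whose coefficient is uniquely determined by the single Laplacian condition. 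Reading off the mode-$0$ and mode-$1$ contributions, via $r^2 e_0 \propto |x|^2$ and $r^3 e_j \propto |x|^2 x_j$ for $j = 1,\dots,n$, produces the explicit formula \eqref{eq083}.

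Linearity of $\mathcal{P}_1$ is immediate from this construction. Boundedness in the weighted Hölder norm follows from standard Fourier-theoretic estimates: the $C^{4,\alpha}$ regularity of $\phi_0$ and $\phi_2$ forces their Fourier coefficients to decay in $k$ faster than the polynomial growth of $A_j, B_j$ in $k$; combined with the pointwise bounds $|r^k|,\,|r^{k+2}| \leq r^2$ on $[0,1]$ for $k \geq 2$, this yields absolute convergence of the series in $C_2^{4,\alpha}(B_1(0) \setminus \{0\})$. The principal technical obstacle will be this last step: one must promote the coefficientwise (a priori only $L^2$) decay into norm convergence in the strong weighted Hölder topology, uniformly up to the outer boundary $r = 1$. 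I expect this to be handled by applying Schauder estimates for $\Delta^2$ to the partial sums on dyadic annuli $[\sigma, 2\sigma]$, exploiting the orthogonality of the spherical harmonics to keep the tails under control.
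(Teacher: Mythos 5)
Your route is genuinely different from the paper's. The paper does not sum Fourier series: for the high-mode harmonic part ($\phi_2\equiv 0$) it quotes \cite[Proposition 11.25]{jleli} and \cite[Lemma 6.2]{MR1763040}, and for the $\phi_2$-part it solves auxiliary Navier problems on the annuli $B_1\setminus B_\rho$, proves the uniform weighted bound $\sup_{B_1\setminus B_\rho}|x|^{-2}|v_\rho|\leq c$ by a blow-up/compactness contradiction modelled on Lemma \ref{lem004}, lets $\rho\to 0$, and gets derivatives from Schauder theory. Your mode-by-mode algebra is sound: the indicial exponents, the identity $\Delta(r^{k+2}e_j)=2(2k+n)r^ke_j$, the triangular system giving $B_j=(\phi_2)_j/(2(2k+n))$, $A_j=(\phi_0)_j-B_j$, and the exclusion of the exponents $k\in\{0,1\}$ and of the negative exponents by the $O(|x|^2)$ requirement are all correct, and for $|x|\leq 1/2$ the factor $r^k\leq (2\sigma)^k$ even gives geometric decay, so the interior part of the estimate is easy. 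The genuine gap is the boundedness of $\mathcal P_1$ in $C^{4,\alpha}_2$ \emph{up to the outer boundary}: from $\phi_0,\phi_2\in C^{4,\alpha}(\mathbb{S}^{n-1})$ you can only extract coefficient decay of order $\lambda_j^{-2}\sim k^{-4}$ (two integrations by parts against $\Delta_{\mathbb{S}^{n-1}}$), while the $C^{4,\alpha}(\mathbb{S}^{n-1})$-norms of the normalized eigenfunctions $e_j$ grow polynomially in $k$ at a rate exceeding $k^{4}$; hence termwise summation does not converge in $C^{4,\alpha}$ on $\{1/2<|x|<1\}$, and orthogonality does not help a H\"older-norm bound. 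To close this you need exactly what the paper's compactness argument supplies (a uniform weighted sup bound plus boundary Schauder estimates for the Navier problem, as in \cite{Gazzola_Grunau_Sweers_2010}), or alternatively you can bypass the series near $r=1$ altogether: solve the Navier problem on the full ball with Agmon--Douglis--Nirenberg/Schauder estimates and subtract the affine function $w(0)+\nabla w(0)\cdot x$, which is biharmonic, leaves $\Delta w$ and $\pi''$ of the trace unchanged, and produces the required $O(|x|^2)$ behaviour. As written, your final step is asserted rather than proved.

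A second point you must resolve: you claim your low-mode computation ``produces the explicit formula \eqref{eq083}'', but it does not. Your own formula gives the degree-one coefficient $1/(2(2k+n))|_{k=1}=1/(2n+4)$, whereas \eqref{eq083} displays $1/(6n-4)$; since a direct computation gives $\Delta(|x|^2x_j)=(2n+4)x_j$, your value appears to be the correct one and the constant in \eqref{eq083} looks like a slip, but you cannot both assert your derivation and assert agreement with \eqref{eq083} without comment. Also, a minor inaccuracy: the coefficients $A_j,B_j$ do not grow in $k$ (indeed $B_j$ carries an extra $k^{-1}$ decay); the obstruction in your summation argument is never their size but the growth of the H\"older norms of the spherical harmonics.
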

	
\begin{proof}
By linearity, we reduce the proof to two cases.

\medskip

\noindent{\bf Case 1:} $\phi_2\equiv0$.

\medskip

By \cite[Proposition 11.25]{jleli}  
and \cite[Lemma 6.2]{MR1763040}, there exists a smooth function $v_{\phi_0}\in C_2^{2,\alpha}(B_1(0)\backslash\{0\})$ satisfying the following
$$\begin{cases}
    \Delta v_{\phi_0}=0&\quad\mbox{ in } B_1(0)\backslash\{0\}\\
    \pi''(v_{\phi_0})=\phi_0 &\quad\mbox{ on }\partial B_1.
\end{cases}$$
Thus, we define $\mathcal P_1(\phi_0,0):=v_{\phi_0}$.

\medskip

\noindent{\bf Case 2:} $\phi_0\equiv0$. This case is divided into two steps.

\medskip

\noindent{\bf Step 1:} $\pi''(\phi_2)=0$.

\medskip

By a simple calculation we see that the function in \eqref{eq083} 
satisfies \eqref{eq075} and
$$\sup_{B_1\backslash\{0\}}|x|^{-2}\mathcal P_1(0,\phi_2)
\leq c \|\phi_{1}\|_{C^{4,\alpha}(\mathbb{S}^{n-1})}$$
for some constant $c>0$ independent of $\phi_2$.

\medskip
	
\noindent{\bf Step 2:} $\pi'(\phi_2)=0$.

Initially, by linearity, we may assume that $\|{\phi}_2\|_{{C}^{4, \alpha}
\left(\mathbb{S}^{n-1}\right)}=1$. 
Hence the solution to the problem \eqref{eq075}, denoted by $v_0$,
can be obtained as the limit $\rho\rightarrow0$ of solutions $v_{\rho}$ to 

\begin{equation}\label{eq084}
    \begin{cases}
        \Delta^2v_{\rho}=0 & \quad {\rm in} \quad B_1\setminus B_{\rho}\\
        v_{\rho}=0 & \quad {\rm on} \quad \partial B_1\\
        \Delta v_{\rho}=\phi_2 & \quad {\rm on} \quad \partial B_1\\
        \Delta v_{\rho}=v_{\rho}=0 & \quad {\rm on} \quad \partial B_\rho.\\
    \end{cases}
\end{equation}

\noindent{\bf Claim 1:} There exists a constant $c>0$, independent 
of $\phi_2$ and $\rho$, such that
$$\sup_{B_1\setminus B_{\rho}}|x|^{-2}\left|v_{\rho}\right|\leq c.$$

Arguing by contradiction, there would exist a sequence of numbers 
$\{\rho_i\}$ and functions $v_{\rho_i}$ solving \eqref{eq084} for $\rho_i$ 
with $\sup_{B_1\setminus B_{\rho_i}}|x|^{-2}\left|
v_{\rho_i}\right|\rightarrow\infty$ as $i\rightarrow\infty$. 
For each $r\in(\rho_i,1)$ the function $v_{\rho_i}(r\cdot)$ is 
$L^2$-orthogonal to $e_0,\ldots,e_n$ on $\mathbb S^{n-1}$. Now 
choose $(r_i,\theta_i)\in(\rho_i,1)\times\mathbb S^{n-1}$ such that
$$A_i:=\sup_{\left(\rho_i, 1\right) \times\mathbb{S}^{n-1}}r^{-2}
\left| v_{\rho_i}(r\theta)\right|=r_i^{-2}\left| v_{\rho_i}(r_i\theta_i)
\right| \rightarrow\infty \quad \mbox{as} \quad i\rightarrow\infty.$$

For $|x|\in [\rho_ir_i^{-1},r_i^{-1}]$, define
\begin{equation*}
    \widetilde{v}_{\rho_i}(x)=r_i^{-2}{A_i^{-1}} 
    v_{\rho_i}\left(r_ix\right)
\end{equation*}
which satisfies
\begin{equation}\label{eq085}
    \sup_{\left(\rho_ir_i^{-1},r_i^{-1}\right)\times
    \mathbb{S}^{n-1}}r^{-2}\left| 
    \widetilde{v}_{\rho_i}(r\theta)\right|=|\widetilde 
    v_{\rho_i}(\theta_i)|=1.
\end{equation}

Arguing as in the proof of Lemma \ref{lem004}, in page \pageref{pg001}, 
we conclude that the sequences $\rho_ir_i^{-1}$ and $r_i^{-1}$ remain 
bounded away from 1. Up to a subsequence, we can assume that the 
sequence $\rho_ir_i^{-1}\rightarrow \tau_1\in[0,1)$ and $r_i^{-1}
\rightarrow\tau_2\in(1,+\infty]$. Using \eqref{eq085} and Schauder 
estimates like in \cite{Gazzola_Grunau_Sweers_2010}, by Arzelà--Ascoli 
Theorem we can suppose that $\widetilde v_{\rho_i}$ converges to 
some biharmonic function $v_\infty\in C^4(B_{\tau_2}\backslash 
B_{\tau_1})$ satisfying 
$v_\infty(\theta_\infty)=1$, for some $\theta_\infty$. If $\tau_1 
= 0$ we consider $B_0 = \{ 0 \}$. Moreover, 
$v_\infty=\Delta v_\infty=0$ on the boundary and by \eqref{eq085} we 
conclude that $v_\infty$ is not identically zero and $|v_\infty(x)|
\leq |x|^2$. Using an analogous argument as in the end of the proof 
of the Lemma \ref{lem004}, we obtain a contradiction.

The estimates for the derivatives follow from Schauder's regularity (see \cite{Gazzola_Grunau_Sweers_2010, Nicolaescu_2007}).
This finishes the proof of the proposition.
\end{proof}

Using a simple scaling argument, we can also define the Poisson operator in a more general context.

\begin{corollary}\label{cor:poissoninterior}
Let $0<\alpha<1$, $\mu \leq 2$ and $r>0$ be a fixed constants. There 
exists a bounded linear operator
$\mathcal P_r: \pi'' (C^{4,\alpha}(\mathbb{S}_r^{n-1}))\times 
C^{4,\alpha}(\mathbb{S}_r^{n-1})) \rightarrow 
C^{4,\alpha}_{2}(B_r(0)\backslash\{0\})$ satisfying that for each 
$\phi_{0}\in \pi''(C^{4,\alpha}(\mathbb S_r^{n-1}))$ and $\phi_2
\in C^{4, \alpha} (\mathbb{S}_r^{n-1})$
    \begin{align*}
		\left\{
		\begin{array}{rclcc}
		\Delta^2 \mathcal P_r(\phi_{0},\phi_{2}) & = & 0  
		&{\rm in } & B_{r}(0)\backslash\{0\} \\
		\pi''(\mathcal P_r(\phi_{0},\phi_{2})) & = &  
		\displaystyle  \phi_0 & {\rm on } & \partial B_{r} \\ 
		\Delta \mathcal P_r(\phi_{0},\phi_{2}) & = & r^{-2}\phi_{2} & 
		{\rm on } & \partial B_{r} .\\
		\end{array}
		\right.
	\end{align*}
    Moreover, there exists a constant $C>0$ independently of $r$ such that
        \begin{equation*}
            \left\|\mathcal{P}_{r}\left(\phi_{0},\phi_{2}\right)
            \right\|_{(4, \alpha), \mu, r} \leq C r^{-2}
            \left\|(\phi_{0},\phi_{2})\right\|_{(4, \alpha), r},
        \end{equation*}
        and if $\pi''(\phi_2)=0$, then
$$
    \mathcal P_r(0, \phi_2)= r^{-2}|x|^2 \left ( \frac{(\phi_2)_0}{2n} 
    + \frac{1}{6n-4} r^{-1}\sum_{j=1}^n (\phi_0)_j x_j \right ). 
  $$
\end{corollary}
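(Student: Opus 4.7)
The approach reduces to the unit-ball case of Proposition~\ref{poissoninterior} via the rescaling $y=x/r$. First, I would pull the boundary data back to $\mathbb{S}^{n-1}$ by setting $\tilde\phi_i(\theta):=\phi_i(r\theta)$ for $i\in\{0,2\}$ and $\theta\in\mathbb{S}^{n-1}$. By the very definition of $\|\cdot\|_{(4,\alpha),r}$ in Section~\ref{sec:functionspaces}, one has $\|\tilde\phi_i\|_{C^{4,\alpha}(\mathbb{S}^{n-1})}=\|\phi_i\|_{(4,\alpha),r}$, and the projections $\pi'$, $\pi''$ commute with this pullback so that $\tilde\phi_0\in\pi''(C^{4,\alpha}(\mathbb{S}^{n-1}))$. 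Applying Proposition~\ref{poissoninterior} produces $w:=\mathcal{P}_1(\tilde\phi_0,\tilde\phi_2)\in C^{4,\alpha}_{2}(B_1(0)\backslash\{0\})$, and I would define
\begin{equation*}
\mathcal{P}_r(\phi_0,\phi_2)(x)\,:=\,w\!\left(\tfrac{x}{r}\right),\qquad x\in B_r(0)\backslash\{0\}.
\end{equation*}

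The chain rule then yields $\Delta^2\mathcal{P}_r(\phi_0,\phi_2)(x)=r^{-4}(\Delta^2 w)(x/r)=0$ in $B_r(0)\backslash\{0\}$, while on $\partial B_r$ one has $\pi''(\mathcal{P}_r(\phi_0,\phi_2))=\tilde\phi_0(\cdot/r)=\phi_0$ and $\Delta\mathcal{P}_r(\phi_0,\phi_2)=r^{-2}(\Delta w)(\cdot/r)=r^{-2}\tilde\phi_2(\cdot/r)=r^{-2}\phi_2$, verifying the boundary value problem.

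For the norm estimate, the essential point is a scale-invariance check for the local seminorm on dyadic annuli: writing $\tau=\sigma/r$, the factors $r^{-j}$ arising from $\nabla^j$ cancel against $\sigma^j=(r\tau)^j$ in Definition~\ref{def1}, so $\|\mathcal{P}_r(\phi_0,\phi_2)\|_{(4,\alpha),[\sigma,2\sigma]}=\|w\|_{(4,\alpha),[\tau,2\tau]}$. Taking $\sup_{0<\sigma\leq r/2}\sigma^{-\mu}$ of both sides gives
\begin{equation*}
\|\mathcal{P}_r(\phi_0,\phi_2)\|_{(4,\alpha),\mu,r}=r^{-\mu}\|w\|_{(4,\alpha),\mu,1}\leq Cr^{-\mu}\|w\|_{(4,\alpha),2,1}\leq Cr^{-\mu}\|(\phi_0,\phi_2)\|_{(4,\alpha),r},
\end{equation*}
where I used the inclusion $C^{4,\alpha}_{2}\hookrightarrow C^{4,\alpha}_{\mu}$ on $B_1(0)\backslash\{0\}$ for $\mu\leq 2$, together with the estimate from Proposition~\ref{poissoninterior}. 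Since $r$ is restricted to a bounded range (one always has $r\leq 1$ in the applications of the corollary), $r^{-\mu}\leq Cr^{-2}$, producing the stated estimate with constant independent of $r$.

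Finally, the explicit expression when $\pi''(\phi_2)=0$ follows by substituting $x/r$ into the closed form \eqref{eq083}: the factor $|y|^2$ becomes $r^{-2}|x|^2$ and the linear part $\sum_{j=1}^n(\tilde\phi_2)_jy_j$ becomes $r^{-1}\sum_{j=1}^n(\tilde\phi_2)_jx_j$, matching the statement under the identification of Fourier coefficients on $\mathbb{S}^{n-1}_r$ with those of the pullback on $\mathbb{S}^{n-1}$. The only mildly technical step is the scale computation for the weighted H\"older norm, but this is a direct consequence of Definition~\ref{def1}.
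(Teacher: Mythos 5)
Your proposal is correct and is essentially the paper's own argument: the paper likewise defines $\mathcal{P}_r(\phi_0,\phi_2)(x)=\mathcal{P}_1(\tilde\phi_0,\tilde\phi_2)(r^{-1}x)$ with $\tilde\phi_i(\theta)=\phi_i(r\theta)$ and invokes the scale identity $\|\mathcal{P}_r(\phi_0,\phi_2)\|_{(4,\alpha),\mu,r}=r^{-\mu}\|\mathcal{P}_1(\tilde\phi_0,\tilde\phi_2)\|_{(4,\alpha),\mu,1}$, which is exactly your dyadic-annulus computation. Your extra remarks (the step $r^{-\mu}\leq Cr^{-2}$ using $\mu\leq 2$ and $r\leq 1$, and the substitution into \eqref{eq083} for the explicit formula, where the paper's $(\phi_0)_j$ is evidently a typo for $(\phi_2)_j$) only fill in details the paper leaves implicit.
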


\begin{proof}
    Let us consider 
    \begin{equation*}
        \mathcal{P}_{r}\left(\phi^r_{0},\phi^r_{2}\right)(x)=
        \mathcal{P}_{1}(\phi_0,\phi_2)\left(r^{-1} x\right),
    \end{equation*}
    where $\phi^r_i(\theta):=\phi_{i}(r \theta)$ for $i=0,2$. 
    This operator is obviously bounded and satisfies
    \begin{equation*}
        \left\|\mathcal{P}_{r}\left(\phi^r_{0},\phi^r_{2}\right)
        \right\|_{(4, \alpha), \mu, r}=r^{-\mu}
        \left\|\mathcal{P}_{1}(\phi^r_{0},\phi^r_{2})
        \right\|_{(4, \alpha), \mu, 1},
    \end{equation*}
    which concludes the proof.
\end{proof}

\subsection{Exterior Poisson operator}\label{EPO}

    Similar to the last proposition, we prove the existence of 
    a Poisson operator for the exterior problem.
    This time we denote our boundary data as $\psi_0, \psi_2 \in C^{4,\alpha}(\mathbb{S}^{n-1})$. 
	
	\begin{proposition} For each $0<\alpha<1$ there exists a bounded linear operator 
		$
		    	\mathcal{Q}_1: C^{4,\alpha}(\mathbb S^{n-1},\mathbb{R}^2)
		    	\rightarrow C^{4,\alpha}_{4-n}(\mathbb R^{n}\backslash B_1)
		$
		such that for all $(\psi_0,\psi_2)\in C^{4,\alpha}
		(\mathbb S^{n-1},\mathbb{R}^2)$
		\begin{align}\label{eq076}
		\left\{
		\begin{array}{rclcc}
		\Delta^2 \mathcal Q_1(\psi_0,\psi_2) & = & 0  &{\rm in } 
		& \mathbb R^{n}\backslash B_1\\
		\mathcal Q_1(\psi_0,\psi_2) & = & \psi_0 & 
		{\rm on } & \partial B_{1} \\ 
		\Delta \mathcal Q_1(\psi_0,\psi_2) & = & \psi_2 
		& {\rm on } & \partial B_{1} .\\
		\end{array}
		\right.
		\end{align}
		More specifically, we have that
		\begin{equation}\label{eq086}
		    \mathcal Q_1(\psi_0,\psi_2)= |x|^{2-n} 
		    \sum_{j=0}^\infty |x|^{-j} 
		    \left ( (\psi_0)_j + \frac{(\psi_2)_j}{c_2(n,j)} (|x|^2 - 1) \right ), 
		\end{equation}
		where 
		$$c_2(n,j) = \left ( 4-n+\frac{n-\sqrt{n^2 +8+4\lambda_j}}{2} 
		\right ) \left ( 2 + \frac{n - \sqrt{n^2+8+4\lambda_j}}{2} 
		\right ) $$ 
		and $\lambda_j$ is the $j$th eigenvalue of $\Delta_{\mathbb{S}^{n-1}}$.  
	\end{proposition}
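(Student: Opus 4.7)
My plan is to construct $\mathcal{Q}_1$ explicitly by separation of variables in spherical coordinates, and then to verify that the resulting series defines a bounded operator into the weighted Hölder space $C^{4,\alpha}_{4-n}$. First, I would expand $\psi_0 = \sum_j (\psi_0)_j e_j$ and $\psi_2 = \sum_j (\psi_2)_j e_j$ in spherical harmonics and seek $u = \mathcal{Q}_1(\psi_0,\psi_2)$ in the form $u(r\theta) = \sum_j u_j(r) e_j(\theta)$. Since $\Delta$ acts on $f(r) e_j(\theta)$ as $L_j f := \ddot{f} + \frac{n-1}{r} \dot{f} - \frac{\lambda_j}{r^2} f$, the PDE $\Delta^2 u = 0$ decouples into a family of fourth-order Euler ODEs $L_j^2 u_j = 0$.

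Next, a direct calculation gives $L_j(r^\gamma) = (\gamma(\gamma + n - 2) - \lambda_j) r^{\gamma - 2}$, so the four characteristic exponents of each $L_j^2$ are the roots of $\gamma(\gamma + n - 2) = \lambda_j$ together with the roots of $(\gamma - 2)(\gamma + n - 4) = \lambda_j$. Writing $\lambda_j = k_j(k_j + n - 2)$ for a degree-$k_j$ harmonic, these exponents are $k_j$, $k_j + 2$, $2 - n - k_j$, and $4 - n - k_j$. To lie in $C^{4,\alpha}_{4-n}(\mathbb{R}^n \setminus B_1)$, which forces decay at infinity, $u_j$ must be a combination of only the two decaying fundamental solutions, so $u_j(r) = A_j r^{2-n-k_j} + B_j r^{4-n-k_j}$. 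The two boundary conditions at $r = 1$ give a linear system for $(A_j, B_j)$. The key observation is that $r^{2-n-k_j} e_j$ is itself harmonic, so only $B_j$ contributes to $\Delta u|_{r=1}$, rendering the system triangular; the constant relating $B_j$ to $(\psi_2)_j$ is precisely $c_2(n,j)$. Solving gives $B_j = (\psi_2)_j / c_2(n,j)$ and $A_j = (\psi_0)_j - B_j$, which reassembled yields exactly the series \eqref{eq086}.

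Finally, to show this formal series defines a bounded operator $\mathcal{Q}_1 \colon C^{4,\alpha}(\mathbb{S}^{n-1},\mathbb{R}^2) \to C^{4,\alpha}_{4-n}(\mathbb{R}^n \setminus B_1)$, I would work on dyadic annuli $\{2^m \leq |x| \leq 2^{m+1}\}$, rescale each to unit size, and invoke interior Schauder estimates for $\Delta^2$. Each mode-$j$ summand decays at infinity at least like $|x|^{2-n-k_j}$, while $1/c_2(n,j) = O(k_j^{-1})$; combined with the rapid decay of the spherical-harmonic Fourier coefficients of $\psi_0, \psi_2 \in C^{4,\alpha}(\mathbb{S}^{n-1})$, both the series and its termwise derivatives converge absolutely and uniformly on compact subsets of $\mathbb{R}^n \setminus B_1$.

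The main technical obstacle I anticipate is upgrading this mode-by-mode convergence to the full weighted $C^{4,\alpha}$ bound uniformly in both the harmonic degree $k_j$ and the annulus index $m$: a rescaling argument on each annulus, coupled with Schauder regularity for $\Delta^2$, is needed to capture the Hölder seminorm, together with careful bookkeeping of how the geometric decay across annuli offsets the polynomial growth in $k_j$ that arises from differentiating spherical harmonics.
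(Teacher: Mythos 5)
Your separation-of-variables derivation of the explicit series is fine, and is actually more detailed than the paper, which simply asserts that \eqref{eq086} satisfies \eqref{eq076} and devotes the entire proof to the estimate $\|\mathcal Q_1(\psi_0,\psi_2)\|_{(4,\alpha),4-n}\leq c\|(\psi_0,\psi_2)\|_{(4,\alpha)}$. (When you carry out the triangular step, the constant you get is $\Delta\bigl(|x|^{4-n-l}e_j\bigr)=\bigl((4-n-l)(2-l)-\lambda_j\bigr)|x|^{2-n-l}e_j=-2(n-4+2l)\,|x|^{2-n-l}e_j$, which never vanishes; this is the quantity that must play the role of $c_2(n,j)$, and it is worth recording explicitly rather than deferring to the stated formula.)

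The genuine gap is in the boundedness step. Your plan --- termwise absolute convergence plus \emph{interior} Schauder estimates on dyadic annuli --- cannot produce the weighted $C^{4,\alpha}$ bound uniformly up to the inner boundary, which is exactly where the norm (and the later gluing, which uses third-order boundary data on $\partial B_{r_\varepsilon}$) requires it. For $\psi_0,\psi_2\in C^{4,\alpha}(\mathbb S^{n-1})$ the coefficients decay only polynomially, $|(\psi_i)_j|\lesssim\lambda_j^{-2}$, while $\|e_j\|_{C^{4,\alpha}(\mathbb S^{n-1})}$ grows polynomially in $\lambda_j$ and the number of degree-$l$ harmonics grows like $l^{n-2}$; for $|x|\geq 2$ the factor $|x|^{-l}$ restores summability, but on $1\leq|x|\leq 2$ there is no geometric gain and the termwise bounds one can extract from $C^{4,\alpha}$ data are not summable even at the level of $C^0$, let alone $C^{4,\alpha}$. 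Moreover, interior Schauder estimates never reach the boundary annulus; there one needs boundary (Navier) Schauder estimates, which in turn require an a priori sup bound near $\partial B_1$ that your argument does not supply. This is precisely where the paper's proof goes a different way: it reduces to $\psi_0\equiv 0$ (the case $\psi_2\equiv 0$ is quoted from Jleli's thesis), treats the $j=0$ mode by hand, uses the embedding $H^s(\mathbb S^{n-1})\hookrightarrow L^\infty$ together with the uniform boundedness of $(1+\lambda_j^s)r^{-l}$ for $r\geq 2$ to obtain the weighted sup bound only on $|x|\geq 2$, then invokes Dunninger's maximum principle for the biharmonic Navier problem to propagate that bound across the annulus $1\leq|x|\leq 2$, and only afterwards applies rescaled elliptic estimates to capture derivatives and the H\"older seminorm. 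You need some such device near $\partial B_1$ (the maximum principle, or a blow-up/contradiction argument as in the interior Poisson operator proposition); without it the ``careful bookkeeping'' you anticipate cannot close, because the obstruction at $|x|=1$ is a genuine divergence of the termwise estimates, not a bookkeeping issue.
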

	\begin{remark} Recall that the eigenvalues of 
	$\Delta_{\mathbb{S}^{n-1}}$ all have the form $\lambda_j 
	= l (n-2+l)$ for some $l = 0,1,2,3,\dots$ In this form, 
	the expression for $c_2(n,j)$ simplifies to $(4-n-l)(2-l)$. 
	\end{remark} 
	
\begin{proof} 
Since the expression \eqref{eq086} satisfies \eqref{eq076}, we need
only to prove  that
$$\|\mathcal Q_1(\psi_0,\psi_2)\|_{(4,\alpha),4-n}\leq 
c\|(\psi_0,\psi_2)\|_{(4,\alpha)}.$$
To prove this, we consider two cases as follows.

\medskip

\noindent{\bf Case 1:} $\psi_2\equiv0$.
    
    \medskip
    
This case is similar to the Case 1 of Proposition \ref{poissoninterior}. See 
\cite[Proposition 13.25]{jleli}.

\medskip

\noindent{\bf Case 2:} $\psi_0\equiv0$.
    
    \medskip
    
    We verify directly that there exists $c>0$ such that
    \begin{equation*}
        \sup _{\mathbb{R}^{n}\backslash B_1} 
        |x|^{n-4}|(|x|^{4-n}-|x|^{2-n})(\psi_2)_0| 
        \leq c \|\psi_{2}\|_{L^{\infty}\left(\mathbb S^{n-1}\right)}.
    \end{equation*}
    Notice that since $\mathbb{S}^{n-1}$ is compact, using 
    Morrey's inequality, we have $H^s(\mathbb{S}^{n-1})\hookrightarrow C^{0,\alpha}
        (\mathbb{S}^{n-1}) \hookrightarrow L^{\infty}(\mathbb{S}^{n-1}),$
    for $s>n/2$ and $\alpha=1-\frac{n}{2s}$.
    Considering $(\psi_{2})_0= 0$, by the definition of 
    Sobolev fractional norm on the sphere (see, for instance, 
    \cite[page 406]{MR2119285}), there exists $c>0$ independently 
    of $r>0$, such that
    \begin{align*}
        \sup_{\mathbb{S}^{n-1}} r^{n-4}|\mathcal 
        Q_1(0, \psi_2)(r\;\cdot)|\leq c r^{n-4}\|
        \mathcal Q_1(0, \psi_2)(r\;\cdot)\|_{H^{s}
        \left(\mathbb S^{n-1}\right)}\leq c
        \left[\sum_{j=1}^{\infty}\left(1+\lambda_{j}^{s}
        \right)\left|(\psi_2)_j\right|^{2}r^{-j}\right]^{1/2}.
    \end{align*}
    Also, since $j>0$, observe that
    \begin{equation*}
        \sup _{r \geq 2} \sup _{j \geq 1}
        \left(1+\lambda_{j}^{s}\right)r^{-j}<\infty.
    \end{equation*}
    Thus, we have
    \begin{equation*}
        \sup _{\mathbb{R}^{n}\backslash B_{2}} 
        |x|^{n-4}|\mathcal Q_1(0,\psi_2)| \leq 
        c\|\psi_{2}\|_{L^{2}\left(\mathbb{S}^{n-1}\right)} 
        \leq c\|\psi_2\|_{L^{\infty}
        \left(\mathbb{S}^{n-1}\right)}.
    \end{equation*}
    The maximum principle in \cite{MR312040}, then implies that
    \begin{equation*}
        \sup _{\mathbb{R}^{n}\backslash B_{1}} 
        |x|^{n-4}|\mathcal Q_1(0,\psi_2)| \leq c\|\psi_2\|_{L^{\infty}
        \left(\mathbb{S}^{n-1}\right)}.
    \end{equation*}
   Finally, using the last estimate, we can use standard (rescaled) 
   elliptic estimates to prove the estimates for the derivatives, 
   which completes the proof of the proposition.
\end{proof}	

\begin{corollary}\label{cor:poissonexterior}
Let $0<\alpha<1$ and $r>0$. 
There exists a bounded linear operator 
		\begin{equation*}
		    	\mathcal{Q}_r: C^{4,\alpha}
		    	(\mathbb S^{n-1}_r,\mathbb{R}^2)\rightarrow 
		    	C^{4,\alpha}_{4-n}(\mathbb R^{n}\backslash B_r)
		\end{equation*}
		satisfying that for each $(\psi_{0},\psi_{2})\in 
		C^{4,\alpha}(\mathbb S_r^{n-1},\mathbb{R}^2)$
		\begin{align*}
		\left\{
		\begin{array}{rclcc}
		\Delta^2 \mathcal Q_r(\psi_{0},\psi_{2}) & = & 0  
		&{\rm in } & \mathbb R^{n}\backslash B_r\\
		\mathcal Q_r(\psi_{0},\psi_{2}) & = & \psi_{0} 
		& {\rm on } & \partial B_{r} \\ 
		\Delta \mathcal Q_r(\psi_{0},\psi_{2}) & = 
		& r^{-2}\psi_{2} & {\rm on } & \partial B_{r} .\\
		\end{array}
		\right.
		\end{align*}
		Moreover, there exists a constant $C>0$ such that
		\begin{equation*}
		    \|\mathcal Q_r(\psi_{0},\psi_{2})
		    \|_{C^{4,\alpha}_{4-n}(\mathbb R^{n}
		    \backslash B_1)}\leq Cr^{n-4}\|(\psi_{0},\psi_{2})
		    \|_{C^{4,\alpha}(\mathbb S_r^{n-1},\mathbb{R}^2)}.
		\end{equation*}
\end{corollary}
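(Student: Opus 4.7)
The strategy is a direct scaling argument, exactly parallel to the proof of Corollary \ref{cor:poissoninterior}. Given $(\psi_0,\psi_2)\in C^{4,\alpha}(\mathbb{S}_r^{n-1},\mathbb{R}^2)$, I define the rescaled boundary data $\widetilde\psi_i(\theta):=\psi_i(r\theta)$ for $\theta\in\mathbb{S}^{n-1}$ and $i=0,2$, and then set
\begin{equation*}
\mathcal{Q}_r(\psi_0,\psi_2)(x) \;:=\; \mathcal{Q}_1(\widetilde\psi_0,\widetilde\psi_2)\!\left(r^{-1}x\right), \qquad x\in \mathbb{R}^n\setminus B_r.
\end{equation*}
Linearity and boundedness of $\mathcal{Q}_r$ follow immediately from the corresponding properties of $\mathcal{Q}_1$ established in the preceding proposition.

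\textbf{Verification of the boundary value problem.} Since $\Delta^2$ is a homogeneous fourth-order operator, one has $\Delta^2\mathcal{Q}_r(\psi_0,\psi_2)(x) = r^{-4}\bigl(\Delta^2\mathcal{Q}_1(\widetilde\psi_0,\widetilde\psi_2)\bigr)(r^{-1}x)=0$ on $\mathbb{R}^n\setminus B_r$. On $\partial B_r$, writing $x=r\theta$ with $\theta\in \mathbb{S}^{n-1}$, the Dirichlet trace becomes $\mathcal{Q}_r(\psi_0,\psi_2)(r\theta)=\mathcal{Q}_1(\widetilde\psi_0,\widetilde\psi_2)(\theta)=\widetilde\psi_0(\theta)=\psi_0(r\theta)$, while for the Navier condition the chain rule yields $\Delta\mathcal{Q}_r(\psi_0,\psi_2)(r\theta)=r^{-2}\Delta\mathcal{Q}_1(\widetilde\psi_0,\widetilde\psi_2)(\theta)=r^{-2}\widetilde\psi_2(\theta)=r^{-2}\psi_2(r\theta)$, exactly as required.

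\textbf{The norm estimate.} The weighted H\"older norm of weight $4-n$ scales homogeneously under the dilation $x\mapsto r^{-1}x$: each term of the form $|x|^{n-4-k}|\nabla^k u(x)|$ transforms so that, after the substitution $x=rz$, the factor $|x|^{n-4-k}$ gives $r^{n-4-k}$ and each derivative contributes $r^{-k}$, producing an overall factor of $r^{n-4}$. Consequently,
\begin{equation*}
\bigl\|\mathcal{Q}_r(\psi_0,\psi_2)\bigr\|_{C^{4,\alpha}_{4-n}(\mathbb{R}^n\setminus B_r)} = r^{n-4}\bigl\|\mathcal{Q}_1(\widetilde\psi_0,\widetilde\psi_2)\bigr\|_{C^{4,\alpha}_{4-n}(\mathbb{R}^n\setminus B_1)}.
\end{equation*}
By definition of the weighted norm on $\mathbb{S}_r^{n-1}$ from Section \ref{sec:functionspaces}, $\|\widetilde\psi_i\|_{C^{4,\alpha}(\mathbb{S}^{n-1})} = \|\psi_i\|_{C^{4,\alpha}(\mathbb{S}_r^{n-1})}$. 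Inserting the unit-radius bound from the preceding proposition yields the claimed inequality. The only genuine point to be careful about is matching the dilation weights so that no hidden factors of $r$ remain; once one observes that the exterior weight $4-n$ matches the homogeneity of the leading Green's kernel $|x|^{4-n}$ appearing in the explicit formula \eqref{eq086}, the factor $r^{n-4}$ emerges naturally and no further work is needed.
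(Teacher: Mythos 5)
Your proposal is correct and is essentially identical to the paper's proof, which defines $\mathcal{Q}_r(\psi_0,\psi_2)(x)=\mathcal{Q}_1(\psi_0(r\,\cdot),\psi_2(r\,\cdot))(r^{-1}x)$ and invokes the same scaling of the weighted norm as in the interior case. The only blemish is a typo in your scaling bookkeeping: the weighted norm controls terms of the form $|x|^{\,n-4+k}|\nabla^k u(x)|$ (not $|x|^{\,n-4-k}$), and with that correction the dilation indeed produces exactly the factor $r^{n-4}$ you claim.
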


    \begin{proof}
        As before, let us consider 
         \begin{equation*}
            \mathcal{Q}_{r}\left(\psi^r_{0},\psi^r_{2}\right)
            (x)=\mathcal{Q}_{1}(\psi_0,\psi_2)\left(r^{-1} x\right),
        \end{equation*}
        where $\psi^r_i(\theta):=\psi_{i}(r \theta)$ for $i=0,2$.
        The proof now follows the same lines as in the interior case.
    \end{proof}

\subsection{Navier-to-Neumann operator}

    Finally we prove the main result of this section, which is 
    the construction of an isomorphism we call the 
    Navier-to-Neumann operator. 
    To this end, we combine a Liouville-type result for bounded 
    entire biharmonic functions and a standard result from the 
    theory of pseudodifferential operators. 

	\begin{proposition}
	The operator $\mathcal{Z}_1:\pi''(C^{4,\alpha}(\mathbb S^{n-1},\mathbb{R}^2))
	\rightarrow \pi''(C^{1,\alpha}(\mathbb S^{n-1},\mathbb{R}^2))$ given by
		\begin{equation*}
		\mathcal{Z}_1(\phi_0,\phi_2) =
		\left(\partial_r(\mathcal{P}_1(\phi_0,\phi_2) 
		- \mathcal{Q}_1(\phi_0,\phi_2)), \partial_r
		(\Delta \mathcal{P}_1(\phi_0,\phi_2) - \Delta \mathcal{Q}_1(\phi_0,\phi_2)) \right) 
		\end{equation*}
		is an isomorphism.
	\end{proposition}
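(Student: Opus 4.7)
The plan is to diagonalize $\mathcal{Z}_1$ in the spherical Fourier basis on $\mathbb{S}^{n-1}$ and reduce the problem to an explicit $2\times 2$ matrix computation on each eigenspace. Because $\mathcal{P}_1$ and $\mathcal{Q}_1$ are built from the rotationally invariant bi-Laplacian, they preserve the spherical harmonic decomposition, so $\mathcal{Z}_1$ is block-diagonal with respect to the $l$-th eigenspace of $\Delta_{\mathbb{S}^{n-1}}$ (with $\lambda_l=l(n-2+l)$, $l\geq 2$ since we are on $\pi''$). On such an eigenspace, the indicial equation $\alpha(\alpha+n-2)-\lambda_l=0$ and its shift by two yield the four biharmonic roots $\{l,l+2,4-n-l,2-n-l\}$, and by the growth conditions the interior operator uses exactly the first two, the exterior operator the last two.

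Next, I would solve the Navier boundary conditions at $r=1$ and compute the jumps in the normal derivative of $u$ and of $\Delta u$. Using $\Delta(r^{l+2}e_j)=2(2l+n)r^le_j$ and $\Delta(r^{2-n-l}e_j)=-2(l+n-2)r^{-n-l}e_j$, a direct calculation yields that on the $l$-th mode
\begin{equation*}
\mathcal{Z}_1\big|_l=\begin{pmatrix} 2l+n-4 & -\dfrac{l+2}{(2l+n)(l+n-2)} \\ 0 & 2l+n \end{pmatrix},
\end{equation*}
whose determinant $(2l+n-4)(2l+n)$ is bounded below by $n(n+4)>0$ for every $l\geq 2$. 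Thus $\mathcal{Z}_1$ is pointwise invertible on each Fourier mode, and the diagonal $\mathrm{diag}(2l+n-4,2l+n)$ identifies the principal symbol of $\mathcal{Z}_1$ with an invertible first-order elliptic symbol on $\mathbb{S}^{n-1}$.

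To conclude the isomorphism I would combine the two ingredients the authors flag. The symbol-level invertibility together with standard elliptic pseudodifferential theory on closed manifolds gives that $\mathcal{Z}_1$ is Fredholm between the stated Hölder product spaces. For triviality of the kernel I would use a Liouville-type argument: if $\mathcal{Z}_1(\phi_0,\phi_2)=0$ then $\mathcal{P}_1(\phi_0,\phi_2)$ and $\mathcal{Q}_1(\phi_0,\phi_2)$ match on $\partial B_1$ in value, Laplacian, normal derivative and normal derivative of the Laplacian, so their concatenation $u$ is $C^3$ and weakly biharmonic on $\mathbb{R}^n\setminus\{0\}$; elliptic regularity then makes $u$ smoothly biharmonic across $\partial B_1$, the interior bound $u=O(|x|^2)$ makes the origin removable, and the exterior decay $u=O(|x|^{4-n})$ together with the classical Liouville theorem for bounded biharmonic functions on $\mathbb{R}^n$ forces $u\equiv 0$, whence $\phi_0=\phi_2=0$. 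The delicate step, where I expect most of the care to be needed, is promoting the mode-wise inversion to an honest bounded inverse between the Hölder product spaces (controlling the off-diagonal coupling of size $O(l^{-1})$ uniformly and matching the two distinct regularity orders in the two components of the system); the PDO framework handles exactly this bookkeeping, so that trivial kernel combined with Fredholmness yields the full isomorphism.
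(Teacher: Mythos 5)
Your injectivity argument is essentially the paper's: if $\mathcal{Z}_1(\phi_0,\phi_2)=0$, glue $\mathcal{P}_1(\phi_0,\phi_2)$ and $\mathcal{Q}_1(\phi_0,\phi_2)$ across $\partial B_1$ (their values and Laplacians already match by construction, the kernel condition matches the two normal derivatives), get a weakly biharmonic function on $\mathbb{R}^n\setminus\{0\}$, remove the origin using the $O(|x|^2)$ bound, and invoke the Liouville theorem for bounded biharmonic functions together with the decay at infinity. That half is fine. Where you diverge is the surjectivity/invertibility half, and there your computation as written is wrong: $r^{2-n-l}e_l$ is \emph{harmonic}, so $\Delta(r^{2-n-l}e_l)=0$, not $-2(l+n-2)r^{-n-l}e_l$; the relevant identity is $\Delta(r^{4-n-l}e_l)=-2(2l+n-4)r^{2-n-l}e_l$. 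Carrying out the boundary-value solve correctly, on the $l$-th eigenspace one finds
\begin{equation*}
\mathcal{Z}_1\big|_l=\begin{pmatrix} 2l+n-2 & \dfrac{2(2l+n-2)}{(2l+n)(2l+n-4)} \\[4pt] 0 & 2l+n-2 \end{pmatrix},
\end{equation*}
i.e.\ both diagonal entries equal $2l+n-2$ (in particular the second component of $\mathcal{Z}_1$ does not see $\phi_0$ at all, since $\Delta\mathcal{P}_1$ and $\Delta\mathcal{Q}_1$ are the interior and exterior harmonic extensions of $\phi_2$). The qualitative conclusion --- triangular, nonzero determinant for every $l\geq 2$ --- survives, but your stated matrix and determinant are incorrect.

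The genuine gap is the final inference ``Fredholm $+$ trivial kernel $\Rightarrow$ isomorphism.'' That implication requires index zero, which you never establish; a Fredholm operator can be injective with a nontrivial cokernel. The paper closes this by asserting that $\mathcal{Z}_1$ is an elliptic \emph{self-adjoint} pseudodifferential operator with nonvanishing principal symbol, so that surjectivity follows once the kernel is shown trivial; you use neither self-adjointness nor any other index-zero argument. Alternatively, your mode-by-mode inversion could be made to yield surjectivity directly (invert on each eigenspace with the corrected matrix, whose inverse entries are uniformly of size $O(l^{-1})$, and then prove a uniform H\"older bound for the summed inverse), but that is exactly the step you defer to ``the PDO framework'' without carrying it out, and without it (or the self-adjointness/index-zero input) the proof of surjectivity does not close.
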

	
	\begin{proof}
	   Notice that 
	   $\mathcal{Z}_1$ is a linear third order elliptic 
	   self-adjoint pseudodifferential operator with principal 
	   symbol $\sigma_{\mathcal{Z}_1}(\xi)=-2(|\xi|,|\xi|^3)$.
	   Hence, since  $\sigma_{\mathcal{Z}_1}(\xi)\neq0$ 
	   whenever $\xi\neq0$, $\mathcal{Z}_1$ is surjective 
	   by a classical result from the 
	   theory of linear operators.
	   
	   Now it is sufficient to prove that this operator is also 
	   injective. Let us take $(\phi_0,\phi_2) \in 
	   {C}^{4, \alpha}\left(\mathbb{S}^{n-1},\mathbb{R}^2\right)$ 
	   satisfying $\mathcal{Z}_1(\phi_0,\phi_2)=0$ and define 
	   $w_{(\phi_0,\phi_2)} \in C^{1,\alpha}(\mathbb{R}^n)$ as
	   \begin{equation*}
	   w_{(\phi_0,\phi_2)}(x)=
	       \begin{cases}
             \overline{w}_{(\phi_0,\phi_2)}, 
             \; {\rm if} \;  x\in B_1\\
             \widetilde{w}_{(\phi_0,\phi_2)} 
             \; {\rm if} \;  x\in \mathbb{R}^n\setminus B_1,

	       \end{cases}
	   \end{equation*}
	   where 
	   $$\overline{w}_{(\phi_0,\phi_2)}
	   :=r^{1-n} \sum_{j=2}^\infty \left ( (\phi_0)_j e_j 
	    + \frac{(\phi_2)_j}{c_2(n,j)} (r^2-1) \right ). $$ 
	   and $\widetilde{w}_{(\phi_0,\phi_2)}$ is the bounded 
	   entire biharmonic function defined 
	   in the exterior region $\mathbb{R}^n\setminus B_1$. 
	   Using the Lioville theorem in \cite{MR274792}, it follows 
	   that $w\equiv0$ in $C^{1,\alpha}(\mathbb{R}^n)$, and so 
	   $(\phi_0,\phi_2)\equiv 0$ in ${C}^{4, \alpha}
	   \left(\mathbb{S}^{n-1},\mathbb{R}^2\right)$,which 
	   proves the proposition.
	\end{proof}
	
	As in the proofs of 
	Corollaries \ref{cor:poissoninterior} and \ref{cor:poissonexterior} 
	we obtain
	$$r\partial_r(\mathcal P_r(\phi^r_{0},\phi^r_{2})
	-\mathcal Q_r(\phi^r_{0},\phi^r_{2}))(r\theta)=\partial_r
	(\mathcal P_1(\phi_{0},\phi_{2})-
	\mathcal Q_1(\phi_{0},\phi_{2}))(\theta)$$
	and
	$$r^3\partial_r\Delta(\mathcal P_r(\phi^r_{0},\phi^r_{2})-
	\mathcal Q_r(\phi^r_{0},\phi^r_{2}))(r\theta)=
	\partial_r(\Delta\mathcal P_1(\phi_{0},\phi_{2})-
	\Delta\mathcal Q_1(\phi_{0},\phi_{2}))(\theta).$$
	Therefore, we obtain
	\begin{corollary}\label{cor002}
	The operator $\mathcal{Z}_r:\pi''(C^{4,\alpha}
	(\mathbb S_r^{n-1},\mathbb{R}^2))\rightarrow 
	\pi''(C^{1,\alpha}(\mathbb S_r^{n-1},\mathbb{R}^2))$ given by
		\begin{equation*}
		\mathcal{Z}_r(\phi_0,\phi_2) 
		= \left(r\partial_r(\mathcal{P}_r(\phi_0,\phi_2) 
		- \mathcal{Q}_r(\phi_0,\phi_2)),
		r^3 \partial_r(\Delta \mathcal{P}_r(\phi_0,\phi_2) 
		- \Delta \mathcal{Q}_r(\phi_0,\phi_2)) \right) 
		\end{equation*}
		is an isomorphism, with norm bounded independent of $r$.
	\end{corollary}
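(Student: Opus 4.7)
The strategy is to derive the corollary directly from the invertibility of $\mathcal{Z}_1$ established in the preceding proposition, using the two scaling identities displayed immediately above the statement. Given data $(\phi_0, \phi_2) \in \pi''(C^{4,\alpha}(\mathbb{S}_r^{n-1}, \mathbb{R}^2))$, I would set $\widehat{\phi}_i(\theta) := \phi_i(r\theta)$ for $i \in \{0,2\}$, which defines functions on $\mathbb{S}^{n-1}$. By the very definition $\|\phi\|_{(k,\alpha),r} = \|\phi(r\cdot)\|_{C^{k,\alpha}(\mathbb{S}^{n-1})}$, this pullback is an isometry between the H\"older spaces on the sphere of radius $r$ and those on the unit sphere, and it clearly preserves the high-frequency projection $\pi''$.

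The two scaling identities displayed just before the corollary assert precisely that
\begin{equation*}
\mathcal{Z}_r(\phi_0, \phi_2)(r\theta) = \mathcal{Z}_1(\widehat{\phi}_0, \widehat{\phi}_2)(\theta),
\end{equation*}
so $\mathcal{Z}_r$ is conjugate to $\mathcal{Z}_1$ via the pullback isometries on source and target. Invertibility of $\mathcal{Z}_r$ and boundedness of $\mathcal{Z}_r^{-1}$ with norm independent of $r$ then follow at once from the corresponding properties of $\mathcal{Z}_1$: given $(\psi_0, \psi_2) \in \pi''(C^{1,\alpha}(\mathbb{S}_r^{n-1}, \mathbb{R}^2))$, the unique preimage is the pair $(\phi_0, \phi_2)$ determined by $(\widehat{\phi}_0, \widehat{\phi}_2) = \mathcal{Z}_1^{-1}(\widehat{\psi}_0, \widehat{\psi}_2)$, and the estimate
\begin{equation*}
\|(\phi_0, \phi_2)\|_{(4,\alpha), r} = \|(\widehat{\phi}_0, \widehat{\phi}_2)\|_{(4,\alpha)} \leq \|\mathcal{Z}_1^{-1}\|\,\|(\widehat{\psi}_0, \widehat{\psi}_2)\|_{(1,\alpha)} = \|\mathcal{Z}_1^{-1}\|\,\|(\psi_0, \psi_2)\|_{(1,\alpha), r}
\end{equation*}
holds with $\|\mathcal{Z}_1^{-1}\|$ independent of $r$.

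Since this is a purely formal scaling reduction, there is no serious obstacle. The only point that needs verification is the validity of the two displayed scaling identities, which one checks by differentiating in $r$ the defining relations $\mathcal{P}_r(\phi^r_0, \phi^r_2)(x) = \mathcal{P}_1(\phi_0, \phi_2)(r^{-1}x)$ and $\mathcal{Q}_r(\phi^r_0, \phi^r_2)(x) = \mathcal{Q}_1(\phi_0, \phi_2)(r^{-1}x)$ used in Corollaries \ref{cor:poissoninterior} and \ref{cor:poissonexterior}; the prefactors $r$ and $r^3$ appearing in the definition of $\mathcal{Z}_r$ are precisely those needed so that the chain rule absorbs the scale factors introduced by the pullback, producing unit-sphere radial derivatives on the right-hand side.
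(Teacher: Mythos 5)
Your proposal is correct and follows essentially the same route as the paper: the paper records exactly the two scaling identities $r\partial_r(\mathcal P_r-\mathcal Q_r)(\phi^r_0,\phi^r_2)(r\theta)=\partial_r(\mathcal P_1-\mathcal Q_1)(\phi_0,\phi_2)(\theta)$ and its Laplacian counterpart with the $r^3$ factor, and then concludes by conjugating $\mathcal Z_r$ to $\mathcal Z_1$ through the pullback, which is an isometry for the norms $\|\phi\|_{(k,\alpha),r}=\|\phi(r\cdot)\|_{C^{k,\alpha}(\mathbb S^{n-1})}$. Your additional remark that the prefactors $r$ and $r^3$ are precisely what the chain rule demands is the same verification implicit in the paper's reference to the proofs of Corollaries \ref{cor:poissoninterior} and \ref{cor:poissonexterior}.
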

	
\section{Interior problem}\label{sec:interioranalysis}

Following the strategy described in the introduction, our goal in this section is 
to construct a solution in a small punctured ball centered at the singular point. 
Due to the conformal invariance of the $Q$-curvature equation, we will use conformal 
normal coordinates centered at the singular point. In these coordinates, we show 
the metric is sufficiently close to the Euclidean metric using the vanishing assumption 
on the Weyl tensor. We use these coordinates to show that the first terms in the expansion of $P_g(u_{\varepsilon,R})$ are $L^2$-orthogonal to the low Fourier modes on the sphere, see Lemma \ref{lem008}. This allows us to use an auxiliary function which will be important to define the fixed point problem. By the expansion of 
the metric, it should be possible to perturb a Delaunay-type solution 
to an exact solution to equation \eqref{ourequation}.

\subsection{Conformal normal coordinates}\label{sec001}
	
Let $(M,g_0)$ be a smooth Riemannian manifold of dimension $n\geq 5$. 
Given a point $p\in M$, 
there exist $r>0$ and a smooth function $\mathcal F$ on $M$ such 
that the conformal metric $g=\mathcal F^{\frac{4}{n-4}}g_0$ satisfies 
\begin{equation}\label{eq038}
    \det g=1,\quad {\rm in} \quad B_r(p)
\end{equation}
in $g$-normal coordinates. In these coordinates $\mathcal F=1+O(|x|^2)$
and $g = \delta + O(|x|^2)$. Additionally, 
\begin{equation}\label{eq036}
    R_{ij}(0)=0,\;\;
    R_g=O(|x|^2)\;\;{\rm and }\;\;\Delta_gR_g(0)=-\frac{1}{6}|W_g(0)|^2,
\end{equation}
where $W_g$ is the Weyl tensor of the metric $g$. 
J. M. Lee and T. H. Parker \cite{MR888880} first proved the existence 
of such coordinates, and 
later J. G. Cao \cite{MR991959} and M. G\"{u}nther \cite{MR1225437} 
derived refined expansions of various curvature terms in 
conformal normal coordinates. 

\begin{remark}
For the remainder of the paper we let $d=\left[\dfrac{n-8}{2}\right]$ 
for $n\geq 8$ and $d=-1$ for $5\leq n\leq 7$. 
\end{remark}

Using the results in \cite{MR1427765} and \cite{MR1216009} we obtain 
improved estimates on curvature terms if we assume that the Weyl 
tensor vanishes to sufficiently high 
order. If the Weyl tensor satisfies  
\begin{equation}\label{eq026}
	    \nabla^lW_{g}(p)=0,\;\;\mbox{for }l=0,1,\ldots,d,
	\end{equation}
then	
\begin{equation}\label{eq035}
	g=\delta+O(|x|^{d+3}).
	\end{equation}

This implies that $R_g=O(|x|^{\max\{2,d+1\}})$ and ${\rm Ric}_g=O(|x|^{\max\{1,d+1\}})$. 
Therefore, by \eqref{q_curvature}, we have $Q_g=O(|x|^{\max\{0,d-1\}})$

Note that the Weyl tensor is conformally invariant, which implies that the 
condition \eqref{eq026} is satisfied for any metric in the conformal class of $g$. 
Also, for dimensions $5\leq n\leq 7$, the condition on $W_g$ is 
vacuous.

In conformal normal coordinates, we will always write
\begin{equation*}
    g=\exp(h),
\end{equation*}
where $h$ is a two-tensor on $\mathbb R^n$ satisfying  
$h_{ij}(x)x_j=0$ and $tr(h(x))=0$ (See \cite{MR2477893}). Note 
that by $\eqref{eq035}$, we have $h=O(|x|^{d+3})$.

Motivated by results in \cite{MR1719213,MR2425176} we prove the following lemma.

\begin{lemma}
   We have
\begin{equation}\label{eq039}
    R_{jk}=\frac{1}{2}(\partial_i\partial_jh_{ki}+\partial_i\partial_kh_{ij}
    -\partial_i\partial_ih_{jk})+O(|\partial h|^2)+O(|h||\partial^2 h|)
\end{equation}
   and
   \begin{equation}\label{eq040}
       R_g=\partial_i\partial_jh_{ij}+O(|\partial h|^2)+O(|h||\partial^2 h|)
   \end{equation}
\end{lemma}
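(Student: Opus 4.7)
The plan is a direct expansion of the Ricci tensor and scalar curvature in powers of $h$, making careful use of the trace-free normalization $\operatorname{tr}(h)=0$.

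First I would expand the metric and its inverse. Writing $g=\exp(h)$ gives
\begin{equation*}
g_{ij}=\delta_{ij}+h_{ij}+O(|h|^2),\qquad g^{ij}=\delta_{ij}-h_{ij}+O(|h|^2),
\end{equation*}
so that all products of the form $g^{kl}\partial h$ contribute a leading term from $\delta^{kl}\partial h$ plus an error of type $O(|h||\partial h|)$. Plugging this into the definition of the Christoffel symbols,
$\Gamma^k_{ij}=\tfrac{1}{2}g^{kl}(\partial_i g_{jl}+\partial_j g_{il}-\partial_l g_{ij})$, yields
\begin{equation*}
\Gamma^k_{ij}=\tfrac{1}{2}\bigl(\partial_i h_{jk}+\partial_j h_{ik}-\partial_k h_{ij}\bigr)+O(|h||\partial h|).
\end{equation*}
The key observation is that the trace-free condition $h_{ii}=0$ forces the contracted Christoffel to lose its leading-order term, i.e.\ $\Gamma^i_{ik}=\tfrac{1}{2}\partial_k h_{ii}+O(|h||\partial h|)=O(|h||\partial h|)$; this is the reason the $\partial_j\Gamma^i_{ik}$ contribution to the Ricci tensor becomes a pure error term.

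Next I would substitute into the standard formula
\begin{equation*}
R_{jk}=\partial_i\Gamma^i_{jk}-\partial_j\Gamma^i_{ik}+\Gamma^i_{il}\Gamma^l_{jk}-\Gamma^i_{jl}\Gamma^l_{ik}.
\end{equation*}
The two quadratic-in-$\Gamma$ pieces are clearly $O(|\partial h|^2)+O(|h|^2|\partial h|^2)$, absorbed into $O(|\partial h|^2)$. Differentiating the linear-in-$h$ part of $\Gamma^i_{jk}$ produces exactly
$\tfrac{1}{2}(\partial_i\partial_j h_{ki}+\partial_i\partial_k h_{ij}-\partial_i\partial_i h_{jk})$, while the remainder term $O(|h||\partial h|)$ contributes $O(|\partial h|^2)+O(|h||\partial^2 h|)$ after one more derivative. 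The contribution from $\partial_j\Gamma^i_{ik}$ is likewise $O(|\partial h|^2)+O(|h||\partial^2 h|)$ by the previous paragraph, establishing \eqref{eq039}.

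Finally, for the scalar curvature I would contract with $g^{jk}=\delta^{jk}+O(|h|)$. The $O(|h|)$ correction multiplies $R_{jk}$, which itself contains $\partial^2 h$ at leading order, producing an $O(|h||\partial^2 h|)$ error. The leading contribution is
\begin{equation*}
\tfrac{1}{2}\bigl(\partial_i\partial_k h_{ki}+\partial_i\partial_j h_{ij}-\partial_i\partial_i h_{jj}\bigr)=\partial_i\partial_j h_{ij},
\end{equation*}
where the first two terms coincide by symmetry of $h$ and the last vanishes because $h_{jj}\equiv 0$. This gives \eqref{eq040}. The only delicate point in the whole argument is bookkeeping the two types of errors ($O(|\partial h|^2)$ from products of first derivatives, $O(|h||\partial^2 h|)$ from products of $h$ with its second derivatives) so that they remain well-separated; the trace-free normalization is what keeps both formulas as clean as stated.
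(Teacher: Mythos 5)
Your proof is correct and follows essentially the same route as the paper: expand the Christoffel symbols to leading order in $h$, insert them into the coordinate formula for $R_{jk}$, and contract with $g^{jk}$ using $\operatorname{tr}(h)=0$. The only cosmetic difference is that the paper kills the contracted Christoffel term exactly via $\Gamma^i_{ik}=\tfrac{1}{2}\partial_k\log\det g=0$ from the normalization $\det g=1$ in conformal normal coordinates, whereas you bound it by $O(|h||\partial h|)$ using trace-freeness, which is equally sufficient for the stated error terms.
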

\begin{proof}
First we observe that $g^{-1}=\exp(-h)$. The Ricci tensor is given by
$$R_{jk}=\partial_i\Gamma_{jk}^i-\partial_j\Gamma_{ik}^i+\Gamma_{jk}^p\Gamma_{ip}^i-\Gamma_{ik}^p\Gamma_{jp}^i.$$
Note that $\Gamma_{ik}^i=\frac{1}{2}g^{il}\partial_kg_{il}=\frac{1}{2}\partial_k\log\det g=0$, 
by \eqref{eq038}. Therefore, $R_{jk}=\partial_i\Gamma_{jk}^i-\Gamma_{ik}^p\Gamma_{jp}^i,$ 
where the Christoffel symbols are given by
$\Gamma_{jk}^i=\frac{1}{2}g^{il}(\partial_jg_{kl}+\partial_kg_{lj}-\partial_lg_{jk}).$
Thus, $\Gamma_{ij}^k=O(|\partial h|)$ and
$$\partial_i\Gamma_{jk}^i=\frac{1}{2}(\partial_i\partial_jh_{ki}
+\partial_i\partial_kh_{ij}-\partial_i\partial_ih_{jk})
+O(|\partial h|^2)+O(|h||\partial^2 h|).$$
This implies \eqref{eq039}. Since $R_g=g^{jk}R_{jk}$ and $tr(h(x))=0$ we obtain the result.
\end{proof}

\begin{lemma}\label{lem011}
    Let $\{ x_i\}$ be conformal normal coordinates 
    centered at $p$. If $u$ is  radial with respect to these
    coordinates then in a small geodesic ball centered at $p$ it holds
    \begin{equation}\label{eq041}
     \nabla_i\nabla_ju=\frac{x_ix_j}{r^2}u''-\frac{x_ix_j}{r^3}u'+\frac{\delta_{ij}}{r}u'+O(|\partial h|)|u'|, 
    \end{equation}
    and
    \begin{equation*}
        \Delta_gu=\Delta u=u''+\frac{n-1}{r}u',
    \end{equation*}
    where $\Delta$ is the Euclidean laplacian and we use $'$ to 
    denote differentiation with respect to $r$. 
\end{lemma}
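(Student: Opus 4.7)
The plan is to treat the Hessian and the Laplacian separately, exploiting the two defining features of conformal normal coordinates used earlier: the identity $\det g = 1$ from \eqref{eq038}, and the algebraic relations $h_{ij}x_j = 0$ and $\mathrm{tr}(h) = 0$.

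For \eqref{eq041}, I would start from the elementary Euclidean identities for a radial function, namely $\partial_i u = u'(r)\,x_i/r$ and
\[
\partial_i\partial_j u \;=\; \frac{x_ix_j}{r^2}u'' \,+\, \frac{\delta_{ij}}{r}u' \,-\, \frac{x_ix_j}{r^3}u'.
\]
The covariant Hessian satisfies $\nabla_i\nabla_j u = \partial_i\partial_j u - \Gamma^k_{ij}\,\partial_k u$, and from the proof of \eqref{eq039} we already have $\Gamma^k_{ij} = O(|\partial h|)$. Substituting $\partial_k u = u'(r)\,x_k/r$ into $\Gamma^k_{ij}\,\partial_k u$ shows this correction is controlled by $O(|\partial h|)\,|u'(r)|$, which yields the stated formula directly.

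For the Laplacian, the assumption $\det g = 1$ eliminates all Christoffel symbols: $\Delta_g u = \partial_i(g^{ij}\partial_j u)$. The key algebraic step is to establish the pointwise identity $g^{ij}x_j = x_i$. Since $g = \exp(h)$ and $h_{ij}x_j = 0$, an induction shows $(h^k)_{ij}x_j = h_{il}(h^{k-1})_{lj}x_j = 0$ for all $k\geq 1$; summing the exponential series gives $g_{ij}x_j = x_i$, and inverting yields $g^{ij}x_j = x_i$. Applied to a radial function $u$, this gives
\[
g^{ij}\partial_j u \;=\; \frac{u'(r)}{r}\,g^{ij}x_j \;=\; \frac{u'(r)}{r}\,x_i \;=\; \partial_i u,
\]
so $\Delta_g u = \partial_i\partial_i u$, which is the familiar Euclidean expression $u'' + (n-1)u'/r$.

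The only conceptually delicate point is the identity $g^{ij}x_j = x_i$; it is precisely here that the two normal-coordinate conditions cooperate, with $h_{ij}x_j = 0$ producing the algebraic reduction and $\mathrm{tr}(h) = 0$ (equivalently $\det g = 1$) removing the divergence-form correction, making the $g$-Laplacian of a radial function coincide \emph{exactly} with the flat one. The Hessian formula is by contrast only approximate, but the error is purely bookkeeping once $\Gamma^k_{ij} = O(|\partial h|)$ is in hand.
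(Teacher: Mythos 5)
Your proof is correct, and there are no gaps. For comparison: the paper gives essentially no argument here — it simply asserts that \eqref{eq041} follows from \eqref{eq038} together with Lemma 2.6 of \cite{MR3420504} — so your write-up is best viewed as a self-contained expansion of that citation rather than a different strategy. The two ingredients you isolate are exactly the right ones: the Gauss-lemma identity $g_{ij}x_j=x_i$ (which you correctly deduce from $h_{ij}x_j=0$ via the exponential series for $g=\exp(\pm h)$, giving $g^{ij}\partial_j u=\partial_i u$ for radial $u$), combined with $\det g=1$ from \eqref{eq038}, yields the \emph{exact} identity $\Delta_g u=\Delta u$; and the bound $\Gamma^k_{ij}=O(|\partial h|)$, already recorded in the proof of \eqref{eq039}--\eqref{eq040}, converts the Euclidean radial Hessian into \eqref{eq041} with the stated error $O(|\partial h|)\,|u'|$, since $|x_k/r|\leq 1$. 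What your route buys is independence from the external reference (and it makes transparent why the Laplacian statement is exact while the Hessian one is only approximate); what the paper's citation buys is brevity.
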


The equation \eqref{eq041} follows from \eqref{eq038} 
and \cite[Lemma 2.6]{MR3420504}.

\begin{lemma}\label{lem007}
    We have
$x_jx_k\partial_i\partial_jh_{ki}=x_jx_k\Delta h_{jk}=x_ix_j\partial_i\partial_kh_{jk}=0.$
\end{lemma}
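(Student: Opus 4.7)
The plan is to derive everything from the two defining properties of $h$ in conformal normal coordinates, namely the radial gauge $h_{ij}(x)x_j=0$ and tracelessness $h_{ii}(x)=0$, together with the symmetry $h_{ij}=h_{ji}$. The proof is purely algebraic — no analytic input is needed.

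\textbf{Step 1 (A key first-order identity).} Because $h$ is symmetric, the radial gauge also reads $h_{ki}(x)x_k=0$ for every fixed $i$. Differentiating this identity in $x_i$ and summing gives
\begin{equation*}
0=\partial_i(h_{ki}x_k)=x_k\partial_i h_{ki}+h_{ki}\delta_{ik}=x_k\partial_i h_{ki}+h_{ii},
\end{equation*}
and tracelessness $h_{ii}=0$ yields the scalar identity
\begin{equation*}
x_k\partial_i h_{ki}\equiv 0.
\end{equation*}
I would regard this as the engine of the whole lemma.

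\textbf{Step 2 (Extracting the three identities).} For each of the three expressions I differentiate the Step 1 identity (or $h_{jk}x_k=0$) once more and then contract against a suitable $x_\ell$, using symmetry to reduce the remaining terms back to Step 1.

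For $x_jx_k\,\partial_i\partial_j h_{ki}=0$, differentiate $x_k\partial_i h_{ki}=0$ in $x_j$, obtaining $\partial_i h_{ji}+x_k\partial_i\partial_j h_{ki}=0$. Multiplying by $x_j$ gives
\begin{equation*}
x_jx_k\,\partial_i\partial_j h_{ki}=-x_j\partial_i h_{ji}=-x_j\partial_i h_{ij}=0,
\end{equation*}
where symmetry of $h$ and a dummy-index relabeling reduce the last term to the Step 1 identity.

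For $x_jx_k\,\Delta h_{jk}=0$, apply $\partial_i\partial_i$ directly to $h_{jk}x_k=0$:
\begin{equation*}
0=\Delta h_{jk}\cdot x_k+\partial_k h_{jk}+\partial_i h_{ji}=x_k\Delta h_{jk}+2\partial_i h_{ji},
\end{equation*}
using symmetry in the middle step. Contracting with $x_j$ and invoking Step 1 (as above) gives the result.

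For $x_ix_j\,\partial_i\partial_k h_{jk}=0$, first note that by symmetry $x_j\partial_k h_{jk}=x_j\partial_k h_{kj}$ which, after relabeling $j\to i$, $k\to m$, is the Step 1 scalar and so vanishes identically. Differentiating this in $x_i$ and contracting with $x_i$ produces
\begin{equation*}
x_ix_j\,\partial_i\partial_k h_{jk}=-x_i\partial_k h_{ik}=0,
\end{equation*}
again by symmetry and Step 1.

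\textbf{Expected difficulty.} There is no genuine obstacle — the only thing that requires care is bookkeeping of free versus summed indices, since each of the three identities looks superficially different but collapses to the same first-order identity once one uses $h_{ij}=h_{ji}$. A clean write-up should state Step 1 as a standalone lemma, after which each of the three equalities is a two-line computation.
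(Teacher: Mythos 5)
Your proof is correct and follows essentially the same route as the paper: both derive the key first-order identity $x_k\partial_i h_{ki}=0$ from the radial gauge $h_{ij}x_j=0$ together with tracelessness, and then obtain each second-order identity by one further differentiation and contraction, using the gauge and this identity to kill the lower-order terms. The only cosmetic difference is that the paper treats $x_ix_j\partial_i\partial_k h_{jk}$ as the same sum as $x_jx_k\partial_i\partial_j h_{ki}$ (after relabeling dummy indices and commuting partials), whereas you verify it by a separate two-line computation.
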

\begin{proof}
Recall that $h_{ki}x_k=0$ and $tr h=0$. Taking a derivative, we get 
$\partial_jh_{ki}x_k+h_{ij}=0$ and
\begin{equation}\label{eq044}
x_k\partial_ih_{ki}=0,   
\end{equation}
which implies that $\partial_jh_{ki}x_kx_j=0$. Differentiating 
again we see $\partial_i\partial_jh_{ki}x_kx_j+\partial_jh_{ii}x_j+
\partial_ih_{ki}x_k=0$, and so  
$x_jx_k\partial_i\partial_jh_{ki}=0$. 
Thus $\partial_ih_{jk}x_j+h_{ik}=0$, which implies that 
$\partial_ih_{jk}x_jx_k=0$. Finally, 
$\partial_i\partial_ih_{jk}x_jx_k+\partial_ih_{ik}x_k+\partial_ih_{ji}x_j=0$. 
Therefore $x_jx_k\Delta h_{jk}=0$.
\end{proof}

\begin{lemma}
   The functions $\partial_i\partial_jh_{ij}$,  
   $\Delta\partial_i\partial_jh_{ij}$ and $x_k\partial_k\partial_i
   \partial_jh_{ij}$ are $L^2$-orthogonal to the functions 
   $\{1,x_1,\ldots,x_n\}$ in $\mathbb S^{n-1}_r$.
\end{lemma}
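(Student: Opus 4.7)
The plan is to prove the orthogonality for $F := \partial_i\partial_j h_{ij}$ itself by a direct divergence-theorem argument exploiting the gauge conditions $h_{ij}x_j=0$ and $\mathrm{tr}(h)=0$, and then deduce the corresponding orthogonality for $\Delta F$ and $x_k\partial_k F$ via a spherical-harmonic decomposition. The whole argument should be short once the right algebraic identity is in hand.

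The first step is to derive the key identity
\begin{equation*}
 x_i\,\partial_j h_{ij} = 0.
\end{equation*}
Differentiating $h_{ij}x_j=0$ in $x_k$ yields $x_j\,\partial_k h_{ij} = -h_{ik}$; contracting $k=i$ and invoking $\mathrm{tr}(h)=0$ gives the claim (after using the symmetry $h_{ij}=h_{ji}$). Applying the divergence theorem to the vector field $V_i := \partial_j h_{ij}$,
\begin{equation*}
\int_{B_r} F\,dx \;=\; \int_{\partial B_r}\frac{x_i}{r}\,\partial_j h_{ij}\,d\sigma \;=\; 0,
\end{equation*}
and differentiating in $r$ yields $\int_{\partial B_r} F\,d\sigma = 0$, i.e.\ orthogonality to the constant mode. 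For each $l\in\{1,\dots,n\}$, two integrations by parts combine with the identity above and with $h_{lj}x_j=0$ to give
\begin{equation*}
\int_{B_r} x_l\,F\,dx \;=\; \int_{\partial B_r}\frac{x_l x_i}{r}\,\partial_j h_{ij}\,d\sigma \;-\; \int_{\partial B_r}\frac{x_j}{r}\,h_{lj}\,d\sigma \;=\; 0,
\end{equation*}
and differentiating in $r$ completes the orthogonality of $F$ to each $x_l$.

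Finally, I would decompose $F(r,\theta) = \sum_j a_j(r)\,e_j(\theta)$ in spherical harmonics. The orthogonalities just established are exactly $a_j\equiv 0$ for $j=0,1,\dots,n$ (the low modes). The polar form
\begin{equation*}
 \Delta = \partial_r^2 + \tfrac{n-1}{r}\,\partial_r + \tfrac{1}{r^2}\,\Delta_{\mathbb{S}^{n-1}}
\end{equation*}
shows that the $e_j$-coefficient of $\Delta F$ equals $\ddot a_j + \tfrac{n-1}{r}\dot a_j - \tfrac{\lambda_j}{r^2}a_j$, while that of $x_k\partial_k F = r\partial_r F$ equals $r\dot a_j$; both vanish identically whenever $a_j\equiv 0$. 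Hence $\Delta F$ and $x_k\partial_k F$ are also $L^2$-orthogonal to $\{1,x_1,\dots,x_n\}$ on every sphere $\mathbb{S}^{n-1}_r$. I do not anticipate any substantive obstacle; the only subtlety is picking the right consequence of $h_{ij}x_j=0$ and $\mathrm{tr}(h)=0$ at each integration-by-parts step so that every boundary term collapses cleanly.
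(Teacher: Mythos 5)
Your proof is correct, and for two of the three functions it takes a genuinely different route from the paper. For $F=\partial_i\partial_jh_{ij}$ itself your argument coincides with the paper's: the same divergence-theorem computations based on the pointwise identities $x_i\partial_jh_{ij}=0$ and $h_{lj}x_j=0$ (you make explicit the differentiation-in-$r$ step that converts the vanishing of the ball integrals $\int_{B_r}F$, $\int_{B_r}x_lF$ for every small $r$ into the vanishing of the corresponding sphere integrals, a step the paper leaves implicit). Where you diverge is in treating $\Delta F$ and $x_k\partial_kF$: the paper continues with further integrations by parts, relying on the additional pointwise identities $x_jx_k\partial_i\partial_jh_{ki}=x_jx_k\Delta h_{jk}=x_ix_j\partial_i\partial_kh_{jk}=0$ from its preceding lemma, whereas you observe that, writing $a_j(r)=\int_{\mathbb S^{n-1}}F(r\theta)e_j(\theta)\,d\theta$, the established orthogonality says $a_j\equiv 0$ on an interval of radii for $j=0,\dots,n$, and that both $\Delta=\partial_r^2+\tfrac{n-1}{r}\partial_r+\tfrac{1}{r^2}\Delta_{\mathbb S^{n-1}}$ and the Euler operator $x_k\partial_k=r\partial_r$ act mode-by-mode (their $e_j$-coefficients being $\ddot a_j+\tfrac{n-1}{r}\dot a_j-\tfrac{\lambda_j}{r^2}a_j$ and $r\dot a_j$, justified by differentiation under the integral and self-adjointness of $\Delta_{\mathbb S^{n-1}}$), so the low-mode coefficients of $\Delta F$ and $x_k\partial_kF$ vanish automatically. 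Your argument buys economy and robustness: it needs no further algebraic identities for $h$, and it shows that \emph{any} operator built from $\partial_r$ and $\Delta_{\mathbb S^{n-1}}$ inherits the orthogonality once $F$ has it on all small spheres; the price is that you must know $a_j\equiv 0$ on a whole interval of radii and use smoothness of $h$ to differentiate in $r$. The paper's computation, by contrast, is purely pointwise-algebraic plus the divergence theorem at a single radius, which is why it bothers with the extra identities.
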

\begin{proof}
First observe that $\partial_i\partial_jh_{ij}=
\operatorname{div}(\partial_jh_{ij}e_i)$ and $\partial_jh_{kj}
=\operatorname{div}(h_{kj}e_j)$. Using integration by parts 
and \eqref{eq044}, we get
\begin{equation}\label{eq045}
    \int_{B_r}\partial_i\partial_jh_{ij}
    =\frac{1}{r}\int_{\mathbb S^{n-1}_r}x_i\partial_jh_{ij}=0
\end{equation}
and
$$\int_{B_r}x_k\partial_i\partial_jh_{ij}=-\int_{B_r}
\partial_jh_{kj}+\frac{1}{r}\int_{\mathbb S^{n-1}_r}
x_i\partial_jh_{ij}=-\frac{1}{r}
\int_{\mathbb S^{n-1}_r}h_{kj}x_j=0.$$

Since $\partial_k\partial_i\partial_jh_{ij}
=\operatorname{div}(\partial_k\partial_jh_{ij}e_i)$ 
and $\partial_k\partial_jh_{lj}
=\operatorname{div}(\partial_kh_{lj}e_j)$, using 
Lemma \ref{lem007}, \eqref{eq044} and \eqref{eq045}, 
we obtain
$$\int_{B_r}x_k\partial_k\partial_i\partial_jh_{ij}
=-\int_{B_r}\partial_k\partial_jh_{kj}+\frac{1}{r}
\int_{\mathbb S^{n-1}_r}x_kx_i\partial_k\partial_jh_{ij}=0$$
and
$$\begin{array}{rcl}
     \displaystyle \int_{B_r}x_lx_k\partial_k\partial_i
     \partial_jh_{ij} & = & \displaystyle -\int_{B_r}
     (x_l\partial_k\partial_jh_{kj}+x_k\partial_k
     \partial_jh_{lj})+\frac{1}{r}\int_{\mathbb S^{n-1}_r}
     x_lx_kx_i\partial_k\partial_jh_{ij} \\
     \\
     & = &\displaystyle -\int_{B_r}x_k\partial_k\partial_jh_{lj}
     =\int_{B_r}\partial_kh_{lk}-\frac{1}{2}
     \int_{\mathbb S^{n-1}_r}x_kx_j\partial_kh_{lj}=0.
\end{array}$$

By the previous results we get
$$\int_{B_r}\Delta\partial_i\partial_jh_{ij}
=\frac{1}{r}\int_{S_r}x_k\partial_k\partial_i\partial_jh_{ij}=0$$
and
$$\int_{B_r}x_l\Delta\partial_i\partial_jh_{ij}
=-\int_{B_r}\partial_l\partial_i\partial_jh_{ij}
=-\frac{1}{r}\int_{\mathbb S^{n-1}_r}
x_j\partial_l\partial_ih_{ij}=\frac{1}{r}
\int_{\mathbb S^{n-1}_r}\partial_ih_{il}=0.$$
\end{proof}

\begin{lemma}\label{lem008}
In conformal normal coordinates at $p$, for sufficiently small $r=|x|$, one has
$$P_gu_{\varepsilon,R}=\Delta^2 u_{\varepsilon,R}+\mathscr{P} (u_{\varepsilon,R})+\mathscr{R}$$
where $\Delta$ is the Euclidean laplacian and
$$\begin{array}{rcl}
     \mathscr{P}(u_{\varepsilon,R}) & = &\displaystyle  \frac{4}{n-2}
     \left(\frac{u'_{\varepsilon,R}}{r} -\frac{(n-2)^2+4}{8(n-1)}
     \Delta u_{\varepsilon,R}\right)\partial_i\partial_jh_{ij}
     +\frac{6-n}{2(n-1)}x_i\partial_i\partial_j\partial_kh_{jk}
     \frac{u'_{\varepsilon,R}}{r}\\
     \\
     & & \displaystyle -\frac{n-4}{4(n-1)}u_{\varepsilon,R}
     \Delta\partial_i\partial_jh_{ij},
\end{array}$$
is $L^2$-orthogonal to the functions $\{1,x_1,\ldots,x_n\}$ and $\mathscr{R}=O(r^{d+4-\frac{n}{2}+\max\{d,0\}})$.
\end{lemma}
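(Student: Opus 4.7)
The plan is to expand the Paneitz operator \eqref{paneitz-branson} applied to the radial Delaunay function $u_{\varepsilon,R}$ in conformal normal coordinates, identify explicitly the leading non-Euclidean contributions (to form $\mathscr P$), verify their orthogonality to $\{1,x_1,\dots,x_n\}$ via the previous lemma, and bundle everything else into $\mathscr R$.

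First, for the bi-Laplacian piece I would use Lemma~\ref{lem011}: since $u_{\varepsilon,R}$ is radial, $\Delta_g u_{\varepsilon,R}=\Delta u_{\varepsilon,R}$, and because $\Delta u_{\varepsilon,R}$ is again radial, iterating gives $\Delta_g^2 u_{\varepsilon,R}=\Delta^2 u_{\varepsilon,R}$ up to terms involving $\partial h$ acting on $\Delta u_{\varepsilon,R}$; all such discrepancies are absorbed into $\mathscr R$. Next, for the middle divergence term I would expand $\mathrm{div}_g\bigl(\mathrm{Ric}_g(\nabla u_{\varepsilon,R},\cdot)\bigr)=\partial_i(R_{ij}\partial_j u_{\varepsilon,R})+O(|\partial h|)(\cdots)$ and use radiality together with formula \eqref{eq041} to replace $R_{ij}\nabla_i\nabla_j u_{\varepsilon,R}$ by $\tfrac{u'_{\varepsilon,R}}{r}R_g+\tfrac{1}{r^2}(u''_{\varepsilon,R}-u'_{\varepsilon,R}/r)x_ix_jR_{ij}$. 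The crucial observation, which follows from Lemma~\ref{lem007} applied to the expression \eqref{eq039}, is that $x_ix_j R_{ij}$ vanishes at leading order, so the $u''_{\varepsilon,R}$-term disappears. For the remaining piece $\partial_i R_{ij}\cdot\tfrac{x_j}{r}u'_{\varepsilon,R}$, I would invoke the contracted Bianchi identity $\partial_i R_{ij}=\tfrac{1}{2}\partial_j R_g$ (valid at leading order in conformal normal coordinates) to rewrite it as $\tfrac{1}{2}\tfrac{u'_{\varepsilon,R}}{r}\langle x,\nabla R_g\rangle$.

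Next I would handle the scalar-curvature divergence $\mathrm{div}_g(R_g\nabla u_{\varepsilon,R})=R_g\Delta u_{\varepsilon,R}+\langle\nabla R_g,\nabla u_{\varepsilon,R}\rangle$ and combine with the Ricci contribution. The $R_g\Delta u_{\varepsilon,R}$ piece directly produces the coefficient $-\tfrac{(n-2)^2+4}{2(n-1)(n-2)}=\tfrac{4}{n-2}\cdot\tfrac{-(n-2)^2-4}{8(n-1)}$ in front of $\Delta u_{\varepsilon,R}\,\partial_i\partial_j h_{ij}$, while combining the two $\langle x,\nabla R_g\rangle\,u'_{\varepsilon,R}/r$ contributions (from Ricci and scalar) with coefficients $\tfrac{4}{n-2}\cdot\tfrac{1}{2}$ and $-\tfrac{(n-2)^2+4}{2(n-1)(n-2)}$ yields exactly $\tfrac{6-n}{2(n-1)}$ after algebraic simplification. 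Substituting $R_g=\partial_i\partial_j h_{ij}$ from \eqref{eq040} then gives the first three terms of $\mathscr P$. For the last summand of $P_g u_{\varepsilon,R}$, I would use that $Q_g=-\tfrac{1}{2(n-1)}\Delta_g R_g+O(|\partial h|^2|\partial^2 h|)+O(|\partial^2 h|^2)$, so the leading term of $\tfrac{n-4}{2}Q_g u_{\varepsilon,R}$ is $-\tfrac{n-4}{4(n-1)}u_{\varepsilon,R}\,\Delta\partial_i\partial_j h_{ij}$, the fourth summand of $\mathscr P$.

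Having identified $\mathscr P$, its $L^2$-orthogonality to $\{1,x_1,\dots,x_n\}$ on $\mathbb S^{n-1}_r$ follows term-by-term from the preceding lemma applied to the angular factors $\partial_i\partial_j h_{ij}$, $x_k\partial_k\partial_i\partial_j h_{ij}$, and $\Delta\partial_i\partial_j h_{ij}$, since the multiplicative factors $u'_{\varepsilon,R}/r$, $\Delta u_{\varepsilon,R}$, $u_{\varepsilon,R}$ are purely radial. Finally, I would estimate $\mathscr R$ by collecting all omitted terms: the $\Delta_g^2-\Delta^2$ discrepancy on $u_{\varepsilon,R}$, the quadratic remainders $O(|\partial h|^2)+O(|h||\partial^2 h|)$ in \eqref{eq039}-\eqref{eq040}, the nonlinear pieces $R_g^2$ and $|\mathrm{Ric}_g|^2$ in $Q_g$, and the lower order terms in \eqref{eq041}. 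Using $h=O(|x|^{d+3})$ together with the Delaunay asymptotics from Corollary~\ref{cor001} (which give $u_{\varepsilon,R}$, $u'_{\varepsilon,R}/r$, $\Delta u_{\varepsilon,R}$ at various orders of $|x|$), each error is of the claimed order, yielding the stated bound on $\mathscr R$.

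The main obstacle is the cancellation step: without Lemma~\ref{lem007} the term $x_ix_j R_{ij}$ would produce contributions involving $u''_{\varepsilon,R}$ that are \emph{not} orthogonal to the low Fourier modes, and the careful combination of coefficients from the Ricci and scalar divergence pieces (producing exactly $\tfrac{6-n}{2(n-1)}$) is needed so that $\mathscr P$ is built out of precisely those angular quantities which the previous lemma can handle.
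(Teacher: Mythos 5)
Your proposal is correct and follows essentially the paper's own route: the paper simply writes the Paneitz operator in its expanded non-divergence form \eqref{eq049} (which is exactly what your contracted-Bianchi computation reproduces, with the coefficient $\tfrac{6-n}{2(n-1)}$), and then, as you do, uses \eqref{eq039}--\eqref{eq041} with Lemma~\ref{lem007} to cancel $x_ix_jR_{ij}$, reads off $\mathscr P$ including the leading term of $Q_g$, gets orthogonality from the preceding lemma, and absorbs the remainders into $\mathscr R$ via $h=O(|x|^{d+3})$. The only minor remark is that $\Delta_g^2u_{\varepsilon,R}=\Delta^2u_{\varepsilon,R}$ holds exactly for the radial $u_{\varepsilon,R}$ because $\det g=1$ (Lemma~\ref{lem011} iterated), so no error term needs to be absorbed there.
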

\begin{proof}
By \eqref{paneitz-branson} 
\begin{equation}\label{eq049}
    \begin{array}{rcl}
P_gu_{\varepsilon,R} &= &\displaystyle \Delta^{2}_{g}
u_{\varepsilon,R} + \frac{4}{n-2}\langle {\rm Ric}_g, \nabla^{2}
u_{\varepsilon,R}\rangle -\frac{(n-2)^2+4}{2(n-1)(n-2)}
R_g\Delta_gu_{\varepsilon,R} \\
& &\displaystyle + \frac{6-n}{2(n-1)}\langle 
\nabla R_g,\nabla u_{\varepsilon,R}\rangle+ \frac{n-4}{2}
Q_{g}u_{\varepsilon,R}.    \end{array}
\end{equation}
Now by \eqref{eq039} and \eqref{eq041}, we obtain
\begin{align*}
     \langle {\rm Ric}_g, \nabla^{2}u_{\varepsilon,R}\rangle 
     = & ~ g^{ij}g^{kl}R_{jk}\nabla_i\nabla_lu_{\varepsilon,R}
     \\
     = & ~ \displaystyle g^{ij}g^{kl}R_{jk}\left(\frac{x_ix_l}{r^2}u_{\varepsilon,R}''
     -\frac{x_ix_l}{r^3}u_{\varepsilon,R}'+\frac{\delta_{il}}{r}u_{\varepsilon,R}'
     +O(|\partial h|)|u_{\varepsilon,R}'|\right)
     \\
     = & ~ \displaystyle R_g\frac{u_{\varepsilon,R}'}{r}+ R_{jk}x_jx_k
     \left(\frac{u_{\varepsilon,R}''}{r^2}-\frac{u_{\varepsilon,R}'}{r^3}\right) +O(|h||Ric|)
     \left(\frac{|u_{\varepsilon,R}'|}{r}+|u_{\varepsilon,R}''|\right)\\
     & 
     +O(|Ric||\partial h|)|u_{\varepsilon,R}'|.
\end{align*}
Using that $h=O(r^{d+3})$, $Ric=O(r^{\max\{1,d+1\}})$ and $u_{\varepsilon,R}=O(r^{\frac{4-n}{2}})$, as well \eqref{eq039} and \eqref{eq040} we obtain
\begin{equation}\label{eq050}
    \begin{array}{rcl}
\langle {\rm Ric}_g, \nabla^{2}u_{\varepsilon,R}\rangle& = 
& \displaystyle \partial_i\partial_jh_{ij}\frac{u_{\varepsilon,R}'}{r}+
\left(\partial_i\partial_jh_{ki}x_jx_k
-\frac{1}{2}\partial_i\partial_ih_{jk}x_jx_k\right)
\left(\frac{u_{\varepsilon,R}''}{r^2}-\frac{u_{\varepsilon,R}'}{r^3}\right)
\\
& & +O(r^{d+4-\frac{n}{2}+\max\{d,0\}})
\\
& = & \displaystyle \partial_i\partial_jh_{ij}\frac{u_{\varepsilon,R}'}{r} +O(r^{d+4-\frac{n}{2}+\max\{d,0\}}),
\end{array}
\end{equation}
where in the last equality we used \eqref{eq044} and \eqref{eq045}. 
Now observe that
$$\langle \nabla R_g,\nabla u_{\varepsilon,R} \rangle = g^{ij}
\nabla_iR_g\nabla_ju_{\varepsilon,R}=
\partial_i\partial_j\partial_kh_{jk}x_i
\frac{u'_{\varepsilon,R}}{r}+O(r^{2d+4-\frac{n}{2}}).$$

Using \eqref{q_curvature} and \eqref{eq040} we get
$$R_g\Delta_gu_{\varepsilon,R}=\partial_i
\partial_jh_{ij}\Delta u_{\varepsilon,R}+O(r^{2d+4-\frac{n}{2}})$$
and
$$Q_g=-\frac{1}{2(n-1)}\Delta\partial_i\partial_jh_{ij}+O(r^{2d+2}).$$
This finishes the proof.

\end{proof}

\begin{lemma}\label{lem009}
In conformal normal coordinates one has 
$$(\Delta^2-\Delta_g^2)u_{\varepsilon,R,a}=O(|x|^{d+2-\frac{n}{2}}),$$
where $\Delta$ is the Euclidean Laplacian.
\end{lemma}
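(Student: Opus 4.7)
The plan is to exploit the fact that $u_{\varepsilon,R}$ is radial in conformal normal coordinates and then treat the non-radial $a$-corrections via the expansion of Proposition \ref{lem006}. First I would apply Lemma \ref{lem011}: since $u_{\varepsilon,R}$ depends only on $r=|x|$, the lemma gives $\Delta_g u_{\varepsilon,R} = \Delta u_{\varepsilon,R}$; applying it once more to the radial function $\Delta u_{\varepsilon,R}$ yields $\Delta_g^2 u_{\varepsilon,R} = \Delta^2 u_{\varepsilon,R}$ \emph{exactly}. So $(\Delta^2 - \Delta_g^2) u_{\varepsilon,R} \equiv 0$, and the problem reduces to
$$(\Delta^2 - \Delta_g^2) u_{\varepsilon,R,a} = (\Delta^2 - \Delta_g^2)(u_{\varepsilon,R,a} - u_{\varepsilon,R}).$$

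Next I would invoke Proposition \ref{lem006} to split the non-radial piece as $u_{\varepsilon,R,a} - u_{\varepsilon,R} = w_a + e_a$, where $w_a(x) = f(|x|)\langle a, x\rangle$ with $f(r) = (n-4)u_{\varepsilon,R}(r) + r\partial_r u_{\varepsilon,R}(r)$, and $e_a$ is the remainder of pointwise size $|a|^2|x|^{(8-n)/2}$. Combining Proposition \ref{Prop002} and Corollary \ref{cor001}, the Delaunay structure $u_{\varepsilon,R}(x) = |x|^{(4-n)/2}v_\varepsilon(-\log|x|+\log R)$ yields the scaling $|\partial^k u_{\varepsilon,R}(x)| \leq C|x|^{(4-n)/2-k}$ for $k\leq 4$, which gives $w_a$ the $C^4$-scaling bounded by $|a||x|^{(6-n)/2}$. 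A parallel $C^4$-scaling $|a|^2|x|^{(8-n)/2-k}$ for $e_a$ follows by revisiting the Taylor expansions \eqref{eq028}--\eqref{eq0003} in the proof of Proposition \ref{lem006}, which involve smooth functions of $x$ with remainders that can be differentiated with the expected loss.

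Then I would establish the general estimate: for any $v\in C^4$ with $|\partial^k v(x)|\leq C|x|^{\alpha-k}$, one has $|(\Delta_g^2-\Delta^2)v(x)|\leq C|x|^{d+\alpha-1}$. Indeed, writing $A^{ij} := g^{ij}-\delta^{ij}$, \eqref{eq035} yields $A = O(|x|^{d+3})$ (with derivative scaling), and
$$(\Delta_g - \Delta)v = A^{ij}\partial_i\partial_j v + (\partial_i A^{ij})\partial_j v = O(|x|^{d+\alpha+1}).$$
Using the decomposition $\Delta_g^2 - \Delta^2 = \Delta_g(\Delta_g - \Delta) + (\Delta_g - \Delta)\Delta$ and applying either $\Delta$ or $\Delta_g$ to the right-hand side costs two further derivatives, giving $O(|x|^{d+\alpha-1})$.

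Applying this to $w_a$ (with $\alpha=(6-n)/2$) and to $e_a$ (with $\alpha=(8-n)/2$) gives
$$|(\Delta_g^2 - \Delta^2)w_a|\leq C|a||x|^{d+2-n/2}, \qquad |(\Delta_g^2 - \Delta^2)e_a|\leq C|a|^2|x|^{d+3-n/2},$$
and since $|x|^{d+3-n/2}\leq |x|^{d+2-n/2}$ for $|x|\leq 1$, the sum is $O(|x|^{d+2-n/2})$ as required. The main obstacle I expect is propagating $C^4$-scaling to the remainder $e_a$; everything else is a direct bookkeeping of orders enabled by the exact identity from Lemma \ref{lem011}.
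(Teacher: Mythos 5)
Your proof is correct and follows essentially the same route as the paper: the radial part is handled exactly via the conformal-normal-coordinate identity $\Delta_g u_{\varepsilon,R}=\Delta u_{\varepsilon,R}$ (applied twice), and the difference $u_{\varepsilon,R,a}-u_{\varepsilon,R}$ is estimated through the coordinate expansion of $\Delta_g^2-\Delta^2$ using $g=\delta+O(|x|^{d+3})$. Your extra splitting into the $\langle a,x\rangle$ term and the quadratic remainder is harmless but not needed, since the paper estimates the whole difference, of size $O(|x|^{\frac{6-n}{2}})$, in one stroke; note that both arguments tacitly require fourth-order derivative bounds with the corresponding scaling for this difference, a point the paper leaves implicit and you at least flag explicitly.
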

\begin{proof}
First we write
$$(\Delta^2-\Delta_g^2)u_{\varepsilon,R,a}=(\Delta^2-\Delta_g^2)
(u_{\varepsilon,R,a}-u_{\varepsilon,R})+(\Delta^2-\Delta_g^2)
u_{\varepsilon,R}.$$

Since $u_{\varepsilon,R}$ is a radial function, 
using \eqref{eq038} and Lemma 2.6 in \cite{MR3420504} we have $\Delta_g^2
u_{\varepsilon,R}=\Delta^2u_{\varepsilon,R}$. 

Now, note that by Lemma \ref{lem006} we have $v=u_{\varepsilon,R,a}(x)-u_{\varepsilon,R}(x)=
O(|x|^{\frac{6-n}{2}})$. Using the Laplacian in coordinates 
and \eqref{eq035} we obtain
    \begin{equation}\label{eq067}
        \Delta_gv=\Delta v+\partial_ig^{ij}\partial_jv+(g^{ij}-\delta^{ij})
        \partial_i\partial_jv,
    \end{equation}
which implies
\begin{equation}\label{eq052}
    \Delta_g^2v=\Delta^2v+\partial_ig^{ij}\partial_j\Delta v
    +(g^{ij}-\delta^{ij})\partial_i\partial_j\Delta v
    +\Delta_g(\partial_ig^{ij}\partial_jv+(g^{ij}
    -\delta^{ij})\partial_i\partial_jv).
\end{equation}

This implies the result.
\end{proof}

	\subsection{Linear analysis}
		In order to solve the singular Q-curvature problem we seek  functions $u$ that 
		are close to a function $u_0$ such that $(u+u_0)(x)\rightarrow +\infty$ as 
		$x\rightarrow p$, $u+u_0$ is a positive function, and $H_g(u_0 + u) = 0$. This 
		is done by considering 
		the linearization about $u_0$ of the operator $H_g$, defined in \eqref{eq020}, 
		which is given by
	\begin{equation}\label{eq021}
	L_{g}^{u_{0}}(u) = \frac{\partial}{\partial t}\bigg|_{t=0}H_{g}
	(u_0 + tu)= P_{g}u - \frac{n(n^2-4)(n+4)}{16}u_0^{\frac{8}{n-4}}u .
	\end{equation}
	Thus we can write
	\begin{equation*}
	H_{g}(u_0 + u) = H_{g}(u_0) + L_{g}^{u_0}(u) + \mathcal R^{u_0}(u),
	\end{equation*}
	where the nonlinear term $\mathcal R^{u_0}(u)$ is independent of the metric and given by
	\begin{equation}\label{eq024}
	\begin{array}{rcl}
	    \mathcal R^{u_{0}}(u) & = & \displaystyle -\frac{n(n^2-4)(n-4)}{16}\left[|u_0+u|^{\frac{8}{n-4}}(u_0+u) - u_0^{\frac{n+4}{n-4}}- \frac{n+4}{n-4}u_{0}^{\frac{8}{n-4}}u\right].
	\end{array}
	    	\end{equation}
	
	This section is devoted to functional properties of the linearized operator around a Delaunay 
	solution in $\mathbb R^n$. In this case the $Q$-curvature operator \eqref{eq020} is given by
	$$H_\delta(u)=\Delta^2u-\frac{n(n^2-4)(n-4)}{16}|u|^{\frac{8}{n-4}}u,$$
	and its linearization \eqref{eq021} around a solution $u_{\varepsilon,R,a}$ 
	(see \eqref{approx}), denoted by  $L_{\varepsilon,R,a}$, is given by
	\begin{equation}\label{eq025}
	L_{\varepsilon,R,a}(v)=\Delta^2 v-\frac{n(n^2-4)(n+4)}{16}u_{\varepsilon,R,a}^{\frac{8}{n-4}}v.
	\end{equation}
	We abbreviate $L_{\varepsilon,R,0} = L_{\varepsilon, R}$ and $L_{\varepsilon, 1,0} = 
	L_{\varepsilon}$. 
	
	Remember that if $g=u^{\frac{4}{n-4}}\delta$, 
	then $\Phi^*g=v^{\frac{4}{n-4}}g_{\rm cyl}$, where
	$v(t)=e^{\frac{4-n}{2}t}u(e^{-t}\theta)=|x|^{\frac{n-4}{2}}u(x)$, 
	{\it i.e.}  $u(x)=|x|^{\frac{4-n}{2}}v(-\log|x|)$, where 
	$\Phi$ is defined in \eqref{eq022}. Hence, using \eqref{eq023} 
	we see
$$L_{\varepsilon,R}(u)=|x|^{-\frac{n+4}{2}}\mathcal L_{\varepsilon,R}(e^{\frac{n-4}{2}t}u\circ\Phi)\circ\Phi^{-1},$$
where
$$\mathcal L_{\varepsilon,R}(v) =  P_{g_{\rm cyl}}(v)-\displaystyle\frac{n(n+4)(n^2-4)}{16}v_{\varepsilon,R}^{\frac{8}{n-4}}v$$
and
	$$P_{g_{\rm cyl}}(v)=\Delta_{g_{\rm cyl}}^2v
	-\frac{n(n-4)}{2}\Delta_{g_{\rm cyl}}v-
	4\partial_t^2 v+\frac{n^2(n-4)^2}{16}v.$$

	Using $\Delta_{g_{\rm cyl}}v=\partial_t^2 v+
	\Delta_{\mathbb S^{n-1}}v$ we obtain
	\begin{equation}\label{eq034}
	    \begin{array}{rcl}
	\displaystyle\mathcal L_{\varepsilon,R}(v) & = 
	&\displaystyle \partial_t^4v +\Delta_{\mathbb S^{n-1}}^2v
	+2\Delta_{\mathbb S^{n-1}}\partial_t^2v-
	\frac{n(n-4)}{2}\Delta_{\mathbb S^{n-1}}v-
	\frac{n(n-4)+8}{2}\partial_t^2 v
	\\
	& & \displaystyle +\frac{n^2(n-4)^2}{16}v-
	\frac{n(n+4)(n^2-4)}{16}v_{\varepsilon,R}^{\frac{8}{n-4}}v.
	\end{array}
	\end{equation}
	
	Now we focus on solving the following linear problem
	with Navier boundary conditions
	\begin{equation}\label{eq008}
	\left\{\begin{array}{lcl}
	\mathcal L_{\varepsilon}(w) = f & {\rm in} & D_R\\
	\Delta w=w=0  & {\rm on} & \partial D_R,
	\end{array}
	\right.
	\end{equation}
	where $D_R:=(-\log R,\infty)\times\mathbb S^{n-1}$ 
	and $\mathcal L_\varepsilon$ is given by \eqref{eq034} with $R=1$.
	
	We will follow closely the program due to 
	R. Mazzeo and F. Pacard \cite{Mazzeo-pacard-1999}. Due to the geometry 
	of the domain, it is natural to approach our problem by 
	means of a classical separation of variables, decomposing 
	both $w$ and $f$ in Fourier series and using the 
	projections \eqref{fourier_mode_projections}. 

	Projecting the linear problem \eqref{eq008} along (the space generated by) 
	each eigenfunction $e_j$ we obtain an infinite dimensional system of ODE's
	$$\left\{\begin{array}{lcl}
	\mathcal L^j_{\varepsilon}(w_j) = f_j & {\rm in} & (-\log R,\infty)\\
	\Delta w_j(-\log R)=w_j(-\log R)=0,
	\end{array}
	\right.$$
	where the ordinary differential operator $\mathcal L^j_{\varepsilon}$ is given by
	\begin{equation}\label{eq015}
	\begin{array}{rcl}
	\mathcal L_{\varepsilon}^j(w_j)& = & \displaystyle 
	\ddddot w_j-\left(2\lambda_j+\frac{n(n-4)+8}{2}\right)\ddot w_j
	\\
	& &\displaystyle +\left(\lambda_j^2+\frac{n(n-4)}{2}
	\lambda_j+\frac{n^2(n-4)^2}{16}-\frac{n(n+4)(n^2-4)}{16}
	v_\varepsilon^{\frac{8}{n-4}}\right)w_j.
	\end{array}
	\end{equation}
	
	The problem will be divided in three cases: $j>n$, $j=0$ 
	and $j=1,\ldots,n$.
	
	\medskip
	
	\subsubsection{Case $j>n$}
	We consider the projections 
	$w'' = \pi''(w)$ and $f'' = \pi''(f)$ of $w$ and $f$ to 
	the high Fourier modes 
	\begin{equation}\label{eq018}
	\left\{\begin{array}{lcl}
	\mathcal L_{\varepsilon}(w'') = f'' & {\rm in} & D_R\\
	\Delta w''= w^{''}=0  & {\rm on} & \partial D_R.
	\end{array}
	\right.
	\end{equation}
	
	First we solve the problem
	\begin{equation}\label{eq009}
	\left\{\begin{array}{rcl}
	\mathcal L_{\varepsilon}(w_T)(t,\theta) = \overline f & {\rm in} & D_R^T\\
	\Delta w_T=w_T=0 & {\rm on} & \partial D_R^T,
	\end{array}
	\right.
	\end{equation}
	where $D_R^T:=(-\log R,T)\times\mathbb S^{n-1}$. This solution 
	can be obtained variationally. Consider the energy function 
	$\mathcal E_T$ given by
	$$\mathcal E_T(w)=\langle P_{g_{\rm cyl}}w,
	w\rangle-\int_{D_R^T}\left(\frac{n(n+4)(n^2-4)}{16}
	v_\varepsilon^{\frac{8}{n-4}}w^2+fw\right)d\theta dt,$$
	where
	$$\begin{array}{rcl}
	\langle P_{g_{\rm cyl}}w,w\rangle & = & 
	\displaystyle\int_{D_R^T}\left(\Delta_{g_{\rm cyl}}
	w\Delta_{g_{\rm cyl}}u-4A_{g_{\rm cyl}}(\nabla w,\nabla u)\right.\\
	& = & \displaystyle\int_{D_R^T}\left((\Delta_{g_{\rm cyl}}w)^2+\frac{n(n-4)}{2}|\nabla_{g_{\rm cyl}} w|^2+4\dot w^2+ \frac{n^2(n-4)^2}{16}w^2\right)d\theta dt.
	\end{array}$$

	It is easy to see that critical points of the functional $\mathcal E_T$ are weak solutions to  \eqref{eq009}. Observe that
	\begin{align*}
	\mathcal E_T(w) \geq & ~ \displaystyle 
	\int_{D_R^T} \left(\lambda^2w^2+\ddot w^2-2\ddot ww\lambda
	+ \frac{n(n-4)}{2} \lambda w^2+\frac{n(n-4)+8}{2}\dot w^2\right.
	\\
	 & ~ \displaystyle\left. +\frac{n^2(n-4)^2}{16}
	w^2-\frac{n(n+4)(n^2-4)}{16}
	v_\varepsilon^{\frac{8}{n-4}}w^2-fw\right)d\theta dt
	\\
	 \geq & ~ \displaystyle \int_{D_R^T} 
	\left(\ddot w^2+\left(2\lambda+\frac{n(n-4)+8}{2}
	\right)\dot w^2\right.
	\\
	 & ~ \displaystyle\left. +\left(\lambda^2+
	\frac{n(n-4)}{2}\lambda+ \frac{n^2(n-4)^2}{16}
	-\frac{n(n+4)(n^2-4)}{16}v_\varepsilon^{\frac{8}{n-4}}
	\right)w^2-fw\right)d\theta dt.    
	\end{align*}
	
	When $\lambda\geq 2n$ we have
	\begin{equation}\label{eq014}
	\lambda^2+\frac{n(n-4)}{2}\lambda+ \frac{n^2(n-4)^2}{16}
	-\frac{n(n+4)(n^2-4)}{16}>0.
	\end{equation}
	
	This implies that $\mathcal E_T$ is coercive and convex in the space
	\begin{align}
	    H^{2}_0(D_R^T)^\perp:=\left\{u\in H^2(D_R^T); 
	    \Delta u=u=0\mbox{ in }
	  \partial D_R^T \ {\rm and } \ \int_{\mathbb S^{n-1}}
	  u(\cdot,\theta)e_j(\theta)d\theta=0,
	  \;\;j=0,\ldots,n\right\}.  \nonumber  
	\end{align}
	The existence of a unique minimizer for $\mathcal E_T$ is immediate. The standard elliptic
	theory yields the expected regularity issues for $w_T$ in 
	terms of the regularity of $f$.
	
	\begin{lemma}\label{lem004}
		For $2-n<\mu<2$, let $\delta=\frac{n-4}{2}+\mu$. Then 
		there exist constants $\varepsilon_0>0$ and $C>0$ independent 
		of $T$ and $R$ such that for all $\varepsilon
		\in(0,\varepsilon_0)$, we have
		\begin{equation}\label{eq010}
		\sup_{(t,\theta)\in D_R^T}e^{\delta t}|w_T(t,\theta)|
		\leq C\sup_{(t,\theta)\in D_R^T}e^{\delta t}| 
		\overline f(t,\theta)|
		\end{equation}
		
	\end{lemma}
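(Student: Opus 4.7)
The plan is to argue by contradiction in the standard Mazzeo--Pacard style. If \eqref{eq010} fails, I extract sequences $\varepsilon_i \to 0$, $R_i$, $T_i$ and solutions $w_i$ of \eqref{eq009} with data $\overline{f}_i$, normalized so that
\[
A_i := \sup_{D_{R_i}^{T_i}} e^{\delta t}|w_i| = 1 \qquad \text{while} \qquad \sup_{D_{R_i}^{T_i}} e^{\delta t}|\overline{f}_i| \longrightarrow 0.
\]
Picking $(t_i, \theta_i)$ where the weighted supremum is essentially attained, I translate and rescale,
\[
\tilde w_i(t, \theta) := e^{\delta t_i} w_i(t + t_i, \theta), \qquad \tilde f_i(t, \theta) := e^{\delta t_i} \overline f_i(t + t_i, \theta),
\]
so that $|\tilde w_i(t, \theta)| \leq e^{-\delta t}$, $|\tilde w_i(0, \theta_i)| \geq 1/2$, and $\tilde w_i$ solves a translated version of \eqref{eq034} with $v_{\varepsilon_i}(\cdot)$ replaced by $v_{\varepsilon_i}(\cdot + t_i)$. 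Since $w_i$ comes from the variational construction on $H^2_0(D^T_R)^\perp$, the projection $\tilde w_i = \pi''(\tilde w_i)$ holds automatically.

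Next I pass to a subsequential limit. Interior and boundary Schauder estimates for fourth-order Navier problems (cf.\ \cite{Gazzola_Grunau_Sweers_2010}) give uniform local $C^{4,\alpha}$ bounds on $\tilde w_i$, so Arzel\`a--Ascoli produces $\tilde w_\infty \in C^4_{\mathrm{loc}}(D_\infty)$ on a limit domain $D_\infty \subseteq \mathbb{R} \times \Ss^{n-1}$ whose shape is dictated by the limits of $t_i - (-\log R_i)$ and $T_i - t_i$. Along a further subsequence, the coefficient $v_{\varepsilon_i}(\cdot + t_i)$ converges locally uniformly either to $0$ (the generic case, since $\varepsilon_i \to 0$ and $T_{\varepsilon_i} \to \infty$, so that between consecutive bumps the Delaunay solutions flatten to $0$) or to a translate of the spherical solution $\cosh^{(4-n)/2}$ (if $t_i$ tracks a bump center). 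In either scenario $\tilde w_\infty$ is a nontrivial element of $\pi''(C^{4,\alpha})$ with $|\tilde w_\infty(t,\theta)| \leq e^{-\delta t}$ solving either the constant-coefficient cylindrical operator $\mathcal L_0$ or the linearization around a spherical bump on $D_\infty$, subject to Navier conditions on any finite endpoint.

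The contradiction is closed by an indicial-root analysis mode by mode. For the $j$-th eigenmode with $\lambda_j = l(l + n - 2)$, a direct computation shows that the characteristic polynomial of $\mathcal L_0^j$ in \eqref{eq015} has roots $\pm(l + n/2)$ and $\pm(l + (n-4)/2)$. The hypothesis $j > n$ forces $l \geq 2$, so every indicial root has modulus at least $n/2$, while the range $2 - n < \mu < 2$ is precisely $|\delta| < n/2$. On $\mathbb{R}$ or a half-cylinder the decay $|\tilde w_{\infty,j}(t)| \leq C e^{-\delta t}$ combined with $\mathcal L_0^j(\tilde w_{\infty,j}) = 0$ forces $\tilde w_{\infty,j} \equiv 0$, since no nontrivial combination of the four indicial exponentials is compatible with the weight; on a bounded limiting strip the same projected operator is coercive by the energy argument leading to \eqref{eq014}, together with the Navier boundary conditions, and again $\tilde w_{\infty,j} \equiv 0$. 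In the alternative limit around a spherical bump, one uses the well-known classification of the kernel of the linearized Paneitz operator on the round sphere, whose nontrivial elements live entirely in the modes $l \leq 1$ (the geometric Jacobi fields) and are annihilated by $\pi''$. In all scenarios $\tilde w_\infty \equiv 0$, contradicting $|\tilde w_\infty(0, \theta_\infty)| \geq 1/2$.

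The main obstacle is keeping the bookkeeping uniform in $R$, $T$ and $\varepsilon$ during the blow-up: one has to rule out degenerate limits where $(t_i, \theta_i)$ collapses onto one of the Navier endpoints so that $D_\infty$ becomes trivial, an argument of exactly the type already carried out in the proof of Proposition~\ref{poissoninterior} where the ratios $\rho_i r_i^{-1}$ and $r_i^{-1}$ were shown to stay bounded away from $1$. One must also distinguish the two possible limits for the potential and dispatch the spherical-bump case via the kernel classification on $\Ss^n$. Once these configurations are classified, the nonresonance $|\delta| < n/2$ for every $l \geq 2$ delivers $\tilde w_\infty \equiv 0$ uniformly across the possible limit geometries, yielding the desired uniform estimate.
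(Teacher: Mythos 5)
Your blow-up scheme is the same as the paper's: argue by contradiction, rescale at a point where the weighted supremum is attained, rule out collapse of the blow-up point onto the Navier boundary, pass to a limit in which the Delaunay potential converges either to $0$ or to a translate of the spherical solution, and exploit that only modes with $l\geq 2$ occur, for which the critical rates are at least $n/2$ while $\delta\in(-n/2,n/2)$; your indicial roots $\pm\bigl(l+\tfrac{n}{2}\bigr)$, $\pm\bigl(l+\tfrac{n-4}{2}\bigr)$ agree with the paper's $\mu_j^{\pm}$. The difference is in the end-game. The paper disposes of \emph{all} limit configurations with one computation: project onto $e_j$, multiply by $w_\infty^j$ and integrate by parts; since $0\leq v_\infty^{8/(n-4)}\leq 1$, inequality \eqref{eq014} makes every term nonnegative for $\lambda_j\geq 2n$, and the asymptotic rates $\geq n/2$ are used only to justify the integration by parts at an infinite end. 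You instead split into cases and, for the spherical-bump limit, appeal to the kernel classification of the linearized operator around the bubble. That is workable, but as written it leaves loose ends: (i) the bound $|w_\infty|\leq e^{-\delta t}$ allows growth up to rate $|\delta|<n/2$ at an infinite end, so before quoting the kernel classification (which concerns decaying kernel elements) you must first upgrade the weighted bound to genuine decay via the indicial roots of the limiting operator — the step the paper makes explicit; (ii) nothing prevents the bump from lying at bounded distance from $t=-\log R$, so the spherical potential can arise on a half-cylinder with Navier conditions at a finite end, a configuration the round-sphere kernel classification does not literally cover; and (iii) in the constant-coefficient case on a half-cylinder the weight kills only the two growing exponentials, and you need the Navier conditions (a nonvanishing $2\times 2$ determinant) to eliminate the decaying pair. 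All of these are repaired by the coercivity argument you already invoke for bounded strips: because $v_\infty^{8/(n-4)}\leq 1$, the integration by parts with \eqref{eq014} works uniformly for the zero and spherical limits, on lines, half-lines and strips, with boundary terms vanishing by the Navier conditions — which is exactly how the paper closes the proof, without any external nondegeneracy input.
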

	\begin{proof}
		The proof is by contradiction. Suppose that the 
		estimate \eqref{eq010} does not hold. This implies 
		that we can find a sequence $(\varepsilon_i,T_i,R_i,
		w_{T_i},f_i)$ such that
		\begin{enumerate}
			\item $\varepsilon_i\rightarrow 0$ 
			as $i\rightarrow\infty$.
			\item $\left\{\begin{array}{lcl}
			\mathcal L_{\varepsilon}(w_{T_i}) = 
			f_i & {\rm in} & D_{R_i}^{T_i}\\
			\Delta w_{T_i}=w_{T_i}=0 & {\rm on} & \partial D_{R_i}^{T_i}
			\end{array}\right.,
			$ for every $i\in\mathbb N$.
			\item $\displaystyle\sup_{(t,\theta)\in 
			D_{R_i}^{T_i}}| e^{\delta t} f_i''(t,\theta)|=1$, 
			for every $i\in\mathbb N$.
			\item $\displaystyle \sup_{(t,\theta)\in 
			D_{R_i}^{T_i}}e^{\delta t}|w_{T_i}(t,\theta)|
			\rightarrow\infty$ as $i\rightarrow\infty$.
		\end{enumerate}
		
		Now choose $t_i\in[-\log R_i,T_i]$ such that 
		$$A_i:=\sup_{(t,\theta)\in D_{R_i}^{T_i}}
		e^{\delta t}|w_{T_i}(t,\theta)|=\sup_{\theta\in
		\mathbb S^{n-1}}e^{\delta t_i}|w_{T_i}(t_i,\theta)|.$$
		Define
$w_i(t,\theta):=A_i^{-1}e^{\delta t_i}w_{T_i}(t+t_i,\theta)$
		and $f_i(t,\theta):=A_i^{-1}e^{\delta t_i}
		f''(t+t_i,\theta),$ for all $i\in\mathbb N$ and 
		all $(t,\theta)\in D_i:=(-\log R_i-t_i,T_i-t_i)\times
		\mathbb S^{n-1}$. Then
		\begin{equation}\label{eq011}
		\sup_{(t,\theta)\in D_i}e^{\delta t}|w_i(t,\theta)|
		=\sup_{\theta\in\mathbb S^{n-1}}|w_i(0,\theta)|=1
		\end{equation}
		and 
		\begin{equation}\label{eq012}
		\sup_{(t,\theta)\in D_i}e^{\delta t}f_i(t,\theta)
		\rightarrow 0\;\;\mbox{ as }i\rightarrow\infty.
		\end{equation}
		Additionally
		\begin{equation*}
		\left\{\begin{array}{lcl}
		\mathcal L_{i}(w_i) =  f_i & {\rm in}  & D_i\\
		\Delta w_i=w_i=0 & {\rm on} & \partial D_i,
		\end{array}
		\right.
		\end{equation*}
		where
		$$\mathcal L_i(w) = \displaystyle \Delta_{g_{\rm cyl}}^2
		w-\displaystyle\frac{n(n-4)}{2}\Delta_{g_{\rm cyl}}w-
		4\partial^2_t w+\displaystyle\frac{n^2(n-4)^2}{16}
		w -\frac{n(n+4)(n^2-4)}{16}v_{\varepsilon_i}
		(t+t_i)^{\frac{8}{n-4}}w.$$
		
\label{pg001}	Up to a subsequence, we can assume that 
		$-\log R_i-t_i\rightarrow\tau_1\in[-\infty,0]$ 
		and $T_i-t_i\rightarrow\tau_2\in[0,+\infty]$, as 
		$i\rightarrow+\infty$. Suppose $\tau_1=0$. Since 
		$w_i(-\log R_i-t_i,\theta)=0$ for every $\theta\in
		\mathbb S^{n-1}$ and \eqref{eq011} holds, then $\{ 
		|\nabla w_i|\}$ 
		blows up a region of the type $[-\log R_i-t_i,
		-\log R_i-t_i+1]\times\mathbb S^{n-1}$ as $i\rightarrow\infty$. 
		On the other hand, using \eqref{eq011} and \eqref{eq012} we 
		see that $|w_i|$ and $|f_i|$ are bounded by a positive 
		constant times $e^{\delta(\log R_i+t_i)}$ in the region 
		$[-\log R_i-t_i,-\log R_i-t_i+1]\times\mathbb S^{n-1}$. 
		Using estimates as in 
		\cite{Gazzola_Grunau_Sweers_2010, Nicolaescu_2007}, we see 
		that $|\nabla w_i|$ is also bounded by the same quantities 
		in $[-\log R_i-t_i,-\log R_i-t_i+1]\times\mathbb S^{n-1}$, 
		which implies that $|\nabla w_i|$ is uniformly bounded as 
		$i\rightarrow+\infty$, contradicting the blow-up of 
		$|\nabla w_i|$. Therefore 
		$\tau_1\not=0$. In the same way we conclude that $\tau_2\not=0$.
		
		Set $v_i(t):=v_{\varepsilon_i}(t+t_i)$. By Proposition 
		\ref{Prop002} we have that over every compact set of 
		$\mathbb R$ the $C^4$-norm of the functions $v_i$ are 
		uniformly bounded. Using the Arzelà–Ascoli Theorem, we 
		deduce that there is a subsequence, still denoted by $v_i$, 
		and a function $v_\infty\in C_{\rm loc}^3(\mathbb R)$ such 
		that $v_i\rightarrow v_\infty$ in $C_{\rm loc}^3(\mathbb R)$.
		
		\medskip
		
		\noindent{\bf Claim 1: } Either $v_\infty\equiv 0$ or 
		$0<v_\infty<1$. In the second case we have  
		$\displaystyle\lim_{t\rightarrow\pm\infty}v_\infty(t)=0$.
		
		Since for each $i\in\mathbb N$ the function $v_i$ satisfies the ODE 
		$$\ddddot{v}_i=\frac{n^2-4n+8}{2}\ddot v_i 
		-\frac{n^2(n-4)^2}{16}v_i+\frac{n(n-4)(n^2-4)}{16}
		v_i ^{\frac{n+4}{n-4}}$$
		and the right hand side converges to $\frac{n^2-4n+8}{2}
		\ddot v_\infty-\frac{n^2(n-4)^2}{16}v_\infty+
		\frac{n(n-4)(n^2-4)}{16}v_\infty ^{\frac{n+4}{n-4}}$ 
		in $C^0_{\rm loc}(\mathbb R)$, we conclude that $v_i\rightarrow 
		v_\infty$ in $C^4_{\rm loc}(\mathbb R)$. Therefore, $v_\infty$ 
		satisfies the ODE \eqref{eq001}. Now, the claim follows by 
		Theorem \ref{Prop001}, since $v_i\rightarrow v_\infty$ 
		in $C^4_{\rm loc}(\mathbb R)$ as $\varepsilon_i:=\min 
		v_i\rightarrow 0$.
		
		\medskip
		
		Combining \eqref{eq011}, \eqref{eq012}, and Schauder 
		estimates (see \cite{Gazzola_Grunau_Sweers_2010}) the 
		Arzela-Ascoli theorem gives us a subsequence, which we 
		still denote by $w_i$, that converges to a function 
		$w_\infty\in C^{2,\alpha}_{\rm loc}((\tau_1,\tau_2)\times
		\mathbb S^{n-1})$ satisfying the equation
		\begin{equation}\label{eq013}
		\mathcal L_{\infty}(w_\infty) =  0\;\mbox{ in }
	(\tau_1,\tau_2)\times\mathbb S^{n-1}
		\end{equation}
		in the sense of distributions, where
		\begin{align*}
		    \displaystyle\mathcal L_\infty = & ~\displaystyle 
		\Delta_{g_{\rm cyl}}^2-\displaystyle\frac{n(n-4)}{2}
		\Delta_{g_{\rm cyl}}-4\partial_t^2
		+\displaystyle\frac{n^2(n-4)^2}{16}-
		\displaystyle\frac{n(n+4)(n^2-4)}{16}v_{\infty}^{\frac{8}{n-4}}\\
		 = & ~ \displaystyle \partial_t^4+\Delta_{\mathbb S^{n-1}}^2
		+2\Delta_{\mathbb S^{n-1}}\partial_t^2
		-\frac{n(n-4)}{2}\Delta_{\mathbb S^{n-1}}
		-\frac{n(n-4)+8}{2}\partial_t^2
		\\
		 & ~ \displaystyle +\frac{n^2(n-4)^2}{16}-\frac{n(n+4)(n^2-4)}{16}
		v_\infty^{\frac{8}{n-4}}.
		\end{align*}
		
		If $\tau_i$ is a real number, then the boundary condition 
		becomes $\Delta w_\infty(\tau_i)=w_\infty(\tau_i)=0$. It 
		is important to point out that by \eqref{eq011} we have 
		that	$\displaystyle\sup_{\theta\in 
		\mathbb S^{n-1}}|w_\infty(0,\theta)|=1.$ Thus, $w_\infty$ 
		is nontrivial. Decompose $w_\infty$ as
		$$w_\infty(t,\theta)=\sum_{j\geq n+1}w_\infty^j(t)e_j(\theta).$$

		Projecting \eqref{eq013} along the eigenfuctions $e_j$, for 
		$j\geq n+1$, we obtain
	\begin{align*}
		\ddddot w_\infty^j-&\left(2\lambda_j+\frac{n(n-4)+8}{2}
		\right)\ddot w_\infty^j\\
		&+\left(\lambda_j^2+\frac{n(n-4)}{2}\lambda_j
		+\frac{n^2(n-4)^2}{16}-\frac{n(n+4)(n^2-4)}{16}
		v_\infty^{\frac{8}{n-4}}\right)w_\infty^j=0.
		\end{align*}

		After multiplying this equation by $w_\infty^j$, we use 
		integration by parts and \eqref{eq014} to deduce that 
		$w_\infty^j\equiv 0$, which is a contradiction. To justify the
		integration by parts, note that if both $\tau_1$ and 
		$\tau_2$ are finite, we use that $w_\infty^j(\tau_1)=0$. 
		Otherwise, since $\displaystyle\lim_{r\rightarrow\pm\infty}
		v_\infty=0$ in case $0<v_\infty<1$, we deduce that 
		$$w_\infty^j(t)\sim e^{\pm\mu_j^{\pm}t},\;\;\;
		\mbox{ as }|t|\rightarrow+\infty,$$
		where 
		\begin{equation*}
	\mu_j^\pm:=\frac{\sqrt{n(n-4)+8+4\lambda_j\pm 4 \sqrt{(n-2)^2+4\lambda_j}}}{2},
		\end{equation*}
	and $\lambda_j\geq 2n$. We remark that $\mu_j^\pm\geq \frac{n}{2}$. 
	On the other hand, the condition $|w_\infty|\leq e^{-\delta t}$, 
	together with the fact that $|\delta|<\frac{n}{2}$, implies 
	that $|w_\infty^j(t)|\leq Ce^{-|t|\frac{n}{2}}$, as $|t|
	\rightarrow+\infty$. Therefore, in this case we can also 
	integrate by parts. This finishes the proof of the lemma
	\end{proof}
	
	Using a scaling argument as in \cite[Proposition 6.2]{Mazziri-Ndiaye} 
	we can see that if $f''\in C^{0,\alpha}_\delta(D_R)$, 
	then $w_T\in C^{4,\alpha}_\delta(D_R^T)$. If $\gamma\in(-n/2,n/2)$ 
	then there exist $\varepsilon_0>0$ and a positive constant $C$, 
	which does not depend on $\varepsilon$, $R$ and $T$, such that for all $\varepsilon\in(0,\varepsilon_0]$ we have
	$$\|w_T\|_{C^{2,\alpha}_\delta(D_R^T)}\leq C\|\overline f\|_{C^{0,\alpha}_\delta(D_R^T)}.$$
	
	Therefore, letting $T\rightarrow+\infty$, we obtain the existence 
	of a solution $\ddot w$ to \eqref{eq018} fulfilling the estimate
	$$\|w^{''}\|_{C^{4,\alpha}_\delta(D_R)}\leq C\| 
	f''\|_{C^{0,\alpha}_\delta(D_R)},$$
	with $C>0$ independent of $R$ and $\varepsilon$.
	
	\medskip
	
	\subsubsection{ Case $j=0$} We start by considering the projection 
	of the problem  \eqref{eq008}  along the eigenfunction $\varphi_0$. 
	We get
	$$\left\{\begin{array}{lcl}
	\mathcal L^0_{\varepsilon}(w_0) = f_0 & {\rm in} & (-\log R,\infty)\\
	\ddot w_0(-\log R)=0\\
	w_0(-\log R)=0,
	\end{array}
	\right.$$
	where 
	\begin{equation*}
	\begin{array}{rcl}
	\mathcal L_{\varepsilon}^0(w_0)& = & \displaystyle \ddddot w_0-\frac{n(n-4)+8}{2}\ddot w_0 +\left(\frac{n^2(n-4)^2}{16}-\frac{n(n+4)(n^2-4)}{16}v_\varepsilon^{\frac{8}{n-4}}\right)w_0.
	\end{array}
	\end{equation*}
	Since we no longer have \eqref{eq014}, it is necessary to use a 
	different approach. Choose an extension of $f_0$ to $\mathbb R$, 
	still denoted by  $f_0$. For $T>-\log R$ consider the unique 
	solution $w_T$ of the problem 
	$$\mathcal L^0_{\varepsilon}(w_T) = f_0 \mbox{ in }
	(-\infty,T)$$
	with $\dddot w_T(T)=
	\ddot w_T(T)=
	\dot w_T(T)=
	w_T(T)=0.$
	
	\begin{lemma}\label{lem005}
		For $\mu>0$, let $\delta=\frac{n-4}{2}+\mu$. Then there are 
		constants $\varepsilon_0>0$ and $C>0$ independent of $T$ 
		and $R>0$, such that, for $\varepsilon\in(0,\varepsilon_0]$, 
		we have
		\begin{equation}\label{eq016}
		\sup_{t\in(-\log R,T)}e^{\delta t}|w_T(t)|\leq 
		C\sup_{t\in(-\log R,T)}e^{\delta t}|f_0(t)|.
		\end{equation}
	\end{lemma}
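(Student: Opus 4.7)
The plan is to run the same blow-up/contradiction scheme as in Lemma \ref{lem004}, but adapted to the ODE case in the zeroth Fourier mode, where spectral coercivity fails. Suppose the estimate \eqref{eq016} fails. Then we can extract sequences $\varepsilon_{i}\to 0$, $R_{i}$, $T_{i}$, and $f_{i}$ with
$$A_{i}:=\sup_{t\in(-\log R_{i},T_{i})}e^{\delta t}|w_{T_{i}}(t)|\to\infty,\qquad \sup_{t\in(-\log R_{i},T_{i})}e^{\delta t}|f_{i}(t)|\leq 1.$$
Pick $t_{i}$ nearly attaining $A_{i}$, and set
$$\tilde w_{i}(t):=A_{i}^{-1}e^{\delta t_{i}}w_{T_{i}}(t+t_{i}),\qquad \tilde f_{i}(t):=A_{i}^{-1}e^{\delta t_{i}}f_{i}(t+t_{i}),$$
on the shifted domain $\tilde D_{i}:=(-\log R_{i}-t_{i},T_{i}-t_{i})$. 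Then $|\tilde w_{i}(0)|\to 1$, the weighted bound $e^{\delta t}|\tilde w_{i}(t)|\leq 1$ is preserved, and $e^{\delta t}|\tilde f_{i}(t)|\leq A_{i}^{-1}\to 0$ uniformly.

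Next, pass to a subsequential limit. Up to extraction, $\tilde D_{i}\to(\tau_{1},\tau_{2})$ with $\tau_{1}\in[-\infty,0]$ and $\tau_{2}\in[0,+\infty]$. Because $\varepsilon_{i}\to 0$, Theorem \ref{Prop001} and Proposition \ref{Prop002} imply that, up to extraction, the translated Delaunay profiles $v_{\varepsilon_{i}}(\cdot+t_{i})$ converge in $C^{4}_{\mathrm{loc}}$ either to the trivial profile $v_{\infty}\equiv 0$ or to a translated spherical bubble $v_{\infty}(t)=\cosh^{(4-n)/2}(t-T_{0})$ for some $T_{0}\in\mathbb R$. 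Applying Schauder estimates to the rescaled fourth-order ODE and Arzelà--Ascoli yields $\tilde w_{i}\to w_{\infty}$ in $C^{4,\alpha}_{\mathrm{loc}}((\tau_{1},\tau_{2}))$, where $w_{\infty}$ satisfies the limit equation
$$\ddddot w_{\infty}-\frac{n(n-4)+8}{2}\ddot w_{\infty}+\left(\frac{n^{2}(n-4)^{2}}{16}-\frac{n(n+4)(n^{2}-4)}{16}v_{\infty}^{\frac{8}{n-4}}\right)w_{\infty}=0$$
together with $|w_{\infty}(t)|\leq e^{-\delta t}$ and $|w_{\infty}(0)|=1$.

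The case $\tau_{2}<+\infty$ is then ruled out: the Cauchy data $w_{T_{i}}(T_{i})=\dot w_{T_{i}}(T_{i})=\ddot w_{T_{i}}(T_{i})=\dddot w_{T_{i}}(T_{i})=0$ pass to the limit at $\tau_{2}$, and the uniqueness of the fourth-order linear IVP forces $w_{\infty}\equiv 0$, contradicting $|w_{\infty}(0)|=1$. Hence $\tau_{2}=+\infty$. When $v_{\infty}\equiv 0$ the limit equation is constant-coefficient with characteristic factorization $(\xi^{2}-n^{2}/4)(\xi^{2}-(n-4)^{2}/4)$ and general solution $w_{\infty}(t)=ae^{nt/2}+be^{-nt/2}+ce^{(n-4)t/2}+de^{-(n-4)t/2}$; the decay $e^{\delta t}|w_{\infty}(t)|\leq 1$ at $+\infty$, with $\delta>(n-4)/2$, forces $a=c=d=0$, and then the weighted bound evaluated at the left endpoint together with $|w_{\infty}(0)|=1$ forces $b=0$, a contradiction.

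The subtle case is $v_{\infty}(t)=\cosh^{(4-n)/2}(t-T_{0})$, where the limit equation has a nontrivial $\cosh^{-4}(t-T_{0})$ potential. Its zeroth-mode kernel contains the translation Jacobi field $\dot v_{\infty}$, which decays exactly like $e^{-(n-4)|t|/2}$, and is excluded by the strict weight $\mu>0$. Since the potential is integrable at infinity, bounded-at-$+\infty$ solutions decompose asymptotically into a combination of $\dot v_{\infty}$ (rate $(n-4)/2$) and a genuinely fast-decaying mode behaving like $e^{-nt/2}$; the weight kills the former, and continuation to the left endpoint combined with $|w_\infty(0)|=1$ kills the latter, just as in the trivial-profile case. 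The main obstacle is precisely this asymptotic classification for the non-autonomous fourth-order ODE with the $\cosh^{-4}$ potential: controlling how a solution that decays at the correct rate at $+\infty$ propagates to the left and interacts with the weight requires careful use of the even symmetry of $v_{\infty}$ around $t=T_{0}$ and of the variation-of-parameters representation adapted to the asymptotic constant-coefficient system.
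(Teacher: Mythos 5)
Your overall strategy coincides with the paper's: argue by contradiction, rescale at near-maximum points of the weighted norm, pass to limits both of the translated Delaunay profiles (zero or a spherical bubble, via Theorem \ref{Prop001}) and of the rescaled solutions, rule out a finite right endpoint using the vanishing Cauchy data at $T_i-t_i$, and rule out $\tau_2=+\infty$ using the indicial rates $\pm\tfrac{n-4}{2},\pm\tfrac{n}{2}$ together with the weight $\delta>\tfrac{n-4}{2}$. However, two steps are missing relative to the paper's proof. First, you do not treat the borderline case $\tau_2=0$. Your convergence is interior ($C^{4,\alpha}_{\mathrm{loc}}$ of the open limit interval), so when $T_i-t_i\to 0$ the normalization point $t=0$ collides with the moving right endpoint, and neither ``$|w_\infty(0)|=1$'' nor ``the Cauchy data pass to the limit at $\tau_2$'' is justified as written. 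The paper isolates exactly this as a separate claim and disposes of it by integrating the ODE backwards from the zero Cauchy data at $T_i-t_i$, using the weighted bounds on $w_i$ and $f_i$, to obtain a uniform bound on $\dot w_i$ near the endpoint; this contradicts the blow-up of $\dot w_i$ forced by $|w_i(0)|=1$, $w_i(T_i-t_i)=0$ and $T_i-t_i\to 0$. You need this (or an equivalent up-to-the-boundary estimate) before your finite-$\tau_2$ argument applies.

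Second, and more importantly, you explicitly leave unproved the step you yourself call the ``main obstacle'': the asymptotic classification at $+\infty$ of solutions of the limit equation when $v_\infty$ is a spherical bubble, and the subsequent elimination of the remaining mode. This is precisely where the paper concludes, asserting $w_\infty(t)\sim e^{\pm\mu t}$ with $\mu\in\{\tfrac{n-4}{2},\tfrac{n}{2}\}$ as $t\to+\infty$; since the potential $v_\infty^{8/(n-4)}$ decays exponentially, this follows from standard Levinson-type asymptotics for linear ODEs with integrable perturbations, so the missing step is standard and should be invoked and used rather than flagged as an obstacle (your observation that $\dot v_\infty$ lies in the kernel with rate $\tfrac{n-4}{2}$ is correct and consistent with it). Finally, be aware that your exclusion of the residual $e^{-nt/2}$-asymptotic solution ``at the left endpoint'' presupposes that the limiting interval contains points $t<0$ (i.e.\ $\tau_1<0$): for $\delta<\tfrac{n}{2}$, which is the regime relevant to the later application $\mu\in(1,2)$, the weighted bound conflicts with $|w_\infty(0)|=1$ only at negative $t$, and in the bubble case $w_\infty$ is only asymptotically, not identically, exponential, so this propagation to finite $t$ is part of what still needs an argument.
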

	\begin{proof}
		The proof is similar to that of Lemma \ref{lem004}. 
		Suppose that \eqref{eq016} does not hold. Then there 
		is a sequence $(\varepsilon_i,T_i,R_i,w_{T_i},f_{0,i})$ such that
		\begin{enumerate}
			\item $\varepsilon_i\rightarrow 0$ as $i\rightarrow\infty$;
			\item $\mathcal L^0_{\varepsilon_i}(w_{T_i}) = 
			f_{0,i}$ in $(-\infty,T_i)$, with $\dddot w_{T_i}(T_i)=
			\ddot w_{T_i}(T_i)=
			\dot w_{T_i}(T_i)=
			w_{T_i}(T_i)=0,$ for all $i\in \mathbb N$.
			\item $\displaystyle\sup_{t\in(-\log R_i,T_i)}
			e^{\delta t}|  f_0(t,\theta)|=1$, for every $i\in\mathbb N$;
			\item $\displaystyle\sup_{t\in(-\log R_i,T_i)}
			e^{\delta t}|  w_{T_i}(t,\theta)|\rightarrow\infty$ 
			as $i\rightarrow\infty$.
		\end{enumerate}
		
		There exists $t_i\in[-\log R_i,T_i]$ such that
		$$A_i:=\sup_{t\in[-\log R_i,T_i]}e^{\delta t}|w_{T_i}(t)|=e^{\delta t_i}|w_{T_i}(t_i)|.$$
		Now set 		$w_i(t)=A_i^{-1}e^{\delta t_i}w_{T_i}(t+t_i)$
		and		$f_i(t)=A_i^{-1}e^{\delta t_i}f_{0,i}(t+t_i),$
		for $t\in [-\log R_i-t_i,T_i-t_i]$ and every $i\in\mathbb N$. Note that $|w_i(t)|$ and $|f_i(t)|$ are bounded by a positive constant times $e^{-\delta t}$. Besides		
		\begin{equation}\label{eq017}
	\mathcal L_{i}^0(w_i) =  f_i  \textrm{ in } (-\log R_i-t_i,T_i-t_i)	\end{equation}
	with $\dddot w_i(T_i-t_i)=	\ddot w_i(T_i-t_i)=	\dot w_i(T_i-t_i)=	
	w_i(T_i-t_i)=0$, as well as 	
	$$\sup_{t\in[-\log R_i-t_i,T_i-t_i]}e^{\delta t}
	|w_i(t)|=|w_i(0)|=1$$
		and
		$$\sup_{t\in[-\log R_i-t_i,T_i-t_i]}e^{\delta t}
		|f_i(t)|\rightarrow 0,\;\;\mbox{ as }i\rightarrow\infty,$$
		where
		$$\mathcal L_i^0(w_i)=\ddddot w_i-\frac{n(n-4)+8}{2}
		\ddot w_i +\left(\frac{n^2(n-4)^2}{16}-
		\frac{n(n+4)(n^2-4)}{16}v_i^{\frac{8}{n-4}}\right)w_i$$
		and $v_i(t)=v_{\varepsilon_i}(t+t_i)$. Up to a subsequence, 
		$-\log R_i-t_i\rightarrow\tau_1\in[-\infty,0]$ 
		and $T_i-t_i\rightarrow\tau_2\in[0,\infty]$.
		
		\medskip
		
		\noindent{\bf Claim:} $\tau_2>0$.
		
		If $\tau_2=0$, since $|w_i(0)|=1$ and $w_i(T_i-t_i)=0$, then 
		$\dot w_i$ must blow up in $[T_i-t_i-1,T_i-t_i]$, as $i
		\rightarrow\infty$. Using that $|w_i(t)|,|f_i(t)|
		\leq Ce^{-\delta(T_i-t_i)}$ in this interval and the 
		boundary condition in \eqref{eq017}, we integrate the 
		ODE to obtain that
		$$|\dot w_i(t)|\leq Ce^{-\delta (T_i-t_i)},$$
		in the same interval. Thus, since $T_i-t_i\rightarrow0$, 
		we obtain a contradiction. The claim follows.
		
		\medskip
		
		The claim implies that the interval $(\tau_1,\tau_2)$ is not empty.
		
		As in the proof of Lemma \ref{lem004} we can suppose that $v_i\rightarrow v_\infty$ in $C^4_{\rm loc}(\mathbb R)$, where 
		$v_\infty$ satisfies \eqref{eq001} and is such that 
		either $v_\infty\equiv 0$ or $0<v_\infty<1$ with 
		$\displaystyle\lim_{t\rightarrow\pm\infty}v_\infty(t)=0$. 
		Passing to a subsequence if necessary, there exists a 
		function $w_\infty$ such that $w_i\rightarrow w_\infty$ 
		in $C^3_{\rm loc}(\tau_1,\tau_2)$ and satisfies the equation
		$$\mathcal L_\infty^0(w_\infty)=\ddddot w_\infty
		-\frac{n(n-4)+8}{2}\ddot w_\infty+\left(\frac{n^2(n-4)^2}{16}
		-\frac{n(n+4)(n^2-4)}{16}v_\infty^{\frac{8}{n-4}}\right)
		w_\infty=0$$
		in $(\tau_1,\tau_2)$. By the normalization 
		$|w_i(0)| = 1$ we have $|w_\infty(0)|=1$, and so  
		$w_\infty$ is not trivial.
		
		If $\tau_2<\infty$, then $\dddot w_\infty(\tau_2)=
		\ddot w_\infty(\tau_2)=\dot w_\infty(\tau_2)=w_\infty
		(\tau_2)=0$ implies that $w_\infty\equiv 0$, which is 
		a contradiction. If $\tau_2=\infty$, we use the fact 
		that $v_\infty\rightarrow 0$ as $|t|\rightarrow\infty$ to 
		conclude that 
		$$w_\infty(t)\sim e^{\pm\mu t}\quad \mbox{ as } \quad t
		\rightarrow+\infty,$$
		where $\mu=\frac{n}{2}$ or $\frac{n-4}{2}$. 
		Using that $|w_\infty(t)|\leq e^{-\delta t}$ with $\delta>
		\frac{n-4}{2}$, we get that $w_\infty\equiv 0$, which again 
		is a contradiction.
		\end{proof}
		
		Since the estimate \eqref{eq016} is independent of the 
		parameters $T$, $R$ and $\varepsilon$, we let $T\rightarrow
		+\infty$ to obtain a function $w_0$ which satisfies
		$$\mathcal L_\varepsilon^0(w_0)=f_0$$
		in $\mathbb R$ with the estimate
		\begin{equation*}
		\sup_{t\in(-\log R,+\infty)}e^{\delta t}|w_0(t)|
		\leq C\sup_{t\in(-\log R,+\infty)}e^{\delta t}|f_0(t)|,
		\end{equation*}
		where $\delta>\frac{n-4}{2}$ and the positive constant $C$ 
		does not depend on $\varepsilon\in(0,\varepsilon_0)$ and $R$. 
		If $f_0$ belongs to $C^{0,\alpha}_\delta(-\log R,+\infty)$, 
		then $w_0$ belongs to $C^{4,\alpha}_\delta(-\log R,+\infty)$ 
		and there exists a positive constant $C$ which does not 
		depend on $\varepsilon$ and $R$ such that
		$$\|w_0\|_{C^{4,\alpha}_\delta(-\log R,+\infty)}
		\leq C\|f_0\|_{C^{0,\alpha}_\delta(-\log R,+\infty)}.$$
		
		There is no reason why the function $w_0$ satisfies the 
		boundary condition $\ddot w_0=w_0=0$ in $t=-\log R$. Although
		it would be nice to find a right inverse with these conditions, 
		we can not control the boundary condition in the right 
		function space.
		
		\medskip
		
	\subsubsection{Case $j=1,\ldots,n$}
	
	Projecting \eqref{eq008} along the eigenfunction $\varphi_j$, 
	with $j=1,\ldots,n$, we get 
	
	$$\left\{\begin{array}{lcl}
	\mathcal L^j_{\varepsilon}(w_j) = f_j & {\rm in} &  (-\log R,\infty)\\
	\ddot w_j(-\log R)=0\\
	w_j(-\log R)=0,
	\end{array}
	\right.$$
	where $\mathcal L_\varepsilon^j$ is given by \eqref{eq015}. The proof 
	is similar to the case $j=0$. First we consider an extension of $f_j$, still denoted by $f_j$, to $\mathbb R$ and find the unique solution $w_T$ of 
	$$\mathcal L^j_{\varepsilon}(w_T) = f_j  \textrm{ in } (-\infty,T),$$
	with $\dddot w_T(T)=\ddot w_T(T)=\dot w_T(T)=	w_T(T)=0.$ In a similar manner of the case $j=0$ we can prove the following lemma.
	
	\begin{lemma}
		For $\mu>1$, let $\delta=\frac{n-4}{2}+\mu$. Then there are constants $\varepsilon_0>0$ and $C>0$ independent of $T$ and $R>0$, such that, for all $\varepsilon\in(0,\varepsilon_0]$, we have
		\begin{equation*}
		\sup_{t\in(-\log R,T)}e^{\delta t}|w_T(t,\theta)|\leq C\sup_{t\in(-\log R,T)}e^{\delta t}|  f_0(t,\theta)|.
		\end{equation*}
	\end{lemma}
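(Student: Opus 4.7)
The plan is to follow the contradiction scheme of Lemma \ref{lem005} verbatim, the only substantive change being the asymptotic analysis of the limiting solution, which is now governed by the eigenvalue $\lambda_j=n-1$ for $j=1,\dots,n$. Assume the estimate fails and produce a sequence $(\varepsilon_i,T_i,R_i,w_{T_i},f_{j,i})$ with $\varepsilon_i\to 0$, $\sup_{t\in(-\log R_i,T_i)} e^{\delta t}|f_{j,i}|=1$, and $A_i:=\sup_{t\in(-\log R_i,T_i)} e^{\delta t}|w_{T_i}|\to\infty$, with the supremum being achieved at some $t_i\in[-\log R_i,T_i]$. Define the rescaled solutions
\[
w_i(t):=A_i^{-1}e^{\delta t_i}w_{T_i}(t+t_i),\qquad f_i(t):=A_i^{-1}e^{\delta t_i}f_{j,i}(t+t_i)
\]
on $(-\log R_i-t_i,T_i-t_i)$, so that $|w_i(0)|=1$, $|w_i(t)|\le e^{-\delta t}$, $\sup e^{\delta t}|f_i|\to 0$, and $w_i$ inherits zero fourth-order Cauchy data at $T_i-t_i$. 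Set $v_i(t):=v_{\varepsilon_i}(t+t_i)$, $\tau_1:=\lim(-\log R_i-t_i)\in[-\infty,0]$, and $\tau_2:=\lim(T_i-t_i)\in[0,\infty]$, and rule out $\tau_2=0$ by the same boundary-layer argument as in Lemma \ref{lem005}.

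Applying Proposition \ref{Prop002}, Arzela-Ascoli, and Schauder regularity as in Lemmas \ref{lem004} and \ref{lem005}, pass to a further subsequence to obtain $v_i\to v_\infty$ in $C^4_{\rm loc}(\mathbb{R})$ solving \eqref{eq001}---so by Claim 1 of Lemma \ref{lem004} either $v_\infty\equiv 0$ or $v_\infty$ is a translate of $v_{\rm sph}$, and in both cases $v_\infty(t)\to 0$ as $|t|\to\infty$---together with $w_i\to w_\infty\in C^3_{\rm loc}((\tau_1,\tau_2))$ satisfying $\mathcal{L}_\infty^j w_\infty=0$, $|w_\infty(0)|=1$, and $|w_\infty(t)|\le e^{-\delta t}$. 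If $\tau_2<\infty$, the vanishing Cauchy data at $\tau_2$ force $w_\infty\equiv 0$ by ODE uniqueness, contradicting the normalization at $t=0$. If $\tau_2=\infty$, then as $t\to+\infty$ the operator $\mathcal{L}_\infty^j$ approaches the constant-coefficient operator whose characteristic polynomial, in view of \eqref{eq015} with $\lambda_j=n-1$, factors as
\[
p^4-\tfrac{n^2+4}{2}\,p^2+\tfrac{(n^2-4)^2}{16}=\left(p^2-\tfrac{(n-2)^2}{4}\right)\left(p^2-\tfrac{(n+2)^2}{4}\right),
\]
so that $w_\infty(t)\sim e^{\pm\mu t}$ at $+\infty$ with $\mu\in\{(n-2)/2,(n+2)/2\}$.

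The hypothesis $\mu>1$, equivalent to $\delta>(n-2)/2$, combined with $|w_\infty|\le e^{-\delta t}$ forces the coefficients of the growing modes and of the slow-decaying mode $e^{-(n-2)t/2}$ to vanish, confining $w_\infty$ to the one-dimensional recessive direction $e^{-(n+2)t/2}$ at $+\infty$. Combining this with the analogous restriction at $\tau_1$---the Navier conditions $w_\infty(\tau_1)=\ddot w_\infty(\tau_1)=0$ when $\tau_1>-\infty$, or, when $\tau_1=-\infty$, the bound $|w_\infty(t)|\le e^{-\delta t}=e^{\delta|t|}$ at $t\to-\infty$ together with the symmetric characteristic analysis---rules out the recessive solution and yields $w_\infty\equiv 0$ on $(\tau_1,\infty)$, contradicting $|w_\infty(0)|=1$.

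The principal obstacle is precisely this last step, since the translation Jacobi fields of the spherical bubble $v_{\rm sph}$ live in the Fourier modes $j=1,\dots,n$ with decay rate $e^{-(n-2)|t|/2}$ at both ends, so the borderline weight $\mu=1$ fails to exclude them. The strict inequality $\mu>1$ is calibrated exactly to eliminate these Jacobi fields from the set of admissible limiting solutions; verifying that the scattering of the $+\infty$-recessive mode $e^{-(n+2)t/2}$ is incompatible with the weighted bound at $-\infty$ is the single nontrivial point in what is otherwise a direct adaptation of the case $j=0$ argument.
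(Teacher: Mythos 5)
Your proposal follows exactly the route the paper intends: the authors omit this proof, stating it is the same blow-up/contradiction argument as Lemma \ref{lem005}, with the only new ingredient being that $\lambda_j=n-1$ shifts the indicial roots and forces $\delta>\frac{n-2}{2}$. Your computation of those roots is correct: from \eqref{eq015} with $\lambda_j=n-1$ one gets $(n-2)^2+4\lambda_j=n^2$ and hence $\mu_j^{\pm}=\frac{n\pm2}{2}$, matching your factorization $\bigl(p^2-\tfrac{(n-2)^2}{4}\bigr)\bigl(p^2-\tfrac{(n+2)^2}{4}\bigr)$, and your identification of $\mu>1$ as precisely what excludes the $e^{-\frac{n-2}{2}t}$ (translation-at-infinity Jacobi field) rate is the point the authors highlight.

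One detail of your endgame does not match the actual construction. In this case, exactly as for $j=0$, the function $w_T$ is obtained by extending $f_j$ to all of $\mathbb{R}$ and solving $\mathcal L^j_\varepsilon(w_T)=f_j$ on $(-\infty,T)$ with vanishing Cauchy data $w_T(T)=\dot w_T(T)=\ddot w_T(T)=\dddot w_T(T)=0$; no boundary condition is imposed at $t=-\log R$ (the paper explicitly remarks after the $j=0$ case that the Navier conditions there are in general \emph{not} satisfied, and this is precisely why the Navier-to-Neumann matching of Section \ref{sec:onegluingprocedure} is needed later). Consequently, in the subcase $\tau_1>-\infty$, $\tau_2=\infty$ you cannot invoke ``Navier conditions $w_\infty(\tau_1)=\ddot w_\infty(\tau_1)=0$''; the limit $w_\infty$ carries no data at $\tau_1$, and the contradiction must be extracted, as in Lemma \ref{lem005}, from the weighted bound $|w_\infty(t)|\le e^{-\delta t}$ together with the indicial behavior at $+\infty$ (and, where needed, the growth bound $e^{\delta|t|}$ as $t\to-\infty$, which for the weights actually used, $\delta<\frac{n}{2}<\frac{n+2}{2}$, is incompatible with the recessive mode's growth at the left end). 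With that correction your argument coincides with the paper's intended proof.
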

	
	The proof of this lemma is similar to that of the Lemma \ref{lem005}, so we will not include it here. The fact that $\lambda_j = n-1$ for 
	each $j=1,2,\dots, n$ forces us to choose a weight $\delta 
	> \frac{n-2}{2}$. As in the case of $j=0$, as $T\rightarrow+\infty$ 
	we get, for every $j=1,\ldots,n$ a solution $w_j$ to the equation 
	$$\mathcal L_\varepsilon^j(w_j)=f_j$$
	in the whole $\mathbb R$. Moreover, if $f_j\in 
	C^{0,\alpha}_\delta(-\log R,+\infty)$ with $\delta>\frac{n-2}{2}$, 
	then $w_j\in C^{4,\alpha}_\delta(-\log R,+\infty)$ and there exists 
	a positive constant  $C$, which does not depend on $R$ and 
	$\varepsilon$, such that 
	$$\|w_j\|_{C^{4,\alpha}_\delta(-\log R,+\infty)}
	\leq C\|f_j\|_{C^{0,\alpha}_\delta(-\log R,+\infty)}.$$
	
	Combining the previous three lemmas we obtain the 
	following proposition, 
	\begin{proposition}
		Let $R>0$, $\alpha\in(0,1)$ and $\mu\in(1,2)$. Then there 
		exists $\varepsilon_0>0$ such that, for all 
		$\varepsilon\in(0,\varepsilon_0)$, there is an operator
		$$G_{\varepsilon,R}:C^{0,\alpha}_{\mu-4}(B_1(0)
		\backslash\{0\})\rightarrow C^{4,\alpha}_{\mu}
		(B_1(0)\backslash\{0\})$$
		with the norm bounded independently of $\varepsilon$ and 
		$R$, such that for all $f\in C^{0,\alpha}_{\mu-4}
		(B_1(0)\backslash\{0\})$, the function $w:=G_{\varepsilon,R}
		(f)$ solves the equation 
	\begin{equation}\label{eq047}
	    \left\{\begin{array}{rcl}
		L_{\varepsilon,R}(w)=f & {\rm in} & B_1(0)\backslash\{0\}   \\
		\pi''(w|_{\mathbb S^{n-1}})=0 & {\rm on} & \partial B_1(0) \\
		\pi''(\Delta w|_{\mathbb S^{n-1}})=0 & {\rm on} & \partial B_1(0).
		\end{array}\right.
	\end{equation}
		Moreover, if $f\in \pi''(C^{0,\alpha}_{\mu-4}
		(B_1(0)\backslash\{0\}))$, then $w\in \pi''
		(C^{4,\alpha}_{\mu}(B_1(0)\backslash\{0\}))$ and we may 
		take $\mu\in (2-n,2)$.
	\end{proposition}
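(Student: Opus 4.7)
My plan is to prove the proposition by combining the three Fourier-mode analyses carried out just above (Lemmas~\ref{lem004}, \ref{lem005}, and the analogous lemma for modes $j=1,\dots,n$), transported back from the cylinder $D_R$ to the punctured ball $B_1(0)\setminus\{0\}$. First, I would use the conjugation
$$L_{\varepsilon,R}(u)(x)=|x|^{-\frac{n+4}{2}}\mathcal{L}_{\varepsilon,R}\!\left(e^{\frac{n-4}{2}t}u\circ\Phi\right)\!\circ\Phi^{-1}(x)$$
established in \eqref{eq034} to translate the equation $L_{\varepsilon,R}(w)=f$ on $B_1(0)\setminus\{0\}$ into $\mathcal{L}_{\varepsilon,R}(\widetilde{w})=\widetilde{f}$ on $D_R=(-\log R,\infty)\times\mathbb{S}^{n-1}$. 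Under this conjugation the weight $\mu$ corresponds to exponential decay rate $\delta=\frac{n-4}{2}+\mu$; the hypothesis $\mu\in(1,2)$ gives $\delta\in(\tfrac{n-2}{2},\tfrac{n}{2})$, while $\mu\in(2-n,2)$ gives $\delta\in(-\tfrac{n}{2},\tfrac{n}{2})$, matching the admissible ranges in the lemmas above.

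Second, I would Fourier-decompose $\widetilde{f}(t,\theta)=\sum_{j\geq 0}f_j(t)e_j(\theta)$ and invert $\mathcal{L}_{\varepsilon,R}$ mode-by-mode. For the high modes $j\geq n+1$, Lemma~\ref{lem004} (together with the Schauder upgrade to $C^{4,\alpha}_\delta$ and the limit $T\to\infty$ noted just below it) yields a solution $w^{''}=\pi''(\widetilde{w})$ to \eqref{eq018} on $D_R$ satisfying the full Navier conditions $w^{''}=\Delta w^{''}=0$ on $\partial D_R$, with
$$\|w^{''}\|_{C^{4,\alpha}_\delta(D_R)}\leq C\|\pi''(\widetilde f)\|_{C^{0,\alpha}_\delta(D_R)},$$
and $C$ independent of $\varepsilon,R$. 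For the single mode $j=0$, Lemma~\ref{lem005} produces $w_0$ with the same uniform bound provided $\delta>\frac{n-4}{2}$ (i.e., $\mu>0$). For the first $n$ nontrivial modes $j=1,\dots,n$, the analogous lemma (valid precisely when $\delta>\frac{n-2}{2}$, i.e., $\mu>1$) yields $w_j$ with a uniform estimate. I then assemble
$$\widetilde{w}(t,\theta)\;=\;w^{''}(t,\theta)+\sum_{j=0}^{n}w_j(t)\,e_j(\theta)$$
and define $G_{\varepsilon,R}(f)$ to be the pullback of $\widetilde{w}$ to $B_1(0)\setminus\{0\}$ via the inverse of $\Phi$.

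Third, the boundary conditions demanded in \eqref{eq047} are satisfied by construction: since $w_j e_j$ is a low Fourier mode for $j\leq n$, we have $\pi''(w_j e_j|_{\partial B_1})=0$ and $\pi''(\Delta(w_j e_j)|_{\partial B_1})=0$ automatically, while $w^{''}$ satisfies the full Navier boundary conditions. Summing the three estimates and noting that the Fourier projections are bounded on the weighted H\"older spaces uniformly in $r$ and $\varepsilon$, we obtain
$$\|G_{\varepsilon,R}(f)\|_{C^{4,\alpha}_\mu(B_1(0)\setminus\{0\})}\leq C\|f\|_{C^{0,\alpha}_{\mu-4}(B_1(0)\setminus\{0\})},$$
with $C$ independent of $\varepsilon\in(0,\varepsilon_0)$ and $R>0$. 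In the special case $f=\pi''(f)$, only the high-mode step is needed, so the weight range relaxes to $\mu\in(2-n,2)$ as stated.

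The main obstacle, essentially absorbed by the preceding lemmas, is the passage $T\to\infty$ together with the uniformity of constants in both $\varepsilon$ and $R$; these come from the contradiction-and-blow-up arguments where the limit equation is either $\mathcal{L}_0$ or $\mathcal{L}_\infty$ and where the decay weight $\delta<\frac{n}{2}$ is sharp enough to kill the limit function via the indicial analysis $w_\infty\sim e^{\pm\mu_j^\pm t}$. A minor secondary point is that the low-mode solutions $w_0,w_1,\dots,w_n$ carry no prescribed boundary trace at $\partial B_1$, which is harmless precisely because the boundary conditions in \eqref{eq047} are projected by $\pi''$.
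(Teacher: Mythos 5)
Your proposal matches the paper's own argument: the paper obtains this proposition precisely by conjugating $L_{\varepsilon,R}$ to the cylindrical operator, splitting into Fourier modes, and combining Lemma~\ref{lem004} (high modes, after the Schauder upgrade and $T\to\infty$), Lemma~\ref{lem005} ($j=0$, needing $\mu>0$) and the analogous lemma for $j=1,\dots,n$ (needing $\mu>1$), with the same weight correspondence $\delta=\tfrac{n-4}{2}+\mu$ and the same observation that the low-mode solutions require no boundary control because the conditions in \eqref{eq047} are projected by $\pi''$. Your remark that when $f=\pi''(f)$ only the high-mode step is used, so the range relaxes to $\mu\in(2-n,2)$, is likewise exactly the paper's reasoning.
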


We will work with the solution $u_{\varepsilon,R,a}$ given 
in \eqref{approx} and with functions defined in $B_r(0)
\backslash\{0\}$ with $0<r<1$. Then it is convenient to study 
the operator $L_{\varepsilon,R,a}$ in weighted function spaces 
defined in $B_r(0)\backslash\{0\}$.

Our construction requires a right-inverse for 
	the linearized operator in a ball of radius $r>0$, which 
	will be very small. We obtain the solution operator 
	in $B_r(0) \backslash \{ 0 \}$ from the solution 
	operator in the punctured unit ball by rescaling. 

First, we observe that if $f\in C^{0,\alpha}_{\mu-4}(B_1(0)
\backslash\{0\})$ and $w\in C^{4,\alpha}_{\mu}(B_1(0)\backslash
\{0\})$ satisfy \eqref{eq047}, then the function $f_r(x)=
r^{-4}f(r^{-1}x)$ and $w_r(x)=w(r^{-1}x)$ satisfy
	$$\left\{\begin{array}{rcl}
		L_{\varepsilon,rR}(w_r)=f_r & {\rm in} & B_r(0)\backslash\{0\}   \\
		\pi''(w_r|_{\mathbb S_r^{n-1}})=0 & {\rm on} & \partial B_r(0) \\
		\pi''(\Delta w_r|_{\mathbb S_r^{n-1}})=0 & {\rm on} 
		& \partial B_r(0).
		\end{array}\right.$$
Also, it is not difficulty to see that 
$$\|w_r\|_{(4,\alpha),\mu,r}\leq c\|f_r\|_{(0,\alpha),\mu-4,r},$$
where $c>0$ is a constant that does not depend on $\varepsilon$, 
$R$, $a$ and $r$.

Finally using a perturbation argument we obtain the following two propositions.
\begin{proposition}\label{Prop003}
		Let $R>0$, $\alpha\in(0,1)$ and $\mu\in(1,2)$. Then there exists $\varepsilon_0>0$ such that, for all $\varepsilon\in(0,\varepsilon_0)$, $a\in\mathbb R^n$ and $0<r\leq 1$ with $|a|r\leq r_0$ for some $r_0\in(0,1)$, there is an operator
		$$G_{\varepsilon,R,a,r}:C^{0,\alpha}_{\mu-4}(B_r(0)\backslash\{0\})\rightarrow C^{4,\alpha}_{\mu}(B_r(0)\backslash\{0\})$$
		with the norm bounded independently of $\varepsilon$, $R$, $a$ and $r$, such that for all $f\in C^{0,\alpha}_{\mu-4}(B_r(0)\backslash\{0\})$, the function $w:=G_{\varepsilon,R,a,r}(f)$ solves the equation 
	\begin{equation*}
	    \left\{\begin{array}{rcl}
		L_{\varepsilon,R,a}(w)=f & {\rm in} & B_r(0)\backslash\{0\}   \\
		\pi''(w|_{\mathbb S_r^{n-1}})=0 & {\rm on} & \partial B_r(0) \\
		\pi''(\Delta w|_{\mathbb S_r^{n-1}})=0 & {\rm on} & \partial B_r(0).
		\end{array}\right.
	\end{equation*}
Moreover, if $f\in \pi''(C^{0,\alpha}_{\mu-4}(B_r(0)\backslash\{0\}))$, then $w\in \pi''(C^{4,\alpha}_{\mu}(B_r(0)\backslash\{0\}))$ and we may take $\mu\in (2-n,2)$.
\end{proposition}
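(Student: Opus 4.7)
The plan is to reduce Proposition \ref{Prop003} to the previous proposition by two successive steps: first rescaling to pass from unit radius $r=1$ to arbitrary $r\in(0,1]$ with $a=0$, and then running a Neumann series to incorporate the translation parameter $a$ under the smallness condition $|a|r\le r_0$.

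For the scaling step, given the operator $G_{\varepsilon,R}$ from the previous proposition on $B_1(0)\backslash\{0\}$, the excerpt already observed that if $L_{\varepsilon,R}(w)=f$ on $B_1(0)\backslash\{0\}$ then $w_r(x):=w(r^{-1}x)$ and $f_r(x):=r^{-4}f(r^{-1}x)$ satisfy $L_{\varepsilon,rR}(w_r)=f_r$ on $B_r(0)\backslash\{0\}$, with the same Navier boundary conditions on high modes, and with weighted H\"older norms transforming homogeneously. Reparametrising $R\mapsto R/r$ produces, for every $R>0$ and $r\in(0,1]$, an operator $G_{\varepsilon,R,0,r}:C^{0,\alpha}_{\mu-4}(B_r(0)\backslash\{0\})\to C^{4,\alpha}_{\mu}(B_r(0)\backslash\{0\})$ that right-inverts $L_{\varepsilon,R,0}$, with a norm bound inherited from $G_{\varepsilon,R/r}$ and hence uniform in $\varepsilon$, $R$ and $r$. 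The same bound holds on the restriction to $\pi''$ with the wider weight range $\mu\in(2-n,2)$, since for $a=0$ the coefficient $u_{\varepsilon,R}^{8/(n-4)}$ is radial, so $L_{\varepsilon,R,0}$ preserves the Fourier decomposition.

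For the perturbation in $a$, I write $L_{\varepsilon,R,a}=L_{\varepsilon,R,0}+K_a$, where $K_a$ is multiplication by
\[
-\tfrac{n(n^2-4)(n+4)}{16}\bigl(u_{\varepsilon,R,a}^{8/(n-4)}-u_{\varepsilon,R}^{8/(n-4)}\bigr).
\]
Proposition \ref{lem006} gives $u_{\varepsilon,R,a}-u_{\varepsilon,R}=O(|a||x|^{(6-n)/2})$ on $B_r(0)\backslash\{0\}$ provided $|a|r\le r_0$, and combined with $u_{\varepsilon,R}\asymp |x|^{(4-n)/2}$ the mean value theorem yields the pointwise bound $|u_{\varepsilon,R,a}^{8/(n-4)}-u_{\varepsilon,R}^{8/(n-4)}|\le C|a||x|^{-3}$. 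Upgrading this to the weighted H\"older norm via differentiation of the expansion in \eqref{eq032} and rescaling the annular estimates defining $\|\cdot\|_{(k,\alpha),[\sigma,2\sigma]}$, I obtain
\[
\|K_a v\|_{C^{0,\alpha}_{\mu-4}(B_r(0)\backslash\{0\})}\le C|a|r\,\|v\|_{C^{4,\alpha}_{\mu}(B_r(0)\backslash\{0\})}\le Cr_0\|v\|_{C^{4,\alpha}_{\mu}(B_r(0)\backslash\{0\})},
\]
where the extra factor $|x|\le r$ converts the weight $\mu-3$ into $\mu-4$. Since the operator norm of $G_{\varepsilon,R,0,r}$ is uniformly bounded, choosing $r_0$ sufficiently small (independently of $\varepsilon$, $R$, $r$, $a$) makes $\|G_{\varepsilon,R,0,r}K_a\|\le 1/2$, so a Neumann series defines
\[
G_{\varepsilon,R,a,r}:=(I+G_{\varepsilon,R,0,r}K_a)^{-1}G_{\varepsilon,R,0,r},
\]
which is a bounded right inverse of $L_{\varepsilon,R,a}$ with norm bounded uniformly in all parameters. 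Each term of the Neumann series lies in the image of $G_{\varepsilon,R,0,r}$, so the Navier conditions $\pi''(w|_{\mathbb S_r^{n-1}})=\pi''(\Delta w|_{\mathbb S_r^{n-1}})=0$ are automatic. The same scheme applied to the restriction $\pi''L_{\varepsilon,R,a}\pi''$ yields the statement for $f\in\pi''$ with $\mu\in(2-n,2)$.

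The main technical point, and the only nontrivial one, is the H\"older-level estimate for the multiplier of $K_a$; this requires differentiating the expansion \eqref{eq032} up to order zero in a scale-invariant way and using that the bound $|a|r\le r_0$ gives a uniformly small multiplier in the correct weighted norm rather than only pointwise. Everything else is bookkeeping: the scaling preserves the structure of the norms, and the Neumann series converges by the uniform operator bound on $G_{\varepsilon,R,0,r}$ inherited from the previous proposition.
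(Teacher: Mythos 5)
Your overall scheme---rescaling the unit-ball right inverse to $B_r(0)$ and then absorbing the translation parameter $a$ by a Neumann series under the smallness condition $|a|r\le r_0$---is exactly the paper's argument, and the bookkeeping (scaling of the weighted norms, preservation of the Navier conditions on the high modes, the $\pi''$ statement) is in order. The gap is in the one step you yourself identify as the only nontrivial one: the estimate $|u_{\varepsilon,R,a}^{8/(n-4)}-u_{\varepsilon,R}^{8/(n-4)}|\le C|a||x|^{-3}$ with $C$ independent of $\varepsilon$ and $R$. You derive it from the mean value theorem together with the claim $u_{\varepsilon,R}\asymp|x|^{(4-n)/2}$, but the two-sided comparability is not uniform in $\varepsilon$: along the Delaunay necks $v_\varepsilon$ descends to its minimum value $\varepsilon$, so the only lower bound valid for all $R>0$ and all $x\in B_r(0)\backslash\{0\}$ is $u_{\varepsilon,R}\ge \varepsilon|x|^{(4-n)/2}$. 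For $5\le n\le 12$ this is harmless, since $8/(n-4)\ge 1$ and the mean value factor $\xi^{8/(n-4)-1}$ only needs the upper bound $\xi\le C|x|^{(4-n)/2}$, so your computation closes. But for $n\ge 13$ one has $8/(n-4)<1$, the factor $\xi^{8/(n-4)-1}$ is a negative power of the intermediate value, and with the only available lower bound it is of size $\varepsilon^{8/(n-4)-1}|x|^{(n-12)/2}$; the resulting constant blows up like $\varepsilon^{(12-n)/(n-4)}$ as $\varepsilon\to 0$. This destroys exactly what the proposition asserts: the norm of $G_{\varepsilon,R,a,r}$ would no longer be bounded independently of $\varepsilon$, and the contraction condition $\|G_{\varepsilon,R,0,r}K_a\|\le 1/2$ would force $r_0$ to shrink with $\varepsilon$, which is incompatible with the later applications, where $r_0$ is fixed, $r=r_\varepsilon$, $|a|r_\varepsilon\le 1$, and $\varepsilon\to 0$ is the relevant regime.

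The paper avoids differentiating $s\mapsto s^{8/(n-4)}$ at small values of $u$ altogether: using \eqref{approx} it writes $u_{\varepsilon,R,a}^{8/(n-4)}(x)=\bigl|x-a|x|^2\bigr|^{-4}\,v_\varepsilon^{8/(n-4)}\bigl(-\log|x|+\log\bigl|\tfrac{x}{|x|}-a|x|\bigr|+\log R\bigr)$, compares the two terms through a shift of size $O(|a||x|)$ in the cylindrical variable, and invokes $|\dot v_\varepsilon|\le c_n v_\varepsilon$ from \eqref{eq048}, so that the $t$-derivative of $v_\varepsilon^{8/(n-4)}$ is bounded by $c_n v_\varepsilon^{8/(n-4)}\le c_n$ uniformly in $\varepsilon$; together with $\bigl|x-a|x|^2\bigr|^{-4}=|x|^{-4}+O(|a||x|^{-3})$ this yields the uniform bound $c_n|a||x|^{-3}$ and hence $\|(L_{\varepsilon,R,a}-L_{\varepsilon,R})v\|_{(0,\alpha),\mu-4,r}\le c_n|a|r\|v\|_{(4,\alpha),\mu,r}$. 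If you prefer to keep your route, you could instead patch the case $n\ge 13$ with the elementary inequality $|A^p-B^p|\le|A-B|^p$ for $p=8/(n-4)<1$, which combined with $|u_{\varepsilon,R,a}-u_{\varepsilon,R}|\le C|a||x|^{(6-n)/2}$ gives a perturbation of size $C(|a|r)^{8/(n-4)}$ in the weighted operator norm---weaker than $C|a|r$ but still small when $r_0$ is small, so the Neumann series still converges; the same care is then needed for the gradient/H\"older level of the coefficient estimate.
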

\begin{proof}
Observe that
$$(L_{\varepsilon,R,a}-L_{\varepsilon,R})v=c(n)(u_{\varepsilon,R,a}^{\frac{8}{n-4}}-u_{\varepsilon,R}^{\frac{8}{n-4}})v.$$

By \eqref{approx} we have
$$u_{\varepsilon,R,a}^{\frac{8}{n-4}}(x)=|x-a|x|^2|^{-4}v_\varepsilon^{\frac{8}{n-4}}\left(-\log|x|+\log\left|\frac{x}{|x|}-a|x|\right|+\log R\right).$$

However, $|x-a|x|^2|^{-4}=|x|^{-4}+O(|a||x|^{-3}),$ $\log\left|x/|x|-a|x|\right|=O(|a||x|)$
and
\begin{align*}
    v_\varepsilon^{\frac{8}{n-4}}\left(-\log|x|+\log\left|\frac{x}{|x|}-a|x|\right|+\log R\right)=v_\varepsilon^{\frac{8}{n-4}}\left(-\log|x|+\log R\right)\\
    +\frac{8}{n-2}\int_0^{\log\left|\frac{x}{|x|}-a|x|\right|}\left(v_\varepsilon^{\frac{12-n}{n-4}}\dot v_\varepsilon\right)\left(-\log|x|+\log R+t\right)dt.
\end{align*}

Therefore, using \eqref{eq048} and fact that $0<v_\varepsilon<1$ we get
\begin{equation}\label{eq058}
    \begin{array}{rcl}
         |(u_{\varepsilon,R,a}^{\frac{8}{n-4}}-u_{\varepsilon,R}^{\frac{8}{n-4}})(x)| & \leq &\displaystyle c_n|x|^{-4}\int_0^{O(|a||x|)}v_\varepsilon^{\frac{8}{n-4}}(-\log|x|+\log R+t)dt+O(|a||x|^{-3})
         \\
         & \leq &  c_n|a||x|^{-3}.
    \end{array}
\end{equation}
This implies that
$$\|u_{\varepsilon,R,a}^{\frac{8}{n-4}}(x)-u_{\varepsilon,R}^{\frac{8}{n-4}}(x)\|_{(0,\alpha),[\sigma,2\sigma]}\leq c_n|a|\sigma^{-3},$$
which implies that
$$\|(L_{\varepsilon,R,a}-L_{\varepsilon,R})v\|_{(0,\alpha),\mu-4,r}\leq c_n|a|r\|v\|_{(4,\alpha),\mu,r}.$$
Therefore, by an perturbation argument we obtain the result.
\end{proof}

\subsection{Nonlinear analysis}\label{sec:nonlinear-analysis}

The main goal of this section is to construct constant $Q$-curvature 
metrics in the punctured ball $B_r(p)$ that are close to the 
deformed Delaunay metrics $g_{\varepsilon,R,a} = 
u_{\varepsilon, R, a}^{\frac{4}{n-4}} \delta$ with 
partially prescribed boundary Navier boundary data. More precisely, 
given $\phi_0, \phi_2 \in C^{4,\alpha} (\mathbb{S}^{n-1}_r)$ 
we want to find a complete metric $g_{\rm int}$ in the punctured ball 
$B_r(p) \backslash \{ p\}$ with 
$Q$-curvature equal to $\frac{n(n^2-4)}{8}$ where 
$$g_{\rm int} = (u_{\varepsilon, R, a} {+\Upsilon}+ v_{\phi_0, \phi_2} 
+ w_{\varepsilon, R} + v)^{\frac{4}{n-4}} g.$$ 
Here 
{
\begin{equation}\label{eq0008}
    \Upsilon:=-\frac{\beta_\varepsilon}{2}R^{-\frac{n}{2}}|x|^2,
\end{equation}
 where $\beta_\varepsilon$ is defined in \eqref{eq0001},} $v_{\phi_0, \phi_2} = \mathcal{P}_r (\phi_0, \phi_2)$, 
where $\mathcal{P}_r$ is the Poisson operator we constructed 
in Corollary \ref{poissoninterior}, and 
$$w_{\varepsilon,R} = \left \{ \begin{array}{rl} 
0, & 5 \leq n \leq 9 \\ -G_{\varepsilon, R, a} (\mathscr{P}
(u_{\varepsilon,R} )), & n \geq 10 , \end{array} \right. $$
where $G_{\varepsilon, R, a}$ is given in Proposition \ref{Prop003}
and $\mathscr{P}$ is given in Lemma \ref{lem008}. {Note that $\Upsilon$ is biharmonic}.

Following \eqref{eq020} we arrive at the boundary value problem 
\begin{equation}\label{initialpro}
	H_{g}(u_{\varepsilon, R,a}{+\Upsilon} +v_{\phi_0,\phi_2}  
	+ w_{\varepsilon,R} + v) = 0 , 
	\quad \pi''(\left. v \right |_{\partial B_r}) = 0, \quad 
	\pi''(\left. (\Delta v )\right |_{\partial B_r}) = 0 .
	\end{equation}
for some $0 < r \leq r_1$. 
We can expand \eqref{initialpro} as  
	\begin{equation}\label{p2}
	    \begin{array}{r}
	L_{\varepsilon,R,a}(v)  = \displaystyle (\Delta^2 - P_g)
	(u_{\varepsilon,R,a}{+\Upsilon} + v_{\phi_0,\phi_2} + w_{\varepsilon,R} + 
	v)- \mathcal R_{\varepsilon,R,a}({\Upsilon+}v_{\phi_0,\phi_2} 
	+ w_{\varepsilon,R} + v)
	\\
	  
	{-L_{\varepsilon,R,a}(w_{\varepsilon,R})}+ 
	\dfrac{n(n^2-4)(n+4)}{16}u_{\varepsilon,R,a}^{\frac{8}{n-4}}
	({\Upsilon+}v_{\phi_0,\phi_2} + w_{\varepsilon,R})  =: \mathscr{M}_{\varepsilon,R,a}(v),
	\end{array}
	\end{equation}
	where $L_{\varepsilon,R,a}$ is defined in \eqref{eq025}, $P_g$ is 
	the Paneitz-Branson operator \eqref{paneitz-branson}, 
	$\mathcal R_{\varepsilon,R,a}:=\mathcal R^{u_{\varepsilon,R,a}}$ 
	is defined in \eqref{eq024}. 
	
	We first show that for a suitable choice of parameters the 
	right hand side $\mathscr{M}_{\varepsilon,R,a} (v)$ lies 
	in the space $C^{0,\alpha}_{\mu-4} 
	(B_{r_{\varepsilon}}(0)\backslash\{0\})$, which is the domain 
	of the right inverse $G_{\varepsilon, R, a}$ 
 constructed in Proposition \ref{Prop003}. This will allow us 
 to consider the map $\mathcal{N}_{\varepsilon}$, defined as 
\begin{equation}\label{eq053}
\mathcal{N}_\varepsilon (R,a,\phi_0, \phi_2, \cdot) : 
\mathscr{B} \rightarrow C^{4,\alpha}_\mu (B_{r_\varepsilon}(0) 
\backslash \{ 0 \} ), \quad 
    \mathcal N_{\varepsilon}(R,a,\phi_0,\phi_2,v) 
    = G_{\varepsilon,R,a,r}\left(\mathscr{M}_{\varepsilon,R,a}(v)\right),
\end{equation}
where $\mathscr B$ is a small ball in $C^{0,\alpha}_{\mu-4}(
B_{r_{\varepsilon}}(0)\backslash \{0\})$. Therefore, to solve the 
equation \eqref{initialpro}, it is sufficient to find a fixed point 
of the map $\mathcal N_{\varepsilon}$. We use the two auxiliary 
functions $w_{\varepsilon,R}$ and $\Upsilon$ to account for 
different phenomena. Without $w_{\varepsilon,R}$ 
the right hand side of \eqref{p2} has terms of order 
$O(|x|^{d+1-\frac{n}{2}})$, which is not enough to assure that 
$\mathscr{M}_{\varepsilon,R,a}(v)\in C_{\mu-4}^{0,\alpha}
(B_{r_\varepsilon}(0)\backslash\{0\})$ for $\mu\in(1,2)$. 
 {We choose
$\Upsilon$ precisely to cancel the third term in the expansion 
of the Delaunay-type solution given in \eqref{eq059}.}

\begin{remark}\label{remark002}
    Throughout this work let $\delta_0>0$ and $m>\delta_2>\delta_1>0$ be sufficiently 
    small, but fixed, constants. We fix $s = \frac{2}{n-4}-\delta_0$ 
    and $r_\varepsilon = \alpha_\varepsilon^s$ and choose $a \in 
    \mathbb{R}^n$ such that $|a|r_\varepsilon\leq 1$. Finally we let
    $b\in \mathbb{R}$ satisfy $|b|\leq 1/2$ and choose 
  $R=\left(\frac{\alpha_\varepsilon}{2+2b}\right)^{\frac{2}{n-4}}$.
\end{remark}
\color{black}

From the parameter choices in Remark \ref{remark002} and \eqref{approx} 
it follows that there are positive 
constants $c_1$ and $c_2$ independent of 
$R$ and $a$ such that
\begin{equation*}
c_1\varepsilon|x|^{\frac{4-n}{2}}\leq u_{\varepsilon,R,a}(x)
\leq c_2 |x|^{\frac{4-n}{2}},
\end{equation*}
for each $x$ in $B_{r_\varepsilon}(0)\backslash\{0\}$. 
{Furthermore, since $s>8/(n(n-4))$ when $\delta_0$ is 
sufficiently small, there exists a constant
$c_3$ such that
\begin{equation}\label{eq0009}
    c_1\varepsilon |x|^{\frac{4-n}{2}}\leq u_{\varepsilon,R,a}+\Upsilon\leq c_3|x|^{\frac{4-n}{2}}.
\end{equation}}

\begin{lemma}\label{rest} Let $\mu \in (1,3/2)$. There exist 
$\varepsilon_0>0$ { and $m>\delta_1>0$}, such that for each $\varepsilon \in 
(0,\varepsilon_0)$, $a\in \mathbb R^n$ with $|a|r_\varepsilon
\leq 1$, and for all $v_i \in C^{4,\alpha}_{\mu}(B_{r_{\varepsilon}}(0)
\backslash \{0\})$, $i=0,1$, and $w \in C^{4,\alpha}_{5+d-\frac{n}{2}}
(B_{r_{\varepsilon}}(0)\backslash \{0\})$ with {$\|w\|_{(4,\alpha),5+d-\frac{n}{2},r_\varepsilon}\leq c$ 
 and
$
  \|v_i\|_{(4,\alpha),\mu,r_\varepsilon}\leq 
  cr_{\varepsilon}^{m-\mu-\delta_1}$} there exists a 
  constant $C>0$ indepenent of $\varepsilon$, $R$ and $a$ such that 
  $\mathcal R_{\varepsilon,R,a}$ satisfies 

$$ \|\mathcal R_{\varepsilon,R,a}({\Upsilon+}w + v_1) - 
\mathcal R_{\varepsilon,R,a}({\Upsilon+}w + v_0) 
\|_{(0,\alpha),\mu-4 ,r_{\varepsilon}} 
\leq   C\varepsilon^{\delta'}
\|v_{1}-v_{0}\|_{(4,\alpha),\mu,r_{\varepsilon}}$$
		and
	\begin{equation*}
		\|\mathcal R_{\varepsilon,R,a}
		({\Upsilon+}v_1+w) \|_{(0,\alpha),\mu-4,r_{\varepsilon}} 
		\leq{ C\varepsilon^{\delta'}
		r_{\varepsilon}^{m-\mu}},
	\end{equation*}
	\color{black}
		for some $\delta'>0$.
	\end{lemma}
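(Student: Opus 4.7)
The strategy is to view $\mathcal R_{\varepsilon,R,a}(\eta)$ as the second-order Taylor remainder of the nonlinearity $F(s)=c_n|s|^{8/(n-4)}s$ expanded at $s=u_{\varepsilon,R,a}$, and to bound it pointwise by exploiting positivity and comparability of $u_{\varepsilon,R,a}+t\eta$ on $B_{r_\varepsilon}(0)\setminus\{0\}$. Setting $\eta_i:=\Upsilon+w+v_i$, the integral form of Taylor's theorem yields
\begin{align*}
\mathcal R_{\varepsilon,R,a}(\eta_i)&=c_n\!\int_0^1(1-t)F''(u_{\varepsilon,R,a}+t\eta_i)\,\eta_i^{2}\,dt,\\
\mathcal R_{\varepsilon,R,a}(\eta_1)-\mathcal R_{\varepsilon,R,a}(\eta_0)&=c_n(v_1-v_0)\!\int_0^1\!\!\int_0^1 F''(u_{\varepsilon,R,a}+\eta_{s,t})\,\eta_{s,t}\,ds\,dt,
\end{align*}
where $|F''(s)|\le C|s|^{(12-n)/(n-4)}$ away from $0$ and $\eta_{s,t}$ is a convex combination of $\eta_0,\eta_1$.

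The first step is to check that, under the hypotheses $\|v_i\|_{(4,\alpha),\mu,r_\varepsilon}\le cr_\varepsilon^{m-\mu-\delta_1}$ and $\|w\|_{(4,\alpha),5+d-n/2,r_\varepsilon}\le c$ combined with the scalings of Remark \ref{remark002}, one has $|\eta_i|\le\tfrac12(u_{\varepsilon,R,a}+\Upsilon)$ throughout $B_{r_\varepsilon}(0)\setminus\{0\}$. Together with \eqref{eq0009} this gives $\tfrac{c_1}{2}\varepsilon|x|^{(4-n)/2}\le u_{\varepsilon,R,a}+t\eta_i\le C|x|^{(4-n)/2}$ uniformly in $t\in[0,1]$, and in either sign of $(12-n)/(n-4)$ one obtains the uniform estimate
\begin{equation*}
|F''(u_{\varepsilon,R,a}+t\eta_i)|\le C\varepsilon^{\min\{(12-n)/(n-4),\,0\}}|x|^{(n-12)/2},
\end{equation*}
the $\varepsilon$-prefactor being nontrivial only in high dimensions $n\ge 13$ where $F''$ is singular at the origin.

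Next I would insert the pointwise bounds $|\Upsilon|\le C\varepsilon^{4/(n-4)}|x|^{2}$ (from \eqref{eq0008}, \eqref{eq0001}, and the choice of $R$), $|w|\le C|x|^{5+d-n/2}$, $|v_i|\le Cr_\varepsilon^{m-\mu-\delta_1}|x|^{\mu}$ into the two Taylor identities, expand $\eta_i^{2}$ term by term, and use $|x|\le r_\varepsilon\le 1$ to convert excess positive powers of $|x|$ into powers of $r_\varepsilon$. The dominant contribution, coming from the $v_i^{2}$ cross term, produces
\begin{equation*}
|\mathcal R_{\varepsilon,R,a}(\eta_i)(x)|\le C\varepsilon^{\min\{(12-n)/(n-4),0\}}\,r_\varepsilon^{\,2(m-\mu-\delta_1)+(n-4)/2+\mu}\,|x|^{\mu-4}.
\end{equation*}
Substituting $r_\varepsilon\asymp\varepsilon^{s}$ with $s=\tfrac{2}{n-4}-\delta_0$ and factoring $r_\varepsilon^{m-\mu}$, the residual $\varepsilon$-exponent reduces to a positive constant $\delta'>0$: the leading contribution $1=s\cdot\tfrac{n-4}{2}$ and (in the low-dimensional case) the factor $\tfrac{8}{n-4}$ from $F''$ dominate the small negative terms $s(m-\mu)-2s\delta_1-\delta_0\tfrac{n-4}{2}$, while in the high-dimensional case the negative exponent $(12-n)/(n-4)$ is overcompensated. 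This gives the second inequality. The first follows verbatim from the second Taylor identity, the factor $(v_1-v_0)$ supplying $\|v_1-v_0\|_{(4,\alpha),\mu,r_\varepsilon}|x|^\mu$ in place of one copy of $r_\varepsilon^{m-\mu-\delta_1}|x|^\mu$ in the previous bound; the Hölder seminorm halves of both norms are handled by the same analysis applied to difference quotients on each shell $|x|\in[\sigma,2\sigma]$, where $F''$ is smooth and the comparability persists.

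The main obstacle is controlling $F''$ in dimensions $n\ge 13$, where it blows up as its argument approaches $0$: preserving the above estimate hinges on maintaining the lower bound \eqref{eq0009} for $u_{\varepsilon,R,a}+\Upsilon$, and this is exactly why the auxiliary function $\Upsilon$ is introduced into the ansatz. Once this comparability is secured, the bookkeeping of $\varepsilon$- and $r_\varepsilon$-powers needed to extract $\varepsilon^{\delta'}$ is essentially mechanical.
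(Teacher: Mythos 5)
Your overall strategy is the same as the paper's: represent $\mathcal R_{\varepsilon,R,a}$ and its difference by the integral form of Taylor's theorem, use the two--sided comparability $c\varepsilon|x|^{\frac{4-n}{2}}\le u_{\varepsilon,R,a}+sz_t\le C|x|^{\frac{4-n}{2}}$ to bound $F''$ with the dimension--dependent prefactor ($\lambda_n=0$ for $5\le n\le 12$, $\lambda_n=\tfrac{12-n}{n-4}$ for $n\ge 13$), and then do shell--by--shell bookkeeping in powers of $r_\varepsilon\asymp\varepsilon^{s}$. However, there is a concrete error in the bookkeeping. You assert $|\Upsilon|\le C\varepsilon^{4/(n-4)}|x|^{2}$, but from \eqref{eq0008}, $|\beta_\varepsilon|\lesssim\varepsilon$ and $R\asymp\alpha_\varepsilon^{2/(n-4)}\asymp\varepsilon^{2/(n-4)}$ one only gets $|\Upsilon|\lesssim\varepsilon^{-4/(n-4)}|x|^{2}$ — the bound the paper itself uses later (see the line preceding \eqref{eq0006}). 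With the correct bound, the available estimate for $\|\Upsilon\|_{(0,\alpha),[\sigma,2\sigma]}$ on the outermost shells $\sigma\sim r_\varepsilon$ is of order $\varepsilon^{-4/(n-4)}r_\varepsilon^{2}=\varepsilon^{-2\delta_0}$, which dominates $\|v_i\|_{(0,\alpha),[\sigma,2\sigma]}\lesssim r_\varepsilon^{m-\delta_1}$ and $\|w\|_{(0,\alpha),[\sigma,2\sigma]}\lesssim r_\varepsilon^{5+d-\frac{n}{2}}$. Hence the dominant contribution to both norms is the $\Upsilon$ (resp.\ $\Upsilon^{2}$) term, not the $v_i^{2}$ term, and your stated intermediate bound $|\mathcal R_{\varepsilon,R,a}(\eta_i)|\le C\varepsilon^{\min\{\lambda_n,0\}}r_\varepsilon^{2(m-\mu-\delta_1)+\frac{n-4}{2}+\mu}|x|^{\mu-4}$ is not a valid upper bound. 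For the same reason the auxiliary claim $|\eta_i|\le\tfrac12(u_{\varepsilon,R,a}+\Upsilon)$ is unjustified near $|x|\sim r_\varepsilon$ (there $u_{\varepsilon,R,a}+\Upsilon\asymp 1+b$ while the available bound on $\Upsilon$ alone allows size $\varepsilon^{-2\delta_0}$); it is also unnecessary: since $\beta_\varepsilon<0$ one has $\Upsilon\ge 0$, so $u_{\varepsilon,R,a}+sz_t\ge u_{\varepsilon,R,a}-|w|-|v_i|\ge\tfrac{c_1}{2}\varepsilon|x|^{\frac{4-n}{2}}$ and $u_{\varepsilon,R,a}+sz_t\le u_{\varepsilon,R,a}+\Upsilon+|w|+|v_i|\le C|x|^{\frac{4-n}{2}}$ by \eqref{eq0009}, which is all the $F''$ bound requires.

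The gap is repairable within your framework, and the repair is exactly the paper's computation: keeping the $\Upsilon$--term one gets
\begin{equation*}
\sigma^{4-\mu}\|\mathcal R_{\varepsilon,R,a}(\Upsilon+w+v_1)\|_{(0,\alpha),[\sigma,2\sigma]}\le C\varepsilon^{\lambda_n}\sigma^{\frac{n-4}{2}-\mu}\varepsilon^{-\frac{8}{n-4}}\sigma^{4}\le C\varepsilon^{\lambda_n-\frac{8}{n-4}+s\frac{n+4}{2}}r_\varepsilon^{-\mu},
\end{equation*}
and similarly the Lipschitz estimate carries the factor $\varepsilon^{\lambda_n-\frac{4}{n-4}+\frac{n}{2}s}$. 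Both residual exponents equal $\lambda_n+1$ up to corrections of order $\delta_0$ and $sm$, hence are positive (for $n\ge 13$, $\lambda_n+1=\tfrac{8}{n-4}$), so the lemma still follows. As written, though, your derivation of the factor $\varepsilon^{\delta'}$ rests on the wrong bound for $\Upsilon$ and on a misidentified dominant term, so that step needs to be redone as above.
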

\begin{proof}
First, by \eqref{eq024}, we can write 
\begin{align*}
\mathcal R_{\varepsilon,R,a}({\Upsilon+}w+v_{1}) - \mathcal R_{\varepsilon,R,a}({\Upsilon+}w+v_{0}) = -\frac{n(n+4)^{2}}{2}(v_{1}-v_{0})\int_{0}^{1}\int_{0}^{1}(u_{\varepsilon,R,a} + sz_{t})^{\frac{12-n}{n-4}}z_{t}dtds
\end{align*}
where $z_{t} = {\Upsilon+}w + tv_{1} + (1-t)v_{0}$.  Combining this last 
displayed equation with { \eqref{eq0009} } implies 
\begin{equation*}
    c\varepsilon|x|^{\frac{4-n}{2}} \leq u_{\varepsilon,R,a}(x){+\Upsilon(x)} + v_{i}(x) + w(x) \leq C|x|^{\frac{4-n}{2}}
\end{equation*}
for sufficiently small $\varepsilon >0$. Thus 
\begin{equation*}
    \max_{0\leq s,t \leq 1}\|(u_{\varepsilon,R,a} +sz_{t})^{\frac{12-n}{n-4}}\|_{(0,\alpha),[\sigma,2\sigma]} \leq C\varepsilon^{\lambda_n}\sigma^{\frac{n-12}{2}},
\end{equation*}
where $\lambda_n=0$ for $5\leq n\leq 12$ and $\lambda_{n} = \frac{12-n}{n-4}$ for $n\geq 13$, 
which in turn yields 

\begin{align*}
 & \sigma^{4-\mu}\|\mathcal R_{\varepsilon,R,a}({\Upsilon+}w+v_1) -  \mathcal R_{\varepsilon,R,a}({\Upsilon+}w+v_0)\|_{(0,\alpha),[\sigma,2\sigma]} \leq \\
    &\leq C\varepsilon^{\lambda_{n}}\sigma^{\frac{n-4}{2}}\|v_{1}-v_{0}\|_{(4,\alpha),\mu,r_{\varepsilon}}\left( \|w\|_{(0,\alpha),[\sigma,2\sigma]} + \|v_1\|_{(0,\alpha),[\sigma,2\sigma]}+ \|v_{2}\|_{(0,\alpha),[\sigma,2\sigma]}{+\|\Upsilon\|_{(0,\alpha),[\sigma,2\sigma]}} \right)\\
    &\leq C\varepsilon^{\lambda_{n}-\frac{4}{n-4}+\frac{n}{2}s}\|v_{1}-v_{0}\|_{(4,\alpha),\mu,r_{\varepsilon}}
\end{align*}
and 

\begin{align*}
    \sigma^{4-\mu}\|\mathcal R_{\varepsilon,R,a}(\Upsilon+v_1+w)\|_{(0,\alpha),[\sigma,2\sigma]}  \leq & ~ C\varepsilon^{\lambda_n}\sigma^{\frac{n-4}{2}-\mu}(\|v_1\|^{2}_{(0,\alpha),[\sigma,2\sigma]}+\|w\|^{2}_{(0,\alpha),[\sigma,2\sigma]}+\|\Upsilon\|^{2}_{(0,\alpha),[\sigma,2\sigma]})\\
     \leq &~ C\varepsilon^{\lambda_n-\frac{8}{n-4}+s\frac{n+4}{2}}r_\varepsilon^{-\mu},
\end{align*}
with $\lambda_n-\frac{4}{n-4}+\frac{n}{2}s>0$ and $\lambda_n-\frac{8}{n-4}+s\frac{n+4}{2}>0$. From these inequalities we obtain the result.
\end{proof}

\color{black}	
\begin{lemma}\label{lem010} Let $5 \leq n \leq 9$, let $\mu \in (1,3/2)$, and 
let $c$ and $\kappa$ be fixed, positive constants. 
There exists $\varepsilon_{0}>0$ such that for each $\varepsilon \in 
(0,\varepsilon_0)$, for all $v \in C^{4,\alpha}_{\mu}(B_{r_{\varepsilon}}(0)\backslash \{0\})$, 
$\phi_0 \in C^{4,\alpha}(\mathbb{S}_{r_{\varepsilon}}^{n-1})$ and $\phi_2 \in \pi''(C^{4,\alpha}
(\mathbb{S}_{r_{\varepsilon}}^{n-1}))$, with $\|v\|_{(4,\alpha),\mu,r_{\varepsilon}} \leq 
cr_{\varepsilon}^{{m}-\mu-\delta_1}$ and $\|(\phi_0,\phi_2)\|_{(4,\alpha),r_{\varepsilon}}
\leq \kappa r_{\varepsilon}^{{m}-\delta_1}$,
 the right-hand side of $\eqref{p2}$ belongs to $C^{0,\alpha}_{\mu-4}(B_{r_{\varepsilon}}(0)\backslash \{0\})$. 
\end{lemma}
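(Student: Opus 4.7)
Since $5 \leq n \leq 9$, by definition $w_{\varepsilon,R} \equiv 0$, so $\mathscr M_{\varepsilon,R,a}(v)$ reduces to three pieces: the geometric error $(\Delta^2 - P_g)(u_{\varepsilon,R,a} + \Upsilon + v_{\phi_0,\phi_2} + v)$, the nonlinear remainder $-\mathcal R_{\varepsilon,R,a}(\Upsilon + v_{\phi_0,\phi_2} + v)$, and the algebraic term $\tfrac{n(n^2-4)(n+4)}{16}\, u_{\varepsilon,R,a}^{8/(n-4)}(\Upsilon + v_{\phi_0,\phi_2})$. To establish membership in $C^{0,\alpha}_{\mu-4}(B_{r_\varepsilon}(0) \setminus \{0\})$ my plan is to verify that each piece decays at least like $|x|^{\mu-4}$ as $|x| \to 0$.

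For the geometric piece, I would decompose further as $(\Delta^2 - P_g)(u_{\varepsilon,R}) + (\Delta^2 - P_g)(u_{\varepsilon,R,a} - u_{\varepsilon,R}) + (\Delta^2 - P_g)(\Upsilon + v_{\phi_0,\phi_2} + v)$. The first term is controlled by Lemma \ref{lem008}: the residual $\mathscr R$ is $O(|x|^{d+4-n/2+\max\{d,0\}})$, while the explicit terms in $\mathscr P(u_{\varepsilon,R})$ give $O(|x|^{2-n/2})$; here I use that $\partial_i\partial_j h_{ij} = O(|x|^2)$ thanks to \eqref{eq040}, since $R_g = O(|x|^2)$ dominates the quadratic $|\partial h|^2 + |h||\partial^2 h|$ terms for $d \leq 0$, together with $|u'_{\varepsilon,R}/r| \leq C|x|^{-n/2}$. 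The translation piece is handled by combining Proposition \ref{lem006}, Lemma \ref{lem009}, and the Paneitz expansion \eqref{eq039}--\eqref{eq040}. The contributions from $\Upsilon$, $v_{\phi_0,\phi_2}$, and $v$ use \eqref{eq067}--\eqref{eq052} together with the bounds $|\Upsilon(x)| \leq C\varepsilon R^{-n/2}|x|^2$ coming from \eqref{eq0008}, $|v_{\phi_0,\phi_2}(x)| \leq C|x|^2$ from Corollary \ref{cor:poissoninterior}, and $\|v\|_{(4,\alpha),\mu,r_\varepsilon} \leq c r_\varepsilon^{m-\mu-\delta_1}$ from the hypothesis. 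The worst decay rate across all pieces is $|x|^{2-n/2}$; for $5 \leq n \leq 9$ this exponent is at least $-5/2$, while $\mu - 4 < -5/2$ since $\mu < 3/2$, so the ratio $|x|^{2-n/2}/|x|^{\mu-4}$ is bounded near the origin.

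For the nonlinear remainder, I would apply Lemma \ref{rest} with $v_1 = v$, $v_0 = 0$, and $w = v_{\phi_0,\phi_2}$ (checking the weighted norm hypothesis on $v_{\phi_0,\phi_2}$ via Corollary \ref{cor:poissoninterior} and the parameter bounds of Remark \ref{remark002}) to obtain directly the desired membership, in fact with a small factor $\varepsilon^{\delta'}$ to spare. For the algebraic term, the pointwise estimate $u_{\varepsilon,R,a}^{8/(n-4)} \leq C|x|^{-4}$ from \eqref{eq0009} combined with the quadratic bounds on $\Upsilon$ and $v_{\phi_0,\phi_2}$ yields a product of order $|x|^{-2}$ as $|x| \to 0$, and since $\mu < 2$ this lies in $C^{0,\alpha}_{\mu-4}$ on $B_{r_\varepsilon}(0) \setminus \{0\}$. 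The H\"older seminorm parts of each estimate follow by the same computation applied to difference quotients, using the scale-invariance of the norm on dyadic annuli $[\sigma, 2\sigma]$.

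The main technical obstacle is verifying the sharp bound $\mathscr P(u_{\varepsilon,R}) = O(|x|^{2-n/2})$ rather than the weaker $O(|x|^{d+1-n/2})$ one might naively expect from $h = O(|x|^{d+3})$ alone. This improvement is possible because the specific combination $\partial_i\partial_j h_{ij}$ coincides with the scalar curvature $R_g = O(|x|^2)$ modulo quadratic-in-$h$ corrections of order $|x|^{2d+4}$, rather than being an arbitrary second derivative of $h$. The bound is tight precisely at $n=9$ (giving $|x|^{-5/2}$), which explains why the hypothesis must impose $\mu < 3/2$ strictly and why for $n \geq 10$ the auxiliary cancellation function $w_{\varepsilon,R}$ has to be introduced in the definition of $\mathscr M_{\varepsilon,R,a}$ to remove the leading low-Fourier-mode part of $\mathscr P(u_{\varepsilon,R})$.
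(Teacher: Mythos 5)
Your overall strategy coincides with the paper's proof of Lemma \ref{lem010}: the same three-way split of the right-hand side of \eqref{p2} (geometric error, nonlinear remainder via Lemma \ref{rest}, algebraic term of size $O(|x|^{-2})$), the same use of \eqref{eq032} and Lemma \ref{lem009} for the translated Delaunay solution, and the same key refinement that the Ricci contraction reduces to $R_g u'/r$ up to admissible errors. Routing the radial part through Lemma \ref{lem008} for all $5\leq n\leq 9$, instead of arguing directly with the decay of $R_g$, $\operatorname{Ric}_g$, $Q_g$ in \eqref{eq049} (with the refinement \eqref{eq050} invoked only for $n=8,9$, the cruder bound $O(|x|^{1-n/2})$ already sufficing for $5\leq n\leq 7$), is only a reorganization, not a different argument.

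One step would not go through as you describe it: the application of Lemma \ref{rest} with $w=v_{\phi_0,\phi_2}$. That lemma requires $\|w\|_{(4,\alpha),5+d-\frac{n}{2},r_\varepsilon}\leq c$ with $c$ a fixed constant, whereas Corollary \ref{cor:poissoninterior} only yields $\|v_{\phi_0,\phi_2}\|_{(4,\alpha),5+d-\frac{n}{2},r_\varepsilon}\leq C\kappa\, r_\varepsilon^{m-\delta_1-(5+d-\frac{n}{2})}$, and since $5+d-\frac{n}{2}>0$ for every $5\leq n\leq 9$ while $m,\delta_1$ are small, this quantity blows up as $\varepsilon\to 0$; the verification you propose in parentheses cannot be completed. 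The repair is exactly what the paper does: place $v_{\phi_0,\phi_2}$ together with $v$ in the $v_i$-slot, where the scaling in Corollary \ref{cor:poissoninterior} gives $\|v_{\phi_0,\phi_2}\|_{(4,\alpha),\mu,r_\varepsilon}\leq C\kappa\, r_\varepsilon^{m-\mu-\delta_1}$, which is precisely the hypothesis of Lemma \ref{rest} on the $v_i$, and take $w=0$ (legitimate here since $w_{\varepsilon,R}\equiv 0$ for $5\leq n\leq 9$). Two smaller glosses: (i) the bound $\mathscr{P}(u_{\varepsilon,R})=O(|x|^{2-\frac{n}{2}})$ needs, besides $\partial_i\partial_jh_{ij}=O(|x|^2)$, also $x_i\partial_i\partial_j\partial_kh_{jk}=O(|x|^2)$ and $\Delta\partial_i\partial_jh_{ij}=O(1)$ (equivalently $\nabla R_g=O(|x|)$ and $Q_g=O(1)$), since the naive count from $h=O(|x|^{d+3})$ on these higher derivatives fails already at $n=7$; (ii) the worst exponent among your pieces is $d+2-\frac{n}{2}$ coming from Lemma \ref{lem009} (equal to $-5/2$ at $n=7$), not $2-\frac{n}{2}$, though it still exceeds $\mu-4$, so the conclusion is unaffected.
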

\begin{proof}
By Corollary \ref{cor:poissoninterior}, the norm of $v_{\phi_0,\phi_2}+v$ is bounded by a 
positive constant times $r_\varepsilon^{{m}-\delta_1}$, so 
Lemma \ref{rest} implies the term $\mathcal R_{\varepsilon,R,a}({\Upsilon+}v_{\phi_0,\phi_2}+v)$ belongs 
to $C^{0,\alpha}_{\mu-4}(B_{r_{\varepsilon}}(0)\backslash \{0\})$. We also have
$u_{\varepsilon,R,a}^{\frac{8}{n-4}}v_{\phi_0,\phi_2}=O(|x|^{-2})=O(|x|^{\mu-4})$. 

To prove the remaining term of \eqref{p2} has the right decay, we write
$$(\Delta^2-P_g)(u_{\varepsilon,R,a}+v_{\phi_0,\phi_2}+v) = 
(\Delta^2-\Delta^2_g)u_{\varepsilon,R,a}+(\Delta^2-P_g)
(v_{\phi_0,\phi_2}+v)+(\Delta_g^2-P_g)u_{\varepsilon,R,a}$$
and estimate each of these terms separately. 
Observe that $v_{\phi_0,\phi_2}=O(|x|^2)$ and $v=O(|x|^\mu)$,
and so the second terms decays like $O(|x|^{\mu-4})$. 
By Lemma \ref{lem009} we also have the estimate 
$(\Delta^2-\Delta^2_g)u_{\varepsilon,R,a}=O(|x|^{\mu-4})$, and 
so it only remains to estimate $(\Delta_g- P_g) u_{\varepsilon,R,a}$. 

Observe that 
\eqref{eq036} gives us $R_g=O(|x|^2)$, ${\rm Ric}_g=O(|x|)$ and $Q_g=O(1)$, 
and so from \eqref{eq049} we obtain directly that $(\Delta_g^2-P_g)u_{\varepsilon,R,a}
=O(|x|^{1-\frac{n}{2}})$ for $5\leq n\leq 7$. For dimension $8$ and $9$, 
\eqref{eq050} yields
$$\langle {\rm Ric}_g,\nabla^2u_{\varepsilon,R} 
\rangle=R_g\frac{u'}{r}+O(|x|^{4-\frac{n}{2}}).$$
Thus, using that $R_g=O(|x|^2)$, $Q_g=O(1)$ and \eqref{eq049} we 
obtain $(\Delta_g^2-P_g)u_{\varepsilon,R}=O(|x|^{2-\frac{n}{2}})$. 
Now, using \eqref{eq032} and \eqref{eq049} we get $(\Delta_g^2-P_g)(
u_{\varepsilon,R,a}-u_{\varepsilon,R})=O(|x|^{2-\frac{n}{2}})$. Therefore, 
$(\Delta_g^2-P_g)u_{\varepsilon,R,a}=O(|x|^{2-\frac{n}{2}})$ for $n=8$ or 
$9$. In either case we have $(\Delta_g^2-P_g)u_{\varepsilon,R,a}=O(|x|^{\mu-4})$. 
This completes the proof{, since $\Upsilon=O(|x|^2)$.}
\end{proof}

Let us now consider the case $n\geq 10$. Since $(P_g-\Delta_g^2)u_{\varepsilon,R,a}=
O(|x|^{d+1-\frac{n}{2}})$, unfortunately $(P_g-\Delta_g^2)u_{\varepsilon,R,a}\not\in 
C^{0,\alpha}_{\mu-4}(B_{r_\varepsilon}(0)\backslash\{0\})$ for any $\mu>1$, 
and so we cannot use the right inverse of $L_{\varepsilon,R,a}$ directly. To 
overcome this difficulty we will consider the expansion \eqref{eq032} and 
the fact that, by Lemma \ref{lem008}, $\mathscr{P}(u_{\varepsilon,R})$ 
is orthogonal to $\{1,x_1,
\ldots,x_n\}$. If follows from this fact and 
Proposition \ref{Prop003} that there exists $w_{\varepsilon
,R}\in \pi''(C^{4,\alpha}_{5+d-\frac{n}{2}}(B_{r_\varepsilon}(0)\backslash\{0\}))$ such that
\begin{equation}\label{eq068}
    L_{\varepsilon,R,a}(w_{\varepsilon,R})=-\mathscr{P}(u_{\varepsilon,R}).
\end{equation}
This auxiliary function $w_{\varepsilon,R}$ will eliminate the terms 
that were preventing us from applying the right inverse to the 
right hand-side of \eqref{p2}. From this,  the expansion \eqref{eq032} and \eqref{p2} we obtain
\begin{equation}\label{eq051}
    \begin{array}{rcl}
         L_{\varepsilon,R,a}(v) & = & \displaystyle (\Delta^2-P_g)(u_{\varepsilon,R,a}-u_{\varepsilon,R}{+\Upsilon}+v_{\phi_0,\phi_2}+w_{\varepsilon,R}+v) 
	\\
	& & + \dfrac{n(n^2-4)(n+4)}{16}u_{\varepsilon,R,a}^{\frac{8}{n-4}}({\Upsilon+}v_{\phi_0,\phi_2})- \mathcal R_{\varepsilon,R,a}({\Upsilon+}v_{\phi_0,\phi_2} +w_{\varepsilon,R}+ v)-\mathscr{R},
    \end{array}
\end{equation}
where Lemma \ref{lem008} implies $\mathscr{R}=O(|x|^{2d+4-\frac{n}{2}})$.

\begin{lemma} Suppose $n\geq 10$, $\mu \in (1,3/2)$, $\kappa>0$ and 
$c>0$ be fixed constants. Under the same assumption of 
Lemma \ref{lem010} the right-hand side of $\eqref{eq051}$ belongs 
to $C^{0,\alpha}_{\mu-4}(B_{r_{\varepsilon}}(0)\backslash \{0\})$. 
\end{lemma}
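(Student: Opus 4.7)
The plan is to check term by term that the right-hand side of \eqref{eq051} lies in $C^{0,\alpha}_{\mu-4}(B_{r_\varepsilon}(0)\setminus\{0\})$. Three of the five summands are handled immediately. By Lemma \ref{lem008}, $\mathscr{R} = O(|x|^{2d+4-n/2})$, and for $n \geq 10$ with $d = [(n-8)/2]$ the exponent is at least $(n-10)/2 \geq 0$, comfortably above $\mu - 4$. The nonlinear remainder $\mathcal R_{\varepsilon,R,a}(\Upsilon + v_{\phi_0, \phi_2} + w_{\varepsilon, R} + v)$ is in $C^{0,\alpha}_{\mu-4}$ by Lemma \ref{rest}, since the weighted bounds on $v$ from Lemma \ref{lem010}, on $(\phi_0,\phi_2)$ via Corollary \ref{cor:poissoninterior}, and on $w_{\varepsilon, R}$ from Proposition \ref{Prop003} applied to \eqref{eq068} all fit its hypotheses. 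Finally, $u_{\varepsilon, R, a}^{8/(n-4)}(\Upsilon + v_{\phi_0, \phi_2}) = O(|x|^{-4}) \cdot O(|x|^2) = O(|x|^{-2})$ belongs to $C^{0,\alpha}_{\mu-4}$ since $\mu \leq 2$.

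The body of the argument lies in estimating $(\Delta^2 - P_g)\phi$ for $\phi = (u_{\varepsilon,R,a} - u_{\varepsilon,R}) + \Upsilon + v_{\phi_0,\phi_2} + w_{\varepsilon,R} + v$. I would split $\Delta^2 - P_g = (\Delta^2 - \Delta_g^2) + (\Delta_g^2 - P_g)$ and use the Paneitz identity \eqref{eq049}, together with the conformal-normal-coordinate expansions from \eqref{eq035} and \eqref{eq036}: $g - \delta = O(|x|^{d+3})$, ${\rm Ric}_g = O(|x|^{\max\{1, d+1\}})$, $R_g = O(|x|^{\max\{2, d+1\}})$ and $Q_g = O(|x|^{\max\{0, d-1\}})$. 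Since $\Upsilon$ is radial and Euclidean-biharmonic, Lemma \ref{lem011} gives $\Delta_g \Upsilon = \Delta \Upsilon$ (a constant), hence $\Delta_g^2 \Upsilon = 0 = \Delta^2 \Upsilon$; only curvature-weighted terms survive in $(\Delta^2 - P_g)\Upsilon$, of size $O(\beta_\varepsilon R^{-n/2}|x|^{d+1})$. The Poisson extension $v_{\phi_0,\phi_2}$ is Euclidean-biharmonic with $|v_{\phi_0, \phi_2}| = O(|x|^2)$, so analogous bookkeeping yields $(\Delta^2 - P_g)v_{\phi_0, \phi_2} = O(|x|^{d+1})$. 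For $w_{\varepsilon, R} \in C^{4,\alpha}_{5+d-n/2}$ and $v \in C^{4,\alpha}_\mu$ the same scheme gives contributions of order $O(|x|^{2d+2-n/2})$ and $O(|x|^{d+\mu-3})$ respectively; all of these scales lie in $C^{0,\alpha}_{\mu-4}$ given $\mu < 3/2$ and $d \geq 1$.

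The most delicate piece is $(\Delta^2 - P_g)(u_{\varepsilon,R,a} - u_{\varepsilon,R})$. Using the expansion \eqref{eq032} from Proposition \ref{lem006}, the pointwise size is $O(|a|\,|x|^{(6-n)/2})$ plus a $O(|a|^2|x|^{(8-n)/2})$ correction, and analogous estimates hold for derivatives up to order four. Pairing these with the curvature factors above, the worst-case contribution is $O(|a|\,|x|^{d + (4-n)/2})$, coming from the $\langle {\rm Ric}_g, \nabla^2 \cdot \rangle$ term. The hard part, and the main obstacle, is verifying that this bound actually lies in $C^{0,\alpha}_{\mu-4}$: one must check that the constraint $|a|r_\varepsilon \leq 1$ from Remark \ref{remark002}, together with the precise Weyl-vanishing order $d = [(n-8)/2]$ for $n \geq 10$ and the upper bound $\mu < 3/2$, is exactly enough to absorb the $|a|$-dependence so that the estimate $|a|\,|x|^{d+(4-n)/2} \leq C|x|^{\mu-4}$ holds uniformly in $|x| \in (0, r_\varepsilon]$. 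Here the vanishing order $d$ of the Weyl tensor is playing the same role it played in showing that $\mathscr{P}(u_{\varepsilon,R})$ has the right decay in Lemma \ref{lem008}.
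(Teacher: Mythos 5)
Your term-by-term decomposition is exactly the route the paper takes: $\mathscr{R}$ via $2d+4-\tfrac n2>\mu-4$, the nonlinear remainder via Lemma \ref{rest}, the term $u_{\varepsilon,R,a}^{8/(n-4)}(\Upsilon+v_{\phi_0,\phi_2})=O(|x|^{-2})$, and the splitting $\Delta^2-P_g=(\Delta^2-\Delta_g^2)+(\Delta_g^2-P_g)$ combined with the conformal-normal-coordinate decay of $h$, $\mathrm{Ric}_g$, $R_g$, $Q_g$ for $\Upsilon$, $v_{\phi_0,\phi_2}$, $w_{\varepsilon,R}$, $v$ and for $u_{\varepsilon,R,a}-u_{\varepsilon,R}$ via \eqref{eq032}. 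Your exponents for all of these terms are consistent with (in places sharper than) the paper's, and in particular your worst-case exponent $d+\tfrac{4-n}{2}=2+d-\tfrac n2$ for $(\Delta^2-P_g)(u_{\varepsilon,R,a}-u_{\varepsilon,R})$ is exactly the one the paper records.

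The gap is in your last step: you declare that verifying membership of $O(|a|\,|x|^{2+d-n/2})$ in $C^{0,\alpha}_{\mu-4}$ is ``the hard part,'' and you describe --- but do not carry out --- a check that $|a|r_\varepsilon\le 1$ absorbs the $|a|$-dependence so that $|a|\,|x|^{2+d-n/2}\le C|x|^{\mu-4}$ uniformly. This is both unfinished and mis-framed. The lemma is a purely qualitative membership statement, so all that is needed is the exponent inequality $2+d-\tfrac n2>\mu-4$, which is immediate from $d=\left[\tfrac{n-8}{2}\right]$ (giving $2+d-\tfrac n2\ge -\tfrac52$ for $n\ge 10$) together with $\mu<\tfrac32$; the factor $|a|$ merely multiplies the constant in the weighted norm, since $\sup_{0<\sigma\le r_\varepsilon/2}\sigma^{4-\mu}\cdot|a|\sigma^{2+d-n/2}\le |a|\,r_\varepsilon^{6+d-n/2-\mu}<\infty$. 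No absorption of $|a|$ is required at this stage, and the uniform-in-$\varepsilon$ bound you ask for is in fact false in the form you state (the constant grows like $r_\varepsilon^{1-\mu}$ when $|a|\sim r_\varepsilon^{-1}$), but it is also not needed: the quantitative, $\varepsilon$-uniform control of this term is deferred to Proposition \ref{IN1}, where precisely the quantity $|a|\,r_\varepsilon^{6+d-n/2-\mu}$ is bounded by $cr_\varepsilon^{\delta_2}r_\varepsilon^{m-\mu}$ using the stronger constraint $|a|r_\varepsilon^{1-\delta_2}\le 1$. Replacing your ``obstacle'' paragraph by this one-line exponent comparison closes the argument and brings it in line with the paper's proof.
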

\begin{proof}
The terms $(\Delta^2-P_g)(v_{\phi_0,\phi_2}+v)$, 
$u_{\varepsilon,R,a}^{\frac{8}{n-4}}v_{\phi_0,\phi_2}$ and 
$\mathcal R_{\varepsilon,R,a}({\Upsilon+}v_{\phi_0,\phi_2} +w_{\varepsilon,R}+ v)$ are 
similar to the proof in Lemma \ref{lem010}, while the 
term $\mathscr{R} \in C^{0, \alpha}_{\mu-4} (B_{r_\varepsilon}
(0) \backslash \{ 0 \} )$ because $2d+4-\frac{n}{2}>\mu-4$. 

Since $w_{\varepsilon,R}=O(|x|^{5+d-\frac{n}{2}})$, \eqref{eq035} 
and \eqref{eq052} imply  $(\Delta^2-\Delta_g^2)
w_{\varepsilon,R}=O(|x|^{2+2d-\frac{n}{2}})$, with $2+2d-n/2>\mu-4$. 
Observe that $\Delta_g-P_g^2$ is a second order operator and 
$3+d-n/2>\mu-4$. Thus $(\Delta^2-P_g)w_{\varepsilon,R}=O(|x|^{\mu-4})$. 
In an analogous way, by \eqref{eq032},  we obtain $(\Delta^2-P_g)(u_{\varepsilon,R,a}-u_{\varepsilon,R})=
O(|x|^{2+d-\frac{n}{2}})$ with $2+d-\frac{n}{2}>\mu-4$. This 
completes the proof{, since $\Upsilon=O(|x|^2)$.}
\end{proof}

\begin{remark}\label{remark003}
    The vanishing of the Weyl tensor up to order 
    $d=\left[\frac{n-8}{2}\right]$ is sharp in the following 
    sense: If the Weyl tensor vanishes up to order $d-1$, 
    then for $n\geq 10$, $g_{ij}=\delta_{ij}+O(|x|^{d+2})$ 
    and
    $$(\Delta^2-\Delta^2_g)(u_{\varepsilon,R,a}-
    u_{\varepsilon,R})=O(|x|^{1+d-\frac{n}{2}}),$$
    which implies that $(\Delta^2-\Delta^2_g)
    (u_{\varepsilon,R,a}-u_{\varepsilon,R})
    \not\in C^{0,\alpha}_{\mu-4}(B_{r_\varepsilon}(0)\backslash\{0\})$.
\end{remark}

\subsubsection{Fixed point argument}\label{sec:fixedpointargument}

In this section we prove the map 
$$\mathcal N_{\varepsilon}(R,a,\phi_0,\phi_2,v)=
G_{\varepsilon,R,a,r}(\mathscr M_{\varepsilon,R,a}(v)),$$ 
defined in \eqref{eq053} has a fixed point, where 
$\mathscr M_{\varepsilon,R,a}(v)$ 
is the right hand side of \eqref{p2} for $5\leq n\leq 9$, and 
the right hand side of \eqref{eq051} for $n\geq 10$.

\begin{lemma}\label{fp2} For all $\mu \in \mathbb{R}$ and $v \in C^{4,\alpha}_{\mu}(B_{r}(0)\backslash\{0\})$ there exists a constant $c>0$ that does not depend on $r$ and $\mu$ such that 
\begin{align*}
   \|(P_g - \Delta^{2})v\|_{(0,\alpha),\mu-4,r} \leq  cr^{d+3}\|v\|_{(4,\alpha),\mu,r}
\end{align*}
\end{lemma}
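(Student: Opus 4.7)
The plan is to expand $P_g - \Delta^2$ termwise via the explicit formula \eqref{paneitz-branson} and estimate each summand using the metric expansion \eqref{eq035} together with the resulting curvature decay rates in conformal normal coordinates. Writing
$$(P_g - \Delta^2)v = (\Delta_g^2 - \Delta^2)v + \frac{4}{n-2}\langle \operatorname{Ric}_g, \nabla^2 v\rangle - \frac{(n-2)^2+4}{2(n-1)(n-2)} R_g \Delta_g v + \frac{6-n}{2(n-1)}\langle \nabla R_g, \nabla v\rangle + \frac{n-4}{2} Q_g v,$$
I will show that on every dyadic annulus $A_\sigma := \{\sigma \leq |x| \leq 2\sigma\} \subset B_r(0)\backslash\{0\}$ each summand is pointwise bounded by $C|x|^{\mu+d-1}\|v\|_{(4,\alpha),\mu,r}$.

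For the bi-Laplacian difference, I will split $\Delta_g = \Delta + D$ as in \eqref{eq067}, where $D$ is a second-order operator whose principal coefficients are $O(|x|^{d+3})$ and whose first-order coefficients are $O(|x|^{d+2})$. Then, as in \eqref{eq052}, $\Delta_g^2 v - \Delta^2 v = \Delta(Dv) + D(\Delta v) + D^2 v$, and using the pointwise bounds $|\nabla^k v(x)| \leq C|x|^{\mu-k}\|v\|_{(4,\alpha),\mu,r}$ for $0 \leq k \leq 4$, each of these three pieces is $O(|x|^{\mu+d-1})$. For the curvature terms I will invoke the decay estimates stated just after \eqref{eq035}, namely $\operatorname{Ric}_g = O(|x|^{\max\{1,d+1\}})$, $R_g = O(|x|^{\max\{2,d+1\}})$, $\nabla R_g = O(|x|^{\max\{1,d\}})$, and $Q_g = O(|x|^{\max\{0,d-1\}})$, which follow from the vanishing of the Weyl tensor to order $d$ together with \eqref{q_curvature}. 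Pairing each curvature factor with the corresponding derivative of $v$ gives $|x|^{\mu+d-1}\|v\|_{(4,\alpha),\mu,r}$ as an upper bound (with strictly faster decay for small $d$, which is still admissible because $|x|\leq r \leq 1$).

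Finally, I will promote these pointwise bounds to the weighted Hölder norm by rescaling $A_\sigma$ to the unit annulus $A_1$ via $y = x/\sigma$: the coefficients of $g$ become uniformly $C^{k,\alpha}$ on $A_1$, so the pointwise estimates above carry over to the full $(0,\alpha)$ norm on $A_\sigma$. Combining yields
$$\sigma^{4-\mu}\|(P_g - \Delta^2)v\|_{(0,\alpha),[\sigma,2\sigma]} \leq C\sigma^{d+3}\|v\|_{(4,\alpha),\mu,r} \leq Cr^{d+3}\|v\|_{(4,\alpha),\mu,r}$$
for every $\sigma \leq r/2$, where in the last inequality I use $d+3 \geq 2 > 0$ and $\sigma \leq r$. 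Taking the supremum over $\sigma$ completes the proof. The main bookkeeping obstacle is verifying that in the dimensional extremes (namely $d=-1$ for $5 \leq n \leq 7$, where the "max" becomes nontrivial, versus $d$ large, where the biharmonic difference dominates) the worst-behaved summand still produces the advertised decay $|x|^{\mu+d-1}$; everything else is routine.
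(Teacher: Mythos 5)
Your proposal is correct and follows essentially the same route as the paper: the paper likewise splits $P_g-\Delta^2=(\Delta_g^2-\Delta^2)+(P_g-\Delta_g^2)$, estimates the first piece via the expansion \eqref{eq052} with $g=\delta+O(|x|^{d+3})$ from \eqref{eq035}, and the second via the curvature decay in \eqref{paneitz-branson}, obtaining $\sigma^{d+3}$ (respectively $\sigma^{\max\{3,d+3\}}$) on each dyadic annulus before taking the supremum. Your version merely spells out the termwise bookkeeping and the rescaling to the unit annulus that the paper leaves implicit.
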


\begin{proof}
First we use \eqref{eq035} and \eqref{eq052} to obtain
$$\sigma^{4-\mu}\|(\Delta_g^2-\Delta^2)v\|_{(0,\alpha),[\sigma,2\sigma]}\leq C\sigma^{d+3-\mu}\|v\|_{(4,\alpha),[\sigma,2\sigma]}.$$
Also, by \eqref{paneitz-branson} we get
$$\sigma^{4-\mu}\|(P_g-\Delta_g^2)v\|_{(0,\alpha),[\sigma,2\sigma]}\leq C\sigma^{\max\{3,d+3\}-\mu}\|v\|_{(4,\alpha),[\sigma,2\sigma]}.$$
Combining these two inequalities implies the result.
\end{proof}

\begin{lemma}\label{lem012}
Given $a\in\mathbb R^n$ with $|a|r_\varepsilon^{1-\delta_2}\leq 1$ 
and $\|(\phi_0,\phi_2)\|_{(4,\alpha),r_{\varepsilon}}\leq 
\kappa r_{\varepsilon}^{m-\delta_1}$, if  
$\varepsilon>0$ is sufficiently small there exists a 
constant $\beta>0$ such that
    $$\|u_{\varepsilon,R,a}^{\frac{8}{n-4}}({\Upsilon+}v_{\phi_0,\phi_2})\|_{(0,\alpha),\mu-4,r_\varepsilon}
    \leq c\varepsilon^\beta r_\varepsilon^{m-\mu}$$
    where $m$ is defined in Remark \ref{remark002}.
\end{lemma}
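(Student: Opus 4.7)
The plan is to split $u_{\varepsilon,R,a}^{8/(n-4)}(\Upsilon+v_{\phi_0,\phi_2})$ into its two summands and estimate each in $C^{0,\alpha}_{\mu-4}$ via a three-region decomposition of $B_{r_\varepsilon}\setminus\{0\}$ at the scales $\tilde R:=R\varepsilon^{2/(n-4)}\sim \varepsilon^{4/(n-4)}$ and $R\sim\varepsilon^{2/(n-4)}$. First I would record pointwise bounds on the outer factors: from \eqref{eq0001} and the choice of $R$ in Remark \ref{remark002} one has $|\Upsilon(x)|\leq C\varepsilon^{-4/(n-4)}|x|^2$, while Corollary \ref{cor:poissoninterior} together with the hypothesis $\|(\phi_0,\phi_2)\|_{(4,\alpha),r_\varepsilon}\leq\kappa r_\varepsilon^{m-\delta_1}$ gives $|v_{\phi_0,\phi_2}(x)|\leq C\kappa r_\varepsilon^{m-\delta_1-2}|x|^2$, with analogous estimates for derivatives.

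The heart of the argument is a region-adapted estimate for $u_{\varepsilon,R,a}^{8/(n-4)}$. Using the representation \eqref{approx}, the pointwise Delaunay bound $v_\varepsilon(t)\leq C\varepsilon e^{(n-4)|t|/2}$ from \eqref{v-bounded-by-cosh} (valid whenever it does not exceed the trivial bound $v_\varepsilon<1$), and the hypothesis $|a|r_\varepsilon^{1-\delta_2}\leq 1$ (which ensures $|x-a|x|^2|\sim |x|$ throughout $B_{r_\varepsilon}$), I would obtain
\begin{equation*}
u_{\varepsilon,R,a}^{8/(n-4)}(x)\;\leq\; C\cdot\begin{cases} |x|^{-4}, & 0<|x|\leq \tilde R,\\ \varepsilon^{16/(n-4)}|x|^{-8}, & \tilde R\leq |x|\leq R,\\ 1, & R\leq |x|\leq r_\varepsilon. \end{cases}
\end{equation*}
Indeed, for $|x|\in[\tilde R,R]$ the bound $v_\varepsilon(\log(R/|x|))\leq C\varepsilon(R/|x|)^{(n-4)/2}$ yields $u\leq C\varepsilon R^{(n-4)/2}|x|^{4-n}\sim \varepsilon^2|x|^{4-n}$, and for $|x|\in[R,r_\varepsilon]$ the analogous bound gives $u\leq C\varepsilon R^{(4-n)/2}\sim 1$.

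Multiplying these three profiles by $|\Upsilon|$ and applying the weight $\sigma^{4-\mu}$ produces contributions whose suprema over $\sigma\in(0,r_\varepsilon/2]$ all peak at $\sigma\sim\tilde R$. The resulting algebra yields
\[\|u_{\varepsilon,R,a}^{8/(n-4)}\Upsilon\|_{(0,\alpha),\mu-4,r_\varepsilon}\;\leq\; C\varepsilon^{-4/(n-4)}\tilde R^{2-\mu}\;=\;C\varepsilon^{4(1-\mu)/(n-4)},\]
and using $r_\varepsilon\sim \varepsilon^s$ with $s=2/(n-4)-\delta_0$ this rewrites as $C\varepsilon^{\beta}r_\varepsilon^{m-\mu}$ with
\[\beta\;=\;\frac{2(2-\mu-m)}{n-4}-\delta_0(\mu-m),\]
which is strictly positive whenever $\mu+m<2$ and $\delta_0$ is taken small enough---both ensured by $\mu\in(1,3/2)$ and the smallness hypothesis on $m$. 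The $v_{\phi_0,\phi_2}$ summand is handled identically; its peak weighted contribution is $C\kappa r_\varepsilon^{m-\delta_1-2}\tilde R^{2-\mu}$, from which substituting $\tilde R^{2-\mu}\sim \varepsilon^{4(2-\mu)/(n-4)}$ likewise factors out a positive power of $\varepsilon$ times $r_\varepsilon^{m-\mu}$ (using that $\delta_1$ is small).

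The Hölder seminorms are controlled analogously by the derivative estimates of Corollary \ref{cor001} and Corollary \ref{cor:poissoninterior}, which scale correctly in each region. The main obstacle is more conceptual than computational: the naive estimate $u_{\varepsilon,R,a}\leq C|x|^{(4-n)/2}$ alone yields only $\|u_{\varepsilon,R,a}^{8/(n-4)}\Upsilon\|_{(0,\alpha),\mu-4,r_\varepsilon}\leq C\varepsilon^{-4/(n-4)}r_\varepsilon^{2-\mu}$, which, when matched against $\varepsilon^\beta r_\varepsilon^{m-\mu}$, forces $\beta\leq -2m/(n-4)-\delta_0(2-m)<0$. The sharper three-region bound above---reflecting the bounded-profile character of the Delaunay solution $v_\varepsilon$ away from its singular peak at $t=T_\varepsilon/2$---is precisely what supplies the extra room needed to extract a positive $\beta$.
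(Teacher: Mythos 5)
Your proposal is correct, and it rests on the same key ingredient as the paper's proof --- the bound $v_\varepsilon(t)\leq \varepsilon\cosh(\tfrac{n-4}{2}t)$ from \eqref{v-bounded-by-cosh}, transferred to $u_{\varepsilon,R,a}$ via \eqref{eq028}--\eqref{eq0003}, combined with the quadratic vanishing of $\Upsilon$ and $v_{\phi_0,\phi_2}$ at the origin --- but the bookkeeping is organized differently. The paper splits $B_{r_\varepsilon}$ only once, at the auxiliary scale $r_\varepsilon^{1+\ell}$: on the outer annulus it uses $v_\varepsilon\lesssim r_\varepsilon^{(n-4)/2}$, hence $u_{\varepsilon,R,a}^{8/(n-4)}\lesssim r_\varepsilon^{4}|x|^{-4}$, while in the core it uses only the crude bound $u_{\varepsilon,R,a}^{8/(n-4)}\lesssim |x|^{-4}$ together with $\sigma^{2-\mu}\lesssim r_\varepsilon^{(1+\ell)(2-\mu)}$ and the smallness of $\delta_1,\delta_0$; this avoids tracking the Delaunay profile near the neck at the price of the extra parameter $\ell$ and a less explicit exponent. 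Your three-region decomposition at the natural scales $\tilde R\sim\varepsilon^{4/(n-4)}$ and $R\sim\varepsilon^{2/(n-4)}$ records the full profile of $u_{\varepsilon,R,a}^{8/(n-4)}$, identifies the worst scale $\sigma\sim\tilde R$, and yields the explicit value $\beta=\tfrac{2(2-\mu-m)}{n-4}-\delta_0(\mu-m)$, which is indeed positive for $\mu\in(1,3/2)$ and $m,\delta_0$ small; your closing remark correctly explains why the naive bound $u_{\varepsilon,R,a}\lesssim|x|^{(4-n)/2}$ alone cannot work. Two cosmetic points: in the outermost region $[R,r_\varepsilon]$ the weighted supremum is attained at $\sigma\sim r_\varepsilon$ rather than at $\tilde R$ (it is, however, dominated by the $\tilde R$ peak, so your conclusion is unaffected), and for the H\"older seminorms with general $R$ and $a$ the derivative control should come from \eqref{eq048} (i.e.\ $|\dot v_\varepsilon|,|\ddot v_\varepsilon|\leq c_n v_\varepsilon$) and \eqref{eq0002} rather than from Corollary \ref{cor001}, which is stated for $u_\varepsilon$ only; neither point is a genuine gap.
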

\begin{proof}
First we note that by Remark  \ref{remark002} we 
obtain $\log R=\frac{2}{n-4}\log \alpha_\varepsilon+\frac{2}{4-n}\log(2+2b)$. By \eqref{eq028} and \eqref{eq0003} we have
\begin{equation}\label{eq0002}
    |x|^{\frac{4-n}{2}}\left|\frac{x}{|x|}-a|x|\right|^{\frac{4-n}{2}}= |x|^{\frac{4-n}{2}}(1+O(r_\varepsilon^{\delta_2}))\quad\mbox{ and }\quad\log\left|\frac{x}{|x|}-a|x|\right|=O(r_\varepsilon^{\delta_2})
\end{equation}
respectively. This implies that if $r_\varepsilon^{1+\ell}\leq |x|\leq r_\varepsilon$ and $\ell\geq 0$ is small enough, we have
$$\left(\frac{2}{n-4}-s\right)\log\alpha_\varepsilon-C\leq -\log|x|+\log\left|\frac{x}{|x|}-a|x|\right|+\log R\leq \left(\frac{2}{n-4}-s(1+\ell)\right)\log\alpha_\varepsilon+C.$$
Observe $2/(n-4)-s>0$ by Remark \ref{remark002}. Thus, for $0<\ell<\frac{2}{s(n-4)}-1$ and $\varepsilon>0$ small enough
\eqref{v-bounded-by-cosh} implies 
$$v_\varepsilon\left(-\log|x|+\log\left|\frac{x}{|x|}-a|x|\right|+\log R\right)\leq c_nr_\varepsilon^{\frac{n-4}{2}}.$$
By \eqref{approx} and \eqref{eq0002} we have
$u_{\varepsilon,R,a}(x)\leq C|x|^{\frac{4-n}{2}}r_\varepsilon^{\frac{n-4}{2}},$
and so 
$$u_{\varepsilon,R,a}(x)^{\frac{8}{n-4}}\leq c_n|x|^{-4}r_\varepsilon^4$$
for all $r_\varepsilon^{1+\ell}\leq |x|\leq r_\varepsilon$. Then, for $\frac{1}{4}r_\varepsilon^{1+\ell}\leq \sigma\leq \frac{1}{2}
r_\varepsilon$, we have
\begin{align*}
     \sigma^{4-\mu}\|u_{\varepsilon,R}^{\frac{8}{n-4}}
     (\Upsilon+v_{\phi_0,\phi_2})\|_{(0,\alpha),[\sigma,2\sigma]}  \leq &  ~
     c\sigma^{-\mu} r_\varepsilon^{4}(\|v_{\phi_0,\phi_2}
     \|_{(0,\alpha),[\sigma,2\sigma]}+\|\Upsilon
     \|_{(0,\alpha),[\sigma,2\sigma]})
     \\
     \leq & ~ c(r_\varepsilon^{m-\mu+4-\delta_1}+r_\varepsilon^{6-\mu}\alpha_\varepsilon^{-\frac{4}{n-4}})
      \leq  ~
     cr_\varepsilon^\beta r_\varepsilon^{m-\mu},
\end{align*}
with $6s-4/(n-4)>0$ by Remark \ref{remark002}. For $0\leq\sigma
\leq 4^{-1}r_\varepsilon^{1+\ell}$, we use $(2-\mu)\ell- \delta_1 > 0$ 
and $(2+\mu(2-\mu)) s - \frac{4}{n-4} > 0$ to see
\begin{align*}
    \sigma^{4-\mu}\|u_{\varepsilon,R}^{\frac{8}{n-4}}(\Upsilon+v_{\phi_0,\phi_2})\|_{(0,\alpha),[\sigma,2\sigma]} 
     \leq &~ c\sigma^{-\mu} (\|v_{\phi_0,\phi_2}
     \|_{(0,\alpha),[\sigma,2\sigma]}+\|\Upsilon
     \|_{(0,\alpha),[\sigma,2\sigma]})\\
     \leq &~ c r_\varepsilon^{(2-\mu)\ell+\delta_1} 
     r_\varepsilon^{m-\mu}+r_\varepsilon^{2-\mu+(2-\mu)\ell}\alpha_\varepsilon^{-\frac{4}{n-4}}\leq r_\varepsilon^\beta r_\varepsilon^{m-\mu},
\end{align*}
which completes our proof. 
\end{proof}

\color{black}

\begin{proposition} \label{IN1}
Let $\mu\in(1,3/2)$, $\tau>0$ and $\kappa>0$ be fixed constants. 
There exists $\varepsilon_0>0$ such that for each $\varepsilon
\in(0,\varepsilon_0)$, $a\in\mathbb R^n$ with 
$|a|r_\varepsilon^{1-\delta_2}\leq 1$, $\phi_0\in C^{4,\alpha}
(\mathbb S_{r_\varepsilon}^{n-1})$ and $\phi_2\in
\pi''(C^{4,\alpha}(\mathbb S_{r_\varepsilon}^{n-1}))$ with 
$\|(\phi_0,\phi_2)\|_{(4,\alpha),r_{\varepsilon}}\leq 
\kappa r_{\varepsilon}^{{m}-\delta_1}$, there 
exists a fixed point of the map 
$\mathcal{N}_{\varepsilon}(R,a,\phi_0,\phi_2,\cdot)$ in 
the ball of radius $\tau r_{\varepsilon}^{{m}-\mu}$.
\end{proposition}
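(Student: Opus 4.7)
The plan is to apply the Banach fixed point theorem to the map $\mathcal{N}_\varepsilon(R,a,\phi_0,\phi_2,\cdot)$ on the closed ball
\[
\mathscr{B}_\tau = \left\{v\in C^{4,\alpha}_\mu(B_{r_\varepsilon}(0)\setminus\{0\}) : \|v\|_{(4,\alpha),\mu,r_\varepsilon}\leq \tau r_\varepsilon^{m-\mu}\right\}.
\]
Since $G_{\varepsilon,R,a,r}$ from Proposition \ref{Prop003} has operator norm bounded independently of $\varepsilon$, $R$, $a$ and $r$, it suffices to control $\mathscr{M}_{\varepsilon,R,a}(v)$ in $C^{0,\alpha}_{\mu-4}$ by a suitable multiple of $r_\varepsilon^{m-\mu}$ and to show that the assignment $v\mapsto \mathscr{M}_{\varepsilon,R,a}(v)$ is Lipschitz with a small constant.

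First I would verify the self-map property. The right-hand side of \eqref{p2} (or \eqref{eq051} when $n\geq 10$) is a sum of five types of terms, each estimated by a previous lemma. The nonlinear remainder $\mathcal{R}_{\varepsilon,R,a}(\Upsilon + v_{\phi_0,\phi_2}+w_{\varepsilon,R}+v)$ is controlled by Lemma \ref{rest}, yielding a bound of the form $C\varepsilon^{\delta'} r_\varepsilon^{m-\mu}$. The mixed term $u_{\varepsilon,R,a}^{8/(n-4)}(\Upsilon + v_{\phi_0,\phi_2})$ is handled by Lemma \ref{lem012} and yields $C\varepsilon^\beta r_\varepsilon^{m-\mu}$. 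The geometric perturbation $(\Delta^2-P_g)(v_{\phi_0,\phi_2}+w_{\varepsilon,R}+v)$ is bounded by Lemma \ref{fp2} by $Cr_\varepsilon^{d+3}\|v_{\phi_0,\phi_2}+w_{\varepsilon,R}+v\|_{(4,\alpha),\mu,r_\varepsilon}$, which by the hypotheses on $(\phi_0,\phi_2)$ and $\|v\|\leq \tau r_\varepsilon^{m-\mu}$ gives a contribution of order $r_\varepsilon^{d+3}r_\varepsilon^{m-\mu}$. The curvature error $\mathscr{R}$ from Lemma \ref{lem008} is of order $O(|x|^{2d+4-n/2})$, which sits inside $C^{0,\alpha}_{\mu-4}$ with a norm $\lesssim r_\varepsilon^{2d+8-n/2-\mu}$; choosing $m$ small enough (cf.\ Remark \ref{remark002}) so that $2d+8-n/2-\mu\geq m-\mu+\delta'$ makes this term subleading. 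Finally, $(\Delta^2-P_g)(u_{\varepsilon,R,a}-u_{\varepsilon,R})$ is handled by Lemma \ref{lem009} together with the expansion of Proposition \ref{lem006} and \eqref{eq035}, while for $5\leq n\leq 9$ one also uses the direct estimate $(\Delta_g^2-P_g)u_{\varepsilon,R,a} = O(|x|^{\mu-4})$ obtained in the proof of Lemma \ref{lem010}. Summing, one obtains
\[
\|\mathscr{M}_{\varepsilon,R,a}(v)\|_{(0,\alpha),\mu-4,r_\varepsilon}\leq C(\varepsilon^{\delta'}+\varepsilon^{\beta}+r_\varepsilon^{d+3})\,r_\varepsilon^{m-\mu}+C\,r_\varepsilon^{m-\mu+\delta'},
\]
so that for $\varepsilon_0$ sufficiently small the bound $\|\mathcal{N}_\varepsilon(v)\|_{(4,\alpha),\mu,r_\varepsilon}\leq \tau r_\varepsilon^{m-\mu}$ holds, provided $\tau$ has been fixed (once and for all, not too small).

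Next I would establish the contraction estimate. For $v_0,v_1\in\mathscr{B}_\tau$, linearity makes the geometric term telescope, leaving
\[
\mathscr{M}_{\varepsilon,R,a}(v_1)-\mathscr{M}_{\varepsilon,R,a}(v_0)=(\Delta^2-P_g)(v_1-v_0)-\bigl[\mathcal{R}_{\varepsilon,R,a}(\cdot+v_1)-\mathcal{R}_{\varepsilon,R,a}(\cdot+v_0)\bigr].
\]
Lemma \ref{fp2} bounds the first bracket by $Cr_\varepsilon^{d+3}\|v_1-v_0\|_{(4,\alpha),\mu,r_\varepsilon}$, and Lemma \ref{rest} bounds the second by $C\varepsilon^{\delta'}\|v_1-v_0\|_{(4,\alpha),\mu,r_\varepsilon}$. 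Composing with $G_{\varepsilon,R,a,r}$ and choosing $\varepsilon_0$ further if needed yields
\[
\|\mathcal{N}_\varepsilon(R,a,\phi_0,\phi_2,v_1)-\mathcal{N}_\varepsilon(R,a,\phi_0,\phi_2,v_0)\|_{(4,\alpha),\mu,r_\varepsilon}\leq \tfrac{1}{2}\|v_1-v_0\|_{(4,\alpha),\mu,r_\varepsilon}.
\]
Thus $\mathcal{N}_\varepsilon$ is a $1/2$-contraction on $\mathscr{B}_\tau$, and Banach's fixed point theorem produces the desired fixed point.

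The main difficulty is purely bookkeeping: one must verify that the numerous positive exponents appearing (namely $\delta',\beta,d+3$, and $2d+8-n/2-\mu-(m-\mu)$) are genuinely strictly positive under the parameter choices of Remark \ref{remark002}, i.e.\ after the substitutions $s=\tfrac{2}{n-4}-\delta_0$, $r_\varepsilon=\alpha_\varepsilon^s$ with $\alpha_\varepsilon\sim\varepsilon$, $R=(\alpha_\varepsilon/(2+2b))^{2/(n-4)}$, and that $m$ can be chosen small enough that the growth $r_\varepsilon^{m-\mu}$ dominates the remainder $\mathscr{R}$ while large enough that the hypotheses of Lemmas \ref{rest} and \ref{lem010} (requiring $\|v\|\leq c\,r_\varepsilon^{m-\mu-\delta_1}$ and $\|(\phi_0,\phi_2)\|\leq \kappa r_\varepsilon^{m-\delta_1}$) are compatible with the conclusion $\|v\|\leq \tau r_\varepsilon^{m-\mu}$. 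Once $\delta_0,\delta_1,\delta_2,m$ are fixed in a compatible window, the argument proceeds exactly as sketched.
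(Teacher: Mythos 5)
Your proposal is correct and follows essentially the same route as the paper: a contraction-mapping argument on the ball of radius $\tau r_\varepsilon^{m-\mu}$, using the uniform boundedness of $G_{\varepsilon,R,a,r}$ from Proposition \ref{Prop003} together with Lemmas \ref{rest}, \ref{fp2}, \ref{lem012} and the expansions of Lemmas \ref{lem008}--\ref{lem009} to bound the right-hand side of \eqref{p2} (resp.\ \eqref{eq051} for $n\geq 10$) and to get the $1/2$-Lipschitz estimate. The only cosmetic difference is that the paper bounds $\mathcal N_\varepsilon(\cdot,0)$ by $\tfrac12\tau r_\varepsilon^{m-\mu}$ and combines it with the contraction estimate rather than estimating $\mathscr M_{\varepsilon,R,a}(v)$ on the whole ball, and no lower bound on $\tau$ is actually needed since every term carries an extra positive power of $\varepsilon$ or $r_\varepsilon$.
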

\begin{proof}
We will prove that the map $\mathcal N_\varepsilon(R,a,\phi_0,
\phi_2,\cdot)$ is a contraction for $\varepsilon>0$ small enough  
by proving the following two inequalities
\begin{equation}\label{eq054}
    \|\mathcal N_{\varepsilon}(R,a,\phi_0,\phi_2,0)
    \|_{(4,\alpha),\mu,r_{\varepsilon}} < \frac{1}{2}
    \tau r_{\varepsilon}^{{m}-\mu}
\end{equation}
and 
\begin{equation}\label{eq062}
\|\mathcal N_{\varepsilon}(R,a,\phi_0,\phi_2,v_{1}) - 
\mathcal N_{\varepsilon}(R,a,\phi_0,\phi_2,v_{2})
\|_{(4,\alpha),\mu,r_{\varepsilon}} < 
\frac{1}{2}\|v_{1}-v_{2}\|_{(4,\alpha),\mu,r_{\varepsilon}}
\end{equation}
for $v_{i} \in C^{4,\alpha}_{\mu}(B_{r_{\varepsilon}}
(0)\backslash\{0\})$, $i=1,2$ with $\| v_i \|_{(4,\alpha), 
\mu} \leq\tau r_{\varepsilon}^{{m}-\mu}$.

\medskip

\noindent \textbf{Case} $\mathbf{5\leq n\leq 9:}$ Since the right 
inverse $G_{\varepsilon,R,a,r_{\varepsilon}}$ given by 
Proposition \ref{Prop003} is bounded in-dependently of $\varepsilon$, 
$R$, $a$ and $r$, the inequality \eqref{eq054} follows once we
estimate the $C^{0,\alpha}_{\mu-4}$-norm of the right 
hand side of \eqref{p2} when $v=0$. Using  \eqref{eq032}, Lemma \ref{lem011} and the estimates in 
the proof of Lemma \ref{lem009} we get
\begin{align*}
    \|(\Delta^{2}-\Delta^{2}_{g})u_{\varepsilon,R,a}
\|_{(0,\alpha),\mu-4,r_{\varepsilon}}    = & ~ \
\|(\Delta^{2}-\Delta^{2}_{g}) (u_{\varepsilon,R,a} - 
u_{\varepsilon,R})\|_{(0,\alpha),\mu-4,r_{\varepsilon}}
\\
 \leq & ~  c|a|r_\varepsilon^{6+d-\frac{n}{2}-\mu}
\leq cr_\varepsilon^{\delta_2}r_\varepsilon^{{m}-\mu}.
\end{align*}
Here we have used $|a|r_\varepsilon^{1-\delta_2}\leq 1$, with $\delta_2>0$. 
By the proof of Lemma \ref{lem010} we have 
$(\Delta_g^2-P_g)u_{\varepsilon,R,a}=
O(|x|^{2+d-\frac{n}{2}})$,  which implies that
\begin{equation}\label{eq055}
    \|(\Delta_g^2-P_g)u_{\varepsilon,R,a}\|_{(0,\alpha),
    \mu-4,r_\varepsilon}\leq 
    cr^{\delta_2}r_\varepsilon^{{m}-\mu}.
\end{equation}

By Corollary \ref{cor:poissoninterior} and Lemma \ref{fp2} we have
\begin{equation}\label{eq056}
    \|(\Delta^{2}-P_g)v_{\phi_0,\phi_2}\|_{(0,\alpha),\mu-4,r} \leq  cr^{d+3}\|v_{\phi_0,\phi_2}\|_{(4,\alpha),\mu,r}\leq cr_\varepsilon^{d+3-\mu}\|(\phi_0,\phi_2)\|_{(4,\alpha),r}
\end{equation}

and
\begin{equation}\label{eq0006}
    \|(\Delta^{2}-P_g)\Upsilon\|_{(0,\alpha),\mu-4,r} \leq  c\varepsilon^{-\frac{4}{n-4}+s(d+5-m)}r_\varepsilon^{m-\mu},
\end{equation}
since $\Upsilon=O(\varepsilon^{-\frac{4}{n-4}}|x|^2)$.
\color{black}
We also have
\begin{equation}\label{eq057}
\|Q_{g}v_{\phi_0,\phi_2}\|_{(0,\alpha),\mu-4,r} \leq cr_\varepsilon^{4-\mu}\|(\phi_0,\phi_2)\|_{(4,\alpha),r}.
\end{equation}

Using Lemma \ref{rest}, one has 
\begin{equation}\label{eq061}
\|\mathcal R_{\varepsilon,R,a}({\Upsilon+}v_{\phi_0,\phi_2})\|_{(0,\alpha),\mu-4,r_{\varepsilon}} \leq {c\varepsilon^{\delta'}r^{m-\mu}},
\end{equation}
{where $\delta'>0$.}
Therefore, by \eqref{eq055}, \eqref{eq056}, \eqref{eq0006}, \eqref{eq057}, 
\eqref{eq061} and Lemma \ref{lem012} we obtain \eqref{eq054} 
for $\varepsilon>0$ small enough.
Now, using \eqref{eq053} with $\mathscr M_{\varepsilon,R,a}(v)$ as 
the right hand side of \eqref{p2} we obtain
\begin{align*}
\|\mathcal N_{\varepsilon}(R,a,\phi_0,\phi_2,v_{1}) - 
\mathcal N_{\varepsilon}(R,a,\phi_0,\phi_2,v_{2})
\|_{(4,\alpha),\mu,r_{\varepsilon}} \leq C\left( \|(\Delta^{2} 
- P_{g})(v_{1}-v_{2})\|_{(0,\alpha),\mu-4,r_{\varepsilon}} 
\right.\\ \left.+ \|\mathcal R_{\varepsilon,R,a}({\Upsilon+}v_{\phi_0,\phi_2}
+v_1)- \mathcal R_{\varepsilon,R,a}({\Upsilon+}v_{\phi_0,\phi_2}+v_2)
\|_{(0,\alpha),\mu-4,r_{\varepsilon}}\right).      
\end{align*}

By Lemmas \ref{fp2} and \ref{rest} we obtain directly \eqref{eq062} 
for $\varepsilon>0$ small enough.

\medskip

\noindent{\bf Case} $\mathbf{n\geq 10}$: Now we use \eqref{eq053} 
with $\mathscr M_{\varepsilon,R,a}$ as the right hand side 
of \eqref{eq051}. As in the previous case, to 
prove \eqref{eq054} it is enough to estimate the 
$C^{0,\alpha}_{\mu-4}$-norm of the right hand side of \eqref{eq051} 
when $v=0$.
Similar to the proof of Lemma \ref{fp2}, we obtain
\begin{equation}\label{eq063}
    \|(\Delta^2-P_g)w_{\varepsilon,R}\|_{(0,\alpha),\mu-4,
    r_\varepsilon}\leq  c\sigma^{8+2d-\frac{n}{2}-\mu}
    \|w_{\varepsilon,R}\|_{(4,\alpha),5+d-\frac{n}{2},
    r_{\varepsilon}} .
\end{equation}

By Lemma \ref{rest} we know that 

\begin{equation}\label{eq064}
     \|\mathcal R_{\varepsilon,R,a}({\Upsilon+}v_{\phi_0,\phi_2} + 
     w_{\varepsilon,R})\|_{(0,\alpha),\mu-4,r_{\varepsilon}}  
     \leq  \varepsilon^{\delta'}r_\varepsilon^{m-\mu},
\end{equation}
where $\delta'>0$.
\color{black}
Therefore, using $\mathscr R=O(r_\varepsilon^{2d+4-\frac{n}{2}})$, 
Lemma \ref{lem012}, \eqref{eq056}, \eqref{eq063}, \eqref{eq064} and 
a similar calculation to that of \eqref{eq055} we conclude 
that \eqref{eq054} holds.
Moreover, 
\begin{align*}
\|\mathcal N_{\varepsilon}(R,a,\phi_0,\phi_2,v_{1}) - 
\mathcal N_{\varepsilon}(R,a,\phi_0,\phi_2,v_{2})\|_{(4,\alpha),
\mu,r_{\varepsilon}} \leq C\left( \|(\Delta^{2} - 
P_{g})(v_{1}-v_{2})\|_{(0,\alpha),\mu-4,r_{\varepsilon}} \right.\\ 
\left.+ \|\mathcal R_{\varepsilon,R,a}({\Upsilon+}v_{\phi_0,\phi_2}+ 
w_{\varepsilon,R} + v_1)- \mathcal R_{\varepsilon,R,a}
({\Upsilon+}v_{\phi_0,\phi_2}+ w_{\varepsilon,R} + v_2)
\|_{(0,\alpha),\mu-4,r_{\varepsilon}}\right).
\end{align*}

By Lemma \ref{rest} 

\begin{align*}
  &\|\mathcal R_{\varepsilon,R,a}
({\Upsilon+}v_{\phi_0,\phi_2}+ w_{\varepsilon,R}+ v_1) - 
\mathcal R_{\varepsilon,R,a}({\Upsilon+}v_{\phi_0,\phi_2}+ 
w_{\varepsilon,R} + v_2) \|_{(0,\alpha),\mu-4 ,r_{\varepsilon}}  
       \leq  c\varepsilon^{\delta'}
      \|v_{1}-v_{2}\|_{(4,\alpha),\mu,r_{\varepsilon}},
\end{align*}
\color{black}
with {$\delta'>0$}. From this and Lemma \ref{fp2}, we 
obtain \eqref{eq062} for $\varepsilon$ sufficiently small.
\end{proof}

\begin{theorem}\label{teo001}
Let $\mu\in(1,3/2)$, $\tau>0$, $\kappa>0$ and $\delta_2>\delta_1>0$  
be fixed constants. There exists $\varepsilon_0>0$, such that for 
each $\varepsilon\in (0,\varepsilon_{0}]$, $|b|\leq 1/2$, 
$a \in \mathbb{R}^{n}$, $\phi_2 \in C^{4,\alpha}
(\mathbb{S}_{r_{\varepsilon}}^{n-1})$ and $\phi_0 \in 
\pi''(C^{4,\alpha}(\mathbb{S}_{r_{\varepsilon}}^{n-1}))$,  with 
$|a|r_{\varepsilon}^{1-\delta_2}\leq 1$, 
$\|(\phi_0,\phi_2)\|_{(4,\alpha),r_{\varepsilon}}\leq \kappa 
r_{\varepsilon}^{{m}-\delta_1}$, there exists 
$U_{\varepsilon,R,a,\phi_0,\phi_2} \in C^{4,\alpha}_{\mu}
(B_{r_{\varepsilon}}(0)\backslash \{0\})$ satisfying 
\begin{align*}
\left\{
\begin{array}{lcl}
H_{g}(u_{\varepsilon,R,a}{+\Upsilon} + w_{\varepsilon,R} + v_{\phi_0,\phi_2}+ 
U_{\varepsilon,R,a,\phi_0,\phi_2}) = 0 & 
{\rm in}& B_{r_{\varepsilon}}(0)\backslash\{0\} \\
\pi''(\Delta U_{\varepsilon,R,a,\phi_0,\phi_2}) 
= \pi''(U_{\varepsilon,R,a,\phi_0,\phi_2}) =0   
& {\rm on} & \partial B_{r_{\varepsilon}}(0)\\
\end{array}
\right.
\end{align*}
where $w_{\varepsilon,R}\equiv 0$ when $5\leq n \leq 9$, 
$w_{\varepsilon,R} \in \pi''(C^{4,\alpha}_{5+d-\frac{n}{2}}
(B_{r_{\varepsilon}}(0)\backslash\{0\}))$ is the solution 
of $\eqref{eq068}$ when $n\geq 10$.
Moreover,
\begin{equation}\label{eq065}
    \|U_{\varepsilon,R,a,\phi_0,\phi_2}\|_{(4,\alpha),
    \mu,r_{\varepsilon}} \leq \tau r_{\varepsilon}^{{m}-\mu}
\end{equation}
and
\begin{equation}\label{eq066}
    \|U_{\varepsilon,R,a,\phi_{1},\phi_{2}}-U_{\varepsilon,R,a,
    \tilde\phi_0,\tilde\phi_2}\|_{(4,\alpha),\mu,r_{\varepsilon}} \leq cr^{\delta_3-\mu}\|(\phi_0,\phi_2)-(\tilde\phi_0,\tilde\phi_2)\|_{(4,\alpha),r_\varepsilon},
\end{equation}
for some constants $c>0$ and $\delta_3>0$ that do not depend on $\varepsilon$, $R$, $a$ and $(\phi_0,\phi_2)$, $(\tilde\phi_0,\tilde\phi_2)$.
\end{theorem}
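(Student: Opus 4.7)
The plan is to apply Proposition \ref{IN1} directly: the fixed point $U_{\varepsilon,R,a,\phi_0,\phi_2}$ of the map $\mathcal{N}_\varepsilon(R,a,\phi_0,\phi_2,\cdot)$ provided there solves $L_{\varepsilon,R,a}(U) = \mathscr{M}_{\varepsilon,R,a}(U)$ by construction of the right inverse $G_{\varepsilon,R,a,r_\varepsilon}$ from Proposition \ref{Prop003}, which automatically enforces the Navier-type boundary conditions $\pi''(U) = \pi''(\Delta U) = 0$ on $\partial B_{r_\varepsilon}$. Unwinding the derivation of \eqref{p2} (respectively \eqref{eq051} for $n \geq 10$, where the auxiliary function $w_{\varepsilon,R}$ solving \eqref{eq068} is used to absorb the low-decay terms coming from $P_g u_{\varepsilon,R}$), this identity is precisely equivalent to $H_g(u_{\varepsilon,R,a} + \Upsilon + w_{\varepsilon,R} + v_{\phi_0,\phi_2} + U) = 0$. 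The norm bound \eqref{eq065} is the statement that the fixed point lies in the ball of radius $\tau r_\varepsilon^{m-\mu}$ produced by Proposition \ref{IN1}.

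For the Lipschitz estimate \eqref{eq066}, set $U_i := U_{\varepsilon,R,a,\phi_0^{(i)},\phi_2^{(i)}}$ for $i=1,2$, where $(\phi_0^{(1)},\phi_2^{(1)}) = (\phi_0,\phi_2)$ and $(\phi_0^{(2)},\phi_2^{(2)}) = (\tilde\phi_0,\tilde\phi_2)$. Using that each $U_i$ is a fixed point of $\mathcal{N}_\varepsilon$, write
\begin{align*}
U_1 - U_2 = &~ \bigl[\mathcal{N}_\varepsilon(R,a,\phi_0,\phi_2,U_1) - \mathcal{N}_\varepsilon(R,a,\phi_0,\phi_2,U_2)\bigr] \\
&~ + \bigl[\mathcal{N}_\varepsilon(R,a,\phi_0,\phi_2,U_2) - \mathcal{N}_\varepsilon(R,a,\tilde\phi_0,\tilde\phi_2,U_2)\bigr].
\end{align*}
By the contraction bound \eqref{eq062} in the proof of Proposition \ref{IN1}, the first bracket has norm at most $\tfrac12\|U_1 - U_2\|_{(4,\alpha),\mu,r_\varepsilon}$. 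For the second bracket I apply $G_{\varepsilon,R,a,r_\varepsilon}$ to the difference $\mathscr{M}_{\varepsilon,R,a}^{(\phi)}(U_2) - \mathscr{M}_{\varepsilon,R,a}^{(\tilde\phi)}(U_2)$, which, inspecting the right-hand side of \eqref{p2}/\eqref{eq051}, only sees $(v_{\phi_0,\phi_2} - v_{\tilde\phi_0,\tilde\phi_2})$-dependent terms, namely $(\Delta^2 - P_g)(v_{\phi_0,\phi_2} - v_{\tilde\phi_0,\tilde\phi_2})$, the algebraic term $u_{\varepsilon,R,a}^{8/(n-4)}(v_{\phi_0,\phi_2} - v_{\tilde\phi_0,\tilde\phi_2})$, and the nonlinear remainder difference $\mathcal{R}_{\varepsilon,R,a}(\Upsilon + v_{\phi_0,\phi_2} + w_{\varepsilon,R} + U_2) - \mathcal{R}_{\varepsilon,R,a}(\Upsilon + v_{\tilde\phi_0,\tilde\phi_2} + w_{\varepsilon,R} + U_2)$.

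Each of these three terms is controlled by tools already in hand: Lemma \ref{fp2} and Corollary \ref{cor:poissoninterior} yield $\|(\Delta^2 - P_g)(v_{\phi_0,\phi_2} - v_{\tilde\phi_0,\tilde\phi_2})\|_{(0,\alpha),\mu-4,r_\varepsilon} \leq c\, r_\varepsilon^{d+3-\mu}\|(\phi_0,\phi_2)-(\tilde\phi_0,\tilde\phi_2)\|_{(4,\alpha),r_\varepsilon}$; the argument in Lemma \ref{lem012} (with $v_{\phi_0,\phi_2}$ replaced by $v_{\phi_0,\phi_2} - v_{\tilde\phi_0,\tilde\phi_2}$ and $\Upsilon$ dropped) gives a bound of the form $c\,\varepsilon^\beta r_\varepsilon^{-\mu}\|(\phi_0,\phi_2)-(\tilde\phi_0,\tilde\phi_2)\|_{(4,\alpha),r_\varepsilon}$ for the algebraic term; and the argument in Lemma \ref{rest}, expressing $\mathcal{R}_{\varepsilon,R,a}(\cdot)$ differences as integral remainders, gives $c\,\varepsilon^{\delta'}\|(\phi_0,\phi_2)-(\tilde\phi_0,\tilde\phi_2)\|_{(4,\alpha),r_\varepsilon}$ for the nonlinearity. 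Choosing the smallest of the resulting powers, one extracts some $\delta_3 > 0$ and obtains $\|\mathcal{N}_\varepsilon(R,a,\phi_0,\phi_2,U_2) - \mathcal{N}_\varepsilon(R,a,\tilde\phi_0,\tilde\phi_2,U_2)\|_{(4,\alpha),\mu,r_\varepsilon} \leq c\, r_\varepsilon^{\delta_3-\mu}\|(\phi_0,\phi_2)-(\tilde\phi_0,\tilde\phi_2)\|_{(4,\alpha),r_\varepsilon}$. Absorbing the $\tfrac12\|U_1-U_2\|$ term from the left then yields \eqref{eq066}.

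The main technical obstacle is the careful bookkeeping in the algebraic term $u_{\varepsilon,R,a}^{8/(n-4)}(v_{\phi_0,\phi_2} - v_{\tilde\phi_0,\tilde\phi_2})$: the conformal factor $u_{\varepsilon,R,a}^{8/(n-4)}$ blows up like $|x|^{-4}$ near the puncture, so to win a positive power of $r_\varepsilon$ one has to use the small-$\varepsilon$ upper bound on $v_\varepsilon$ in the annulus $\{r_\varepsilon^{1+\ell} \leq |x| \leq r_\varepsilon\}$ (as in the proof of Lemma \ref{lem012}) together with the fact that $v_{\phi_0,\phi_2} - v_{\tilde\phi_0,\tilde\phi_2}$ vanishes like $|x|^2$ at the origin, so that the parameters $s$, $\delta_0$, $\delta_1$, $\delta_2$ from Remark \ref{remark002} combine to give strictly positive exponents. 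Once this is handled, the remaining steps are essentially the ones already executed in Lemmas \ref{rest} and \ref{fp2} and Proposition \ref{IN1}.
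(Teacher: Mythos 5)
Your proposal is correct and follows essentially the same route as the paper: existence, the boundary conditions, and \eqref{eq065} are read off from the fixed point of $\mathcal N_\varepsilon$ in Proposition \ref{IN1}, and \eqref{eq066} is obtained by splitting $U_1-U_2$ into the contraction part controlled by \eqref{eq062} and the boundary-data difference controlled by the estimates behind Proposition \ref{IN1} (Lemmas \ref{rest}, \ref{fp2}, \ref{lem012}), then absorbing. Your write-up merely makes explicit the bookkeeping that the paper compresses into its final sentence.
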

\begin{proof}
The solution $U_{\varepsilon,R,a,\phi_0,\phi_2}$ is the fixed point of the map $\mathcal N_\varepsilon
(R,a,\phi_0,\phi_2,\cdot)$ given by Proposition \ref{IN1} with the estimate \eqref{eq065}. Using this fact and \eqref{eq062}, we obtain that
\begin{align*}
 & \|U_{\varepsilon,R,a,\phi_0,\phi_2}-U_{\varepsilon,R,a,\tilde\phi_0,\tilde\phi_2}\|_{(4,\alpha),\mu,r_{\varepsilon}}\\
 \leq & ~ 2\|\mathcal N_\varepsilon
(R,a,\phi_0,\phi_2,U_{\varepsilon,R,a,\phi_0,\phi_2})-\mathcal N_\varepsilon
(R,a,\tilde\phi_0,\tilde\phi_2,U_{\varepsilon,R,a,\tilde\phi_0,\tilde\phi_2})\|_{(4,\alpha),\mu,r_{\varepsilon}}.
\end{align*}

The estimate \eqref{eq066} now follows from the 
definition of $\mathcal N_\varepsilon$ together with the 
estimates obtained in Proposition \ref{IN1}.
\end{proof}

\section{Exterior problem}\label{sec:exterioranalysis}
In the previous section we exploited the conformal structure of the equation to perturb 
a Delaunay-type solution to a exact solution to the Q-curvature problem in a small 
neighborhood of the singularity. To find a solution on the complement of this neighborhood 
whose boundary data matches the solution constructed in the previous section, we need to 
choose the approximate solution that will be perturbed on the exterior domain carefully. 
Such approximate solution must be close to the Delaunay-type solution $u_{\varepsilon,R,a}$ 
near to the boundary. Our asymptotic expansion in Corollary \ref{cor001} shows that 
as $\varepsilon \rightarrow 0$ the function $u_{\varepsilon,R,a}$ is close to the 
Green's function of the flat bi-Laplacian with a pole on the singularity, at least in a sufficiently 
small annulus. Consequently, since our manifold has constant $Q$-curvature (the constant 
function $1$ is a solution to $H_{g_0}(1)=0$), the natural approximate solution on the 
exterior domain will be the constant 1 plus the Green function with a pole at $p$.

Let $r_{1}\in (0,1)$ and $\Psi: B_{r_{1}}(0)\rightarrow M$ be a normal coordinate system 
with respect to $g=\mathcal F^{\frac{4}{n-4}}g_0$	on M centered in $p$, where $\mathcal F$ 
is defined in Section \ref{sec001}. In these coordinates we have $g_{ij}=\delta_{ij}+O(|x|^2)$
and $\mathcal F=1+O(|x|^2)$, which implies that $(g_0)_{ij}=\delta_{ij}+O(|x|^2)$. Remember 
that $g_0$ has constant $Q$-curvature equal to $n(n^2-8)/8$. Denote by $G_{p}(x)$ the Green's 
function with pole at $p$ for $L_{g_0} = P_{g_{0}} - \frac{n(n^{2}-4)(n+4)}{16}$, the 
linearization of $H_{g_0}$ at the constant function 1. The 
Green's function $G_p$ exists by our hypothesis that $g_0$ is 
nondegenerate. By a similar argument to the one in Proposition 2.7 
of \cite{MR3518237} we can normalize $G_{p}$ 
such that $\displaystyle\lim_{x\rightarrow 0}|x|^{n-4}G_p(x)=1$. This implies that 
$|G_p(x)|\leq C|x|^{4-n}$, for all $x\in B_{r_1}(0)$.  Our main goal in this section is 
to solve the following problem
\begin{equation}\label{EX}
    H_{g_0}(1 + \lambda G_{p} + u) = 0 \quad {\rm on} \quad M\backslash B_{r}(p),
\end{equation}
with $\lambda \in \mathbb{R}$, $r \in (0,r_1)$ and prescribed boundary data.
Similarly to the strategy of the previous section, expanding this equation we obtain 
\begin{equation*}
    H_{g_0}(1+\lambda G_{p} + u) = L_{g_{0}}(u) 
    + \mathcal R(\lambda G_{p}+u) =0,
\end{equation*}
since $H_{g_0}(1)= 0$ and $L_{g_{0}}(G_{p})=0$. Here $\mathcal R$ is defined in \eqref{eq024} 
with $u_0=1$. In order to apply a fixed point theorem in this case, it is necessary to show 
that the linearized operator in $M \backslash B_{r}(p)$ has a right inverse. At this point, 
the nondegeneracy of the metric will be necessary.

\subsection{Linear analysis}

The goal of this subsection is to find a right inverse of the 
operator $L_{g_0}$.  
To this aim, given $0<r<s$ consider in $\mathbb R^n$ the 
annulus $\Omega_{r,s}:=
B_s(0)\backslash B_r(0)$. We start proving the following result.
	\begin{proposition}\label{Prop004}
		Let $\nu \in (4-n,5-n)$ fixed and $0 < 2r < s$. There 
		exists an operator
		$
		\mathcal G_{r,s}: C^{0,\alpha}_{\nu-4}(\Omega_{r,s}) 
		\rightarrow C^{4,\alpha}_{\nu}(\Omega_{r,s})
		$
		such that, for all $f \in C^{0,\alpha}_{\nu-4}(\Omega_{r,s})$, 
		the function $w = \mathcal G_{r,s}(f)$ 
		satisfies
		\begin{equation}\label{eq077}
		\left\{
		\begin{array}{lcl}
		\Delta^{2}w =f   & {\rm in} & \Omega_{r,s}  \\
		\Delta w = w = 0  & {\rm on} & \partial B_{s}.
		\end{array}
		\right.    
		\end{equation}
		Moreover, there exists a positive constant $C$ that 
		does not depend on $s$ and $r$ such that
		\begin{equation*}
		\|\mathcal G_{r,s}(f)\|_{C^{4,\alpha}_{\nu}(\Omega_{r,s})} 
		\leq C\|f\|_{C^{0,\alpha}_{\nu-4}(\Omega_{r,s})}.
		\end{equation*}
	\end{proposition}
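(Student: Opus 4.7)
The plan is to define $\mathcal G_{r,s}(f)$ as the unique solution of the over-determined problem with \emph{Navier} data on \emph{both} components of the boundary, that is, $\Delta^2 w=f$ in $\Omega_{r,s}$ subject to $w=\Delta w=0$ on $\partial B_r\cup\partial B_s$. For each fixed $(r,s)$ this admits a unique weak solution in $H^2\cap H^1_0(\Omega_{r,s})$ by minimization of the strictly convex coercive functional $w\mapsto\tfrac12\int(\Delta w)^2-\int fw$, and biharmonic Schauder theory \cite{Gazzola_Grunau_Sweers_2010} yields $C^{4,\alpha}$ regularity up to the boundary. This $w$ trivially satisfies the weaker boundary condition asked in the statement, so it remains only to prove the uniform-in-$(r,s)$ weighted estimate.

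I would establish the estimate by a blow-up contradiction argument modeled on Lemmas~\ref{lem004} and \ref{lem005}. Suppose it fails and take sequences $r_i<s_i/2$, $f_i\to 0$ in $C^{0,\alpha}_{\nu-4}(\Omega_{r_i,s_i})$ with $w_i=\mathcal G_{r_i,s_i}(f_i)$ of unit $C^{4,\alpha}_\nu$-norm. Choose $\rho_i\in[r_i,s_i/2]$ at which the weighted norm is nearly attained and rescale $\tilde w_i(x):=\rho_i^{-\nu}w_i(\rho_i x)$, $\tilde f_i(x):=\rho_i^{4-\nu}f_i(\rho_i x)$. Then $\Delta^2\tilde w_i=\tilde f_i$ on the rescaled annulus $\Omega_{r_i/\rho_i,\,s_i/\rho_i}$, the bound $|\tilde w_i(x)|\le|x|^\nu$ survives, $\|\tilde f_i\|_{C^{0,\alpha}_{\nu-4}}\to 0$, and the scaling identity $\|\tilde w_i\|_{(4,\alpha),[1,2]}=\rho_i^{-\nu}\|w_i\|_{(4,\alpha),[\rho_i,2\rho_i]}\ge 1/2$ keeps the sequence non-trivial. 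Passing to subsequences $r_i/\rho_i\to\tau_1\in[0,1]$ and $s_i/\rho_i\to\tau_2\in[1,\infty]$, Schauder estimates together with Arzel\`a--Ascoli produce a biharmonic limit $\tilde w_\infty$ on $\Omega_{\tau_1,\tau_2}$ satisfying $|\tilde w_\infty(x)|\le|x|^\nu$ and inheriting zero Navier data on whichever of $\partial B_{\tau_1}$, $\partial B_{\tau_2}$ is finite and positive.

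One then shows $\tilde w_\infty\equiv 0$ in each of the four configurations of $(\tau_1,\tau_2)$. When $0<\tau_1$ and $\tau_2<\infty$ this is immediate from uniqueness for the Navier biharmonic problem on a bounded annulus. When $\tau_1=0$ and $\tau_2<\infty$, the bound $|\tilde w_\infty(x)|\le|x|^\nu$ with $\nu>4-n$ is strictly weaker than the fundamental singularity of $\Delta^2$, so the singularity at the origin is removable and $\tilde w_\infty$ extends to a Navier biharmonic function on $B_{\tau_2}$, which must vanish. For the remaining cases $\tau_2=\infty$, I would separate variables and write $\tilde w_\infty=\sum_{l\ge 0}\bigl(a_lr^l+b_lr^{l+2}+c_lr^{2-n-l}+d_lr^{4-n-l}\bigr)e_l(\theta)$; the decay $|\tilde w_\infty(x)|\le|x|^\nu$ at infinity with $\nu<5-n\le 0$ forces $a_l=b_l=0$ for every $l\ge 0$, while either the growth bound near the origin (if $\tau_1=0$, using $\nu>4-n\ge 4-n-l$ and $\nu>2-n-l$) or the two Navier conditions at $\partial B_{\tau_1}$ (if $\tau_1>0$) annihilate $c_l$ and $d_l$. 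This contradicts $\|\tilde w_\infty\|_{(4,\alpha),[1,2]}\ge 1/2$; the higher-derivative part of the weighted bound then follows from standard rescaled Schauder estimates.

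The main obstacle is the $\tau_2=\infty$ analysis: verifying that the pointwise growth bound alone suffices to kill every spherical-harmonic coefficient. This is precisely the role of the hypothesis $\nu\in(4-n,5-n)$, which ensures that the four indicial roots $\{l,\,l+2,\,2-n-l,\,4-n-l\}$ of $\Delta^2$ on each mode $l\ge 0$ all lie \emph{outside} the open interval $(4-n,5-n)$, so no separated solution $r^a e_l$ with $a$ an indicial root is compatible with the prescribed growth and each coefficient must vanish. This uniform gap between the admissible weight and the indicial spectrum of $\Delta^2$ is what makes the constant in the estimate independent of $r$ and $s$.
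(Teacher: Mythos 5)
Your proposal is correct and follows essentially the same route as the paper's proof: existence via the Navier problem and standard elliptic theory, then a rescaled blow-up contradiction in which the limit is expanded in spherical harmonics and each of the four cases for $(\tau_1,\tau_2)$ is excluded using the indicial exponents $l$, $l+2$, $2-n-l$, $4-n-l$ together with the window $\nu\in(4-n,5-n)$. The only point to tighten is the final contradiction: since $C^{4,\alpha}$ seminorms need not pass to the $C^{4,\alpha'}_{\rm loc}$ limit, close the argument as the paper does by running the blow-up on the weighted sup-norm (so nontriviality of the limit is a pointwise statement, with derivative estimates recovered afterwards by rescaled Schauder) or, equivalently, use local Schauder with $\tilde f_i\to 0$ to force $\|\tilde w_i\|_{(4,\alpha),[1,2]}\to 0$ once $\tilde w_i\to 0$ in $C^0_{\rm loc}$; incidentally, your scaling exponent $4-\nu$ for $f$ is the correct biharmonic one.
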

\begin{proof}
The existence of $w$ satisfying \eqref{eq077} 
follows by standard elliptic theory. Now we need to prove the 
existence of a constant $c>0$ which does not depend on $r$ 
and $s$ such that
\begin{equation}\label{ex03}
\sup_{\Omega_{r,s}}r^{-\nu}| w| \leq c     \sup_{\Omega_{r,s}}r^{4-\nu}| f|,
\end{equation}
Suppose by contradiction that the estimate does not hold. Then 
there exist a sequence of numbers $0<r_{i}<s_{i}$, a sequence 
of functions $f_{i}$ satisfying
$
\sup_{\Omega_{r_i,s_i}}r^{4-\nu}|f_{i}|=1
$,
and a sequence of solutions $w_i$ to the problem
$$\left\{\begin{array}{rcl}
     \Delta^{2} w_{i} =  f_{i}   & {\rm in} & \Omega_{r_i,s_i}\\
     \Delta  w_{i} = w_{i} = 0  & {\rm on} & \partial \Omega_{r_i,s_i}
\end{array}\right.$$
such that 
$
A_{i}:=\sup _{\Omega_{r_i,s_i}} r^{-\nu}\left|{w}_{i}\right| 
\rightarrow+\infty. 
$
Consider $(t_i,\theta_i) \in (r_{i},s_i)\times \mathbb{S}^{n-1}$, a 
point where this supremum $A_{i}$ is attained. Up to a subsequence, 
we can assume that $r_{i}/t_i\rightarrow\tau_1\in[0,1]$ and 
$s_{i}/t_i\rightarrow \tau_2\in [1,+\infty]$. With a similar argument 
applied in page \pageref{pg001} we can prove that $\tau_1$ and 
$\tau_2$ is not equal to $1$.

Define the sequence of rescaled functions
\begin{equation*}
\hat{w}_{i}(r\theta):=\frac{t_{i}^{-\nu}}{A_{i}} 
{w}_{i}\left(t_{i}r\theta\right) \quad \text { and } 
\quad \hat{f}_{i}(r\theta):=\frac{t_{i}^{2-\nu}}{A_{i}}
{f}_{i}\left(t_{i}r\theta\right).
\end{equation*}
Note that $1\in(\tau_1,\tau_2)$, $\hat w_i(\theta_i)=1$ and 
$|\hat w_i(r\theta)|
\leq r^\nu$. By Arzel\'a-Ascoli's Theorem we can assume, up to a 
subsequence, the sequence of functions $\hat w_{i}$ converges uniformly
on compact subsets 
of $(\tau_1,\tau_2)\times \mathbb{S}^{n-1}$ to a biharmonic 
function $\hat w_{\infty}$ which is not identically zero. Moreover, 
$\hat w_{\infty}$ is bounded by $r^{\nu}$ on $(\tau_1,\tau_2)
\times \mathbb{S}^{n-1}$, and  $\Delta\hat w_\infty=\hat w_\infty=0$ 
on the boundary if $\tau_1>0$ or if $\tau_2<\infty$. Writing 
\begin{equation*}
\hat w_{\infty}(r,\theta) = \sum_{j= 0}^\infty\hat w_{j}(r)e_{j}(\theta),
\end{equation*}
where $\hat w_{j}$ is given explicitly  by 
\begin{equation*}
\hat w_{j}:= a_{j}^{-}r^{j}+a_{j}^{+}r^{2-n-j} 
+b_{j}^{-}r^{2+j}+b_{j}^{+}r^{4-n-j}.
\end{equation*}
We examine the various possibilities of the values for 
$\tau_1$ and $\tau_2$ case by case.

\noindent {\bf Case 1: $\boldsymbol{\tau_1>0}$ and 
$\boldsymbol{\tau_2<+\infty}$.} In this case, $\hat w_{\infty}$ is 
biharmonic with zero Navier boundary data, which implies that 
$\hat w_{\infty}\equiv 0$

\noindent {\bf Case 2: $\boldsymbol{\tau_1=0}$ and 
$\boldsymbol{\tau_2=+\infty}$.}  Since $\hat w_{\infty}$ is bounded 
by $r^{\nu}$, using that $\tau_2=+\infty$ and $\nu<5-n\leq 0$ we 
find that $a_j^-=b_j^-=0$, and using that $\tau_1=0$ and $\nu>4-n$ 
we find that $a_j^+=b_j^+=0$.

\noindent {\bf Case 3: $\boldsymbol{\tau_1=0}$ and 
$\boldsymbol{\tau_2<+\infty}$.} As in the case 2 we find 
that $a_j^+=b_j^+=0$. Now, using that $\hat w_{\infty}=
\Delta \hat w_{\infty} = 0$ on $\partial B_{\tau_2}$ we find 
that $a_j^+=b_j^+=0$.

\noindent {\bf Case 4: $\boldsymbol{\tau_1>0}$ and 
$\boldsymbol{\tau_2=+\infty}$.} Similarly to the previous 
case we conclude that $\hat w_\infty\equiv 0$.

In each of these cases above, we obtain the zero function in 
the limit, contradicting the normalization $\hat w_i(\theta_i) 
= 1$.  Consequently, we proved the existence of a function satisfying \eqref{eq077} and the estimate \eqref{ex03}. The estimates for the derivatives follow from standard elliptic estimates.
\end{proof}

\begin{corollary}
Let $\nu\in(4-n,5-n)$. There exists $s_0>0$ such that for all 
$0<2r<s<s_0$ there is an operator 
$$\mathscr H_{r,s}:C^{0,\alpha}_{\nu-4}(\Omega_{r,s})
\rightarrow C^{0,\alpha}_{\nu}(\Omega_{r,s})$$ 
with the function $w=\mathscr H_{r,s}(f)$ satisfying 
\begin{align*}
		\left\{
		\begin{array}{lcl}
		L_{g_0}(w) =f   & {\rm in} & \Omega_{r,s}  \\
		\Delta w = w = 0  & {\rm on} & \partial B_{s}.
		\end{array}
		\right.
		\end{align*}
		Moreover, there exists a positive constant $C$ that 
		does not depend on $s$ and $r$ such that
		\begin{equation*}
		\|\mathscr H_{r,s}(f)\|_{C^{4,\alpha}_{\nu}(\Omega_{r,s})} \leq C\|f\|_{C^{0,\alpha}_{\nu-4}(\Omega_{r,s})}.
		\end{equation*}
\end{corollary}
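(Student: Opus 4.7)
The plan is a straightforward perturbation of Proposition~\ref{Prop004}. I write $L_{g_0} = \Delta^2 + Q$, where $Q := L_{g_0} - \Delta^2$ collects the lower-order corrections coming from the metric and from the zeroth-order constant $-\frac{n(n^2-4)(n+4)}{16}$. The strategy is to recast the Navier boundary value problem $L_{g_0}w = f$ on $\Omega_{r,s}$ as the fixed point equation $w = \mathcal{G}_{r,s}(f - Qw)$, and then apply the Banach contraction principle using the bound on $\mathcal{G}_{r,s}$ already furnished by Proposition~\ref{Prop004}.

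The main step is to show that $Q : C^{4,\alpha}_\nu(\Omega_{r,s}) \to C^{0,\alpha}_{\nu-4}(\Omega_{r,s})$ has operator norm bounded by $C s^\beta$ for some $\beta > 0$, uniformly in $r$ with $0<2r<s$. Term by term from \eqref{paneitz-branson}: in conformal normal coordinates at $p$ we have $g_{ij} = \delta_{ij} + O(|x|^2)$, so the fourth-order difference $(\Delta^2_{g_0} - \Delta^2)w$ has coefficients of size $O(|x|^2)$ multiplying up to fourth derivatives of $w = O(|x|^{\nu-4})$, producing $O(|x|^{\nu-2})$ which is $O(s^2)$ in $C^{0,\alpha}_{\nu-4}$. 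The Ricci- and scalar-curvature-divergence terms are handled similarly using $\operatorname{Ric}_{g_0}=O(|x|)$ and $R_{g_0}=O(|x|^2)$ from \eqref{eq036}, gaining powers of $s$. The terms $\tfrac{n-4}{2}Q_{g_0}w$ and $-\tfrac{n(n^2-4)(n+4)}{16}w$ are merely bounded times $w = O(|x|^\nu)$, but the inclusion $C^{0,\alpha}_\nu \hookrightarrow C^{0,\alpha}_{\nu-4}$ on $\Omega_{r,s}$ costs a factor $O(s^4)$, so these contributions are also small.

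With this smallness, the map $T_f : w \mapsto \mathcal{G}_{r,s}(f - Qw)$ satisfies $\|T_f(w_1)-T_f(w_2)\|_{C^{4,\alpha}_\nu} \leq C_0 s^\beta \|w_1-w_2\|_{C^{4,\alpha}_\nu}$, where $C_0$ is the operator norm of $\mathcal{G}_{r,s}$ (uniform in $r,s$). Choosing $s_0$ small enough that $C_0 s_0^\beta < 1/2$, $T_f$ is a contraction on $C^{4,\alpha}_\nu(\Omega_{r,s})$ for every $0<2r<s<s_0$. Its unique fixed point $w := \mathscr{H}_{r,s}(f)$ solves $\Delta^2 w + Qw = L_{g_0}w = f$ and, by construction of $\mathcal{G}_{r,s}$, satisfies $\Delta w = w = 0$ on $\partial B_s$. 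From $w = \mathcal{G}_{r,s}(f)-\mathcal{G}_{r,s}(Qw)$ the contraction estimate gives $\|w\|_{C^{4,\alpha}_\nu} \leq 2C_0\|f\|_{C^{0,\alpha}_{\nu-4}}$, which is the claimed uniform bound.

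The principal subtlety I expect is the constant zeroth-order term $-\tfrac{n(n^2-4)(n+4)}{16}w$, which does not decay near $p$: its contribution to $Q$ is small only after invoking the diameter $s$ of $\Omega_{r,s}$ to convert the $C^{0,\alpha}_\nu$ bound into a $C^{0,\alpha}_{\nu-4}$ bound, which is precisely what forces us to shrink $s_0$. A secondary point is that Hölder seminorms of the perturbation coefficients must also behave correctly in the weighted scale, but this follows from the smoothness of $g_0$ and the standard scaling in Definition~\ref{def1}. Nondegeneracy of $g_0$ is not needed here (it enters later when solving on the exterior domain $M\setminus B_r(p)$), so this corollary is purely a local small-domain perturbation statement.
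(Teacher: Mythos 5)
Your proposal is correct and follows essentially the same route as the paper: the paper's proof consists precisely of the estimate $\|(L_{g_0}-\Delta^2)v\|_{C^{0,\alpha}_{\nu-4}(\Omega_{r,s})}\leq Cs^2\|v\|_{C^{4,\alpha}_{\nu}(\Omega_{r,s})}$ (obtained from \eqref{eq052}, \eqref{paneitz-branson} and $g_0=\delta+O(|x|^2)$ in normal coordinates) followed by ``a perturbation argument and Proposition \ref{Prop004},'' which is exactly the contraction scheme you spell out, uniform bound included. The only small inaccuracy is your appeal to \eqref{eq036} for $\operatorname{Ric}_{g_0}$ and $R_{g_0}$ (those expansions hold for the conformal metric $g$ in conformal normal coordinates, not for $g_0$), but this is harmless: bounded curvature coefficients on terms of order at most two already gain a factor $s^2$ after the shift from weight $\nu$ to $\nu-4$, just as you observe for the zeroth-order terms.
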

\begin{proof}
Using \eqref{eq052} and the fact that in normal coordinates one has $g_0=\delta+O(|x|^2)$, we find $\|(\Delta_{g_0}^2-\Delta^2)v\|_{C^{0,\alpha}_{\nu-4}(\Omega_{r,s})}\leq s^2\|v\|_{C^{2,\alpha}_{\nu}(\Omega_{r,s})}$. From this and \eqref{paneitz-branson} we obtain
$$\|(L_{g_0}-\Delta^2)v\|_{C^{0,\alpha}_{\nu-4}(\Omega_{r,s})}\leq s^2\|v\|_{C^{2,\alpha}_{\nu}(\Omega_{r,s})}.$$
Finally, the result follows by a perturbation argument and Proposition \ref{Prop004}.
\end{proof}

For the remainder of the construction we let $M_r = M \backslash 
B_r(p)$. 

\begin{proposition}\label{RIEX}
Let $\nu \in (4-n,5-n)$. There exists $r_{2} < r_{1}$ such that for 
all $r \in (0,r_2)$ we can define an operator 
$
G_{r,g_0}: C^{0,\alpha}_{\nu-4}(M_r)\rightarrow C^{4,\alpha}_{\nu}(M_r)
$
where the function $w = G_{r,g_0}(f)$ satisfies
$$
L_{g_0}(w) = f, \qquad \left. w \right |_{\partial B_r}
= 0, \qquad \left. (\Delta w ) \right |_{\partial B_r} = 0
$$
in $M_{r}$. In addition, there exists a constant $C>0$ independent 
of $r$ such that
\begin{equation*}
\|G_{r,g_0}(f)\|_{C^{4,\alpha}_{\nu}(M_r)} 
\leq C\|f\|_{C^{0,\alpha}_{\nu-4}(M_r)}.
\end{equation*}
\end{proposition}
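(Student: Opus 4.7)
The plan is to build an approximate right inverse $\widetilde G_{r,g_0}$ by gluing the two inverses already in hand, namely the annular inverse $\mathscr H_{r,s}$ from the preceding corollary and the global inverse $G_{g_0}:C^{0,\alpha}(M)\to C^{4,\alpha}(M)$ which exists by the standing nondegeneracy hypothesis on $g_0$. Writing $L_{g_0}\widetilde G_{r,g_0}=I+E_r$, I would then invert $I+E_r$ by Neumann series once the operator norm of $E_r$ in the weighted spaces is shown to be small uniformly in $r$, and set $G_{r,g_0}=\widetilde G_{r,g_0}(I+E_r)^{-1}$.

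In detail, fix $s\in(0,r_1/2)$ independent of $r$ and a cutoff $\chi_s\in C^\infty(M)$ with $\chi_s\equiv 1$ on $M\setminus B_{2s}(p)$ and $\chi_s\equiv 0$ on $B_s(p)$. Given $f\in C^{0,\alpha}_{\nu-4}(M_r)$, split $f=\chi_s f+(1-\chi_s)f$. Extend $\chi_s f$ by zero to a function $\overline f_1\in C^{0,\alpha}(M)$ and set $w_1:=G_{g_0}(\overline f_1)\in C^{4,\alpha}(M)$; since $\nu<4-n<0$, the inclusion $C^{4,\alpha}(M)\hookrightarrow C^{4,\alpha}_\nu(M_r)$ is bounded. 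The remaining piece $(1-\chi_s)f$ is supported in $\Omega_{r,2s}$, so set $w_2:=\mathscr H_{r,2s}((1-\chi_s)f)$, extended by zero across $\partial B_{2s}$; the Navier BC of the annular inverse on $\partial B_{2s}$ guarantees that this extension is $C^{4,\alpha}$ across the outer boundary. Then $w_1+w_2$ solves $L_{g_0}(w_1+w_2)=f$ on $M_r$ up to commutator terms supported in $\Omega_{s,2s}$ (where $\chi_s$ varies). Finally, use the exterior Poisson operator $\mathcal Q_r$ from Corollary \ref{cor:poissonexterior}, acting in normal coordinates near $p$, to produce a biharmonic correction $v:=\mathcal Q_r\bigl(-(w_1+w_2)|_{\partial B_r},-\Delta(w_1+w_2)|_{\partial B_r}\bigr)$, cut off by a smooth function vanishing outside $B_s(p)$; the resulting $\widetilde G_{r,g_0}(f):=w_1+w_2+v$ satisfies the Navier conditions on $\partial B_r$ exactly.

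The error $E_rf:=L_{g_0}\widetilde G_{r,g_0}(f)-f$ then has two contributions: (i) commutator terms $[L_{g_0},\chi_s]w_j$ and the analogous cutoff commutator for $v$, all supported on fixed compact sets in $\Omega_{s,2s}$ bounded away from $\partial B_r$; and (ii) the discrepancy $(L_{g_0}-\Delta^2)v$ coming from $g_0=\delta+O(|x|^2)$ in normal coordinates, which is small because of the decay $v\in C^{4,\alpha}_{4-n}$ produced by $\mathcal Q_r$. The main obstacle is bounding $\|E_rf\|_{C^{0,\alpha}_{\nu-4}(M_r)}\le C r^{\delta'}\|f\|_{C^{0,\alpha}_{\nu-4}(M_r)}$ for some $\delta'>0$, uniformly in $r$: one must track the boundary data $w_1|_{\partial B_r}$, $\Delta w_1|_{\partial B_r}$ (controlled via a Green's function expansion of $G_{g_0}$ at $p$, analogous to the one used for $G_p$ above) and $w_2|_{\partial B_r}$, $\Delta w_2|_{\partial B_r}$ (controlled by the weighted estimate for $\mathscr H_{r,2s}$) to show they are $O(r^{\nu})$ and $O(r^{\nu-2})$ respectively, so that by Corollary \ref{cor:poissonexterior} the $C^{4,\alpha}_\nu$ norm of $v$ is $O(1)$ times $\|f\|_{C^{0,\alpha}_{\nu-4}(M_r)}$ and the perturbative error $(L_{g_0}-\Delta^2)v$ absorbs an extra power of the support radius of the cutoff of $v$. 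Once this is in place, choosing $r_2$ so that $C r_2^{\delta'}<1/2$ gives invertibility of $I+E_r$ by Neumann series and yields the stated $r$-independent bound on $G_{r,g_0}$.
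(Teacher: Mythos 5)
There is a genuine gap, and it sits at the heart of your error estimate. You extend $w_2:=\mathscr H_{r,2s}((1-\chi_s)f)$ by zero across $\partial B_{2s}$ and assert that the Navier conditions $w_2=\Delta w_2=0$ on $\partial B_{2s}$ make this extension $C^{4,\alpha}$. They do not: zero boundary value and zero boundary Laplacian give no control on $\partial_r w_2$ or $\partial_r\Delta w_2$ at $\partial B_{2s}$, so the zero-extension is in general only Lipschitz, and $L_{g_0}$ applied to it produces distributional layers supported on the sphere $\partial B_{2s}$ rather than a function in $C^{0,\alpha}_{\nu-4}(M_r)$. If you repair this in the natural way, by multiplying $w_2$ by a smooth cutoff equal to $1$ near the puncture and supported in a slightly larger ball, then the error term becomes a genuine commutator $[L_{g_0},\eta]w_2$ supported on a \emph{fixed} annulus (radius comparable to $s$, independent of $r$), and its size is comparable to $\|f\|_{C^{0,\alpha}_{\nu-4}(M_r)}$ with a constant depending on $s$ but with no factor tending to $0$ as $r\to 0$. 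Hence the key claim $\|E_r\|\le Cr^{\delta'}$ fails, and the Neumann series for $(I+E_r)^{-1}$ does not converge; the smallness you do obtain (from the Poisson correction $v$, whose boundary data and the discrepancy $(L_{g_0}-\Delta^2)v$ are indeed small) cannot compensate for this $O(1)$ residual. A minor additional slip: $\nu\in(4-n,5-n)$ means $4-n<\nu<5-n\le 0$, not $\nu<4-n$; the boundedness of $C^{4,\alpha}(M)\hookrightarrow C^{4,\alpha}_\nu(M_r)$ only needs $\nu\le 0$, so the conclusion there survives.

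The missing idea is the second, exact solve that the paper uses. There one first sets $w_0=\eta\,\mathscr H_{r,r_1}(f|_{\Omega_{r,r_1}})$ and accepts that the residual $h=f-L_{g_0}(w_0)$ is of size $O(\|f\|)$ but supported away from the puncture; this residual is then removed \emph{exactly} (not perturbatively) by the global inverse furnished by nondegeneracy, via $w_1=\chi L_{g_0}^{-1}(h)$ with $\chi$ cutting off at a new small scale $r_2$. Only after this second step is the remaining error a commutator supported where $|x|\sim r_2$, where the weighted norm supplies a positive power of $r_2$, so that choosing $r_2$ small makes the final perturbation argument close, uniformly in $r$. In your scheme the single fixed scale $s$ provides no such gain, so you would need to incorporate an analogous exact correction of the $\Omega_{s,2s}$-supported residual (a second application of $L_{g_0}^{-1}$ with a cutoff at a small scale chosen at the end) before any fixed-point or Neumann-series step can succeed. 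Your use of $\mathcal Q_r$ to enforce the Navier data at $\partial B_r$ is a reasonable variant and its contribution is indeed controllable, but it does not address the main obstruction above.
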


\begin{proof}
The proof follows the ideas of the proof of Proposition 13.28 
in \cite{jleli} (see also \cite{MR2639545}). Given $f\in 
C^{0,\alpha}_{\nu-4}(M_r)$ define a function $w_0\in 
C^{4,\alpha}_{\nu}(M_r)$ by $w_0:=\eta \mathscr 
H_{r,r_1}(f|_{\Omega_{r,r_1}})$, where $\eta$ is a smooth 
radial function equal to 1 in $B_{\frac{1}{2}r_1}(p)$ and 
equal to zero in $M_{r_1}$. There is a constant $C>0$ 
independent of $r$ and $r_1$ so that
\begin{equation}\label{eq070}
    \|w_0\|_{C^{4,\alpha}_{\nu}(M_r)}\leq 
    C\|f\|_{C^{0,\alpha}_{\nu-4}(M_r)}.
\end{equation}
Note that the function $h:=f-L_{g_0}(w_0)$ is supported in $M_{\frac{1}{2}r_1}$, and so it follows that 
$$\|h\|_{C^{0,\alpha}(M)}\leq \|f \|_{C^{0, \alpha} (M)} 
+ \| L_{g_0} (w_0) \|_{C^{0,\alpha}_{\nu-4}(M_r)} 
\leq C_1\|f\|_{C_{\nu-4}^{0,\alpha}(M_r)},$$
for some constant $C$ independent of $r$. By the non degeneracy of 
the metric, we can define $w_1:=\chi L_{g_0}^{-1}(h)$, where 
$\chi$ is a smooth radial function equal to 1 in $M_{2r_2}$ and 
equal to zero in $B_{r_2}(p)$ with $4r_2<r_1$. Hence, there 
exists a constant $C>0$  independent of $r$ and $r_2$ such that
\begin{equation}\label{eq071}
    \|w_1\|_{C_\nu^{4,\alpha}(M_r)}\leq C\|f\|_{C_{\nu-4}^{0,\alpha}(M_r)}.
\end{equation}
This implies that there exists $C>0$ independent of $r$ and $r_2$ so that
$$\|L_{g_0}(w_0+w_1)-f\|_{C_{\nu-4}^{0,\alpha}(M_r)}\leq 
Cr_2^{-1-\nu}\|f\|_{C_{\nu-4}^{0,\alpha}(M_r)}.$$
From this, by \eqref{eq070}, \eqref{eq071} and a perturbation 
argument we obtain the result.
\end{proof}	

\subsection{Nonlinear analysis}\label{sec:constant-q-curvature}
Now that we have shown the linearized operator about the 
constant function $1$ has a right-inverse on $M_r$ for 
$r$ sufficiently small, we use a 
fixed point argument to show the existence of a function 
satisfying $\eqref{EX}$. However, because we must prescribe
the Navier boundary data appropriately, in addition to the 
constant 1 and the Green function, we add the exterior Poisson 
operator (constructed in section \ref{EPO}) in the expansion 
of equation \eqref{EX}.

Recall that $\Psi$ parameterizes $B_{r_1}(p)$ in 
conformal normal coordinates. We let 
$$\eta: \R^n \rightarrow \R, \quad \eta(x) = \left \{ 
\begin{array}{rl} 1, & |x| < r_1/2 \\ 0, & |x|> r_1 
\end {array} \right .$$ 
be a smooth, radial cut-off function satisfying 
$|\partial^{(k)}_r \eta | \leq c |x|^{-k}$ on $B_{r_1}(0)$ 
for each $k=0,1,\dots, 4$. Consequently, we have 
$\|\eta\|_{(4, \alpha),[\sigma, 2 \sigma]} \leq c$ 
for every $r \leq \sigma \leq \frac{1}{2} r_{1}$.
For each $\psi_0,\psi_2 \in C^{4, \alpha}
\left(\mathbb{S}_{r}^{n-1}\right)$ we define $u_{\psi_0, \psi_2}$ 
by 
$$u_{\psi_0, \psi_2} \equiv 0 \textrm{ on } M_{r_1}, \quad
u_{\psi_0, \psi_2} \circ \Psi = \eta \mathcal{Q}_r(\psi_0, \psi_2 ) \textrm{ on } B_{r_1} \backslash B_r,$$
where $\mathcal Q_{r}$ is
the Poisson operator on the exterior domain constructed in Corollary~\ref{cor:poissonexterior}. 
By Corollary \ref{cor:poissonexterior} we obtain
\begin{equation}\label{eq072}
    \left\|u_{\psi_0,\psi_2}\right\|_{C_{\nu}^{4, \alpha}
    \left(M_{r}\right)}\leq c r^{-\nu}\|(\psi_0,\psi_2)
    \|_{(4, \alpha), r}
\end{equation}
for $\nu \geq 4-n$. 

Finally, we seek a solution to \eqref{EX} of the form 
$u_{\psi_0, \psi_2} + v$, which is equivalent to 
finding a fixed point of the mapping 
$$\mathcal{M}_r(\lambda, \psi_0, \psi_2,\cdot) : 
C^{4,\alpha}_\nu(M_r) \rightarrow C^{4,\alpha}_\nu(M_r)$$
defined by 
$$\mathcal M_{r}(\lambda,\psi_0, \psi_2,v) := - G_{r,g_{0}}
(\mathcal R(\lambda G_{p} +u_{\psi_0, \psi_2}+v) + 
L_{g_{0}}(u_{\psi_0,\psi_2})).$$

\begin{proposition}\label{prop005}
Let $\nu \in (9/2-n,5-n)$, $\delta_4 \in (0,1/2)$ and 
$\beta,\gamma>0$ fixed constants. There exists $r_{2}>0$, such 
that for all $r\in (0,r_{2})$, $\lambda \in \mathbb R$, 
$\psi_0,\psi_2 \in C^{4,\alpha}(\mathbb{S}^{n-1}_{r})$ 
satisfying {$|\lambda|\leq r^{n-4+\frac{m}{2}}$,} and 
$\|(\psi_0,\psi_2)\|_{(4,\alpha),r} \leq \beta  
r^{{m}-\delta_4}$, there exists a fixed point to the map 
$\mathcal M_{r}(\lambda,\psi_0,\psi_2,\cdot)$ in the 
ball of radius $\gamma r^{{m}-\nu}$, {where $m$ is defined in Remark \ref{remark002}}.
\end{proposition}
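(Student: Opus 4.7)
The plan is to apply the Banach contraction principle to the map $\mathcal{M}_r(\lambda,\psi_0,\psi_2,\cdot)$ on the closed ball $\mathscr{B}:=\{v\in C^{4,\alpha}_\nu(M_r):\|v\|_{C^{4,\alpha}_\nu(M_r)}\leq\gamma r^{m-\nu}\}$. Since the right inverse $G_{r,g_0}$ of Proposition \ref{RIEX} is bounded independently of $r$, it suffices to establish the two inequalities
\begin{equation*}
\|\mathcal{R}(\lambda G_p+u_{\psi_0,\psi_2})\|_{C^{0,\alpha}_{\nu-4}(M_r)}+\|L_{g_0}(u_{\psi_0,\psi_2})\|_{C^{0,\alpha}_{\nu-4}(M_r)}\leq \tfrac{\gamma}{2C}r^{m-\nu}
\end{equation*}
(self-map) and a corresponding Lipschitz bound with constant $1/2$ for the nonlinear piece (contraction). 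Shrinking $r_2$ if necessary at the end of the argument will absorb all constants.

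For the linear defect $L_{g_0}(u_{\psi_0,\psi_2})$, I would exploit that $u_{\psi_0,\psi_2}=\eta\,\mathcal{Q}_r(\psi_0,\psi_2)\circ\Psi^{-1}$ with $\mathcal{Q}_r(\psi_0,\psi_2)$ biharmonic by Corollary \ref{cor:poissonexterior}. Thus $\Delta^2u_{\psi_0,\psi_2}$ is supported in the fixed annulus $\{r_1/2\leq|x|\leq r_1\}$, where all weights are $O(1)$, and is controlled by $r^{n-4}\|(\psi_0,\psi_2)\|_{(4,\alpha),r}$ from Corollary \ref{cor:poissonexterior}. The remaining part $(L_{g_0}-\Delta^2)u_{\psi_0,\psi_2}$ can be handled using the normal-coordinate expansion $g_0=\delta+O(|x|^2)$ from Section \ref{sec001} together with the Paneitz expression \eqref{paneitz-branson}; this produces a gain of at least two powers of $|x|$ over the crude bound on $u_{\psi_0,\psi_2}$. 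Combined with the hypothesis $\|(\psi_0,\psi_2)\|_{(4,\alpha),r}\leq\beta r^{m-\delta_4}$ and the constraint $\nu<5-n$, a direct weighted estimate gives $\|L_{g_0}(u_{\psi_0,\psi_2})\|_{C^{0,\alpha}_{\nu-4}(M_r)}\leq C r^{\sigma}\cdot r^{m-\nu}$ for some $\sigma>0$.

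For the nonlinear piece, I would use that the Taylor expansion of $u\mapsto u^{(n+4)/(n-4)}$ at $u_0=1$ yields $|\mathcal{R}(w)|\leq c|w|^2$ and $|\mathcal{R}(w_1)-\mathcal{R}(w_2)|\leq c(|w_1|+|w_2|)|w_1-w_2|$ for $w,w_i$ in a bounded range. Since $|\lambda G_p+u_{\psi_0,\psi_2}|(x)\leq c(|\lambda|+\|(\psi_0,\psi_2)\|_{(4,\alpha),r})|x|^{4-n}$ and $|v(x)|\leq\gamma r^{m-\nu}|x|^\nu$, squaring and integrating the weight $\sigma^{4-\nu}$ on each dyadic annulus $[\sigma,2\sigma]\subset[r,r_1/2]$ produces contributions of orders $\sigma^{12-2n-\nu}|\lambda|^2$, $\sigma^{12-2n-\nu}\|(\psi_0,\psi_2)\|^2$, and $\sigma^{4-\nu+2\nu}(\gamma r^{m-\nu})^2$. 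Feeding the bounds $|\lambda|\leq r^{n-4+m/2}$ and $\|(\psi_0,\psi_2)\|\leq\beta r^{m-\delta_4}$ and splitting into the cases $12-2n-\nu\gtrless0$ (the lower bound $\nu>9/2-n$ is exactly what is needed to guarantee $\nu>8-2n$, which makes the first two contributions bounded by a small multiple of $r^{m-\nu}$ in the compact range of $\sigma$; the third contribution is bounded by $\gamma^2 r^{m-\nu}\cdot r^{m+\nu}$ which is harmless since $m+\nu>9/2-n+m>0$ for small $m$), one arrives at the required self-map bound. The Lipschitz estimate follows by the same dyadic scheme applied to the Lipschitz-type bound on $\mathcal{R}$.

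The main obstacle is the careful case analysis of the sign of $12-2n-\nu$ in the weighted estimate for $\mathcal{R}$, and the simultaneous balancing of the three small parameters $|\lambda|$, $\|(\psi_0,\psi_2)\|$, and the radius $\gamma r^{m-\nu}$ against the dyadic weights. Once these are all controlled uniformly in $r$, choosing $r_2$ sufficiently small makes both constants strictly less than $1/2$, and the Banach fixed-point theorem produces a unique $v\in\mathscr{B}$ with $\mathcal{M}_r(\lambda,\psi_0,\psi_2,v)=v$, which is the desired solution to \eqref{EX}.
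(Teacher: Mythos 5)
Your overall strategy coincides with the paper's: set up the fixed point problem for $\mathcal M_r$, use that the right inverse $G_{r,g_0}$ of Proposition \ref{RIEX} is bounded independently of $r$, estimate the linear defect $L_{g_0}(u_{\psi_0,\psi_2})$ using that $\mathcal Q_r(\psi_0,\psi_2)$ is flat-biharmonic together with $g_0=\delta+O(|x|^2)$, and control $\mathcal R$ by its quadratic Taylor bound on dyadic annuli. There is, however, a genuine gap in your nonlinear estimate: the pointwise bound $|\lambda G_p+u_{\psi_0,\psi_2}|\le c(|\lambda|+\|(\psi_0,\psi_2)\|_{(4,\alpha),r})|x|^{4-n}$ drops the scaling factor $r^{n-4}$ that Corollary \ref{cor:poissonexterior} provides, namely $|u_{\psi_0,\psi_2}(x)|\le Cr^{n-4}\|(\psi_0,\psi_2)\|_{(4,\alpha),r}|x|^{4-n}$ (equivalently \eqref{eq072} with weight $4-n$). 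Your $|\lambda|^2$ contribution closes (there the factor $r^{n-4}$ is built into the hypothesis $|\lambda|\le r^{n-4+m/2}$), but the $\|(\psi_0,\psi_2)\|^2$ contribution $\sigma^{12-2n-\nu}\|(\psi_0,\psi_2)\|^2$ does not: already at scales $\sigma\sim r_1$ it forces $r^{2(m-\delta_4)}\lesssim r^{m-\nu}$, i.e.\ $m+\nu\ge 2\delta_4$, which fails for small $m$ since $\nu<5-n\le 0$; at $\sigma\sim r$ it forces $12-2n+m-2\delta_4\ge 0$, false for $n\ge 7$, and the cross term with $\lambda$ fails similarly in high dimensions. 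The missing factor also undermines the very applicability of the quadratic Taylor bound: near $|x|\sim r$ your estimate only gives $|u_{\psi_0,\psi_2}|\lesssim r^{4-n+m-\delta_4}$, which blows up, so you cannot conclude that $1+\lambda G_p+u_{\psi_0,\psi_2}+v$ stays in a fixed positive bounded range; with the correct bound one gets $|u_{\psi_0,\psi_2}|\lesssim r^{m-\delta_4}$ and the paper's estimate $\sigma^{4-\nu}\|u_{\psi_0,\psi_2}\|^2_{(0,\alpha),[\sigma,2\sigma]}\le Cr^{\frac32-\delta_4+m-\nu}$ follows, after which the contraction closes as you describe.

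Two smaller slips. Your justification of the $v$-quadratic term rests on ``$m+\nu>9/2-n+m>0$'', which is false for every $n\ge 5$; the term is indeed harmless, but because $\sigma^{4+\nu}(\gamma r^{m-\nu})^2$ is at worst $\gamma^2r^{4+m}\,r^{m-\nu}$ (when $4+\nu<0$, worst case $\sigma\sim r$) or $C\gamma^2r^{2(m-\nu)}$ (when $4+\nu\ge0$, worst case $\sigma\sim r_1$). Likewise, in the estimate of $L_{g_0}(u_{\psi_0,\psi_2})$ the positivity of the final exponent is $n-4+\nu-\delta_4>0$, which comes from the lower bound $\nu>9/2-n$ together with $\delta_4<1/2$, not from $\nu<5-n$; the upper bound enters through $6-n-\nu>0$ and through the hypothesis of Proposition \ref{RIEX}.
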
	

\begin{proof} We follow the same strategy as in the proof of 
Proposition \ref{IN1}, that is, showing that $\mathcal M_{r}
(\lambda,\psi_0,\psi_2,\cdot)$ is a contraction in the ball 
of radius $\gamma r^{{m}-\nu}$. First, since $u_{\psi_0,\psi_2}\equiv 0$ in $M_{r_{1}}$, we have
\begin{equation}\label{eq073}
\|\mathcal R(\lambda G_p + u_{\psi_0,\psi_2})
\|_{C^{0,\alpha}_{\nu-4}(M_r)} = \|\mathcal R
(\lambda G_p)\|_{C^{0,\alpha}_{\nu-4}(M_{\frac{1}{2}r_1})}+ 
\|\mathcal R(\lambda G_{p}+u_{\psi_0,\psi_2})
\|_{C^{0,\alpha}_{\nu-4}(\Omega_{r,r_{1}})}
\end{equation}

Note that $1+\lambda G_p>0$ in $M_{r_1}$ for $r$ small enough. 
Additionally, the $C^{0,\alpha}(M_{\frac{1}{2}r_1})$-norm of 
$\left(1+s t \lambda G_{p}\right)^{\frac{12-n}{n-4}}$ and $G_p$ 
are bounded independently of $r$. Thus, it follows 
from\eqref{eq024} that
\begin{equation}\label{er1}
\|\mathcal R(\lambda G_{p})\|_{C^{0, \alpha}(M_{\frac{1}{2}r_{1}})} 
\leq C|\lambda|^{2} \leq C r^{\delta^{\prime}} r^{{m}-\nu},
\end{equation}
where the constant $C>0$ does not depend on $r$. The fact 
that $\nu > 9/2 -n$ implies $\delta'>0$.  Since 
$r<\sigma$ and $19/2-2n-\nu<0$, similarly to \eqref{eq072} 
we can prove that 
$$\sigma^{4-\nu}\|u_{\psi_0,\psi_2}\|_{(0, \alpha),
[\sigma, 2 \sigma]}^{2}\leq C  r^{{\frac{3}{2}-\delta_4+m-\nu}}.$$
By \eqref{eq072} we have $|u_{\psi_0,\psi_2}(x)|
\leq cr^{{m}-\nu-\delta_4}$, for all $x\in M_r$ 
with ${m}-\nu-\delta_4>0$, and we have 
$|\lambda G_p(x)|\leq cr^{{m/2}}$, 
where the exponent is positive. This implies that 
$$0<c<1+t(\lambda G_p+u_{\psi_0,\psi_2})<C$$ 
for every $t\in[0,1]$, 
and so we obtain that the H\"older norm of $(1+
t(\lambda G_p+u_{\psi_0,\psi_2}))^{\frac{12-n}{n-4}}$ 
is bounded independently of $r$. {Thus, since  $19/2-2n-\nu<0$, 
by \eqref{eq024} we obtain
\begin{equation}\label{er2}
\|\mathcal R\left(\lambda G_{p}+u_{\psi_0,\psi_2}\right)
\|_{C_{\nu-4}^{0, \alpha}\left(\Omega_{r, r_{1}}\right)} 
\leq C r^{\frac{3}{2}-\delta_4} r^{{m}-\nu}.
\end{equation}}
From \eqref{er1}, \eqref{er2}, it follows that the 
right hand side of \eqref{eq073} 
is bounded by $cr^{\delta''}r^{{m}-\nu}$,
for some constants $\delta''>0$ and $c>0$  independent of $r$.
Using that 
$u_{\psi_0,\psi_2}=\mathcal Q(\psi_0,\psi_2),$ 
the fact that 
the derivative of $\eta$ is zero in $B_{\frac{1}{2}r_1}
\backslash B_r$, and \eqref{eq052} we obtain
$$\|\Delta^2_{g_0}u_{\psi_0,\psi_2}\|_{(0,\alpha),[\sigma,2\sigma]}
\leq C_{r_1}\|\mathcal Q(\psi_0,\psi_2)\|_{(4,\alpha),[\sigma,2\sigma]},$$
for some constant $C_{r_1}$ independently of $r$. 
By \eqref{paneitz-branson} and \eqref{eq021} the operator
$L_{g_0}-\Delta_{g_0}^2$ is second order, and so 
for some $C>0$ independent of $r$ we have
\begin{equation*}
\|L_{g_0}(u_{\psi_0,\psi_2})\|_{(0,\alpha),[\sigma,2\sigma]} \leq C\sigma^{-2}\|Q_{r}(\psi_0,\psi_2)\|_{(4,\alpha),[\sigma,2\sigma]}.
\end{equation*}
Note that $6-n-\nu>0$ and $n-4+\nu-\delta_4>0$, and so
\begin{equation}\label{epoisson}
\|L_{g_0}(u_{\psi_0,\psi_2})\|_{C^{0,\alpha}_{\nu-4}(\Omega_{r,r_{1}})} \leq Cr^{n-4+\nu-\delta_{4}}r^{{m}-\nu}.
\end{equation}
Using  \eqref{eq073}, \eqref{er1}, \eqref{er2} and \eqref{epoisson},
and the fact that the right inverse of $G_{r, g_0}$ 
constructed in Proposition \ref{RIEX} is bounded independently 
of $r$ together with 
 $u_{\psi_0,\psi_2}\equiv 0$ in $M_{r_1}$ for $r$ small enough, 
 we have 
\begin{equation}\label{fixpoint1}
\|\mathcal M_{r}(\lambda,\psi_0,\psi_2,0)
\|_{C^{4,\alpha}_{\nu}(M_{r})} \leq 
\frac{1}{2}\gamma r^{{m}-\nu} .
\end{equation}

Now, note that for $r>0$ small enough we have $0<c<z_{t} = 
1+\lambda G_p + u_{\psi_0,\psi_2} + v_2 + t(v_2-v_1)<C$, 
for all $t\in[0,1]$. Thus, the $C^{0,\alpha}$-norms of 
$\left(1+s z_{t}\right)^{\frac{12-n}{n-4}}$ and $\left(1+s 
z_{t}\right)^{\frac{12-n}{n-4}}$  are bounded independently of $r$. 
Using an analogous equality as in the beginning of the proof of 
Lemma \ref{rest}, we obtain
$$\left\|\mathcal R\left(\lambda G_{p}+v_{1}\right)-
\mathcal R\left(\lambda G_{p}+v_{2}\right)\right
\|_{C^{0, \alpha}\left(M_{r_{1}}\right)} \leq 
C\left(r^{{n-4+\frac{m}{2}}}
+r^{{m}-\nu}\right)\left\|v_{1}-v_{2}
\right\|_{C_{\nu}^{4, \alpha}\left(M_{r}\right)}$$
and
$$\left\|\mathcal R^{1}\left(\lambda G_{p}+
u_{\phi_0,\phi_2}+v_{1}\right)-\mathcal R^{1}
\left(\lambda G_{p}+u_{\phi_0,\phi_2}+v_{2}\right)
\right\|_{C^{0,\alpha}_{\nu-4}(\Omega_{r,r_{1}})} 
\leq C {(r^{\frac{m}{2}}+r^{m-\delta_4})}\left\|v_{1}-v_{2}\right\|_{C_{\nu}^{4, \alpha}\left(M_{r}\right)}.$$
Using again the fact that the right inverse $G_{r,g_0}$ is bounded 
independently of $r$, $u_{\psi_0,\psi_2}\equiv 0$ in $M_{r_1}$, and 
the two previous estimates, for $r>0$ small enough, we get that 
\begin{equation}\label{fixpoint2}
\|\mathcal M_{r}(\lambda,\psi_0,\psi_2,v_1)- 
\mathcal M_{r}(\lambda,\psi_0,\psi_2,v_2)
\|_{C^{4,\alpha}_{\nu}(M_{r})} \leq 
\frac{1}{2}\|v_{1}-v_{2}\|_{C^{4,\alpha}_{\nu}(M_{r})}.
\end{equation}

Therefore, by \eqref{fixpoint1} and \eqref{fixpoint2} the proof of the proposition is complete.
\end{proof}

\begin{theorem}\label{existenceexterior}
Let $\nu \in (9/2-n,5-n)$, $\delta_4 \in (0,1/2)$ and $\beta,\gamma>0$ 
fixed constants. There exists $r_{2}>0$, such that for all $r\in 
(0,r_{2})$, $\lambda \in \mathbb R$ with $|\lambda|\leq 
r^{{n-4+\frac{m}{2}}}$ and $\psi_0,\psi_2 \in C^{4,\alpha}
(\mathbb{S}^{n-1}_{r})$ satisfying $\|(\psi_0,\psi_2)\|_{(4,\alpha),r} 
\leq \beta  r^{{m}-\delta_4}$, there exists a solution 
$V_{\lambda,\psi_0,\psi_2} \in C^{4,\alpha}_{\nu}(M_{r})$ of 

$$H_{g_0}(1+\lambda G_p + u_{\psi_0,\psi_2} + V_{\lambda,\psi_0,\psi_2}) 
= 0,\quad\mbox{ in } M_r.$$ Moreover,
$
\|V_{\lambda,\psi_0,\psi_2}\|_{C^{4,\alpha}_{\nu}(M_r)}\leq 
\gamma r^{{m}-\nu}
$
and 
\begin{equation}\label{eq074}
\|V_{\lambda,\psi_0,\psi_2} - V_{\lambda,\tilde\psi_0,\tilde\psi_2}
\|_{C^{4,\alpha}_{\nu}(M_r)} \leq cr^{\delta_5-\nu}\|(\psi_0,\psi_2) - (\tilde\psi_0,\tilde\psi_2)  \|
\end{equation}
for some sufficiently small constant $\delta_5 >0$ independent of $r$.
\end{theorem}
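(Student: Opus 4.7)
The plan is to apply Proposition \ref{prop005} directly to produce the solution $V_{\lambda,\psi_0,\psi_2}$ as the fixed point of the contraction
$$\mathcal M_{r}(\lambda,\psi_0,\psi_2,\cdot):\overline{B_{\gamma r^{m-\nu}}}\subset C^{4,\alpha}_\nu(M_r)\longrightarrow C^{4,\alpha}_\nu(M_r),$$
which exists for $r\in(0,r_2)$ once the constants $\beta,\gamma,\delta_4$ and the bound $|\lambda|\leq r^{n-4+m/2}$ are as in the hypotheses. Since $V_{\lambda,\psi_0,\psi_2}=\mathcal M_r(\lambda,\psi_0,\psi_2,V_{\lambda,\psi_0,\psi_2})$, the definition of $\mathcal M_r$ together with $L_{g_0}(G_p)=0$ and $H_{g_0}(1)=0$ gives
$$H_{g_0}(1+\lambda G_p+u_{\psi_0,\psi_2}+V_{\lambda,\psi_0,\psi_2})=0\quad\text{in }M_r,$$
and the bound $\|V_{\lambda,\psi_0,\psi_2}\|_{C^{4,\alpha}_\nu(M_r)}\leq \gamma r^{m-\nu}$ follows from the fact that the fixed point lies in the prescribed ball.

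For the Lipschitz estimate \eqref{eq074}, I would write, using the fixed point identity,
\begin{align*}
V_{\lambda,\psi_0,\psi_2}-V_{\lambda,\tilde\psi_0,\tilde\psi_2} & =\mathcal M_r(\lambda,\psi_0,\psi_2,V_{\lambda,\psi_0,\psi_2})-\mathcal M_r(\lambda,\tilde\psi_0,\tilde\psi_2,V_{\lambda,\tilde\psi_0,\tilde\psi_2}) \\
& = \bigl[\mathcal M_r(\lambda,\psi_0,\psi_2,V_{\lambda,\psi_0,\psi_2})-\mathcal M_r(\lambda,\psi_0,\psi_2,V_{\lambda,\tilde\psi_0,\tilde\psi_2})\bigr]\\
& \quad + \bigl[\mathcal M_r(\lambda,\psi_0,\psi_2,V_{\lambda,\tilde\psi_0,\tilde\psi_2})-\mathcal M_r(\lambda,\tilde\psi_0,\tilde\psi_2,V_{\lambda,\tilde\psi_0,\tilde\psi_2})\bigr].
\end{align*}
The first bracket is controlled by the contraction estimate \eqref{fixpoint2} from the proof of Proposition \ref{prop005}, which absorbs a factor $1/2$ on the left-hand side. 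The second bracket reduces, via the definition of $\mathcal M_r$ and the uniform boundedness of $G_{r,g_0}$, to estimating $L_{g_0}(u_{\psi_0,\psi_2}-u_{\tilde\psi_0,\tilde\psi_2})$ and $\mathcal R(\lambda G_p+u_{\psi_0,\psi_2}+V_{\lambda,\tilde\psi_0,\tilde\psi_2})-\mathcal R(\lambda G_p+u_{\tilde\psi_0,\tilde\psi_2}+V_{\lambda,\tilde\psi_0,\tilde\psi_2})$ in $C^{0,\alpha}_{\nu-4}(M_r)$. The linear term is controlled by Corollary \ref{cor:poissonexterior} together with \eqref{epoisson}, producing a bound of order $r^{n-4+\nu-\delta_4}\|(\psi_0,\psi_2)-(\tilde\psi_0,\tilde\psi_2)\|_{(4,\alpha),r}$; the nonlinear term is handled as in \eqref{er2} by writing it as an integral of $(1+tz_s)^{(12-n)/(n-4)}$ against $u_{\psi_0,\psi_2}-u_{\tilde\psi_0,\tilde\psi_2}$, using smallness of the bracketed quantity and \eqref{eq072}. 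Collecting these estimates, absorbing the contraction factor, and setting $\delta_5$ equal to the smallest gain in the powers of $r$ that appear yields \eqref{eq074}.

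The step I expect to require the most bookkeeping is the Lipschitz estimate for the nonlinear remainder term in $C^{0,\alpha}_{\nu-4}(M_r)$. The difficulty is that one must simultaneously (i) keep the base points $1+\lambda G_p+u_{(\cdot)}+V_{(\cdot)}$ uniformly bounded away from zero and from above, so that the factors $(1+tz_s)^{(12-n)/(n-4)}$ have $C^{0,\alpha}$ norms independent of $r$; (ii) separate the contribution of $u_{\psi_0,\psi_2}-u_{\tilde\psi_0,\tilde\psi_2}$, which is only nonzero in the annular region $\Omega_{r,r_1}$, from the region where the Green function dominates; and (iii) track the weights carefully so that the exponent $\delta_5-\nu$ in \eqref{eq074} ends up strictly larger than $-\nu$. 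Once (i)--(iii) are in place, the remaining estimates are routine manipulations analogous to those already carried out in Proposition \ref{prop005}.
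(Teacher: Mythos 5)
Your proposal is correct and follows essentially the same route as the paper: existence, the equation, and the norm bound come directly from the fixed point of $\mathcal M_r$ in Proposition \ref{prop005}, and the Lipschitz estimate \eqref{eq074} is obtained by the same absorption argument the paper invokes (``the same method as in the proof of inequality \eqref{eq066}''), splitting the difference of fixed points into the contraction part \eqref{fixpoint2} and the variation of $\mathcal M_r$ in the boundary data, which is estimated through $G_{r,g_0}$, the linearity of $u_{\psi_0,\psi_2}$ in $(\psi_0,\psi_2)$ via \eqref{eq072}--\eqref{epoisson}, and the integral representation of the difference of the remainders $\mathcal R$. Your extra bookkeeping on the exponents is consistent with the paper's (terse) argument, so there is nothing to correct.
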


\begin{proof}
After the previous proposition, it remains only to show \eqref{eq074}, 
which we prove using the same method as in the
proof of inequality \eqref{eq066}.
\end{proof}

\section{One-point gluing procedure}\label{sec:onegluingprocedure}
In this section, we complete our gluing construction 
in the case of a single puncture by matching the boundary 
data of our interior and exterior solutions. 

We begin our discussion by 
explaining why matching the 
boundary data of our interior and exterior solutions across their 
common boundary gives us a smooth, global solution. The weak form of \eqref{ourequation} states
\begin{eqnarray}\label{weak_form1}  
0 & = & \int_{M \backslash \{ p \} } \left ( \Delta_g\phi \Delta_g u 
- \frac{4}{n-2} \operatorname{Ric}_g (\nabla \phi, \nabla u) 
+ \frac{(n-2)^2 + 4}{(2(n-1)(n-2)} R_g \langle \nabla \phi, \nabla u\rangle 
\right . \\ \nonumber 
&& \quad \quad \quad \left. + \frac{n-4}{2} Q_g u \phi 
- \frac{n(n-4)(n^2-4)}{16} u^{\frac{n+4}{n-4}}\phi\right ) d\mu_g 
\end{eqnarray} 
for every $\phi \in C^\infty_0 (M \backslash \{ p \} )$. 
Now let $\mathcal{A}$ denote the interior solution, defined on 
$\overline{B_r(p)} \backslash \{ p \}$ for a sufficiently small $r$, 
and let $\mathcal{B}$ denote the exterior solution defined 
on $M \backslash B_r(p)$. Integrating by parts in \eqref{weak_form1} 
we obtain 
\begin{eqnarray*} 
0 & = & \int_{M \backslash \{ p \} } \left ( \Delta_g\phi \Delta_g u 
- \frac{4}{n-2} \operatorname{Ric}_g (\nabla \phi, \nabla u) 
+ \frac{(n-2)^2 + 4}{(2(n-1)(n-2)} R_g \langle \nabla \phi, \nabla u\rangle 
\right . \\ \nonumber 
&& \quad \quad \quad \left. + \frac{n-4}{2} Q_g u \phi 
- \frac{n(n-4)(n^2-4)}{16} u^{\frac{n+4}{n-4}}\phi\right ) d\mu_g \\ 
& = & \int_{\partial B_r(p)}\left ( \frac{\partial \phi}{\partial r} 
(\Delta_g \mathcal{B} - \Delta_g \mathcal{A} \right ) + \phi 
\left ( \frac{\partial \Delta_g \mathcal{A}}{\partial r} - 
\frac{\partial \Delta_g \mathcal{B}}{\partial r} \right ) d\mu_g 
\\ \nonumber 
&& + \int_{\partial B_r(p)} \left ( - \frac{4}{n-2} 
\operatorname{Ric}_g (\partial_r, \nabla \mathcal{A} 
- \nabla \mathcal{B} ) + \frac{(n-2)^2+4}{2(n-1)(n-2)} R_g \phi
(\partial_r \mathcal{A} - \partial_r \mathcal{B}) \right ) d\mu_g.
\end{eqnarray*} 
Thus we see that, provided $\mathcal{A}$ and $\mathcal{B}$ 
satisfy the compatibility equations 
\begin{equation*} \label{compat1} 
\mathcal{A} = \mathcal{B}, \quad \partial_r \mathcal{A}
= \partial_r \mathcal{B}, \quad \Delta_g \mathcal{A} 
= \Delta_g \mathcal{B}, \quad \partial_r \Delta_g \mathcal{A}
 = \partial_r \Delta_g \mathcal{B}
 \end{equation*}
 along the boundary $\partial B_r(p)$ we obtain a weak solution 
 to \eqref{ourequation} which is smooth, and therefore also a 
 strong solution.

By Theorem \ref{teo001}, for each sufficiently small $\varepsilon
> 0$ there exists a mapping $\mathcal{A}_\varepsilon$ 
such that the metric defined in $\overline{B_{r_\varepsilon}
(p)} \backslash \{ p \} )$ by 
\begin{eqnarray} \label{eq079}
\hat g & =& \left ( \mathcal A_\varepsilon
(R, a, \phi_0, \phi_2) \right )^{\frac{4}{n-4}}g \\ \nonumber
& =&  
\left ( u_{\varepsilon,R, a}{+\Upsilon} + v_{\phi_0, \phi_2} 
+ w_{\varepsilon,R} + U_{\varepsilon, R, a, \phi_0, \phi_2}
\right )^{\frac{4}{n-4}} g\end{eqnarray} 
has $Q$-curvature equal to $\frac{n(n^2-4)}{8}$. Also, 
by Theorem \ref{existenceexterior}, for each sufficiently 
small $\varepsilon> 0$ there exists a mapping $\mathcal{B}_\varepsilon$
such that the metric defined in $M_{r_\varepsilon}$ by 
\begin{eqnarray} \label{eq080}
\tilde g & = & \left ( \mathcal B_{\varepsilon}
(\lambda, \psi_0, \psi_2) \right )^{\frac{4}{n-4}}g \\ \nonumber 
& = & \left ( f (1+ \lambda  G_p + u_{\psi_0, \psi_2} 
+ V_{\lambda, \psi_0, \psi_2} )
\right )^{\frac{4}{n-4}} g\end{eqnarray}
has $Q$-curvature equal to $\frac{n(n^2-4)}{8}$, 
where $f-1 = O(|x|^2)$ in conformal 
normal coordinates.

	We would like now to define our metric to be $\hat g
	= \mathcal{A}_\varepsilon^{\frac{4}{n-4}} g$ in 
	the punctured ball $B_r(p) \backslash \{ p \}$ and 
	$\tilde g = \mathcal{B}_\varepsilon^{\frac{4}{n-4}} g$ in $M 
	\backslash B_r (p)$. However, this is 
	not {\it a priori} a smooth (or even continuous!) metric. 
	To complete our gluing construction, we show that one can 
	choose geometric parameters $a$, $R$, $\lambda$, 
	$\phi_0$ and $\phi_2$ such that 
	\begin{equation}\label{gluingsystem}
	\left\{
	\begin{array}{rcl}
	\mathcal A_{\varepsilon} &=& \mathcal B_{\varepsilon}\\
	\partial_{r}\mathcal A_{\varepsilon} &=& \partial_{r}\mathcal B_{\varepsilon}\\
	\Delta_g \mathcal A_{\varepsilon} &=& \Delta_g \mathcal B_{\varepsilon}\\
	\partial_r \Delta_g \mathcal A_{\varepsilon}& =& \partial_r \Delta_g \mathcal B_{\varepsilon}.
	\end{array}
	\right.    
	\end{equation}
	
	To solve this system of equations, we once again decompose our 
	boundary data into high and low Fourier modes, writing 
	$$\phi_i(\theta) = \sum_{j=0}^\infty (\phi_i)_j e_j(\theta), 
	\quad \pi'(\phi_i) = \sum_{j=0}^n (\phi_i)_j e_j, 
	\quad \pi''(\phi_i) = \sum_{j=n+1}^\infty (\phi_i)_j e_j,$$ 
	where $e_j$ is the $j$th eigenfunction of 
	$\Delta_{\mathbb{S}^{n-1}}$, counted with multiplicity and 
	normalized to have $L^2$-norm equal to $1$. We will 
	sometimes abbreviate $\pi'(\phi_i) = \phi_i'$ 
	and $\pi''(\phi_i) = \phi_i''$. We similarly decompose 
	$\psi_i = \psi_i' + \psi_i''$ into low and high Fourier 
	modes, respectively. 

\subsection{High Fourier modes}
In this subsection use the model Navier-to-Neumann operator 
we constructed in Corollary \ref{cor002} to show that we 
can match the high Fourier modes of our boundary data. 
{Observe that $\Upsilon$, defined in \eqref{eq0008}, is radial in the ball  
${B}_{r_\varepsilon}(p)\subset M$, which implies that it will only appear 
in the constant term of the Fourier expansions. In particular, 
$$\partial_r^k \pi'' (\left. \Upsilon \right |_{\partial 
{B}_{r_\varepsilon}(p)} ) = 0, \qquad k=0,1,2,3,$$ 
and so we do not encounter $\Upsilon$ in matching the high 
Fourier modes of the boundary data of the interior and 
exterior solutions.}

Before we do this, however, we must first ensure that 
$$\pi'' (\left. \mathcal{A}_\varepsilon \right |_{\partial 
B_{r_\varepsilon}
(p)} ) = \pi'' (\left . \mathcal{B}_\varepsilon
\right |_{\partial B_{r_\varepsilon}(p)} ) , \qquad 
\pi'' (\left. \Delta_g \mathcal{A}_\varepsilon
\right |_{\partial B_{r_\varepsilon}
(p)} ) = \pi'' (\left . \Delta_g \mathcal{B}_\varepsilon 
\right |_{\partial B_{r_\varepsilon}(p)} ).$$
Using 
$$\pi''(\left. v_{\phi_0, \phi_2} \right |_{\partial 
B_{r_\varepsilon} (p)} )= \phi_0'' , \quad \pi''
(\left. w_{\varepsilon,R} \right |_{\partial B_{r_\varepsilon}
(p)} ) = 0 , \quad 
\pi'' ( \left. U_{\varepsilon,R,a,\phi_0,\phi_2} \right |_{\partial 
B_{r_\varepsilon} (p)} ) = 0$$ 
and 
$$\pi'' (\left. f \right |_{\partial B_{r_\varepsilon} (p)} ) 
= f'', \quad \pi'' (\left. u_{\psi_0,\psi_2} \right |_{\partial 
B_{r_\varepsilon} (p)}) = \psi_0'' , \quad 
\left. V_{\lambda, \psi_0, \psi_2} \right |_{\partial 
B_{r_\varepsilon} (p)} = 0,$$
we see from \eqref{eq079} and \eqref{eq080} that  
\begin{eqnarray*} 
\pi'' (\left. \mathcal{A}_\varepsilon \right |_{\partial B_{r_\varepsilon}
(p)} ) & = & \pi'' (\left. u_{\varepsilon,R,a} \right |_{\partial B_{r_\varepsilon}
(p)} ) + \phi_0''  \\ 
\pi'' (\left . \mathcal{B}_\varepsilon
\right |_{\partial B_{r_\varepsilon}(p)} )& = & f'' + \lambda \pi'' (
\left. fG_p \right |_{\partial B_{r_\varepsilon} (p)} ) 
+ \pi''((f-1)\psi_0)+\psi_0''.
\end{eqnarray*} 
Thus taking 
\begin{equation} \label{match_high_0}
\phi_0'' = f''+ \lambda \pi'' (\left. fG_p \right|_{\partial 
B_{r_\varepsilon}(p)} ) + \pi''((f-1)\psi_0)+\psi_0'' - \pi'' (\left. u_{\varepsilon,R,a} \right |_{\partial 
B_{r_\varepsilon}(p)} )
\end{equation} 
gives us the first identity 
$$\pi'' (\left. \mathcal{A}_\varepsilon \right |_{\partial 
B_{r_\varepsilon}
(p)} ) = \pi'' (\left . \mathcal{B}_\varepsilon 
\right |_{\partial B_{r_\varepsilon}(p)} ). $$

Next we use the fact that 
$$\pi'' (\left. \Delta v_{\phi_0,\phi_2} \right |_{\partial 
B_{r_\varepsilon} (p)} ) = \phi_2'' , \quad \pi'' (\left. 
\Delta w_{\varepsilon, R} \right |_{\partial B_{r_\varepsilon}
(p)} ) = 0, \quad \pi'' (\left . \Delta U_{\varepsilon,R,a,
\phi_0,\phi_2} \right |_{\partial B_{r_\varepsilon}(p) } ) =0$$ 
and 
$$\pi'' (\left. \Delta u_{\psi_0,\psi_2} \right |_{\partial 
B_{r_\varepsilon}(p)}) = \psi_2'' , \quad \left. \Delta 
V_{\lambda,\psi_0,\psi_2} \right |_{\partial B_{r_\varepsilon}(p)} 
= 0$$ 
to see 
\begin{align*}
\pi'' (\left. \Delta_g \mathcal{A}_\varepsilon \right |_{\partial 
B_{r_\varepsilon} (p)})  = & \pi''(\left(\Delta_g-\Delta)
(w_{\varepsilon,R}+v_{\phi_0,\phi_2}+U_{\varepsilon,R,a,\phi_0,\phi_2})
\right |_{\partial B_{r_\varepsilon} (p)})+\phi_2''\\
& +\pi'' (\left. \Delta_g (u_{\varepsilon,
R,a}{+\Upsilon}) \right |_{\partial B_{r_\varepsilon} (p)})\\
\pi'' (\left. \Delta_g \mathcal{B}_\varepsilon \right |_{\partial 
B_{r_\varepsilon} (p)})  = &\pi'' (\left. \Delta_g ((f-1)
(1+u_{\psi_{0},\psi_2}+V_{\lambda,\psi_0,\psi_2})+\lambda fG_p)
\right |_{\partial B_{r_\varepsilon} (p)} ) \\
&  +\pi'' (\left. (\Delta_g-\Delta)u_{\psi_0,\psi_2}
\right |_{\partial B_{r_\varepsilon} (p)})+\psi_2''. 
\end{align*}

Thus we need to choose $\phi_2''$ satisfying
\begin{equation}\label{match_high_2}
    \begin{array}{rcl}
         \phi_2''   & = & \pi'' (\left. \Delta_g 
         ((f-1)(1+u_{\psi_{0},\psi_2}+V_{\lambda,\psi_0,\psi_2})+\lambda fG_p)
\right |_{\partial B_{r_\varepsilon} (p)} )+\psi_2''  \\
& &  +\pi'' (\left. (\Delta_g-\Delta)u_{\psi_0,\psi_2}
\right |_{\partial B_{r_\varepsilon} (p)}) -\pi'' (\left. 
\Delta_g (u_{\varepsilon,
R,a}{+\Upsilon}) \right |_{\partial B_{r_\varepsilon} (p)})\\
& & -\pi''(\left(\Delta_g-\Delta)(w_{\varepsilon,R}
+v_{\phi_0,\phi_2}+U_{\varepsilon,R,a,\phi_0,\phi_2})\right |_{\partial B_{r_\varepsilon} (p)}) \\ 
& =: & \mathscr{X}_{\varepsilon,R,a,\lambda,\psi_0,\psi_2} (\phi_2). 
    \end{array}
\end{equation}
To solve \eqref{match_high_2} it suffices to find a fixed point of 
the mapping 
$$\phi_2'' \mapsto \mathscr{X}_{\varepsilon,R,a,\lambda, \psi_0, \psi_2} 
(\phi_2'')$$
for a given choice of parameters $\varepsilon, R, a,\lambda,\psi_0,\psi_2$. 
We have $\left. (f-1) \right |_{\partial B_{r_\varepsilon}(p)} = O(r_\varepsilon^2)$ 
and similarly all the coefficients of the second order operator $\Delta_g - \Delta$
are $O(r_\varepsilon^2)$. Therefore, following the same arguments as 
in the proof of Proposition \ref{IN1} we find that $\mathscr{X}$ is 
a contraction provided we choose all the parameters to be sufficiently 
small. We conclude that \eqref{match_high_2} admits a 
solution $\phi_2''$ depending continuously on the parameters 
$\varepsilon,R,a,\lambda, \psi_0, \psi_2$.

For the remainder of this construction, for given boundary 
functions $\psi_0$ and $\psi_2$, we choose $\phi_0$ to 
satisfy \eqref{match_high_0} and choose $\phi_2$ to 
satisfy \eqref{match_high_2}. In this fashion we may regard 
$\phi_0$ and $\phi_2$ as functions of the parameters $\psi_0$ 
and $\psi_2$. 

It remains to show that 
$$\pi''(\left. (\partial_r \mathcal{A}_\varepsilon )
\right |_{\partial B_{r_\varepsilon}}) 
= \pi'' (\left. (\partial_r\mathcal{B}_\varepsilon )
\right |_{\partial B_{r_\varepsilon}}), 
\qquad 
\pi'' (\left. (\partial_r\Delta_g \mathcal{A}_\varepsilon) 
\right |_{\partial B_{r_\varepsilon}})= \pi'' (\left. (\partial_r 
\Delta_g \mathcal{B}_\varepsilon) \right |_{\partial B_{r_\varepsilon}}).$$

We see from \eqref{eq079} and \eqref{eq080} that
\begin{align*}
     \pi''(\left. (\partial_r \mathcal{A}_\varepsilon )
\right |_{\partial B_{r_\varepsilon}}) 
= &  \pi'' \left ( \left. \left(\partial_r 
 (u_{\varepsilon,R,a}{+\Upsilon})+w_{\varepsilon,R}+U_{\varepsilon,R,a,\phi_0,\phi_2}\right)
\right |_{\partial B_{r_\varepsilon}} \right )+\left.\partial_rv_{\phi_0''-\psi_0'',\phi_2''-\psi_2''}\right|_{\partial B_{r_\varepsilon}}\\
& +\left.\partial_rv_{\psi_0'',\psi_2''}\right|_{\partial B_{r_\varepsilon}},\\
\pi'' (\left. (\partial_r\mathcal{B}_\varepsilon )
\right |_{\partial B_{r_\varepsilon}})= &  \pi'' \left ( \left. \partial_r
\left( f(1+\lambda G_p
 + V_{\lambda,\psi_0, \psi_2})+(f-1)u_{\psi_0,\psi_2} \right)
\right |_{\partial B_{r_\varepsilon} } \right ) +\left.\partial_ru_{\psi_0'',\psi_2''}\right|_{\partial B_{r_\varepsilon}},\\
\pi'' (\left. (\partial_r\Delta_g \mathcal{A}_\varepsilon) 
\right |_{\partial B_{r_\varepsilon}})= & \pi''\left( \left.\partial_r\left(\Delta_g\left( u_{\varepsilon,R,a}{+\Upsilon}+w_{\varepsilon,R}+U_{\varepsilon,R,a,\phi_0,\phi_2}\right)+(\Delta_g-\Delta)v_{\phi_0,\phi_2} \right)\right|_{\partial B_{r_\varepsilon}}\right)\\
& +\left.\partial_r\Delta v_{\phi_0''-\psi_0'',\phi_2''-\psi_2''}\right|_{\partial B_{r_\varepsilon}}+\left.\partial_r\Delta v_{\psi_0'',\psi_2''}\right|_{\partial B_{r_\varepsilon}},\\
\pi'' (\left. (\partial_r 
\Delta_g \mathcal{B}_\varepsilon) \right |_{\partial B_{r_\varepsilon}})= & \pi'' \left ( \left. \partial_r 
\Delta_g \left ( f(1+\lambda G_p 
+ V_{\lambda, \psi_0, \psi_2} \right ) +(f-1)u_{\psi_0,\psi_2})\right |_{\partial 
B_{r_\varepsilon} } \right )\\
& +\pi''\left(\left. \partial_r(\Delta_g-\Delta)u_{\psi_0,\psi_2}\right |_{\partial 
B_{r_\varepsilon} }\right)+\left. \partial_r\Delta u_{\psi_0'',\psi_2''} \right |_{\partial 
B_{r_\varepsilon} }.
\end{align*}

Multiplying the first two equations by 
$r_\varepsilon$ and the other two by $r_\varepsilon^3$ we obtain the system 
\begin{equation}\label{gluingsystem2} 
\left\{\begin{array}{rcl}
\mathcal{S}_\varepsilon (\psi_0'', \psi_2'') & = &  r_\varepsilon 
\left. \partial_r (v_{\psi_0'', \psi_2''} - 
u_{\psi_0'', \psi_2''} ) \right |_{\partial B_{r_\varepsilon}
}\\
\mathcal{T}_\varepsilon (\psi_0'', \psi_2'') & = &  
r_\varepsilon^3 \left. \partial_r \Delta (v_{\psi_0'', \psi_2''}
- u_{\psi_0'', \psi_2''}) \right |_{\partial B_{r_\varepsilon}},
\end{array}\right.
\end{equation} 
where
\begin{align*}
    \mathcal{S}_\varepsilon (\psi_0'', \psi_2'') = & \pi'' \left ( \left. \partial_r
\left( f(1+\lambda G_p
 + V_{\lambda,\psi_0, \psi_2})+(f-1)u_{\psi_0,\psi_2} \right)
\right |_{\partial B_{r_\varepsilon} } \right )  \\ 
&-\pi'' \left ( \left(\left. \partial_r 
 (u_{\varepsilon,R,a}{+\Upsilon})+w_{\varepsilon,R}+U_{\varepsilon,R,a,\phi_0,\phi_0}\right)
\right |_{\partial B_{r_\varepsilon}} \right ) -\left.\partial_rv_{\phi_0''-\psi_0'',\phi_2''-\psi_2''}\right|_{\partial B_{r_\varepsilon}}\\
    \mathcal{T}_\varepsilon (\psi_0'', \psi_2'') =  & \pi'' \left ( \left. \partial_r 
\Delta_g \left ( f(1+\lambda G_p 
+ V_{\lambda, \psi_0, \psi_2} \right ) +(f-1)u_{\psi_0,\psi_2})\right |_{\partial 
B_{r_\varepsilon}} \right )\\
& +\pi''\left(\left. \partial_r(\Delta_g-\Delta)u_{\psi_0,\psi_2}\right |_{\partial 
B_{r_\varepsilon}}\right)-\left.\partial_r\Delta v_{\phi_0''-\psi_0'',\phi_2''-\psi_2''}\right|_{\partial B_{r_\varepsilon}}\\
& -\pi''\left( \left.\partial_r\left(\Delta_g\left( u_{\varepsilon,R,a}{+\Upsilon}+w_{\varepsilon,R}+U_{\varepsilon,R,a,\phi_0,\phi_2}\right)+(\Delta_g-\Delta)v_{\phi_0,\phi_2} \right)\right|_{\partial B_{r_\varepsilon}}\right).
\end{align*}
(Recall that $\psi_0$ and $\psi_2$ determine 
$\phi_0$ and $\phi_2$ in the formulas above.)

Recalling the definition of the isomorphism  
$\mathcal{Z}_{r_\varepsilon}$ we constructed in 
Corollary \ref{cor002}, we see that 
solving \eqref{gluingsystem2} is equivalent to 
$$\mathcal{Z}_{r_\varepsilon} (\psi_0'', \psi_2'')
= (\mathcal{S}_\varepsilon (\psi_0'', \psi_2''), 
\mathcal{T}_\varepsilon (\psi_0'', \psi_2'') ),$$ 
and so it suffices to find a fixed point of the mapping 
$$\mathcal{H}_\varepsilon : \mathcal{D}_\varepsilon 
\rightarrow \pi'' (C^{4,\alpha}
(\mathbb{S}^{n-1}_{r_\varepsilon}, \mathbb{R}^2), 
\qquad \mathcal{H}_\varepsilon (\psi_0'', \psi_2'') 
=  \mathcal{Z}_{r_\varepsilon}^{-1} (\mathcal{S}_\varepsilon
(\psi_0'', \psi_2''), \mathcal{T}_\varepsilon (\psi_0, \psi_2) ),$$
where 
$$\mathcal{D}_{\varepsilon}:=\{(\psi_0'',\psi_2'')
\in \pi''(C^{4,\alpha}(\mathbb{S}_{r_\varepsilon}^{n-1}; 
\mathbb{R}^{2})): \|(\psi_0'',\psi_2'')\|_{(4,\alpha),1} 
\leq r_{\varepsilon}^{{m}-\delta_1}\}.$$

\begin{lemma}\label{lem013}
Let $a\in \mathbb{R}^{n}$, $b$, $\lambda \in \mathbb R$ be constants 
with  $|a|^{2} \leq r_\varepsilon^{{m-2}}$, $|b|\leq 1/2$ 
and $|\lambda|\leq r_\varepsilon^{{n-4+\frac{m}{2}}}$. 
Then there exists $\varepsilon_0>0$ such 
that for $\varepsilon\in (0,\varepsilon_0)$, the map 
$\mathcal H_{\varepsilon}$ has a fixed point. Moreover, this 
fixed point depends continuously on the parameters.
\end{lemma}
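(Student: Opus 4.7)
The plan is to apply Banach's fixed point theorem to $\mathcal H_\varepsilon$ on the closed ball $\mathcal D_\varepsilon$. Since $\mathcal Z_{r_\varepsilon}^{-1}$ is bounded uniformly in $\varepsilon$ by Corollary \ref{cor002}, it is enough to prove two estimates:
\begin{itemize}
    \item[(i)] $\|(\mathcal S_\varepsilon(0,0),\mathcal T_\varepsilon(0,0))\|_{(1,\alpha),r_\varepsilon} \leq \tfrac{1}{2}C^{-1} r_\varepsilon^{m-\delta_1}$, where $C$ denotes the norm of $\mathcal Z_{r_\varepsilon}^{-1}$;
    \item[(ii)] $(\psi_0'',\psi_2'')\mapsto (\mathcal S_\varepsilon(\psi_0'',\psi_2''),\mathcal T_\varepsilon(\psi_0'',\psi_2''))$ is Lipschitz with constant at most $\tfrac{1}{2}C^{-1}$ on $\mathcal D_\varepsilon$.
\end{itemize}
Continuity of the resulting fixed point in the external parameters $a,b,\lambda$ will then follow from the standard parameter version of the contraction principle, combined with the continuous dependence of $U_{\varepsilon,R,a,\phi_0,\phi_2}$ and $V_{\lambda,\psi_0,\psi_2}$ on their parameters built into Theorems \ref{teo001} and \ref{existenceexterior}.

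For (i), I would group the terms of $\mathcal S_\varepsilon(0,0)$ and $\mathcal T_\varepsilon(0,0)$ into three categories. The \emph{Delaunay-type contributions} $\partial_r(u_{\varepsilon,R,a}+\Upsilon+w_{\varepsilon,R}+U_{\varepsilon,R,a,\phi_0,\phi_2})|_{\partial B_{r_\varepsilon}}$ (and their $\Delta_g$ counterparts) are controlled via Proposition \ref{Prop002}, Corollary \ref{cor001}, Lemma \ref{lem006}, Lemma \ref{lem008} and the fine estimate \eqref{eq065}; once multiplied by $r_\varepsilon$ (resp.\ $r_\varepsilon^3$) these yield the right power. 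The \emph{exterior contributions} involving $f(1+\lambda G_p+V_{\lambda,\psi_0,\psi_2})$ and $(f-1)u_{\psi_0,\psi_2}$ are controlled by Theorem \ref{existenceexterior} together with the smallness hypotheses $|\lambda|\leq r_\varepsilon^{n-4+m/2}$ and $f-1=O(r_\varepsilon^2)$ in conformal normal coordinates. The \emph{second-order error} $(\Delta_g-\Delta)$ applied to $v_{\phi_0,\phi_2},u_{\psi_0,\psi_2}$, and $w_{\varepsilon,R}+U_{\varepsilon,R,a,\phi_0,\phi_2}$ carries an extra factor $r_\varepsilon^2$ since $g=\delta+O(|x|^2)$. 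Finally, $\partial_r v_{\phi_0''-\psi_0'',\phi_2''-\psi_2''}$ is handled by Corollary \ref{cor:poissoninterior} after observing that, at $(\psi_0'',\psi_2'')=(0,0)$, the boundary data $\phi_0''-\psi_0''$ and $\phi_2''-\psi_2''$ prescribed by \eqref{match_high_0}--\eqref{match_high_2} already carry a small factor from the $f-1$, $\Delta_g-\Delta$, and $\lambda G_p$ terms. Summing all contributions gives a bound $cr_\varepsilon^{m-\delta_1+\delta}$ for some $\delta>0$.

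For (ii), I would first show that the map $(\psi_0'',\psi_2'')\mapsto(\phi_0'',\phi_2'')$ defined implicitly by \eqref{match_high_0}--\eqref{match_high_2} is well-defined and Lipschitz (with constant close to $1$) on $\mathcal D_\varepsilon$, reusing the contraction argument already outlined after \eqref{match_high_2}; the differences $\phi_i''(\psi)-\phi_i''(\tilde\psi)-(\psi_i''-\tilde\psi_i'')$ pick up factors $O(r_\varepsilon^2)$ from $f-1$ and $\Delta_g-\Delta$. Then the Lipschitz estimates \eqref{eq066} for $U_{\varepsilon,R,a,\phi_0,\phi_2}$ and \eqref{eq074} for $V_{\lambda,\psi_0,\psi_2}$, together with the linearity of the Poisson extensions $\mathcal P_{r_\varepsilon}$ and $\mathcal Q_{r_\varepsilon}$ in their boundary data, yield
\[
\|(\mathcal S_\varepsilon(\psi)-\mathcal S_\varepsilon(\tilde\psi),\mathcal T_\varepsilon(\psi)-\mathcal T_\varepsilon(\tilde\psi))\|_{(1,\alpha),r_\varepsilon}
\leq c\, r_\varepsilon^{\delta'}\|\psi-\tilde\psi\|_{(4,\alpha),r_\varepsilon},
\]
for some $\delta'>0$ as $\varepsilon\to 0$.

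The main obstacle is the bookkeeping of exponents of $r_\varepsilon$: $\phi_0''$ and $\phi_2''$ are coupled to $\psi_0''$ and $\psi_2''$ through auxiliary equations that are themselves solved by contraction, so one must verify that all the weights $\mu,\nu,\delta_1,\delta_4$ and the choice of $m$ from Remark \ref{remark002} line up with the parameter restrictions $|a|^2\leq r_\varepsilon^{m-2}$, $|b|\leq 1/2$ and $|\lambda|\leq r_\varepsilon^{n-4+m/2}$ so that the extra small power $r_\varepsilon^{\delta'}$ genuinely appears in both estimates. The use of $\pi''$ is crucial here because it kills the contributions of $\Upsilon$ and the low-mode obstructions, allowing $\mathcal Z_{r_\varepsilon}$ to be invertible; the low Fourier modes will be matched separately in the next subsection.
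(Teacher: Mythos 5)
Your proposal follows essentially the same route as the paper: the paper's own (largely omitted) proof is exactly the contraction argument you outline, namely showing $\|\mathcal H_\varepsilon(0,0)\|\leq \tfrac{1}{2}r_\varepsilon^{m-\delta_1}$ and a small Lipschitz constant, using the uniform bound on $\mathcal Z_{r_\varepsilon}^{-1}$ from Corollary \ref{cor002} together with the estimates of Propositions \ref{IN1} and \ref{prop005} and the Lipschitz bounds \eqref{eq066}, \eqref{eq074}. Your additional bookkeeping of the implicit dependence of $(\phi_0'',\phi_2'')$ on $(\psi_0'',\psi_2'')$ via \eqref{match_high_0}--\eqref{match_high_2} is precisely the ``long and technical computation'' the paper declares and omits, so the proposal is consistent with the paper's argument.
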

\begin{proof} The proof of this lemma is based in the same ideas 
of the previous results where we have used the fixed point argument. 
We have to prove that the map $\mathcal H_\varepsilon$ is a 
contraction with the norm of $\mathcal H_\varepsilon(0,0)$ bounded 
by $\frac{1}{2}r_{\varepsilon}^{{m}-\delta_1}$. To this 
end we use that the map $\mathcal Z_{r_\varepsilon}$ has norm bounded
independently of $r_\varepsilon$ and the estimates obtained in 
Propositions \ref{IN1} and \ref{prop005}, as 
well \eqref{eq067}, \eqref{eq066} and \eqref{eq074}. This is a 
long and technical computation, which we be omitted.
\end{proof}

\subsection{Constant functions}
We denote the fixed point of the mapping $\mathcal{H}_\varepsilon$ 
by $(\boldsymbol \psi_0'', \boldsymbol \psi_2'')$ and, abusing 
notation slightly, we denote the associated boundary functions 
for $\mathcal{A}_\varepsilon$ as $(\boldsymbol \phi_0'', 
\boldsymbol\phi_2'')$.

By 
Remark \ref{remark002}, \eqref{eq059} and \eqref{eq0008} we have
$$u_{\varepsilon,R}(x)+\Upsilon=1+b+\frac{\alpha_\varepsilon^2}{4(1+b)}|x|^{4-n}
+\frac{\beta_\varepsilon}{2}\left(\frac{\alpha_\varepsilon}{2(1+b)}\right)^{\frac{n}{n-4}}|x|^{2-n}
+O^{(4)}(\varepsilon^{2\frac{n+4}{n-4}}|x|^{-n}).$$
Note that 
$$\frac{\beta_\varepsilon}{2}\left(\frac{\alpha_\varepsilon}{2(1+b)}\right)^{\frac{n}{n-4}}r_\varepsilon^{2-n}=O(\varepsilon^{\frac{2(n-2)}{n-4}-s(n-2)})\quad\mbox{ and }\quad \varepsilon^{2\frac{n+4}{n-4}}r_\varepsilon^{-n}=\varepsilon^{2\frac{n+4}{n-4}-sn},$$
with $\frac{2(n-2)}{n-4}-s(n-2)>0$ and $2\frac{n+4}{n-4}-sn>0$. Assuming the hypotheses of 
Lemma \ref{lem013}, combining \eqref{eq033} and \eqref{eq076} with the bound $|a|^2 
\leq r_\varepsilon^{{m-2}}$ gives us 
\begin{align}
    \mathcal A_{\varepsilon}(r_{\varepsilon}\theta) & = 
\displaystyle 1+b+\frac{\alpha_\varepsilon^2}{4(1+b)}r_\varepsilon^{4-n}+\left((n-4)u_{\varepsilon,R}
(r_{\varepsilon}\theta)+r_{\varepsilon}
\partial_r u_\varepsilon(r_{\varepsilon}\theta)\right)
r_{\varepsilon} a\cdot\theta  +w_{\varepsilon,R}(r_{\varepsilon}\theta) 
\label{eq081}\\
& \displaystyle + v_{\boldsymbol{\phi_0},\boldsymbol{\phi_2},}(r_{\varepsilon}\theta)+ U_{\varepsilon,R,a,\boldsymbol{\phi_0},\boldsymbol{\phi_2}}
(r_{\varepsilon}\theta)+ O( r_\varepsilon^{m}) + O(
r_\varepsilon^{4})\nonumber
\end{align}
\color{black}
with  the last term independent
of $\theta$. We also have
\begin{eqnarray}\label{eq082}
    \mathcal B_{\varepsilon}(r_{\varepsilon}\theta)  & = &  
    1+\lambda r_{\varepsilon}^{4-n} 
    + u_{\boldsymbol{\psi_0},\boldsymbol{\psi_2}}
    (r_{\varepsilon}\theta) + (f-1)(r_{\varepsilon}\theta) 
    \\ \nonumber 
    && 
    + (f-1)u_{\boldsymbol{\psi_0}, \boldsymbol{\psi_2}}
(r_{\varepsilon}\theta)+ 
(fV_{\lambda, \boldsymbol{\psi_0},\boldsymbol{\psi_2}})
(r_{\varepsilon}\theta) + O(|\lambda|r_{\varepsilon}^{5-n}).
\end{eqnarray}

Now we project the system \eqref{gluingsystem} onto the space generated 
by the constant functions. Let us define $\xi_0$, $\xi_2$ as 
the zero Fourier modes of $\boldsymbol{\phi_0}$
and $\boldsymbol{\phi_2}$, respectively. 
Using Corollary \ref{cor:poissoninterior} and \ref{cor:poissonexterior}, 
we obtain

\begin{equation}\label{gs3}
	\left\{\begin{array}{rcl}
	\displaystyle b
	-\left(\frac{\alpha_\varepsilon^2}{4(1+b)}-\lambda\right) r_\varepsilon^{4-n} + \frac{1}{2n}\xi_0 & = 
	& \mathcal H^{0}_{\varepsilon}\\
	\displaystyle(4-n)\left(\frac{\alpha_\varepsilon^2}{4(1+b)}-\lambda\right) r_\varepsilon^{4-n} + \frac{1}{n}\xi_0
	-\frac{1}{4-n}\xi_2 & =  & r_\varepsilon\partial_r
	\mathcal H_{\varepsilon}^{0}\\
	\displaystyle 2(4-n)\left(\frac{\alpha_\varepsilon^2}{4(1+b)}-\lambda\right) r_\varepsilon^{4-n} + \xi_{0}-\xi_2& = 
	& r_{\varepsilon}^{2}\Delta_{g}\mathcal H_{\varepsilon}^{0} \\
	\displaystyle -2(4-n)(2-n)\left(\frac{\alpha_\varepsilon^2}{4(1+b)}-\lambda\right) r_\varepsilon^{4-n} - (2-n)\xi_{2} 
	&=& r_{\varepsilon}^{3}\partial_r\Delta_g 
	\mathcal{H}_{\varepsilon}^{0},
	\end{array}\right.
\end{equation}
\color{black}
where  $\mathcal{H}_{\varepsilon}^{0}$, 
$r_\varepsilon\partial_r\mathcal{H}_{\varepsilon}^{0}$, 
$r_\varepsilon^2\Delta_g\mathcal{H}_{\varepsilon}^{0}$ and 
$r_\varepsilon^3\partial_r\Delta_g\mathcal{H}_{\varepsilon}^{0}$ depends 
continuously on the parameters $b$, $\lambda$, $\xi_0$ and $\xi_2$. Also, 
by \eqref{eq081}, \eqref{eq082} and the estimates obtained in 
Sections \ref{sec:fixedpointargument} and \ref{sec:constant-q-curvature} 
we obtain that  $\mathcal{H}_{\varepsilon}^{0}$, 
$r_\varepsilon\partial_r\mathcal{H}_{\varepsilon}^{0}$, 
$r_\varepsilon^2\Delta_g\mathcal{H}_{\varepsilon}^{0}$ and 
$r_\varepsilon^3\partial_r\Delta_g\mathcal{H}_{\varepsilon}^{0}$ have 
order $r_\varepsilon^{{m}}$.

\begin{lemma}\label{lem014}
Let $a\in \mathbb{R}^{n}$ with $|a|^{2} \leq r_\varepsilon^{{m-2}}$ and 
$\omega_i\in C^{4,\alpha}(\mathbb{S}^{n-1})$ belonging to the space spanned by the 
coordinate functions and with norm bounded by $r_{\varepsilon}^{{m}-\delta_{1}}$. 
There exists a constant $\varepsilon_1>0$ such that for $\varepsilon\in (0,\varepsilon_1)$ 
the system \eqref{gs3} has a solution $(b,\lambda,\xi_0,\xi_2)$ with $|b|\leq 1/2$, 
$|\lambda|\leq r_\varepsilon^{{n-4+\frac{m}{2}}}$ and $|\xi_i|
\leq r_\varepsilon^{{m}-\delta_1}$.
\end{lemma}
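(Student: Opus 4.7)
The plan is to recast \eqref{gs3} as a fixed-point equation on a small closed convex subset of $\mathbb{R}^4$ and apply the Brouwer fixed-point theorem.

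To expose the linear structure, I would introduce the auxiliary variable
\begin{equation*}
\mu := \left(\frac{\alpha_\varepsilon^2}{4(1+b)} - \lambda\right) r_\varepsilon^{4-n},
\end{equation*}
so that once $b$ and $\mu$ are determined, $\lambda$ is recovered from $\lambda = \frac{\alpha_\varepsilon^2}{4(1+b)} - \mu\, r_\varepsilon^{n-4}$. In the variables $\mathbf{y} = (b, \mu, \xi_0, \xi_2)^T$, the four equations of \eqref{gs3} take the form $M\mathbf{y} = \mathbf{h}(b,\lambda,\xi_0,\xi_2)$, where $\mathbf{h}$ records the four scalar right-hand sides $\mathcal{H}^0_\varepsilon,\, r_\varepsilon\partial_r \mathcal{H}^0_\varepsilon,\, r_\varepsilon^2 \Delta_g \mathcal{H}^0_\varepsilon,\, r_\varepsilon^3 \partial_r\Delta_g \mathcal{H}^0_\varepsilon$ of \eqref{gs3}, and $M$ is the $4\times 4$ matrix of coefficients. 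Since the first column of $M$ equals $(1,0,0,0)^T$, computing $\det M$ reduces to the determinant of the explicit $3\times 3$ minor in the $(\mu,\xi_0,\xi_2)$-block, which I would evaluate as an elementary rational function of $n$ and verify to be nonzero in the relevant range of dimensions; this yields an inverse $M^{-1}$ with norm depending only on $n$.

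Next, define the closed convex set
\begin{equation*}
\mathcal{K}_\varepsilon := \left\{ (b,\lambda,\xi_0,\xi_2)\in \mathbb{R}^4 \;:\; |b| \leq \tfrac{1}{2},\ |\lambda| \leq r_\varepsilon^{n-4+m/2},\ |\xi_0|,|\xi_2| \leq r_\varepsilon^{m-\delta_1}\right\},
\end{equation*}
and the map $\mathcal{T}: \mathcal{K}_\varepsilon \to \mathbb{R}^4$ by setting $(b',\mu',\xi_0',\xi_2')^T := M^{-1}\mathbf{h}(b,\lambda,\xi_0,\xi_2)$ and
\begin{equation*}
\mathcal{T}(b,\lambda,\xi_0,\xi_2) := \left(b',\ \tfrac{\alpha_\varepsilon^2}{4(1+b')} - \mu'\, r_\varepsilon^{n-4},\ \xi_0',\ \xi_2'\right).
\end{equation*}
The interior and exterior estimates from Sections \ref{sec:fixedpointargument} and \ref{sec:constant-q-curvature}, in particular the continuity estimates \eqref{eq066} and \eqref{eq074}, combined with the remark immediately after \eqref{gs3} that $\mathcal{H}^0_\varepsilon$ and its derivative-like companions are of order $r_\varepsilon^m$, imply $\|\mathbf{h}\|_\infty = O(r_\varepsilon^m)$ uniformly over $\mathcal{K}_\varepsilon$. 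The scaling $r_\varepsilon = \alpha_\varepsilon^s$ with $s = 2/(n-4)-\delta_0$ then gives $\alpha_\varepsilon^2 r_\varepsilon^{4-n} = O(\varepsilon^{\delta_0(n-4)})$, which is much smaller than $r_\varepsilon^{m/2}$ for a proper choice of $m$, ensuring that the recovered $\lambda$-coordinate stays inside its prescribed ball. Thus $\mathcal{T}$ is continuous and sends $\mathcal{K}_\varepsilon$ into itself for every $\varepsilon \in (0,\varepsilon_1)$ with $\varepsilon_1$ sufficiently small; Brouwer's fixed-point theorem then produces a fixed point solving \eqref{gs3}, whose continuous dependence on the ambient parameters descends from that of $\mathbf{h}$ together with the explicit recovery formula for $\lambda$.

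The hard part of this argument is twofold: first, the algebraic verification that $\det M \ne 0$ in every relevant dimension $n\geq 5$; and second, the delicate bookkeeping of the interlocked small parameters $\varepsilon, \delta_0, \delta_1, m, s$ fixed in Remark \ref{remark002}, needed to confirm the self-mapping property $\mathcal{T}(\mathcal{K}_\varepsilon) \subset \mathcal{K}_\varepsilon$ --- especially the bound on the recovered $\lambda$-coordinate, which requires that the nearly canceling quantities $\frac{\alpha_\varepsilon^2}{4(1+b')}$ and $\mu'\, r_\varepsilon^{n-4}$ together fit into the very narrow window $|\lambda| \le r_\varepsilon^{n-4+m/2}$.
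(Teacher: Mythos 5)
Your overall strategy is the same as the paper's: both proofs recast \eqref{gs3} as a fixed-point problem for the unknowns $(b,\lambda,\xi_0,\xi_2)$ on exactly the box $\mathcal D_{\varepsilon,0}=\{|b|\le \tfrac12,\ |\lambda|\le r_\varepsilon^{n-4+m/2},\ |\xi_i|\le r_\varepsilon^{m-\delta_1}\}$, use the fact that $\mathcal H^0_\varepsilon$, $r_\varepsilon\partial_r\mathcal H^0_\varepsilon$, $r_\varepsilon^2\Delta_g\mathcal H^0_\varepsilon$, $r_\varepsilon^3\partial_r\Delta_g\mathcal H^0_\varepsilon$ are $O(r_\varepsilon^{m})$ to get the self-mapping property, and conclude with Brouwer (the paper additionally remarks the map is a contraction, which is how it gets continuous dependence on the parameters). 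Your bookkeeping for the $\lambda$-window, in particular $\alpha_\varepsilon^2 r_\varepsilon^{4-n}=O(\varepsilon^{\delta_0(n-4)})$ and the need to take $m$ small relative to $\delta_0$, is consistent with Remark \ref{remark002}.

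The genuine gap is precisely the step you defer. With your substitution $\mu=\bigl(\tfrac{\alpha_\varepsilon^2}{4(1+b)}-\lambda\bigr)r_\varepsilon^{4-n}$, the coefficient matrix $M$ of \eqref{gs3} in the variables $(b,\mu,\xi_0,\xi_2)$ indeed has first column $(1,0,0,0)^T$, but the determinant of the remaining $3\times3$ block (equations two through four in $(\mu,\xi_0,\xi_2)$, with entries $(4-n),\ \tfrac1n,\ -\tfrac{1}{4-n}$; $2(4-n),\ 1,\ -1$; $-2(4-n)(2-n),\ 0,\ -(2-n)$) evaluates to $\tfrac{(n-2)^2(8-n)}{n}$, which vanishes exactly at $n=8$ — a dimension covered by the lemma and by the main theorem. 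So the "verify the determinant is nonzero" step fails there, $M^{-1}$ does not exist, and your map $\mathcal T$ is undefined at $n=8$; generic invertibility cannot simply be asserted. To complete your argument you would have to either prove a compatibility condition at $n=8$ (that the right-hand side lies in the range of $M$, and then select a solution of the resulting affine family inside $\mathcal K_\varepsilon$), or re-derive the coefficients of \eqref{gs3}. Note that the paper's own proof never inverts $M$ abstractly: it writes explicit linear combinations of the four right-hand sides, with denominators involving only $n$, $n-2$ and $n-4$, as the components of its map $\mathcal G_{\varepsilon,0}$, so it does not encounter the factor $8-n$; this discrepancy is a warning that the printed coefficients of \eqref{gs3} should be checked before the determinant computation is taken as final, but as written your proposal does not establish the lemma in dimension $8$.
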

\begin{proof} Define a continuous map $\mathcal{G}_{\varepsilon,0}:\mathcal D_{\varepsilon,0}\rightarrow \mathbb{R}^4$ as

$$\begin{array}{rcl}
     \mathcal G_{\varepsilon,0}(b,\lambda,\xi_0,\xi_{2}) & =  
     &\displaystyle \left(\left. \mathcal H_{\varepsilon} + 
     \frac{r_{\varepsilon}}{n-2}\partial_r\mathcal H_{\varepsilon} 
     - \frac{r_{\varepsilon}^{2}}{2(n-2)}\Delta_g\mathcal H_{\varepsilon} 
     - \frac{r_{\varepsilon}^{3}}{2(n-4)(n-2)}\partial_r\Delta_g\mathcal 
     H_{\varepsilon},\right.\right.
     \\
     & &\displaystyle \frac{\alpha_\varepsilon^2}{4(1+b)} 
     +\frac{r_{\varepsilon}^{n-3}}{n-2}\partial_r\mathcal 
     H_{\varepsilon}-\frac{r^{n-2}_{\varepsilon}}{n(n-2)}
     \Delta_g\mathcal H_{\varepsilon} - \frac{r_{\varepsilon}^{n-1}}{n(n-4)(n-2)}
     \partial_r\Delta_g \mathcal H_{\varepsilon},
     \\
     & &\displaystyle  r_{\varepsilon}^{2}\Delta_g \mathcal{H}_{\varepsilon}
     + \frac{r^{3}_{\varepsilon}}{n-2}\partial_r\Delta_g\mathcal{H}_{\varepsilon},
     \\
     & & \displaystyle\left.\frac{2(4-n)}{(2-n)}\left(r_{\varepsilon}
     \partial_r\mathcal H_{\varepsilon} - \frac{r^{2}_{\varepsilon}}{n}
     \Delta_g\mathcal H_{\varepsilon} + \frac{r_{\varepsilon}^{3}}{2n}
     \partial_r\Delta_g \mathcal H_{\varepsilon}\right)\right),
\end{array}$$
\color{black}
where $\mathcal D_{\varepsilon,0}:=\left\{(b,\lambda,\xi_{0},\xi_{2})
\in\mathbb R^4 : \; |b|\leq 1/2,\; |\lambda|\leq 
r_{\varepsilon}^{{n-4+\frac{m}{2}}}\;\;{\rm and }\;\;|\xi_i|\leq r_\varepsilon^{{m}-\delta_1}\right\}$. Using 
the fact that that $\mathcal{H}_{\varepsilon}^{0}$, $r_\varepsilon\partial_r\mathcal{H}_{\varepsilon}^{0}$,
$r_\varepsilon^2\Delta_g\mathcal{H}_{\varepsilon}^{0}$ and $r_\varepsilon^3\partial_r\Delta_g\mathcal{H}_{\varepsilon}^{0}$ 
are all  $O(r_\varepsilon^{{m}})$, we can show that $\mathcal G_{\varepsilon,0}
(\mathcal{D}_{\varepsilon,0})\subset \mathcal D_{\varepsilon,0}$, and so it follows 
from Brouwer's fixed point theorem that $\mathcal G_{\varepsilon,0}$ has a fixed point. 
In fact, we can prove that $\mathcal G_{\varepsilon,0}$ is a contraction, hence 
the fixed point depends continuously on the parameters. It is not difficult to see that this fixed point is a 
solution to the system \eqref{gs3}.
\end{proof}

\subsection{Coordinate functions}
Finally, we consider the solutions given by Lemmas 
\ref{lem013} and \ref{lem014}, and project the 
system \eqref{gluingsystem} in the space spanned by the 
coordinate functions. 
Let us consider that the component of the functions $\boldsymbol{\phi_0}$,
and $\boldsymbol{\phi_2}$ in the direction of the coordinates functions are given by
$$\sum_{j=1}^n\tau_j e_j,\;\;\sum_{j=1}^n\zeta_j e_j\;\;{\rm and }\;\;\sum_{j=1}^n\varrho_j e_j,$$
Using Corollary \ref{cor:poissoninterior} and \ref{cor:poissonexterior}, projecting the system \eqref{gluingsystem} in direction of $e_j$ we obtain
\begin{equation}\label{gs4}
	\left\{\begin{array}{rcl}
	\displaystyle F(r_{\varepsilon})r_{\varepsilon}a_{j} +\frac{1}{2n+4}\tau_j-\varrho_j& = & \mathcal{H}_{\varepsilon j}\\
	
	\displaystyle G(r_{\varepsilon})r_{\varepsilon}a_{j} +\frac{3}{2n+4}\tau_j-\frac{2}{3-n}\zeta_j-(1-n)\varrho_j &=& r_{\varepsilon}\partial_{r}\mathcal H_{\varepsilon j}\\
	
    \displaystyle M(r_{\varepsilon})r_{\varepsilon}a_j +\tau_j - \zeta_j&=& r_{\varepsilon}^{2}\Delta\mathcal H_{\varepsilon j}\\
    
	N(r_{\varepsilon})r_{\varepsilon}a_j+\tau_j - (1-n)\zeta_j&=& r_{\varepsilon}^{3}\partial_{r}\Delta \mathcal H_{\varepsilon j},
	\end{array}\right.
\end{equation}
where 
\begin{eqnarray*}
F(r_{\varepsilon}) & = & \left((n-4)u_{\varepsilon,R} + r_{\varepsilon}\partial_r u_{\varepsilon,R}\right)(r_{\varepsilon}\theta),\\
G(r_{\varepsilon})&=&\left((n-4)u_{\varepsilon,R}+(n-2)r_{\varepsilon}\partial_ru_{\varepsilon,R}+r_{\varepsilon}^2\partial_r^2u_{\varepsilon,R}\right)(r_{\varepsilon}\theta),\\
M(r_{\varepsilon}) &=& \left((n-3)(n+1)r_{\varepsilon}\partial_ru_{\varepsilon,R} + (2n-1)r_{\varepsilon}^{2}\partial_{r}^{2}u_{\varepsilon,R}+ r^{3}_{\varepsilon}\partial_r^{3}u_{\varepsilon,R}\right)(r_{\varepsilon}\theta),\\
N(r_{\varepsilon})&=&\left((n^2-4)r_{\varepsilon}^{2}\partial_{r}^{2}u_{\varepsilon,R} + (2n+1)r^{3}_{\varepsilon}\partial_r^{3}u_{\varepsilon,R} + r^{4}_{\varepsilon}\partial_r^{4}u_{\varepsilon,R}\right)(r_{\varepsilon}\theta),
\end{eqnarray*}
and $\mathcal{H}_{\varepsilon j}$, $r_\varepsilon\partial_r\mathcal{H}_{\varepsilon j}$, $r_\varepsilon^2\Delta_g\mathcal{H}_{\varepsilon j}$ and $r_\varepsilon^3\partial_r\Delta_g\mathcal{H}_{\varepsilon j}$ depends continuously on the parameters $a_j$, $\tau_j$, $\zeta_j$ and $\varrho_j$. Also, by \eqref{eq081}, \eqref{eq082} and the estimates obtained in Sections \ref{sec:fixedpointargument} and \ref{sec:constant-q-curvature} we obtain that  $\mathcal{H}_{\varepsilon}^{0}$, $r_\varepsilon\partial_r\mathcal{H}_{\varepsilon}^{0}$, $r_\varepsilon^2\Delta_g\mathcal{H}_{\varepsilon}^{0}$ and $r_\varepsilon^3\partial_r\Delta_g\mathcal{H}_{\varepsilon}^{0}$ have order $r_\varepsilon^{{m}}$.

\begin{lemma}\label{lem015}
There exists a constant $\varepsilon_2>0$ such that if $\varepsilon\in(0,\varepsilon_2)$, then the system \eqref{gs4} has a solution $(a_j,\tau_j,\zeta_j,\varrho_j)$.
\end{lemma}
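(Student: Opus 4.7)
For each $j \in \{1,\ldots,n\}$, the system \eqref{gs4} is a $4 \times 4$ linear system $A_\varepsilon \mathbf{x}_j = \mathbf{b}_j$ in the unknowns $\mathbf{x}_j = (a_j,\tau_j,\zeta_j,\varrho_j)^\top$, where the coefficient matrix
\[
A_\varepsilon = \begin{pmatrix} r_\varepsilon F(r_\varepsilon) & \tfrac{1}{2n+4} & 0 & -1 \\ r_\varepsilon G(r_\varepsilon) & \tfrac{3}{2n+4} & -\tfrac{2}{3-n} & n-1 \\ r_\varepsilon M(r_\varepsilon) & 1 & -1 & 0 \\ r_\varepsilon N(r_\varepsilon) & 1 & n-1 & 0 \end{pmatrix}
\]
depends on $\varepsilon$ through $F,G,M,N$, and the right-hand side $\mathbf{b}_j := (\mathcal H_{\varepsilon j}, r_\varepsilon \partial_r \mathcal H_{\varepsilon j}, r_\varepsilon^2 \Delta_g \mathcal H_{\varepsilon j}, r_\varepsilon^3 \partial_r \Delta_g \mathcal H_{\varepsilon j})^\top$ is of order $r_\varepsilon^{m}$ and depends continuously on all parameters through the fixed points constructed in previous lemmas. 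My plan is to mimic the argument of Lemma \ref{lem014}: first show $A_\varepsilon$ is invertible with controlled inverse, then build a fixed point map $\mathcal G_{\varepsilon,j}$ on an appropriate domain and conclude by Brouwer's theorem.

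To analyze $A_\varepsilon$, I would combine Corollary \ref{cor001} with the choice of $R$ from Remark \ref{remark002} (which forces $\tfrac{\alpha_\varepsilon}{2} R^{(4-n)/2} = 1+b$) and the identity $\alpha_\varepsilon^2 r_\varepsilon^{4-n} = O(\alpha_\varepsilon^{\delta_0(n-4)}) \to 0$ to obtain
\[
F(r_\varepsilon),\ G(r_\varepsilon) \longrightarrow (n-4)(1+b), \qquad M(r_\varepsilon),\ N(r_\varepsilon) \longrightarrow 0
\]
as $\varepsilon \to 0$. Factoring $r_\varepsilon$ out of the first column, $\det A_\varepsilon = r_\varepsilon \det \tilde A_\varepsilon$. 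Expanding $\det \tilde A_\varepsilon$ along the first column and using $M, N \to 0$, only the first two rows contribute in the limit; a direct $3\times 3$ computation then gives $\det \tilde A_\varepsilon \to (n-4)(1+b)\, n^2$, which is bounded away from zero since $|b|\leq 1/2$. Thus $A_\varepsilon$ is invertible for $\varepsilon$ small, with its first row of the inverse of order $O(r_\varepsilon^{-1})$ (the relevant $3\times 3$ minors of the first row of the adjugate do not involve the $O(r_\varepsilon)$ first column of $A_\varepsilon$) and its remaining rows $O(1)$ (those minors do involve the first column, giving cancellation against $\det A_\varepsilon$).

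I would then construct $\mathcal G_{\varepsilon,j}$ by taking explicit linear combinations of the equations in \eqref{gs4} that decouple each component of $\mathbf{x}_j$, exactly in the spirit of the map $\mathcal G_{\varepsilon,0}$ used in Lemma \ref{lem014}. The natural domain is
\[
\mathcal D_{\varepsilon,j} := \left\{(a_j,\tau_j,\zeta_j,\varrho_j)\in\mathbb R^4 : |a_j|\leq r_\varepsilon^{(m-2)/2},\ |\tau_j|,|\zeta_j|,|\varrho_j|\leq r_\varepsilon^{m-\delta_1}\right\},
\]
which is compatible with the constraint $|a|^2 \leq r_\varepsilon^{m-2}$ used in Lemmas \ref{lem013} and \ref{lem014}. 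Since $\|\mathbf{b}_j\|_\infty = O(r_\varepsilon^m)$ and $A_\varepsilon^{-1}$ behaves as above, the image $A_\varepsilon^{-1}\mathbf{b}_j$ satisfies $|a_j| \leq C\, r_\varepsilon^{m-1}$ and $|\tau_j|,|\zeta_j|,|\varrho_j|\leq C\, r_\varepsilon^m$, both well inside $\mathcal D_{\varepsilon,j}$ for $\varepsilon$ small, so $\mathcal G_{\varepsilon,j}$ sends $\mathcal D_{\varepsilon,j}$ into itself. Brouwer's fixed point theorem then furnishes a solution, with continuous dependence on the remaining parameters inherited from the continuity built into $\mathcal H_{\varepsilon j}$.

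The main obstacle is the determinant computation: verifying that the relevant $3\times 3$ minors of $\tilde A_\varepsilon$ do not cancel in the limit, so that $\det \tilde A_\varepsilon \to (n-4)(1+b)n^2 \neq 0$. Once this nondegeneracy is in hand, the remaining steps are routine variants of the arguments already developed in Lemmas \ref{lem013} and \ref{lem014}; in particular, upgrading $\mathcal G_{\varepsilon,j}$ to a contraction (should continuous dependence be desired in a stronger form) requires only an additional perturbation estimate on the Hölder norms of $\mathcal H_{\varepsilon j}$ in the variables $(a_j,\tau_j,\zeta_j,\varrho_j)$.
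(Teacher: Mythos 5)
Your proposal is correct and follows essentially the same route as the paper: the paper also reduces \eqref{gs4} to a solved-for form using the nondegeneracy coming from $F,G\to(n-4)(1+b)$ and $M,N\to 0$ (encoded there in the pivot quantity $\mathcal T=F+\tfrac{G}{n-1}+c_1M+c_2N=\tfrac{n(n-4)}{n-1}(1+b)+O(\varepsilon^\gamma)$, which plays the role of your limiting determinant $(n-4)(1+b)n^2\neq 0$), and then obtains the solution as a fixed point of the resulting map using the $O(r_\varepsilon^{m})$ bounds on $\mathcal H_{\varepsilon j}$ and its scaled derivatives. Your invertibility computation and the order estimates $|a_j|=O(r_\varepsilon^{m-1})$, $|\tau_j|,|\zeta_j|,|\varrho_j|=O(r_\varepsilon^{m})$ are consistent with the paper's explicit reduced system.
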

\begin{proof} The proof of this lemma is similar to the previous one, that is, by a fixed point argument. First, by \eqref{eq059} and Remark \ref{remark002} we have
$$\mathcal T= F+\frac{G}{n-1}+c_1M+c_2N=\frac{n(n-4)}{n-1}(1+b)+O(\varepsilon^\gamma),$$
for some $\gamma>0$, where $c_i$, $i=1,2$, are constants which depends only on $n$. Then, we can reduce the system \eqref{gs4} to the system
$$\left\{\begin{array}{rcl}
    a_j & = &\mathcal T^{-1}r_\varepsilon^{-1}(\mathcal{H}_{\varepsilon j}+c_3r_\varepsilon\partial_r\mathcal{H}_{\varepsilon j}+c_4r^2_\varepsilon\Delta_g \mathcal{H}_{\varepsilon j}+c_5r^3_\varepsilon\partial_r\Delta_g\mathcal{H}_{\varepsilon j})  \\
   \vspace{-0,1cm} \\
    \tau_j & = & \displaystyle c_6r_\varepsilon^2\Delta_g\mathcal{H}_{\varepsilon j}+c_7r_\varepsilon^3\partial_r\Delta_g\mathcal{H}_{\varepsilon j}+((1-n)M-N)n^{-1}r_\varepsilon a_j\\
    \vspace{-0,1cm} \\
    \zeta_j & = & \displaystyle c_8r_\varepsilon^2\Delta_g\mathcal{H}_{\varepsilon j}+c_9r_\varepsilon^3\partial_r\Delta_g\mathcal{H}_{\varepsilon j}+(M-N)n^{-1}r_\varepsilon a_j\\
    \vspace{-0,1cm} \\
    \varrho_j & = & c_{10}r_\varepsilon\partial_r\mathcal{H}_{\varepsilon j}+c_{11}r_\varepsilon^2\Delta_g\mathcal{H}_{\varepsilon j}+c_{12}r_\varepsilon^3\partial_r\Delta_g\mathcal{H}_{\varepsilon j}+(c_{13}G+c_{14}M+c_{15}N)r_\varepsilon a_j,
\end{array}\right.$$
where $c_i$ are constants which depends only on $n$. Now, using that $\mathcal{H}_{\varepsilon j}$, $r_\varepsilon\partial_r\mathcal{H}_{\varepsilon j}$, $r_\varepsilon^2\Delta_g\mathcal{H}_{\varepsilon j}$ and $r_\varepsilon^3\partial_r\Delta_g\mathcal{H}_{\varepsilon j}$ are bounded by $r_\varepsilon^{{m}}$, we obtain a solution to this system as a fixed point of a map $\mathcal K_{j}(a_j,\tau_j,\zeta_j,\varrho_j)=(a_j,\tau_j,\zeta_j,\varrho_j)$.
\end{proof}

We summarize this construction with the following theorem. 
\begin{theorem} Let $(M^{n},g_0)$ be a nondegenerate compact Riemannian manifold of dimension $n\geq 5$, with constant Q-curvature equal to $n(n^2-4)/8$. For $n\geq 8$, suppose there exists a point $p \in M$ with the Weyl tensor satisfying 
$$\nabla^{k}W_{g_0}(p) = 0,\;\;\;\mbox{ for }\;\;k=0,\ldots, \left[\frac{n-8}{2}\right].$$
Then there exists $\varepsilon_0 >0$ and a one-parameter family of complete metrics $g_{\varepsilon}$ on $M\backslash\{p\}$, for $\varepsilon\in (0,\varepsilon_0)$, such that 
each $g_{\varepsilon}$ is conformal to $g_0$ and has constant Q-curvature equal to $n(n^2-4)/8$, $g_{\varepsilon}$ is asymptotically Delaunay
and $g_{\varepsilon}$ converges to $g_{0}$ uniformly on compact sets of $M\backslash \{p\}$ as $\varepsilon \rightarrow 0$.
\end{theorem}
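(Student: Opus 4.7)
The plan is to paste together an interior piece defined on $\overline{B_{r_\varepsilon}(p)}\setminus\{p\}$ with an exterior piece defined on $M\setminus B_{r_\varepsilon}(p)$ and to match their Cauchy data along the common boundary. By the integration by parts computation at the beginning of Section~\ref{sec:onegluingprocedure}, once we arrange that $\mathcal A_\varepsilon$, $\partial_r\mathcal A_\varepsilon$, $\Delta_g\mathcal A_\varepsilon$ and $\partial_r\Delta_g\mathcal A_\varepsilon$ agree with the corresponding quantities for $\mathcal B_\varepsilon$ on $\partial B_{r_\varepsilon}(p)$, the resulting globally defined function is a weak, hence classical, solution of \eqref{ourequation}, and thus defines a smooth metric $g_\varepsilon = u_\varepsilon^{4/(n-4)} g$ with constant $Q$-curvature $n(n^2-4)/8$ on $M\setminus\{p\}$.

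First, for each sufficiently small $\varepsilon$ I would invoke Theorem~\ref{teo001} to produce the interior solution $\mathcal A_\varepsilon(R,a,\phi_0,\phi_2)$ of \eqref{eq079}, where $R$ is determined by the parameter $b$ as in Remark~\ref{remark002}. In parallel, Theorem~\ref{existenceexterior} supplies the exterior solution $\mathcal B_\varepsilon(\lambda,\psi_0,\psi_2)$ of \eqref{eq080}. Both families depend continuously on their parameters, with quantitative bounds in the weighted H\"older norms. The gluing is then to select $(b,\lambda,a,\phi_0,\phi_2,\psi_0,\psi_2)$ as functions of $\varepsilon$ so that the four compatibility conditions \eqref{gluingsystem} are simultaneously satisfied.

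Second, I would resolve the matching by projecting onto the orthogonal decomposition $L^{2}(\mathbb S^{n-1}_{r_\varepsilon})=\mathrm{span}\{1\}\oplus\mathrm{span}\{x_1,\dots,x_n\}\oplus \pi''(L^2)$. For the high-frequency projection, the zeroth and second Navier data can be prescribed freely via \eqref{match_high_0} and \eqref{match_high_2} (the latter by a contraction in $\phi_2''$, since $\Delta_g - \Delta$ is second order with small coefficients on $\partial B_{r_\varepsilon}$); then the conditions on $\partial_r$ and $\partial_r\Delta_g$ reduce to the system \eqref{gluingsystem2}, which is solvable because $\mathcal Z_{r_\varepsilon}$ is an isomorphism with $\varepsilon$-independent norm bound (Corollary~\ref{cor002}). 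Lemma~\ref{lem013} produces the fixed point $(\boldsymbol\psi_0'', \boldsymbol\psi_2'')$ (and hence $(\boldsymbol\phi_0'', \boldsymbol\phi_2'')$), continuous in the remaining parameters.

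Third, with the high-mode data fixed, I would project \eqref{gluingsystem} onto $\mathrm{span}\{1\}$ and onto $\mathrm{span}\{x_1,\dots,x_n\}$ separately, obtaining the two finite-dimensional systems \eqref{gs3} and \eqref{gs4}. Lemma~\ref{lem014} solves the zero-mode system for $(b,\lambda,\xi_0,\xi_2)$ by Brouwer, using that every term arising from the high-mode correction $\mathcal H_\varepsilon^{0}$ is $O(r_\varepsilon^{m})$. Lemma~\ref{lem015} solves the coordinate-mode system for $(a_j,\tau_j,\zeta_j,\varrho_j)$ via a contraction; the essential point is that the combination $\mathcal T=F+G/(n-1)+c_1 M+c_2 N$ extracted from the expansion \eqref{eq059} equals $\tfrac{n(n-4)}{n-1}(1+b)+O(\varepsilon^{\gamma})$, which is bounded away from zero. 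This is the delicate step: the kernel of $L_{g_0}$ contains no geometric Jacobi fields corresponding to translations of the singular point, so without the built-in translation parameter $a$ inside $u_{\varepsilon,R,a}$ the coordinate-mode system would be under-determined; the auxiliary function $\Upsilon$ plays the analogous role for the zero mode. Assembling $\mathcal A_\varepsilon^{4/(n-4)} g$ on $B_{r_\varepsilon}(p)\setminus\{p\}$ with $\mathcal B_\varepsilon^{4/(n-4)} g$ on $M\setminus B_{r_\varepsilon}(p)$ then yields the desired $g_\varepsilon$: completeness near $p$ and the Delaunay asymptotics follow from the interior construction and Proposition~\ref{Prop002}, while the smallness estimates $|\lambda|\leq r_\varepsilon^{n-4+m/2}$ and $\|V_{\lambda,\psi_0,\psi_2}\|_{C^{4,\alpha}_\nu(M_{r_\varepsilon})}\leq \gamma r_\varepsilon^{m-\nu}$ force $g_\varepsilon\to g_0$ uniformly on compact subsets of $M\setminus\{p\}$ as $\varepsilon\to 0$.
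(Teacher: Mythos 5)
Your proposal is correct and follows essentially the same route as the paper's proof: it assembles the interior solution from Theorem~\ref{teo001} and the exterior solution from Theorem~\ref{existenceexterior}, matches the Cauchy data via the high-mode, constant, and coordinate-mode systems resolved in Lemmas~\ref{lem013}, \ref{lem014} and \ref{lem015}, and concludes by the weak-formulation/elliptic-regularity argument at the start of Section~\ref{sec:onegluingprocedure}. The supporting remarks on the roles of $a$, $\Upsilon$ and the quantity $\mathcal T$ are consistent with the paper's treatment, so no gap remains.
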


\begin{proof} We keep the previous notations. Consider the metric $g=\mathcal F^{\frac{4}{n-4}}g_0$ given in Section \ref{sec:interioranalysis}.
For suitable parameters $R$, $a$ and $\phi_0,\phi_2\in C^{4,\alpha}(\mathbb S_{r_\varepsilon}^{n-1})$ with $\phi_0=\xi_0e_0+\sum_{j=1}^n\tau_je_j+\phi_0''$, $\pi''(\phi_0'')=\phi_0''$ and $\pi''(\phi_2)=\phi_2$, by Theorem \ref{teo001} there exists a family of constant $Q$-curvature complete metrics in $\overline{B_{r_\varepsilon}(p)}\backslash\{p\}\subset M$, for $\varepsilon>0$ small enough, given by $\hat g={\mathcal A}_\varepsilon^{\frac{4}{n-4}}g$. 

Also, for suitable parameters $\lambda$ and $\psi_0,\psi_2\in C^{4,\alpha}(\mathbb S_{r_\varepsilon}^{n-1})$ with $\psi_0=\xi_2e_0+\sum_{j=1}^n\zeta_je_j+\psi_0''$, $\psi_2=\sum_{j=1}^n\varrho_je_j+\psi_2''$, by Theorem \ref{existenceexterior} there exists a family of constant $Q$-curvature metrics in $M\backslash B_{r_\varepsilon}(p)$, for $\varepsilon>0$ small enough, given by $\tilde g={\mathcal B}_\varepsilon^{\frac{4}{n-4}}g$.

From Lemmas \ref{lem013}, \ref{lem014} and \ref{lem015} there exists $\varepsilon_0>0$ such that for all $\varepsilon\in(0,\varepsilon_0)$ there are parameters  for which the functions $\mathcal A_\varepsilon$ and $\mathcal B_\varepsilon$ satisfy the system \ref{gluingsystem}. Hence using elliptic regularity we can show that the function $\mathcal W_\varepsilon$, defined by $\mathcal W_\varepsilon:=\mathcal A_\varepsilon$ in $B_{r_\varepsilon}(p)\backslash\{p\}$ and $\mathcal W_\varepsilon:=\mathcal B_\varepsilon$ in $M\backslash B_{r_\varepsilon}(p)$, is a positive smooth function in $M\backslash\{p\}$. Moreover, $\mathcal W_\varepsilon$ is asymptotically to some Delaunay-type solution $u_{\varepsilon,R}$. This implies that the metric $g_\varepsilon:=\mathcal W_\varepsilon^{\frac{4}{n-4}}g$ is the one that proves the theorem.
\end{proof}

\section{Proof of the main result}\label{sec:mainresults}
Finally we are ready to prove the main result of this work, particularly
adapting our construction in the case of a single puncture to 
handle finitely many punctures. 

	Recall that the singular set is given by $\Lambda=\left\{p_{1}, 
	\ldots, p_{k}\right\}$ and at each point it satisfies 
	$\nabla^{j} W_{g_{0}}\left(p_{i}\right)=0,$ 
	for $j=0, \dots, [\frac{n-8}{2}]$ and $i=1,\dots,k$.
	
	\begin{proof}[Proof of Theorem~\ref{maintheorem1}]
	As before, we also have three steps: we construct an interior solution, 
	construct an exterior solution, and finally match boundary data across 
	an interface to obtain a globally smooth solution. Fix a positive 
	number $\delta$ and fix $t_i \in (\delta, \delta^{-1})$.  For 
	each $i =1,\dots, k$ let $\varepsilon_i = \varepsilon t_i$. Our 
	interior domain is $\cup_{i=1}^k B_{r_{\varepsilon_i}} (p_i)$, 
	where we choose $\varepsilon$ sufficiently small such that these
	balls are pairwise disjoint. Thus one can perform the same interior 
	analysis on each of these balls individually, as we have already 
	done in Section \ref{sec:interioranalysis}. Given geometric parameters 
	$R_i>0$ and $a_i\in \mathbb{R}^n$ and boundary functions $\phi_0^i \in 
	\pi'' (C^{4,\alpha} (\mathbb{S}^{n-1}_{r_{\varepsilon_i}}))$ 
	and $\phi_2^i \in C^{4,\alpha}(\mathbb{S}^{n-1}_{r_{\varepsilon_i}})$
	we obtain a constant $Q$-curvature metric of the form 
	$$\hat g = (u_{\varepsilon_i, R_i, a_i} +{\Upsilon_i+} v_{\phi_0^i, \phi_2^i}
	+ w_{\varepsilon_i, R_i} + U_{\varepsilon_i, R_i,a_i, 
	\phi_0^i, \phi_2^i})^{\frac{4}{n-4}} g_0$$ 
	on the union $\bigcup_{i=1}^k B_{r_{\varepsilon_i}}(p_i) 
	\backslash \{ p_i \}.$

    On the other hand, some adjustments are necessary in order to construct 
    a family of metrics as in Section~\ref{sec:exterioranalysis}.
    Let $\Psi_{i}: B_{2r_{0}}(0) \rightarrow M$ be a normal coordinate system with 
    respect to $g_{i}=\mathcal{F}_{i}^{{4}/{n-4}} g_{0}$ on $M$ 
    centered at $p_{i}\in\Lambda$. 
    Here, $\mathcal{F}_{i}$ is defined in Section~\ref{sec:exterioranalysis} for all $i=1,\dots,k$.
    Hence, each metric $g_{i}$ yields conformal normal coordinates centered at $p_{i}$.
    Recall that $\mathcal{F}_{i}=1+O(|x|^{2})$ in the coordinate 
    system $\Psi_{i}$. Denote by $G_{p_{i}}$ the Green's function 
    of $L_{g_{0}}$ with pole at $p_{i}$ and assume that 
    $\lim_{x \rightarrow 0}|x|^{n-4} G_{p_{i}}(x)=1$ in the 
    coordinate system $\Psi_{i}$ for all $i=1,\dots,k$. 
        Define $G_{p_{1},\dots, p_{k}}\in C^{\infty}\left(M \backslash\Lambda\right)$ to be 
        \begin{equation*}
            G_{p_{1}, \ldots, p_{k}}=\sum_{i=1}^{k} \lambda_{i} G_{p_{i}},
        \end{equation*}
        where $\lambda_{i}\in \mathbb{R}$.
        Let $r=\left(r_{\varepsilon_{1}}, \dots, r_{\varepsilon_{k}}\right)$ 
        and denote by $M_{r}:=M\backslash \bigcup_{i=1}^k\Psi_{i}
        \left(B_{r_{\varepsilon_{i}}}(0)\right)$.
        Define the space $C_{\nu}^{l, \alpha}
        \left(M \backslash\Lambda\right)$ as in 
        Definition~\ref{def4} with the following norm
        \begin{equation*}
            \|v\|_{C_{\nu}^{l, \alpha}(M \backslash\{p\})}
            :=\|v\|_{C^{l, \alpha}\left(M_{\frac{1}{2}r_{0}}\right)}
            +\sum_{i=1}^{k}\left\|v \circ \Psi_{i}
            \right\|_{(l, \alpha), \nu, r_{0}}.
        \end{equation*}
        The space $C_{\nu}^{l, \alpha}\left(M_{r}\right)$ is defined similarly.
        
        We now outline how to prove the analog of Proposition~\ref{RIEX}. 
        Choose a radial cutoff function 
        $$\eta : \mathbb{R}^n \rightarrow [0,\infty), \qquad \eta(x) = 
        \left \{ \begin{array}{rl} 1, & |x| < r_1/2 \\ 0, & |x|> r_1 
        \end{array} \right . $$
        and let 
        $$w_0 = \sum_{i=1}^k \eta \mathscr{H}_{r,r_1}^i (\left .
        f \right |_{\Omega_{r,r_1}^i} ).$$
        Here $\Omega_{r,r_1}$ is the annulus centred at $p_i$ with 
        inner radius $r$ and outer radius $r_1$, with respect to our 
        chosen conformal normal coordinates described above. Letting
        $$ h = f - L_{g_0} ( w_0)$$ 
        we see as in the proof of Proposition~\ref{RIEX} 
        that there exists $C_1>0$ such that 
        $$\| h \|_{C^{0,\alpha}(M)} \leq C_1 \| f \|_{C^{0,\alpha}_{\nu-4}
        (M_r)}.$$
        Now choose 
        $$\chi: M \rightarrow [0,\infty), \qquad 
        \chi(x) = \left \{ \begin{array}{rl} 0 & x \in 
        \bigcup_{i=1}^k B_{r_2}(p_i) \\ 1 & 
        x \in M\backslash \left ( \bigcup_{i=1}^k 
        B_{2r_2} (p_i) \right ), \end {array} \right . $$
        where $4r_2 < r_1$ and let 
        $$w_1 = \chi L_{g_0}^{-1} (h).$$ 
        The same estimates as before tell us 
        $$\| w_1 \|_{C^{4,\alpha}_\nu (M_r)} \leq 
        C_2 \| f \|_{C^{0,\alpha}_{\nu-4} (M_r)}$$ 
        for some constant $C_2$ independent of $r$. 
        Finally, as in the previous proof, we see 
        $$\| L_{g_0} ( w_0 + w_1) - f 
        \|_{C^{0,\alpha}_{\nu-4} (M_r)} \leq C_3 r_2^{-1-\nu} 
        \| f \|_{C^{0,\alpha}_{\nu-4}(M_r)},$$ 
        and so we may find our right-inverse for 
        the exterior problem by perturbations, such that 
        the resulting function $w$ is constant 
         on any component of $\partial M_{r}$. 
         
         Choosing boundary functions $\psi_0^i, \psi_2^i 
         \in C^{4,\alpha} (\mathbb{S}^{n-1}_{r_{\varepsilon_i}})$
         we define $u_{\psi_0^i ,\psi_2^i}$ by 
         $$u_{\psi_0^i, \psi_2^i}\circ \Psi_i  = 
         \eta \mathcal{Q}_{r_{\varepsilon_i}} (\psi_0^i, 
         \psi_2^i) \qquad \textrm{ in } 
         B_{2r_0}(p_i)$$
         and letting $u_{\psi_0^i, \psi_2^i}$ be zero 
         otherwise. 
         The same fixed-point argument as before will give 
         us $V_{\lambda_i, \psi_0^i, \psi_2^i}$ such that 
         $$H_{g_0} (1+G_{p_1,\dots, p_k} + u_{\psi_0^i, \psi_2^i}
         + V_{\lambda_i, \psi_0^i, \psi_2^i} ) = 0,$$ 
         which gives us our exterior solution. 
         
         We complete the gluing construction by once
         more using fixed-point theorems to show that 
         one can choose the boundary date and parameters 
         such that the interior and exterior solutions match 
         to third order across the interface. This part of the 
         proof is the same as in the one-point gluing 
         construction. 
	\end{proof}
	

    \end{document}